\newcommand{\mcm}[3]{\newcommand{#1}[#2]{{\ensuremath{#3}}}} 
\mcm{\tuple}{1}{\langle #1 \rangle}
\mcm{\name}{1}{\ulcorner #1 \urcorner}
\mcm{\Nbb}{0}{\mathbb{N}}
\mcm{\Zbb}{0}{\mathbb{Z}}
\mcm{\Rbb}{0}{\mathbb{R}}
\mcm{\Cbb}{0}{\mathbb{C}}
\mcm{\Qbb}{0}{\mathbb{Q}}
\mcm{\Acal}{0}{\cal A}
\mcm{\Bcal}{0}{\cal B}
\mcm{\Ccal}{0}{\cal C}
\mcm{\Dcal}{0}{\cal D}
\mcm{\Ecal}{0}{\cal E}
\mcm{\Fcal}{0}{\cal F}
\mcm{\Gcal}{0}{\cal G}
\mcm{\Hcal}{0}{\cal H}
\mcm{\Ical}{0}{\cal I}
\mcm{\Jcal}{0}{\cal J}
\mcm{\Kcal}{0}{\cal K}
\mcm{\Lcal}{0}{\cal L}
\mcm{\Mcal}{0}{\cal M}
\mcm{\Ncal}{0}{\cal N}
\mcm{\Ocal}{0}{{\cal O}}
\mcm{\Pcal}{0}{{\cal P}}
\mcm{\Qcal}{0}{{\cal Q}}
\mcm{\Rcal}{0}{{\cal R}}
\mcm{\Scal}{0}{{\cal S}}
\mcm{\Tcal}{0}{{\cal T}}
\mcm{\Ucal}{0}{{\cal U}}
\mcm{\Vcal}{0}{{\cal V}}
\mcm{\Wcal}{0}{{\cal W}}
\mcm{\Xcal}{0}{{\cal X}}
\mcm{\Ycal}{0}{{\cal Y}}
\mcm{\Zcal}{0}{{\cal Z}}
\mcm{\Mfrak}{0}{\mathfrak M}
\mcm{\restric}{0}{\upharpoonright}
\mcm{\upset}{0}{\uparrow}
\mcm{\onto}{0}{\twoheadrightarrow}
\mcm{\smallNbb}{0}{{\small \mathbb{N}}}
\DeclareMathOperator{\preop}{op}
\mcm{\op}{0}{^{\preop}}
\newcommand{\se}{\subseteq}
\newcommand{\theoremize}[2]{\newaliascnt{#1}{thm} \newtheorem{#1}[#1]{#2} \aliascntresetthe{#1}}
\theoremstyle{plain}
\newtheorem{thm}{Theorem}[section]
\theoremstyle{definition}
\theoremstyle{plain}
\newenvironment{cproof}{\noindent\textit{Proof of Claim.}}{\phantom{!}\!\hfill$\diamondsuit$}
\mcm{\Fbb}{0}{\mathbb{F}}
\DeclareMathOperator{\degree}{deg}
\DeclareMathOperator{\Sbb}{\mathbb{S}}
\newcommand{\Sthree}{$\Sbb^3$}
\newcommand{\sm}{\setminus}
\newcommand{\scom}{simplicial complex}
\begin{document}

\begin{center}
 {\LARGE Embedding simply connected 2-complexes in 3-space}

 \vspace{.6cm}Johannes Carmesin\\
 \vspace{.2cm}
 {University of Birmingham} \\\vspace{.3cm}
 \today
\end{center}

            \vspace{.55cm}

\begin{center}
 {\bf Abstract}

 \begin{addmargin}[2em]{2em}
{\small
\indent Firstly, we characterise the embeddability of simply connected locally 3-connected 2-dimensional
simplicial
complexes in 3-space in a way analogous to Kuratowski's characterisation of graph planarity, by nine
excluded minors. This answers questions of Lov\'asz, Pardon and U. Wagner.

The excluded minors are the cones over $K_5$ and $K_{3,3}$, five related constructions, and the remaining two are obtained from triangulations of the M\"obius strip by attaching a disc at its central cycle.

Secondly, we extend the above theorem to all simply connected 2-dimensional simplicial complexes.
}
\end{addmargin}
\end{center}

\theoremstyle{plain}
\newtheorem{main}{Theorem}
\def\mainautorefname{Theorem}
\newcommand{\mainmize}[2]{\newaliascnt{#1}{main} \newtheorem{#1}[#1]{#2} \aliascntresetthe{#1}}

\section*{Introduction}

In 1930, Kuratowski proved that a graph can be embedded in the plane if and only if it has none of
the two non-planar graphs $K_5$ or $K_{3,3}$ as a minor\footnote{A \emph{minor} of a graph is
obtained by deleting or contracting edges. Kuratowski's theorem is stated in terms of subdivisions but here we refer to it in the modern version in terms of minors; both statements are trivially equivalent.
The perspective in terms of the minor operation is due to K. Wagner, who used it to
prove the equivalence between the 4 Colour Theorem and Hadwiger's Conjecture \cite{MR0012237} for 4 colours; and thus founded graph minor theory \cite{wagner1937eigenschaft}.} \cite{kuratowski1930probleme}. The main result of this paper may be regarded as a
3-dimensional analogue of this theorem.

\vspace{.15cm}

Kuratowski's theorem gives a way how embeddings in the plane could be understood through the
minor relation. A far reaching extension of Kuratowski's theorem is the Robertson-Seymour theorem
\cite{MR2099147}. Any minor-closed class of graphs is characterised by the list of minor-minimal
graphs not
in the class. This theorem says that this list always must be finite.
The methods developed to prove this theorem are nowadays used in many results in the
area of structural graph theory \cite{DiestelBookCurrent} --
and beyond; recently Geelen, Gerards and Whittle extended the Robertson-Seymour theorem
to representable matroids by
proving  Rota's conjecture \cite{solving_Rota}.
Very roughly, the Robertson-Seymour structure theorem establishes a correspondence between minor
closed classes of graphs and classes of graphs almost embeddable in 2-dimensional surfaces.

In his survey on the Graph Minor project of Robertson and Seymour \cite{lovasz_gm_survey}, in 2006
Lov\'asz asked
whether there is a
meaningful analogue of the minor relation in three dimensions. Clearly, every graph can be
embedded in 3-space\footnote{Indeed, embed the vertices in general position and embed the edges
as straight lines. }.

One approach towards this question is to restrict the embeddings in question, and just consider so
called linkless embeddings of graphs, see \cite{linkless_emb_survey} for a survey. Instead of
restricting embeddings, one could also
put some additional structure on the graphs in question. Indeed, U. Wagner  asked how an analogue of
the minor relation could be defined on general simplicial complexes \cite{Wagner_minor}. All simplicial complexes considered in this paper are finite, for infinite ones see \cite{georgakopoulos2022discrete}.

Unlike in higher dimensions, a 2-dimensional simplicial complex has a topological embedding in
3-space if and only if it has a piece-wise linear embedding, which in turn holds if and only if it has a differential
embedding \cite{{Bin59},{Hatcher3notes},{moise},{Pap43}}. Matou\v sek, Sedgewick, Tancer and U. Wagner\cite{mstw_3sphere_decidable}
proved that
the embedding problem for 2-dimensional simplicial complexes in 3-space is
decidable. De Mesmay, Rieck, Sedgwick and Tancer complemented this result by showing that this problem is
NP-hard \cite{s3np_hard}.

Our lack of understanding NP-hard problems may suggest that a structural characterisation of
embeddability of 2-dimensional simplicial complexes\footnote{All graphs and simplicial complexes considered in this paper are finite.} in general is out of reach.
But what about structural characterisations for natural subclasses? In fact such
questions have
been asked: in 2011 at the internet forum `MathsOverflow' Pardon\footnote{John Pardon confirmed in
private communication that he asked that question as the user `John Pardon'.} asked
whether there are necessary and sufficient conditions for when contractible 2-dimensional
simplicial complexes embed in 3-space. The \emph{link graph} at a vertex $v$ of a
simplicial complex
is the incidence graph between edges and faces incident with $v$.
Pardon notes that if embeddable the link graph at any vertex must be planar. This leads to obstructions
for embeddability such as the cone over the complete graph $K_5$, see
\autoref{cK5}. -- But there are different
obstructions of a more global character, see \autoref{8obs}. All their link graphs are planar --
yet they are not embeddable.
   \begin{figure} [htpb]
\begin{center}
   	  \includegraphics[height=4cm]{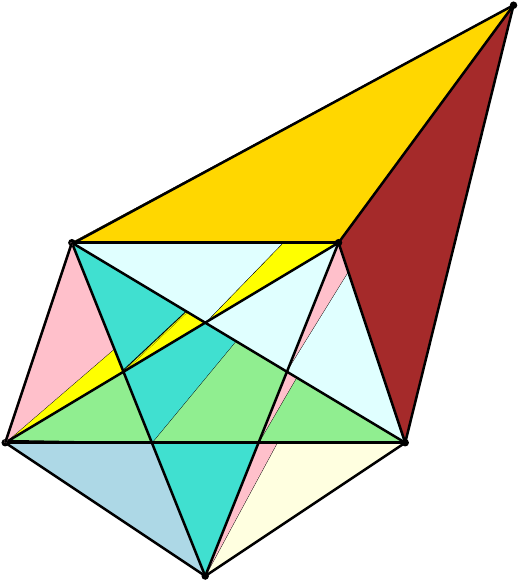}
   	  \caption{The cone over $K_5$.
Similarly as the graph $K_5$ does not embed in 2-space, the cone over
$K_5$ does not embed in 3-space. }\label{cK5}
\end{center}\vspace{-0.7cm}
   \end{figure}

   \begin{figure} [htpb]
\begin{center}
   	  \includegraphics[height=4cm]{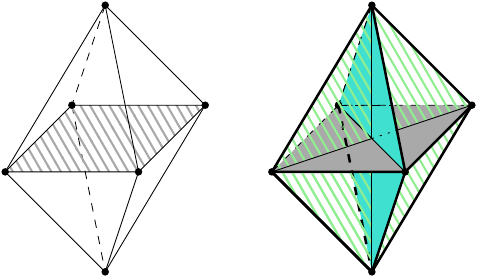}
   	  \caption{The octahedron obstruction, depicted on the right, is obtained from the
octahedron with its eight triangular faces by
adding 3 more faces of size 4 orthogonal to the three axis. If we add just one of these
4-faces
to the octahedron, the resulting 2-complex is embeddable as illustrated on the left. A
second 4-face could be added on the outside of that depicted embedding. However, it can
be shown that the octahedron with all three 4-faces is not embeddable.}\label{8obs}
\end{center}\vspace{-0.7cm}
   \end{figure}


Addressing these questions, we introduce an analogue of the minor relation
for 2-complexes and we use it to prove a 3-dimensional analogue of
Kuratowski's theorem characterising when simply connected 2-dimensional simplicial complexes
(topologically) embed
in 3-space.
\begin{figure}
\begin{center}
    \begin{tabular}{ | l | l  | l |}
    \hline
    {} & delete & contract  \\ \hline
    edge & {} &{} \\ \hline
    face & {}&{} \\ \hline
    \end{tabular}
    \caption{For each of the four corners of the above diagram we have one space minor
operation.}\label{4ops}
\end{center}\vspace{-0.7cm}
\end{figure}
More precisely, a \emph{space minor} of a 2-complex is obtained by successively
deleting or contracting edges or faces, and splitting vertices.
See \autoref{4ops} and \autoref{fig:space_minor2}. The precise details of these definitions are
given in \autoref{sec:space}; for example contraction of edges is only allowed for edges that
are not loops\footnote{\emph{Loops} are edges that have only a single endvertex. While
contraction of edges that are not loops clearly preserves embeddability in 3-space, for loops this
is not always the case.} and we only contract faces of size at most two.
   \begin{figure} [htpb]
\begin{center}
   	  \includegraphics[height=3cm]{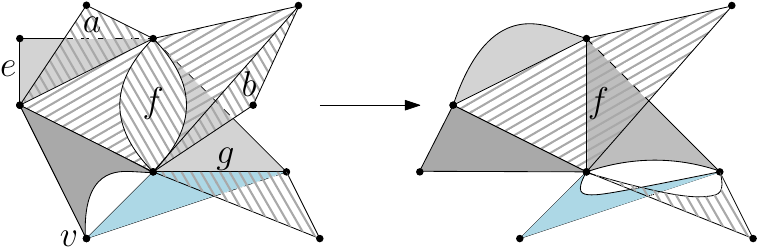}
   	  \caption{The complex on the right is a space minor of the complex on the left. To see this, delete the faces labelled $a$ and $b$, contract the edge $e$, contract the face
$f$, delete the edge $g$ and split the vertex $v$. Here deleting an edge $g$ means that we replace $g$ by a copy for every incident face, where the copy is only incident with a single face where it takes the role of $g$.}\label{fig:space_minor2}
\end{center}\vspace{-0.7cm}
   \end{figure}

It will be quite easy to see that space minors preserve embeddability in 3-space and that this
relation
is well-founded. The operations of face deletion and
face contraction correspond to the minor operations in the dual matroids of simplicial complexes in
the sense of \cite{3space4}.

\begin{eg}
 Using space minors, we can understand why the Octahedron Obstruction (\autoref{8obs}) does not
embed in 3-space. Indeed, we contract an arbitrary face of size three to a single vertex (formally,
we first contract an edge of that face, then it gets size two. So we can contract it to an edge.
Then we contract that edge to a vertex). It turns out that the link graph at the new vertex is the
non-planar graph $K_{3,3}$. Thus this space minor is not embeddable in 3-space. As space minors
preserve embeddability, we deduce that the Octahedron Obstruction cannot be embeddable.
\end{eg}

A construction of a simply connected 2-complex that is not embeddable in 3-space and has no space
minor with a non-planar link graph can be obtained from the M\"obius strip by attaching a disc at its central cycle, see \autoref{fig:moe1}. We refer to these constructions as \emph{M\"obius obstructions}.

   \begin{figure} [htpb]
\begin{center}
   	  \includegraphics[height=3cm]{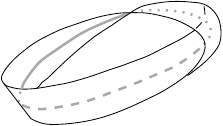}
   	  \caption{The  M\"obius-strip. Its central cycle is depicted in grey.}\label{fig:moe1}
\end{center}\vspace{-0.7cm}
   \end{figure}

The main result of \autoref{chapterI} is the following. A \emph{space restriction} is a space minor where we abstain from contracting edges or faces.

\begin{main}\label{kura_intro}
Let $C$ be a simply connected locally 3-connected 2-dimensional simplicial complex. The following
are equivalent.
\begin{itemize}
 \item $C$ embeds in 3-space;
 \item $C$ has no space restriction from an explicit list $\Zcal$.
\end{itemize}
\end{main}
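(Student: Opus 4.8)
The forward implication is the easy one: each complex in $\Zcal$ will be checked to be non-embeddable, and since space restrictions are a special case of space minors they preserve embeddability in 3-space; hence an embeddable $C$ has no space restriction in $\Zcal$. For the converse I would argue the contrapositive — assuming $C$ has no space restriction in $\Zcal$, construct an embedding — and it is convenient to take $C$ minimal with this property in the well-founded space-restriction order, so that every proper space restriction of $C$ still lying in our class already embeds. The plan is then to pass to a combinatorial model of embeddings: a \emph{rotation system}, consisting of a compatible choice of cyclic orderings of the faces around each edge together with an embedding of each link graph, and to use the reformulation (which I would set up first, in the spirit of rotation systems for graph embeddings) that a simply connected 2-complex embeds in 3-space precisely when it carries a \emph{planar} such system. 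So it suffices to build a planar rotation system for $C$.

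The first task is to show that the link graph $L_v$ at every vertex is planar. If some $L_v$ is non-planar it contains a $K_5$- or $K_{3,3}$-minor; I would take a minimal witness and, using local 3-connectivity to keep the structure around $v$ rigid, translate the edge- and face-deletions and vertex splits needed to realise that minor into a space restriction of $C$. When the edges and faces of the witness meet only $v$, this space restriction is the cone over $K_5$ or $K_{3,3}$; when some of them are shared with neighbouring vertices the shared part survives, and one instead obtains one of the related constructions on the list $\Zcal$. Enumerating the configurations that can occur, and verifying each lies in $\Zcal$, is the combinatorial content of this step.

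With all link graphs planar, local 3-connectivity and Whitney's uniqueness theorem give each $L_v$ an essentially unique embedding in the sphere, so choosing a planar rotation system for $C$ reduces to choosing one of two orientations at each vertex, subject to a local compatibility constraint across each edge: its two endpoints must induce the same cyclic order on the incident faces. I would phrase the obstruction to a globally consistent choice as a signature on closed walks in a suitable auxiliary graph built from $C$, and exploit simple connectivity of $C$ to reduce any inconsistency to one supported on the boundary of a disc of faces.

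The crux — and what I expect to be the main obstacle — is to show that an unresolvable inconsistency forces one of the two M\"obius obstructions (or, in degenerate cases, a related construction) to appear as a space restriction. A minimal such inconsistency should sweep out a one-sided band of faces around a central cycle; keeping that band together with a disc bounded by the central cycle — which exists since $C$ is simply connected — and deleting the rest by a space restriction should produce a triangulated M\"obius strip with a disc attached along its central cycle, i.e.\ a M\"obius obstruction. Making this rigorous is delicate: one must control precisely which edges and faces survive a space restriction, retain enough local 3-connectivity en route, and treat the boundary cases where the band closes up prematurely or meets a non-3-connected remnant — these are what produce the remaining members of $\Zcal$ rather than a clean M\"obius obstruction. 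A final point is that, once a planar rotation system is in hand, one must still check that the embedding it yields lies in 3-space and not in some other simply connected 3-manifold, which uses simple connectivity of $C$ together with a Poincar\'e-type input. By contrast, the link-planarity step and the Whitney-based reduction are fairly direct analogues of the classical Kuratowski machinery.
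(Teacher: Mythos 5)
Your overall plan matches the paper's architecture closely: prove the equivalence between embeddings in $\Sbb^3$ and planar rotation systems (using a Poincar\'e/Perelman input), then reduce the existence of a planar rotation system to link-planarity plus a signature/parity argument, using Kuratowski's theorem and Whitney's uniqueness theorem for the local structure, and finally turn an irreducible parity inconsistency into a M\"obius-type obstruction by localizing it (via simple connectivity) to a face boundary. This is essentially the route the paper takes.

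However, there is a genuine gap in the middle: after you conclude that every $L(v)$ is planar, you pass directly to ``choosing one of two orientations at each vertex, subject to a local compatibility constraint across each edge: its two endpoints must induce the same cyclic order on the incident faces.'' That compatibility is \emph{not} automatic. Given planar, 3-connected $L(v)$ and $L(w)$ at the ends of an edge $e$, the two rotators at $e$ are the same or reverse only if the link graph $L(e)$ at the contraction vertex in $C/e$ (the vertex sum $L(v)\oplus_e L(w)$) is itself planar; if it is non-planar the two rotators can be unrelated and there is no signature to talk about. This is a separate obstruction that your proposal never tests for. In the paper's framework it is precisely the second of three mutually exclusive cases (non-planar $L(v)$, non-planar $L(e)$ in $C/e$, or a well-defined rotation framework with a red face), and it is this second case that produces the five ``combined cone'' members of $\Zcal$. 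Your remark that ``when some of [the witness's edges and faces] are shared with neighbouring vertices\ldots\ one instead obtains one of the related constructions'' does not capture this: a non-planar $L(v)$ always yields an honest cone as a space restriction (delete faces avoiding $v$, topologically delete edges and split), so sharing does not change that outcome; the combined cones appear only when all vertex links are planar but some edge link is not. Without this intermediate step your Whitney-based reduction and the ensuing parity argument are not yet applicable.

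Two smaller points. First, your disc attached at the central cycle of the M\"obius strip is not an abstract disc produced by simple connectivity; it is literally the face $f$ on whose boundary the red-parity is odd. Simple connectivity is used earlier, to push an odd cycle onto a single face boundary by $\Fbb_2$-generation, not to manufacture a disc. Second, the minimal-counterexample framing you propose is not used and is not needed: the paper extracts a concrete space restriction directly from each of the three obstruction cases, which is cleaner and also what you ultimately need for the explicit list.
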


The list $\Zcal$ consists of cones over subdivisions of $K_5$ and $K_{3,3}$, five similar constructions to which we refer to as \lq combined cones\rq, see \autoref{expli-computation} below for details, and M\"obius obstructions. The set of space-minor minimal elements of $\Zcal$ consists of the cones over $K_5$ and $K_{3,3}$ and a finite set of five combined cones constructions and the finite set of minimal M\"obius obstructions. In particular, this set of space-minor minimal elements of $\Zcal$ is finite.

Here a (2-dimensional) simplicial complex is \emph{locally
3-connected} if all its link graphs are 3-connected. In \autoref{chapterV}, we extend \autoref{kura_intro} to simplicial complexes that need
not be
locally
3-connected.
\vspace{.3cm}

We are able to extend \autoref{kura_intro} from simply connected
simplicial complexes to those with trivial first homology group.

\begin{main}\label{kura_intro_hom}
Let $C$ be a locally 3-connected 2-dimensional simplicial complex such that the first homology
group $H_1(C,\Fbb_p)$ is trivial for some prime $p$. The following are equivalent.
\begin{itemize}
 \item $C$ embeds in 3-space;
 \item $C$ is simply connected and has no space restriction from
the explicit list $\Zcal$.
\end{itemize}
\end{main}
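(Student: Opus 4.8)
The plan is to deduce \autoref{kura_intro_hom} from \autoref{kura_intro} together with a single topological lemma: \emph{if a $2$-complex $C$ embeds in $3$-space and $H_1(C;\Fbb_p)$ is trivial for some prime $p$, then $C$ is simply connected}. Granting this, both implications are short. For the backward implication, if $C$ is simply connected and has no space restriction from $\Zcal$, then since $C$ is locally $3$-connected, \autoref{kura_intro} applies and produces an embedding of $C$ in $3$-space. For the forward implication, suppose $C$ embeds in $3$-space; the lemma yields that $C$ is simply connected, and then \autoref{kura_intro}, applied in the other direction, yields that $C$ has no space restriction from $\Zcal$, so both right-hand conditions hold. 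Thus essentially all the remaining work is in the lemma, and \autoref{kura_intro} itself is used as a black box.

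For the lemma I would argue through regular neighbourhoods. As recalled in the introduction, a $2$-complex embeds topologically in $3$-space if and only if it embeds piecewise-linearly, so fix a PL embedding $C \subseteq \Sbb^3$ and let $N$ be a closed regular neighbourhood of $C$; this is a compact orientable $3$-manifold with boundary that deformation retracts onto $C$, so $H_\ast(N)\cong H_\ast(C)$ and $\pi_1(N)\cong\pi_1(C)$, and it suffices to prove $\pi_1(N)$ is trivial. Put $M:=\overline{\Sbb^3\sm N}$ and $\Sigma:=\partial N=\partial M=N\cap M$, a closed orientable surface (possibly disconnected). The Mayer--Vietoris sequence of $\Sbb^3=N\cup M$, together with $H_2(\Sbb^3)=H_1(\Sbb^3)=0$, gives an isomorphism $H_1(\Sigma;\Zbb)\cong H_1(N;\Zbb)\oplus H_1(M;\Zbb)$. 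Hence $H_1(C;\Zbb)=H_1(N;\Zbb)$ is a direct summand of the free abelian group $H_1(\Sigma;\Zbb)$, and so is itself free abelian. By the universal coefficient theorem the hypothesis $H_1(C;\Fbb_p)=0$ forces $H_1(C;\Zbb)\otimes\Fbb_p=0$, so $H_1(C;\Zbb)$ is finite; being simultaneously free abelian and finite, $H_1(C;\Zbb)=0$.

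It remains to upgrade $H_1=0$ to $\pi_1=0$. With $H_1(N;\Zbb)=0$, the displayed isomorphism makes the inclusion-induced map $H_1(\Sigma;\Zbb)\to H_1(M;\Zbb)$ an isomorphism; but by the ``half lives, half dies'' lemma applied to the compact orientable $3$-manifold $M$, the kernel of $H_1(\Sigma;\Qbb)\to H_1(M;\Qbb)$ has $\Qbb$-dimension $\tfrac{1}{2}\dim_\Qbb H_1(\Sigma;\Qbb)$, so this dimension is $0$ and $\Sigma$ is a disjoint union of $2$-spheres. A connected compact $3$-submanifold of $\Sbb^3$ whose boundary is a disjoint union of $2$-spheres is, by the PL Schoenflies theorem of Alexander, a punctured $3$-sphere (that is, $\Sbb^3$ with finitely many disjoint open balls removed), which is simply connected; hence $\pi_1(C)=\pi_1(N)=1$, as required. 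If $C$ is disconnected one applies this componentwise. Feeding the now-established simple connectivity back into \autoref{kura_intro} completes the forward implication.

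\textbf{Main obstacle.} Practically all the genuine difficulty of \autoref{kura_intro_hom} is already contained in \autoref{kura_intro}; the additional input above is soft $3$-manifold topology. The points that need care are technical rather than conceptual: verifying that the regular neighbourhood $N$ is genuinely a compact orientable $3$-manifold with bicollared boundary, so that Mayer--Vietoris and the half-lives-half-dies lemma apply; invoking the PL Schoenflies theorem correctly to identify $N$; and disposing of degenerate cases, for instance $C$ purely $1$-dimensional (where $H_1(C;\Fbb_p)=0$ already forces $C$ to be a forest, and simple connectivity reduces to connectedness) or $C$ disconnected.
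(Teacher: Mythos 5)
Your reduction of \autoref{kura_intro_hom} to \autoref{kura_intro} plus the single lemma ``embeds in $\Sbb^3$ and $H_1(C;\Fbb_p)=0$ implies simply connected'' is logically sound, and it is in fact the same high-level reduction the paper makes: the paper routes through \autoref{combi_intro_extended} together with \autoref{rot_minor}, and the only genuine content of \autoref{combi_intro_extended} beyond \autoref{combi_intro} is exactly the $(2)\Rightarrow(1)$ direction of \autoref{nullt}, which is precisely your lemma. Where you diverge is in the \emph{proof} of that lemma. The paper argues combinatorially: it passes from $H_1(C;\Fbb_p)=0$ to the statement that all local surfaces of the induced planar rotation system are spheres via the Euler-characteristic/dual-complex machinery of \autoref{loc_are_spheres} and \autoref{euler_double_counting}, and then recovers simple connectedness by \autoref{topo_to_combi} and \autoref{is_simply_connected2}. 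Your argument is classical $3$-manifold topology: pass to a PL regular neighbourhood $N$ of $C$, use Mayer--Vietoris in $\Sbb^3$ to see $H_1(N;\Zbb)$ is a direct summand of $H_1(\partial N;\Zbb)$ and hence free, so $H_1(C;\Fbb_p)=0$ forces $H_1(N;\Zbb)=0$; then ``half lives, half dies'' shows $\partial N$ is a union of spheres, and an inductive application of the PL Schoenflies theorem identifies $N$ as a punctured $3$-sphere, which is simply connected. Both arguments are correct, and neither uses Perelman at this step (Perelman enters only via \autoref{combi_intro}, which both you and the paper use as a black box). Your route is shorter and leans on standard but nontrivial off-the-shelf results (regular neighbourhoods, half-lives-half-dies, Schoenflies), whereas the paper's route is more self-contained and develops local-surface machinery that it reuses elsewhere (e.g.\ in \autoref{general} and the algorithmic applications in \autoref{chapterV}). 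Two small remarks: the step ``$H_1(C;\Zbb)$ is free and $H_1(C;\Zbb)\otimes\Fbb_p=0$, hence $H_1=0$'' goes through directly without the detour via finiteness; and the identification of a connected compact $3$-submanifold of $\Sbb^3$ with all boundary components spheres as a punctured $3$-sphere, while correct, is not a one-line consequence of Schoenflies but requires a short induction on the number of boundary spheres (cap the outermost complementary ball and recurse), which is worth spelling out.
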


In general there are infinitely many obstructions to embeddability in 3-space. Indeed, the
following infinite family consists of obstructions of trivial homology group that are not simply connected, and so these obstructions directly connect to \autoref{kura_intro_hom}.
\begin{eg}\label{q-folded}
Given a natural number $q\geq 2$, the $q$-folded cross cap consists of a single vertex,
a single edge that is a loop and a single face traversing the edge $q$-times in the same
direction. It can be shown that $q$-folded cross caps cannot be embedded in 3-space.
\end{eg}
\noindent A more sophisticated infinite family of obstructions to embeddability is constructed in \cite{3space4}.
Examples of follow-up works of this paper include: \cite{georgakopoulos20232}, \cite{georgakopoulos2022discrete}, \cite{fulek2022atomic}, \cite{carmesin2022new}.

\vspace{.3cm}

{\bf Overview over this paper.}
This paper is subdivided into three chapters.
The proof of  \autoref{kura_intro_hom} (which implies \autoref{kura_intro}) spans \autoref{chapterI} and \autoref{chapterII}. \autoref{chapterII} is self-contained and deals with the topological aspects of this paper.

The main result of  \autoref{chapterV} is an extension of \autoref{kura_intro_hom} dropping the assumption of local 3-connectivity. This result can be combined with the machinery from \cite{3space4} to extend \autoref{kura_intro_hom} beyond 2-complexes with trivial homology group. Examples demonstrating that these results cannot be extended much further can be found in \cite{3space4}.

	\tableofcontents

	\chapter{The combinatorial part of the proof of \autoref{kura_intro_hom}}\label{chapterI}

The proof of \autoref{kura_intro_hom} (which implies \autoref{kura_intro}) is subdivided into a topological and combinatorial part.
The connection between these parts is the notion of `planar rotation systems'; roughly speaking, this is some combinatorial data on a 2-complex. In the topological part we show that the existence of planar rotation systems is equivalent to the existence of an embedding; this is the topic of \autoref{chapterII}.
In \autoref{chapterI}, we characterise combinatorially which 2-complexes admit planar rotation systems.
This is done in several steps.
In \autoref{sec_vertex_sum} we make a first step towards an excluded minors characterisations by studying how planar rotation systems behave with respect to contractions of edges.
In \autoref{s_constr} we use this to find in every simply connected 2-complex without a planar rotation system an obstruction to embeddability localised at a vertex, an edge or a face.
In \autoref{restricted_section}, we use this to prove \autoref{kura_intro_hom} -- assuming a single lemma, \autoref{combi_intro_extended}, from \autoref{chapterII}. In \autoref{sec:space} we introduce space minors and prove that the list $\Zcal$ of \autoref{kura_intro_hom} has nine minimal elements and determine them. Concluding remarks follow in \autoref{concl77}.

\vspace{.3cm}

For graphs\footnote{In this chapter graphs are allowed to have loops and parallel edges. } we
follow
the notation of \cite{DiestelBookCurrent}.
To read this chapter, a minimal amount of topological background is necessary, which we summarise now.

A \emph{2-complex} is a graph $(V,E)$ together with a set $F$ of closed trails\footnote{A
\emph{trail} is sequence
$(e_i|i\leq n)$ of distinct edges such that the endvertex of $e_i$ is the starting vertex of
$e_{i+1}$ for all $i<n$.  A trail is \emph{closed} if the starting vertex of $e_1$ is equal to
the endvertex of $e_n$.}, called its
\emph{faces}. We denote 2-complexes $C$ by triples $C=(V,E,F)$.
The definition of  \emph{link graphs} naturally extends from simplicial complexes to
2-complexes with the following modification: we add two vertices in the link graph $L(v)$
for each loop incident
with $v$. We add one edge to $L(v)$ for each traversal of a face at $v$.
In this paper we suppress from our notation an injection from the vertices and edges of the link
graph to $E$ and $F$, respectively; and simply consider the vertices of the link graph as edges of
$C$, for instance.
Throughout this paper,
we denote faces of a 2-complex by $f$, edges by $e$ and vertices by $v$. Thus we denote vertices of link graphs by $e$ and edges of link graphs by $f$ (with the understanding that they come from edges or faces of a 2-complex, respectively).

Rotation systems of 2-complexes play a central role in our proof of \autoref{kura_intro}. In this
section we
introduce them and prove some basic properties of them.
A rotation system of a graph $G$ is a family $(\sigma_v|v\in V(G))$ of cyclic
orientations\footnote{A \emph{cyclic orientation} is a bijection to an oriented cycle. This notion is closely related to that of a \lq cyclic ordering\rq, which is a bijection to a cycle (which does not come with an orientation). We stress that in our notation it will be important to remember this orientation.}
$\sigma_v$ of the edges incident with the vertices $v$ \cite{MoharThomassen}. The
orientations $\sigma_v$
are called
\emph{rotators}.
Whenever a graph $G$ is embedded in an oriented  (2-dimensional) surface, the cyclic orientation in which the edges incident with a vertex $v$ appear around it (with respect to the orientation of the surface) is a rotator at $v$.
Conversely, any rotation system of a graph $G$ induces an
embedding of $G$ in an oriented surface $S$. To be precise, we obtain $S$ from $G$
by gluing faces onto (the geometric realisation of) $G$ along closed walks of $G$ as follows.
Each directed edge of $G$ is in one of these walks. Here the direction $\vec{a}$ is
directly before the direction $\vec{b}$ in a face $f$ if the endvertex $v$ of $\vec{a}$ is equal to
the starting vertex of $\vec{b}$ and $b$ is just after $a$ in the rotator at $v$.
The rotation system is \emph{planar} if that surface $S$ is a disjoint union of 2-spheres.
Note that if the graph $G$ is connected, then for any rotation system of $G$, also the
surface $S$ is connected.

A \emph{rotation system of a (directed\footnote{A \emph{directed} 2-complex is a
2-complex together with a choice of
direction at each of its edges and a choice
of one of the two orientations of each of the closed trails corresponding to the faces. All 2-complexes considered in this paper are directed. In order to simplify notation
we will
not always say that explicitly. }) 2-complex} $C$ is a  family $(\sigma_e|e\in E(C))$ of
cyclic orientations $\sigma_e$ of the faces incident\footnote{Recall that a face of a 2-complex traverses each edge at most one by definition.} with each edge $e$.
A rotation system of a 2-complex $C$ \emph{induces} a rotation system at each of its link graphs
$L(v)$ by
restricting to the edges that are vertices of the link graph $L(v)$; here we take $\sigma(e)$ if
$e$
is directed towards $v$ and the reverse of $\sigma(e)$ otherwise.
Here we follow the convention that a loop $e$ at a vertex $v$ is oriented from one attachment at $v$ to the other and hence its rotator projects to $\sigma(e)$ in $L(v)$ at the copy of $e$ corresponding to the attachment it is directed towards and the reverse of $\sigma(e)$ at the other copy.

A rotation system of a 2-complex is \emph{planar} if the induced rotation system on each link graph is planar.
\begin{eg}
Like embeddings of graphs in oriented surfaces induce rotation systems, embeddings of 2-complexes $C$ in oriented 3-manifolds induce planar rotation systems of $C$. Indeed, the cyclic orderings in which the faces appear around each edge $e$ (with respect to the orientation) form a rotator, and planarity follows from the fact that vertices of the 2-complex are embedded to points of a 3-manifold.
\end{eg}

In \autoref{chapterII} we prove the following, which we use in the proof of
\autoref{kura_intro}.

\begin{thm}\label{emb_to_rot}[\autoref{combi_intro}]
A simply connected simplicial complex has an embedding in \Sthree\  if and only if it has a planar
rotation system.
\end{thm}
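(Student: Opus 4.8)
The plan is to prove both directions of the equivalence in \autoref{emb_to_rot} by making precise the heuristic already indicated in the example: an embedding yields a planar rotation system by reading off the cyclic order of faces around each edge, and conversely a planar rotation system can be ``thickened'' into an embedding. The forward direction is the easy one. Given an embedding of the simplicial complex $C$ in \Sthree, orient \Sthree; around each edge $e$ the incident faces meet $e$ in a small normal disc in cyclic order, and this cyclic order (read consistently with the orientation) is the rotator $\sigma_e$. For planarity one must check that the induced rotation system on each link graph $L(v)$ is planar, i.e. the associated surface is a union of 2-spheres. This holds because a small sphere $S_v$ around the image of $v$ meets $C$ in (a subdivision of) $L(v)$, embedded in the 2-sphere $S_v$; and the faces that the rotation-system construction glues on are exactly the discs cut out on $S_v$ by the faces of $C$ incident with $v$, so the surface built from the induced rotation system is $S_v$ itself, a 2-sphere. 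One should be a little careful that every region of $L(v)$ in $S_v$ is indeed a disc, which uses that $C$ is simply connected only implicitly here — actually for this direction it is local and needs no global hypothesis; the global hypothesis enters only in the converse.

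For the converse, suppose $C$ has a planar rotation system. The construction goes through a ``thickening'': build a 3-manifold (with boundary) $M$ by taking, for each face $f$ a copy $f\times[0,1]$, for each edge $e$ a solid cylinder, and for each vertex $v$ a 3-ball, and gluing them according to the rotators. The rotator $\sigma_e$ dictates the cyclic order in which the face-slabs are attached around the cylinder over $e$; the planarity of the induced rotation system at $L(v)$ is exactly the statement that the boundary of the local model around $v$ is a 2-sphere rather than a higher-genus surface, so the gluing over $v$ can be completed by a genuine 3-ball. The result is a compact 3-manifold $M$ whose boundary $\partial M$ is a closed orientable surface, with $C$ sitting inside $M$ as a deformation retract (spine). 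It remains to embed $M$ in \Sthree. For this I would show $\partial M$ consists of 2-spheres by a Euler-characteristic / homology computation, using simple connectivity of $C$ (and hence of $M$, since $C$ is a spine of $M$) — this is where the global hypothesis is essential — and then cap each boundary sphere with a 3-ball to obtain a simply connected closed 3-manifold, which by the Poincar\'e conjecture is \Sthree; restricting the embedding $M\hookrightarrow \Sthree$ to $C$ finishes the argument.

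The main obstacle is the converse direction, and specifically the step showing that the thickening $M$ of a simply connected $C$ with a planar rotation system is a submanifold of \Sthree\ — equivalently that $\partial M$ is a disjoint union of spheres and that capping it off produces a simply connected, hence standard, 3-manifold. The delicate points are: (i) verifying that the local model around each vertex really is a 3-ball, which is a careful unravelling of the definition of ``planar'' for the induced rotation system of $L(v)$, translating ``the surface $S$ from the rotation system is a union of 2-spheres'' into ``the normal-disc-bundle boundary is a 2-sphere''; (ii) controlling $\pi_1(\partial M)$ and $H_*(M)$ to conclude the boundary components are spheres — a van Kampen / Mayer--Vietoris bookkeeping exercise that uses $\pi_1(C)=1$; and (iii) the (now legitimate) appeal to Perelman's proof of the Poincar\'e conjecture to identify the capped manifold with \Sthree. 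Everything else — the forward direction, the orientability of $\partial M$, and the fact that $C$ is a spine of $M$ — is routine once the combinatorial gluing data is set up carefully. I expect \autoref{chapterII} to carry out precisely this program, with the bulk of the work in item (i) and in making the thickening construction functorial enough to track the rotation system.
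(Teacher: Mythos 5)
Your forward direction matches the paper's (\autoref{obo}), and your overall strategy for the converse — build an oriented $3$-manifold out of the combinatorial data and invoke Perelman — is the right one, but the details diverge from the paper's route in an interesting way. You propose to build the thickening $M$ as a compact manifold-with-boundary, prove that all components of $\partial M$ are spheres (from $\pi_1(M)=\pi_1(C)=1$ via ``half lives, half dies''/Mayer--Vietoris), and only then cap off with $3$-balls. That plan is sound, but notice that the sphere claim is precisely the paper's \autoref{loc_are_spheres}, which is proved only in \autoref{sec6} (and under the weaker hypothesis that $C$ is $p$-nullhomologous, for use in \autoref{combi_intro_extended}) by a rather elaborate combinatorial Euler-characteristic double-counting involving the dual complex. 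In the proof of \autoref{combi_intro} itself the paper never establishes the sphere lemma. Instead it builds the closed space $T(C,\Sigma)$ directly: onto each local surface $S$ of genus $g$ it glues the closure $\hat S$ of the inside of a standard genus-$g$ embedding of $S$ in $\Sbb^3$, which is a genus-$g$ handlebody, not a ball. Then \autoref{is_manifold} shows $T(C,\Sigma)$ is an oriented closed $3$-manifold iff $\Sigma$ is planar, and \autoref{is simply connected} shows $T(C,\Sigma)$ is simply connected by an iterated Van Kampen argument whose only topological input is that the boundary inclusion $\pi_1(S)\to\pi_1(\hat S)$ of a handlebody is surjective — no genus-zero claim is needed before Perelman is applied. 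Your route is more classical (regular neighbourhoods and spines) and perfectly workable; the paper's is slicker exactly because it postpones (and for \autoref{combi_intro} avoids) proving that the local surfaces are spheres.

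One concrete gap to flag: you attach ``for each vertex $v$ a 3-ball,'' which implicitly assumes the link graph $L(v)$ is connected; if $L(v)$ is disconnected the link complex is a disjoint union of spheres, the local model at $v$ is a wedge of balls, and $M$ fails to be a manifold at $v$. Simple connectivity of $C$ does not imply local connectivity. The paper handles this with a preliminary reduction: \autoref{is_loc_con} shows a $p$-nullhomologous complex without cut vertices is locally connected, and \autoref{block-lem} plus an induction on the number of cut vertices reduces the general simply connected case to the locally connected one. Your proposal needs the same reduction before the thickening step.
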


\section{Vertex sums}\label{sec_vertex_sum}

In this short section we prove some elementary facts about an operation we call `vertex sum'
which
is used in the proof of \autoref{kura_intro}.

Let $H_1$ and $H_2$ be two graphs with a common vertex $v$ and a bijection $\psi$ between the
edges incident with $v$ in $H_1$ and $H_2$.
The \emph{vertex sum} of $H_1$ and $H_2$ over $v$ with respect to $\psi$ is the graph obtained from the
disjoint union of $H_1$ and $H_2$ by deleting $v$ in both $H_i$ and adding an edge between any pair
$(v_1,v_2)$ of vertices $v_1\in V(H_1)$ and  $v_2\in V(H_2)$ such that $v_1v$ and $v_2v$ are
mapped to one another by $\psi$, see \autoref{fig:vx_sum}.
   \begin{figure} [htpb]
\begin{center}
   	  \includegraphics[height=2.5cm]{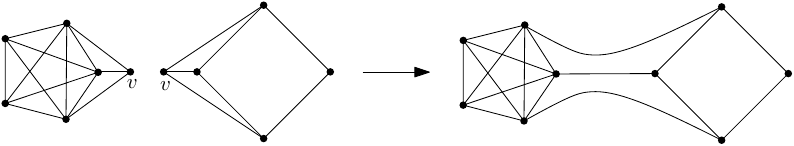}
   	  \caption{The vertex sum of the two graphs on the left is the graph on
the right.}\label{fig:vx_sum}
\end{center}\vspace{-0.7cm}
   \end{figure}

Let $C=(V,E,F)$ be a 2-complex and let $e$ be a non-loop edge of $C$, the 2-complex obtained from
$C$ by \emph{contracting $e$} (denoted by $C/e$) is obtained from $C$ by identifying the two
endvertices of $e$, removing $e$ from all faces and then removing $e$, formally:
$C/e=((V,E)/e, \{f-e|f\in F\})$.

\begin{obs}\label{obs1}
 The link graph of $C/e$ at
$e$ is the vertex sum of the link graphs $L(v)$ and $L(w)$ over the common vertex $e$.
\end{obs}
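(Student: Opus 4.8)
The plan is to exhibit an explicit isomorphism between $L_{C/e}(e)$ and the vertex sum of $L(v)$ and $L(w)$, where $v$ and $w$ are the two endvertices of $e$ (distinct, since $e$ is a non-loop) and $e$ also denotes the vertex of $C/e$ obtained by identifying them. Everything reduces to carefully unwinding the definitions of link graph, of edge contraction, and of vertex sum; no real idea beyond bookkeeping is involved.

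First I would match up the vertex sets. The edges of $C/e$ incident with $e$ are exactly the edges of $C$ incident with $v$ or with $w$ other than $e$ itself, so, taking into account the convention that a loop contributes two vertices to a link graph, the vertex set of $L_{C/e}(e)$ is canonically $(V(L(v))\sm\{e\})\sqcup(V(L(w))\sm\{e\})$; note that an edge $e'\neq e$ joining $v$ and $w$ in $C$ becomes a loop at $e$ in $C/e$, and its two copies in $L_{C/e}(e)$ are precisely the copy coming from $L(v)$ and the copy coming from $L(w)$. This is by definition the vertex set of the vertex sum, once we fix the bijection $\psi$ between the edges at $e$ in $L(v)$ and the edges at $e$ in $L(w)$: an edge of $L(v)$ at $e$ is a traversal of $v$ by a face $f$ using $e$, and since a face traverses $e$ at most once, $f\mapsto$ (that traversal) is a bijection from the faces traversing $e$ onto the edges at $e$ in $L(v)$, and similarly for $L(w)$; let $\psi$ be the composite.

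Next I would check the edges. Go through the faces $f$ of $C$. If $f$ does not traverse $e$, then $f-e=f$, and the traversals of $e$ by $f$ in $C/e$ are exactly the traversals of $v$ together with the traversals of $w$ by $f$, each with literally the same pair of endpoints in $L_{C/e}(e)$ as it had in $L(v)$ respectively $L(w)$; these account for the edges inherited from the disjoint union $L(v)\sqcup L(w)$. If $f$ does traverse $e$, write $e_v\in V(L(v))$ and $e_w\in V(L(w))$ for the edges of $C$ accompanying $e$ in the trail $f$ on the side of $v$ and of $w$; then the unique traversal of $v$ by $f$ using $e$ is the edge $e_ve$ of $L(v)$ and the unique traversal of $w$ by $f$ using $e$ is the edge $ee_w$ of $L(w)$, and these are $\psi$-related as they come from the same face. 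In $f-e$ the subtrail $(e_v,e,e_w)$ is replaced by $(e_v,e_w)$, which creates exactly one new traversal of $e$ in $C/e$, giving the edge $e_ve_w$ of $L_{C/e}(e)$, while the remaining traversals of $v$ and $w$ by $f$ are untouched; and $e_ve_w$ is precisely the edge that the vertex sum adds for the $\psi$-related pair $e_ve\in L(v)$, $ee_w\in L(w)$.

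Combining these, the identity on the common vertices together with the natural bijection on edges is a graph isomorphism from $L_{C/e}(e)$ to the vertex sum of $L(v)$ and $L(w)$ over $e$ with respect to $\psi$, as desired. The only points needing care are the degenerate cases, namely faces of size at most two and faces running through $v$ or through $w$ more than once, together with the loop-contributes-two-vertices convention; for instance a size-two face $(e,e')$ has $e_v=e'=e_w$, but the two occurrences live in different link graphs, matching the two copies of the loop $e'$ in $L_{C/e}(e)$. I expect this bookkeeping, rather than any genuine difficulty, to be the main thing to get right.
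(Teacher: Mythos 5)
Your proposal is correct and takes essentially the same approach as the paper: you identify the same bijection $\psi$ (matching the two traversals of a face $f$ through $e$, one seen from $L(v)$ and one from $L(w)$), which is all the paper records. Your version simply carries out the definitional bookkeeping explicitly.
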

\begin{proof}
 For $\psi$ take the following bijection: given a face $f$ traversing $e$, this traversal corresponds to a unique edge in $L(v)$ and $L(w)$, and these two edges are in bijection via $\psi$.
\end{proof}

\begin{lem}\label{sum_planar1}
 Let $G$ be a graph that is a vertex sum of two graphs $H_1$ and $H_2$ over the common vertex $v$.
Let $(\sigma_x^i|x\in V(H_i))$ be a planar rotation system of $H_i$ for $i=1,2$ such that
$\sigma_v^1$ is the reverse of $\sigma_v^2$. Then  $(\sigma_x^i|x\in V(H_i)-v, i=1,2)$ is a planar
rotation system of $G$.
\end{lem}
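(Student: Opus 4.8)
The plan is to realise the combined rotation system of $G$ as the rotation system induced by an embedding of $G$ into an explicit surface that is a disjoint union of $2$-spheres, and then to invoke the general principle that any rotation system arising from an embedding of a graph in a disjoint union of $2$-spheres is planar.

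First I would build this surface $\Sigma$. For $i=1,2$ let $S_i$ be the oriented surface associated with the planar rotation system $(\sigma^i_x)$ of $H_i$; by hypothesis each $S_i$ is a disjoint union of $2$-spheres. In $S_i$ choose a small closed disc $D_i$ that is a regular neighbourhood of the vertex $v$, so that the edges incident with $v$ meet $\partial D_i$ in single points, appearing around $\partial D_i$ in the cyclic order $\sigma^i_v$. Removing the open disc $D_i$ turns the sphere of $S_i$ containing $v$ into a closed disc $\Delta_i$, while leaving the remaining spheres of $S_i$ untouched. Now glue $\Delta_1$ and $\Delta_2$ along a homeomorphism $\partial\Delta_1\to\partial\Delta_2$ sending the point where an edge $a$ at $v$ meets $\partial\Delta_1$ to the point where $\psi(a)$ meets $\partial\Delta_2$. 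Such a homeomorphism exists precisely because $\sigma^1_v$ is the reverse of $\sigma^2_v$ under $\psi$ -- this is the only place the reversal hypothesis enters, and since the induced boundary orientations of the two circles then disagree, the orientations of $S_1$ and $S_2$ combine to an orientation of the glued surface. Let $\Sigma$ be the disjoint union of this glued surface with all the untouched spheres of $S_1$ and of $S_2$. Since a $2$-sphere minus an open disc is a disc, and two discs glued along their boundary form a $2$-sphere, $\Sigma$ is a disjoint union of $2$-spheres. Moreover the half-edge of $a$ at $v$ in $S_1$ and the half-edge of $\psi(a)$ at $v$ in $S_2$ are joined by the gluing into a single arc, and one checks that the resulting embedded graph in $\Sigma$ is exactly $G$ and that the rotation system it induces (using the orientation of $\Sigma$) is the combined rotation system $(\sigma^i_x\mid x\in V(H_i)-v,\ i=1,2)$.

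It then remains to prove the general principle: if a graph $H$ is embedded in a surface $\Sigma$ that is a disjoint union of $2$-spheres, then the rotation system induced by the embedding is planar. For this let $N$ be a regular neighbourhood of $H$ in $\Sigma$; then $N$ is a compact surface with boundary, its cyclic data at vertex discs is the induced rotation system, and -- unwinding the definition of the surface associated with a rotation system -- the surface $S$ of the induced rotation system is obtained from $N$ by gluing a disc onto each boundary circle of $N$. Every component of $N$ lies inside a single sphere of $\Sigma$, hence has genus zero; and gluing discs onto all boundary circles of a genus-zero compact surface with boundary yields a $2$-sphere. Therefore every component of $S$ is a $2$-sphere, as required. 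Applying this with $H=G$ and the surface $\Sigma$ constructed above completes the proof.

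The main obstacle is the subtlety of this last step rather than the construction: one must resist identifying the surface $S$ of the combined rotation system with $\Sigma$ itself. They need not agree -- for instance when $H_1$ and $H_2$ are stars at $v$, the graph $G$ is a union of disjoint edges, which embeds non-cellularly in the single sphere $\Sigma$, whereas $S$ is the disjoint union of one $2$-sphere per edge. The point is exactly that passing from an embedding to the surface of its rotation system (cellularising via the regular neighbourhood) can only decrease genus and split components, so from spheres one ends with spheres. The other point requiring care is the bookkeeping that the boundary identification matching $\psi$ exists if and only if $\sigma^1_v$ is the reverse of $\sigma^2_v$, and that it reproduces precisely $G$ with its combined rotation system; this is a routine but attention-demanding check with the cyclic orders of the edge-ends around $v$.
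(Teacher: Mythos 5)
Your proof is correct, and it is a rigorous realisation of the one-sentence sketch the paper gives (``a consequence of the topological fact that the connected sum of two spheres is the sphere''). Excising a disc around $v$ from each of the two spheres containing it and gluing along the boundary via an orientation-reversing identification (whose existence is exactly what the reversal hypothesis $\sigma^1_v=(\sigma^2_v)^{-1}$ provides) \emph{is} the connected sum the paper alludes to, and your construction produces $G$ embedded in a disjoint union of spheres carrying the combined rotation system. The genuinely non-automatic step that the paper's sketch elides --- and which you correctly isolate --- is that $G$ need not embed cellularly in the glued surface $\Sigma$, so one cannot simply declare $S=\Sigma$; your regular-neighbourhood argument (subsurfaces of spheres are planar, and capping boundary circles of a planar surface with discs gives spheres) is the right way to close that gap, and your star example cleanly demonstrates why the gap is real. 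So this is the same route as the paper, executed in full rather than sketched.
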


\begin{proof}[Proof sketch.] This is a consequence of the topological fact that the connected sum
of two spheres is the sphere.
\end{proof}

\begin{lem}\label{sum_planar2}
 Let $G$ be a graph that is a vertex sum of two graphs $H_1$ and $H_2$ over the common vertex $v$.
 Assume that the vertex $v$ is not a cutvertex of $H_1$ or $H_2$.
 Assume that $G$ has a planar rotation system $\Sigma$. Then there are planar rotation systems of
$H_1$ and $H_2$ that agree with $\Sigma$ at the vertices in $V(G)\cap V(H_i)$ and that are reverse
at $v$.
\end{lem}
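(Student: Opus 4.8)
We are given a graph $G$ that is a vertex sum of $H_1$ and $H_2$ over a common vertex $v$ (with bijection $\psi$ between the edges at $v$), where $v$ is not a cutvertex of either $H_i$. We have a planar rotation system $\Sigma$ of $G$, and we want to construct planar rotation systems of $H_1$ and $H_2$ agreeing with $\Sigma$ away from $v$ and being reverse at $v$. The natural approach is geometric: use the embedding-in-spheres interpretation. Since $\Sigma$ is planar, it embeds $G$ (or rather each connected component) into a disjoint union of 2-spheres. The vertex sum operation "cuts" $G$ into $H_1$ and $H_2$; so I want to reverse-engineer spherical embeddings of $H_1$ and $H_2$ from the spherical embedding of $G$.

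**The key idea — cutting the sphere along a circle.** In the embedding of $G$ induced by $\Sigma$, look at the set of edges of $G$ that came from edges at $v$ in $H_1$ (equivalently, via $\psi$, from edges at $v$ in $H_2$). These form a set of edges each of which has one endpoint that is a neighbour of $v$ in $H_1$ and the other a neighbour of $v$ in $H_2$. I would like to claim that in the spherical embedding, there is a simple closed curve $\gamma$ that crosses exactly these "bridge" edges, separating the $H_1$-part from the $H_2$-part, and meeting $G$ only in interior points of those bridge edges. Then cutting the sphere along $\gamma$ yields two discs; capping each with a disc gives two spheres. The $H_1$-side sphere contains the $H_1$-part of $G$ together with the bridge-edge stubs, which we reconnect to a single new vertex $v$ placed in the centre of the capping disc — this recovers an embedding of $H_1$ in a sphere, hence a planar rotation system of $H_1$; similarly for $H_2$. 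The rotators at all vertices other than $v$ are literally inherited from $\Sigma$, and the two rotators at $v$ (one in $H_1$'s system, one in $H_2$'s) are read off from the cyclic order in which $\gamma$ crosses the bridge edges — and since $\gamma$ is traversed in opposite orientations as the boundary of the two discs, these two rotators at $v$ are reverse to each other, as required.

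**Why the separating curve exists — the main obstacle.** The crux is establishing that such a separating simple closed curve $\gamma$ exists, and this is exactly where the hypothesis that $v$ is not a cutvertex of $H_1$ or $H_2$ is needed. If $v$ were a cutvertex, the $H_i$-part could fail to be connected in a way that no single circle separates it from the other side. Concretely, I would argue as follows: consider the subcomplex $G_i$ of (the geometric realisation of) $G$ consisting of the images of $H_i - v$ together with half of each bridge edge on the $H_i$ side; since $v$ is not a cutvertex of $H_i$, the graph $H_i$ is connected after deleting $v$ in the relevant sense, so $G_i$ is connected. A small regular neighbourhood $N_i$ of $G_i$ in the sphere is then a connected surface-with-boundary; I must check it is a disc (equivalently, that its complement is connected) — this uses that the embedding of $G$ sits inside a sphere and that $G_1$ and $G_2$ meet only in the midpoints of the bridge edges, so the sphere is the union of $N_1$ and $N_2$ glued along an annular collar, and an Euler-characteristic or Mayer–Vietoris computation forces each $N_i$ to be a disc. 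The boundary of (a slightly shrunk) $N_1$ is then the desired curve $\gamma$. Formalising "regular neighbourhood" and this Euler-characteristic bookkeeping is the technical heart; everything else is routine translation between embeddings and rotation systems via the correspondence already set up in the preliminaries.

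**Writing it up.** I would structure the proof as: (1) pass to the spherical embedding of $G$ given by planarity of $\Sigma$, reducing to one connected component (the bridge edges all lie in one component since $G$'s two sides are joined by them, modulo a trivial case check); (2) identify the bridge edges and the subcomplexes $G_1, G_2$, and use the non-cutvertex hypothesis to get that each $G_i$ is connected; (3) take regular neighbourhoods and run the Euler-characteristic argument to conclude each is a disc bounded by a circle $\gamma$ crossing the bridge edges transversally once each; (4) cap off to get spheres containing $H_1$ and $H_2$ respectively, with the new vertex $v$ in the cap; (5) read off the rotation systems and observe they agree with $\Sigma$ off $v$ and are reverse at $v$, invoking the standard embedding-to-rotation-system correspondence. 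Step (3) is the one I expect to require the most care; it is essentially the planar analogue of the Jordan curve theorem packaged for graphs on the sphere, and a clean way to avoid heavy machinery is to argue directly with the faces of the embedding of $G$: the bridge edges, being a bond between the $H_1$-side and $H_2$-side, have the property that each is incident with faces on "both sides", and one can thread $\gamma$ through these faces.
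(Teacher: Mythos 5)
Your proof is correct in outline and reaches the same structural insight, but the route is noticeably different from the paper's. You work globally on the sphere: you want a separating circle $\gamma$ crossing exactly the bridge edges, you cut along it, cap off, and read the rotator at $v$ from the cyclic order of crossings. The paper instead works combinatorially: since $v$ is not a cutvertex of $H_2$, the vertex set $V(H_2)-v$ is connected, so $H_1$ is obtained from $G$ by contracting this connected set to a single vertex, and contraction can be carried out inside the plane embedding, directly yielding a planar rotation system $\Sigma_1$ of $H_1$ agreeing with $\Sigma$ off $v$. The rotator at $v$ is then identified because the cut $X$ between $V(H_1)-v$ and $V(H_2)-v$ is a bond (here both non-cutvertex hypotheses are used), and a bond in a plane graph is precisely a circuit of the dual graph; the rotator at $v$ is that circuit's cyclic orientation, with the two sides giving opposite orientations. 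Notice that this one-line observation (bond $=$ dual circuit $=$ simple closed separating curve) is exactly what your step (3) is trying to establish via regular neighbourhoods and Euler characteristic; that machinery is correct but much heavier than needed, and it is also the part of your write-up that you leave least rigorous. If you replace it with the bond/dual-circuit fact you get essentially the paper's proof, while if you keep the cut-and-cap framework you should be explicit that the curve $\gamma$ you want is the dual cycle of $X$, rather than re-deriving its existence from Mayer--Vietoris. The paper's contraction step is also a slightly cleaner way than cut-and-cap to see that the rotators away from $v$ are inherited unchanged.
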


\begin{proof}
Since the vertex $v$ is not a cutvertex of the graph $H_2$, the graph $H_1$ can be obtained from
the graph $G$ by contracting the connected vertex set $V(H_2)-v$ onto a single
vertex.
Now let a plane embedding $\iota$ of $G$ be given that is induced by the rotation system $\Sigma$.
Since contractions can be performed within the plane embedding $\iota$, there is a planar rotation
system $\Sigma_1$ of the graph $H_1$ that agrees with $\Sigma$ at all
vertices in $V(H_1)-v$.

Since the vertex $v$ is not a cutvertex of $H_1$ or $H_2$, the cut $X$ of $G$ consisting of
the edges between $V(H_1)-v$ and $V(H_2)-v$ is actually a bond\footnote{Given a bipartition of the vertex set of a graph, its \emph{cut} is the set of edges between these two biparition classes. A \emph{bond} is a minimal nonempty cut. It is easy to see that a cut is a bond if and only if both its paritition classes are connected vertex sets.} of the graph $G$.
The bond $X$ is a circuit $o$ of the dual graph of $G$ with respect to the embedding $\iota$; and
the rotator at $v$ of the planar rotation system $\Sigma_1$ is equal (up to reversing) to the cyclic orientation
of the edges on the circuit $o$.
Similarly, we construct a planar rotation system  $\Sigma_2$ of $H_2$ that agrees with $\Sigma$ at
all
vertices in $V(H_2)-v$, and the rotator at the vertex $v$ is the other orientation of the circuit
$o$. This completes the proof.
\end{proof}

Let $C$ be a 2-complex and $e$ be a non-loop edge of $C$, and $\Sigma=(\sigma_x|x\in E(C))$ be a
rotation system of $C$. The \emph{induced} rotation system of $C/e$ is $\Sigma_e=(\sigma_x|x\in
E(C)-x)$. This is well-defined as the incidence relation between edges of $C/e$ and faces is the
same as in $C$.
Planarity of rotation systems is preserved under contractions:

\begin{lem}\label{contr_pres_planar}
Let $e$ be a nonloop edge.
If a rotation system $\Sigma$ is planar, then its induced rotation system $\Sigma_e$ is planar.

Conversely, for any planar rotation system $\Sigma'$ of $C/e$ so that $e$ is not
a cutvertex of any of the two link graphs at its endvertices in $C$, there is a planar rotation system of
$C$
inducing $\Sigma'$.
\end{lem}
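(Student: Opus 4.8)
\textbf{Proof plan for \autoref{contr_pres_planar}.}

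The plan is to reduce both directions to the vertex-sum lemmas \autoref{sum_planar1} and \autoref{sum_planar2}, using \autoref{obs1} as the bridge. Recall that by \autoref{obs1}, if $e$ has endvertices $v$ and $w$, then the link graph $L_C(e)$ of $e$ in $C/e$ is the vertex sum of $L_C(v)$ and $L_C(w)$ over the common vertex $e$, with respect to the bijection $\psi$ matching up traversals of $e$ by faces. The key observation making this usable is that the rotation system $\Sigma_e$ of $C/e$ induced by $\Sigma$ restricts, at the vertex $e$ of $C/e$, to a rotation system of $L_C(e)$, and similarly $\Sigma$ restricts at $v$ and at $w$ to rotation systems of $L_C(v)$ and $L_C(w)$; one has to check that, under the identification of edges of $L_C(e)$ with pairs of edges of $L_C(v)$ and $L_C(w)$ via $\psi$, the restricted rotators are exactly the ones appearing in the vertex-sum lemmas, \emph{with the correct orientation conventions}. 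This bookkeeping with the directions (which copy of a loop projects to $\sigma(e)$ versus its reverse, and how reversing $\sigma_v^1$ against $\sigma_v^2$ corresponds to the fact that $e$ is directed towards one of $v,w$ and away from the other) is where I expect the real care to be needed.

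For the forward direction, suppose $\Sigma$ is planar. At every vertex $u \neq e$ of $C/e$, the link graph $L_{C/e}(u)$ is literally equal to $L_C(u)$ and the induced rotation system is unchanged, so its planarity is immediate. At the vertex $e$, we apply \autoref{sum_planar1}: the induced rotation systems on $L_C(v)$ and $L_C(w)$ are planar by hypothesis, and by the orientation convention for $\sigma(e)$ versus its reverse (depending on the direction of $e$), their rotators at the common vertex $e$ are reverse to one another; hence the vertex sum $L_{C/e}(e)$ carries a planar rotation system, namely exactly $\Sigma_e$ restricted there. So $\Sigma_e$ is planar.

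For the converse, suppose $\Sigma'$ is a planar rotation system of $C/e$ and that $e$ is not a cutvertex of $L_C(v)$ or of $L_C(w)$. Applying \autoref{sum_planar2} to the vertex sum $L_{C/e}(e) = L_C(v) \oplus_e L_C(w)$ with its planar rotation system (the restriction of $\Sigma'$), we obtain planar rotation systems of $L_C(v)$ and of $L_C(w)$ that agree with $\Sigma'$ at all vertices other than $e$ and are reverse at $e$. These data single out, for each face $f$ traversing $e$, a cyclic orientation of the faces around $e$; together with the rotators $\sigma'_x$ for $x \in E(C)-e$ coming from $\Sigma'$, this defines a candidate rotation system $\Sigma$ of $C$ whose induced rotation system on $C/e$ is $\Sigma'$ by construction. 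It remains to check that $\Sigma$ is planar, i.e. that every link graph $L_C(u)$ gets a planar induced rotation system: for $u \notin \{v,w\}$ this is inherited verbatim from $\Sigma'$, and for $u \in \{v,w\}$ it is exactly the planar rotation system produced by \autoref{sum_planar2}, once one verifies that the orientation convention used to build $\Sigma$ from the pair of reverse rotators at $e$ matches the convention by which $\Sigma$ induces rotation systems on link graphs. This last compatibility check, again a matter of tracking the direction of $e$ and the resulting reversals, is the main obstacle; everything else is a direct translation through \autoref{obs1}.
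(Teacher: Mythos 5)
Your proposal is correct and follows essentially the same route as the paper's own proof: reduce via \autoref{obs1} to the vertex-sum setting, then apply \autoref{sum_planar1} for the forward direction and \autoref{sum_planar2} for the converse, noting that link graphs and induced rotators at vertices other than the contraction vertex are unaffected. The ``orientation bookkeeping'' you flag as a remaining obstacle is in fact built into the conventions: the definition of how a rotation system of a $2$-complex induces rotators on link graphs (take $\sigma(e)$ if $\vec{e}$ points towards the vertex, else its reverse) is set up precisely so that the two rotators at the identified vertex $e$ are reverse of one another, which is exactly the hypothesis/conclusion shape of \autoref{sum_planar1} and \autoref{sum_planar2}; the paper likewise treats this as immediate.
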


\begin{proof}
If a rotation system $\Sigma$ is planar, then by \autoref{obs1} and \autoref{sum_planar1} its induced rotation system $\Sigma_e$ is planar.
Conversely, for any planar rotation system $\Sigma'$ of $C/e$ so that $e=vw$ is not
a cutvertex of any of the two link graphs at its endvertices in $C$,
by \autoref{sum_planar2} applied to the planar rotation system $\Sigma'$ induces on the link graph $L(e)$ in $C/e$,
there are planar rotation systems on the link graphs $L(v)$ and $L(w)$ that are compatible along the edge $e$ and equal to $\Sigma'$ at other edges, and so $\Sigma'$ together with the rotator at $e$ obtained from \autoref{sum_planar2} defines a planar rotation system of $C$ inducing $\Sigma'$.
\end{proof}

\begin{lem}\label{sum_3con}
 Let $G$ be a graph that is a vertex sum of two graphs $H_1$ and $H_2$ over the common vertex $v$.
 Let $k\geq 2$.
If $H_1$ and $H_2$ are $k$-connected\footnote{Given $k\geq 2$, a graph with at least $k+1$
vertices is \emph{$k$-connected} if the removal of less than $k$ vertices does not make it
disconnected. Moreover it is not allowed to have loops and if $k>2$, then it is not allowed to
have parallel edges. Whilst $K_2$ is not 2-connected, every graph with two vertices, no loops and at least two parallel edges is 2-connected, all other graphs on less than $k+1$ vertices are not $k$-connected for every $k$.}, then so is $G$.
\end{lem}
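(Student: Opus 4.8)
The plan is to verify directly the two requirements in the definition of $k$-connectivity. The size requirement is immediate: the vertex sum $G$ has vertex set $(V(H_1)\setminus\{v\})\sqcup(V(H_2)\setminus\{v\})$, so $|V(G)|=|V(H_1)|+|V(H_2)|-2\geq 2k\geq k+1$. There are no loops in $G$ (edges inside $H_i-v$ are edges of the loopless graph $H_i$, and bridge edges join the two disjoint vertex classes), and if $k>2$ there are no parallel edges either, since parallel edges inside $H_i-v$ would be parallel in $H_i$, while two parallel bridge edges $u_1u_2$ would force parallel edges $u_iv$ in $H_i$. So the substantive claim is: for every $S\subseteq V(G)$ with $|S|<k$, the graph $G-S$ is connected.

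Write $S_i:=S\cap(V(H_i)\setminus\{v\})$ and $A_i:=(V(H_i)\setminus\{v\})\setminus S_i$, so that $V(G)\setminus S=A_1\sqcup A_2$, the subgraph of $G-S$ induced on $A_i$ equals $H_i-v-S_i$, and the only further edges of $G-S$ are the surviving bridge edges, each joining $A_1$ to $A_2$. I would first record two facts. (1) Since $|S_i|\leq|S|<k$ and $H_i$ is $k$-connected, $H_i-S_i$ is connected; as $v\in V(H_i-S_i)$, every component of $H_i-v-S_i$ contains a neighbour of $v$ in $H_i$ --- a component containing no neighbour of $v$ would be a component of the connected graph $H_i-S_i$ that avoids $v$. (2) $v$ has at least $k$ distinct neighbours in each $H_i$, for otherwise $N_{H_i}(v)$ would be a set of fewer than $k$ vertices separating $v$ from the nonempty set $V(H_i)\setminus(N_{H_i}(v)\cup\{v\})$, contradicting $k$-connectivity of $H_i$.

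Now assume, for contradiction, that $G-S$ has a separation into nonempty parts $X,Y$ with no edge between them. Suppose first that both $X$ and $Y$ meet $A_1$. Since the edges of $G-S$ inside $A_1$ are exactly the edges of $H_1-v-S_1$, the set $\{v\}\cup S_1$ separates $H_1$, so $|S_1|+1\geq k$, whence (as $|S|\leq k-1$) $|S_1|=k-1$ and $S_2=\emptyset$. Then $G[A_2]=H_2-v$ is connected, hence lies entirely in $X$ or in $Y$; the other part is a nonempty subset of $A_1$ and so contains a component $D$ of $H_1-v-S_1$, and by (1) some $w\in D$ lies in $N_{H_1}(v)\setminus S_1$. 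The edge $wv$ produces a bridge edge of $G$ from $w$ to a vertex $w'\in A_2$ (note $w'\notin S$, as $S=S_1\subseteq V(H_1)$); this bridge edge survives in $G-S$ and joins $X$ to $Y$ --- a contradiction. By the argument with $H_1$ and $H_2$ interchanged, $X$ and $Y$ cannot both meet $A_2$ either. Hence each $A_i$ lies in a single part, and as both parts are nonempty we may assume $X=A_1$ and $Y=A_2$; in particular no bridge edge of $G$ survives in $G-S$. Now ``no bridge edge survives'' says that for every edge $e=uv$ of $H_1$ either $u\in S_1$ or the $\psi$-partner $\psi(e)=u'v$ has $u'\in S_2$, so $N_{H_1}(v)\subseteq S_1\cup T$, where $T$ is the set of endpoints in $H_1$ of those edges $e$ whose partners $\psi(e)$ meet $S_2$. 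For $k\geq 3$, $H_2$ has no parallel edges, so $|T|\leq|S_2|$, giving $|N_{H_1}(v)|\leq|S_1|+|S_2|\leq k-1$ and contradicting (2). For $k=2$ we have $|S|\leq 1$, and ``no bridge edge survives'' forces $N_{H_1}(v)$ or $N_{H_2}(v)$ to consist of a single vertex, again contradicting (2).

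I expect the final case --- $X=A_1$, $Y=A_2$ --- to be the heart of the matter: it is precisely the situation in which deleting $S$ has severed every connection running through the (now deleted) gluing vertex $v$, and excluding it is exactly where one must invoke that $v$ has many neighbours in $H_1$ and in $H_2$. The only mildly delicate point is the bookkeeping with parallel edges when $k=2$, but this is handled by the very same ``$v$ has at least two neighbours'' observation and poses no real difficulty.
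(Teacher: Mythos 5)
Your proof takes a genuinely different route from the paper's. The paper reduces to the case where the set $Y$ of edges incident with $v$ contains a matching of size $k$ in $G$ (handling the remaining case in a separate Claim), then uses the pigeonhole principle to find a matching edge missed by a putative small cut and analyses how that cut meets each $H_i$. You instead run a direct case distinction on which parts of the hypothetical separation meet each $A_i=V(H_i)\setminus(\{v\}\cup S_i)$, driven by your two preliminary facts about neighbours of $v$. The bulk of your argument is sound, and arguably more transparent about exactly where the bridge edges are used.

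There is, however, a gap, and it lives precisely where the paper's Claim~\ref{sublem0099} does its work: the degenerate case $k=2$ in which some $H_i$ is a two-vertex graph with parallel edges. By the paper's footnote definition such a graph \emph{is} $2$-connected, yet your fact~(2) --- that $v$ has at least $k$ distinct neighbours in each $H_i$ --- is false for it: $v$ has exactly one neighbour. The justification you give for~(2) tacitly assumes $V(H_i)\setminus(N_{H_i}(v)\cup\{v\})$ is nonempty, which requires $|V(H_i)|\ge k+1$. Likewise your opening bound $|V(G)|=|V(H_1)|+|V(H_2)|-2\ge 2k$ uses $|V(H_i)|\ge k+1$; when both $H_i$ are two-vertex parallel graphs one gets $|V(G)|=2<k+1$, and one must instead verify directly that $G$ is itself a two-vertex graph with at least two parallel edges (it is, because the vertex sum preserves the degree of $v$). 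Consequently your final $k=2$ sub-case, which concludes ``again contradicting~(2),'' does not close as written: if the relevant $H_i$ is the two-vertex parallel graph, $N_{H_i}(v)$ being a singleton is no contradiction. The argument is repairable --- in that sub-case $S_i\ne\emptyset$ together with $A_i\ne\emptyset$ forces $|V(H_i)|\ge 3$, so fact~(2) does apply there --- but this needs to be said, and the $|V(G)|=2$ possibility needs its own sentence. The paper isolates all of this in Claim~\ref{sublem0099}; your write-up needs an analogous patch.
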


\begin{proof}
Let $Y$ be the set of edges incident with $v$ (suppressing the bijection between the edges incident
with $v$ in $H_1$ and $H_2$ in our notation). We shall also consider $Y$ as an edge set of $G$.
\begin{sublem}\label{sublem0099}
 If $Y$ does not contain a matching of size $k$, then $G$ is $k$-connected.
\end{sublem}
\begin{cproof}
As $H_1$ is $k$-connected, the set $Y$ contains at
least $k$ edges. If $k>2$, then since no $H_i$ has parallel edges, in the graph $G$ no two edges in $Y$ share a
vertex. So we may assume that $k=2$. If in $G$ the set $Y$ does not contain a matching of size two, there is a vertex $x$ covering $Y$. By symmetry assume $x\in V(H_1)$. Then all edges of $Y$ are in parallel between the vertices $x$ and $v$. Since $x$ is not a cutvertex of $H_1$, the graph $H_1$ can only have the vertices $x$ and $v$. So $G=H_2$ by the definition of vertex-sum. So $G$ is 2-connected.
\end{cproof}

By \autoref{sublem0099}, assume that $Y$ contains a set $Y'$ of $k$ edges that form a matching in the graph $G$.
 Suppose for a contradiction that there is a set of less than $k$ vertices of $G$ such that $G\sm
X$
is disconnected.
Hence by the pigeonhole principle, there is an edge $e$ in $Y'$ such that no endvertex of $e$ is in
$X$.  Let $C$ be the component of $G\sm X$
that contains $e$. Let $C'$ be a different component of $G\sm X$. Let $i$ be such that $H_i$
contains a vertex $w$ of $C'$.

In $H_i$ this vertex $w$ and an endvertex of $e$ are separated by $X+v$. As $H_i$ is $k$-connected,
we deduce that all vertices of $X$ are in $H_i$. Then the connected graph $H_{i+1}$ is a subset of
$C$. Hence the vertex $w$ and an endvertex of $e$ are separated by $X$ in $H_i$. This is a
contradiction to the assumption that $H_i$ is $k$-connected.
\end{proof}

\section{Obstructions to the existence of planar rotation systems}\label{s_constr}

In this section we prove \autoref{rot_system_exists-simplycon} below, which is used in the proof of \autoref{kura_intro}.
Intuitively, this lemma characterises simply connected locally 3-connected 2-complexes admitting planar rotation systems in terms of three obstruction; these obstructions to the existence of a planar rotation system are firstly a non-planar link graph, secondly an edge $e$ such that after contraction of $e$ the link graph at the contraction vertex is non-planar, and finally an obstruction spread around the boundary of a face, roughly speaking.
We start with some preparation. Recall that 3-connected graphs have no parallel edges.

\begin{lem}\label{locally_at_edge}
Let $C$ be a 2-complex with an edge $e$ with endvertices $v$ and $w$.
Assume that the link graphs $L(v)$ and $L(w)$ are 3-connected and that the link
graph
$L(e)$ of $C/e$ at $e$ is planar.
Then for any two planar rotation systems of $L(v)$ and $L(w)$ the rotators at $e$ are reverse of
one
another or agree.
\end{lem}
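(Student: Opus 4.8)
The statement is a uniqueness-up-to-reversal claim for the rotator at $e$ that one gets from any planar rotation system of the two 3-connected link graphs $L(v)$ and $L(w)$. The natural approach is to transfer the question from $C$ to the contracted complex $C/e$: by \autoref{obs1}, the link graph $L(e)$ of $C/e$ at $e$ is precisely the vertex sum of $L(v)$ and $L(w)$ over their common vertex $e$, where the identification $\psi$ matches up the two link-edges coming from each face traversing $e$. So we are exactly in the situation of \autoref{sum_planar2}, applied with $G=L(e)$, $H_1=L(v)$, $H_2=L(w)$ and the ``common vertex'' of that lemma being $e$.

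First I would check the hypothesis of \autoref{sum_planar2}: we need $e$ to be a non-cutvertex of $L(v)$ and of $L(w)$. This is immediate because both link graphs are assumed 3-connected, hence 2-connected, hence have no cutvertices at all. Next, since $L(e)$ is planar and (being a vertex sum of 3-connected, and in particular connected, graphs without isolated vertices) has a planar rotation system, \autoref{sum_planar2} hands us planar rotation systems of $L(v)$ and $L(w)$ whose rotators at $e$ are reverse of one another. That produces \emph{one} compatible pair. The remaining content is to argue that this pair is essentially the \emph{only} one, i.e.\ that the rotator at $e$ coming from \emph{any} planar rotation system of $L(v)$ is determined up to reversal, and likewise for $L(w)$; then any two planar rotation systems of the pair $L(v), L(w)$ have rotators at $e$ that either agree or are reverse of each other, because each is individually pinned down up to orientation.

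The key step, therefore, is the rigidity input: a 3-connected planar graph has an essentially unique embedding in the sphere, so its planar rotation system is unique up to a global reversal of all rotators (this is Whitney's theorem / Whitney uniqueness for $3$-connected planar graphs, in the rotation-system language of \cite{MoharThomassen}). Applying this to $L(v)$: if $L(v)$ is non-planar there is nothing to prove since then there is no planar rotation system and the statement is vacuous; if $L(v)$ is planar, any two planar rotation systems of $L(v)$ differ by reversing every rotator simultaneously, so in particular their rotators at $e$ are reverse or equal. The same holds for $L(w)$. Combining: given two planar rotation systems, say $(\sigma_x^{(1)})$ and $(\sigma_x^{(2)})$, of $L(v)$ (resp.\ of $L(w)$), the rotator $\sigma_e$ in the first is either equal to or the reverse of the rotator $\sigma_e$ in the second. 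Hence across any two choices of planar rotation systems of $L(v)$ and $L(w)$, the rotators at $e$ lie in a single reversal-class, which is exactly the assertion.

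\textbf{Main obstacle.} The only genuinely non-routine point is making the Whitney-type uniqueness statement apply cleanly in this setting: one must be careful that link graphs here may a priori carry parallel edges or other degeneracies, but the hypothesis that $L(v)$ and $L(w)$ are $3$-connected rules those out (as the paper notes, $3$-connected graphs have no parallel edges), so Whitney uniqueness is available verbatim; and one must phrase ``unique embedding in the sphere'' correctly in terms of rotation systems, namely unique up to the single global reversal corresponding to the two orientations of $\Sbb^2$. Everything else — identifying $L(e)$ as the vertex sum via \autoref{obs1}, checking the non-cutvertex hypothesis from $3$-connectivity, and invoking \autoref{sum_planar2} to see that a reverse-compatible pair exists — is a direct citation of results already established in the excerpt.
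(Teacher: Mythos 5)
Your proposal is correct and follows essentially the same route as the paper's proof: start from a planar rotation system of $L(e)$, invoke \autoref{sum_planar2} (with the non-cutvertex hypothesis supplied by 3-connectivity) to obtain one pair of planar rotation systems of $L(v)$ and $L(w)$ whose rotators at $e$ are reverse, and then invoke Whitney's uniqueness theorem for 3-connected planar graphs to conclude that every other planar rotation system of $L(v)$ (resp.\ $L(w)$) is a global reversal of that one, so all rotators at $e$ lie in a single reversal class.
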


\begin{proof}
Let $\Sigma=(\sigma_x|x\in (L(v)\cup L(w))-e)$ be a planar rotation system of $L(e)$. By
\autoref{sum_planar2} there is a rotator $\tau_e$ at $e$ such that $(\sigma_x|x\in L(v)-e)$
together
with $\tau_e$ is a
planar rotation system of $L(v)$ and $(\sigma_x|x\in L(w)-e)$ together with the reverse of $\tau_e$
is a planar rotation system of $L(w)$.

Recall that Whitney proved that a 3-connected planar graph admits a planar rotation system that is unique up to reversing all rotators \cite{{whitney1992congruent},{Whitney_flip}}.
Since $L(v)$ and $L(w)$ are 3-connected, their planar rotation system are unique up to reversing
and hence the lemma follows.
\end{proof}

A \emph{rotation framework} of a 2-complex $C$ is a choice of planar rotation system at each link graph of $C$ such that for every edge $e=vw$
the two rotators at $e$, considered as a vertex in the link graphs $L(v)$ and $L(w)$, are the same or reverse of one another.
Given a 2-complex $C$ with a rotation framework. We colour an edge $e$ of $C$ \emph{green} (with respect to a rotation framework) if the two rotators at $e$ are reverse; otherwise we colour it \emph{red}.
\begin{eg} Edges incident with just two faces are always coloured green.\end{eg}

A rotation framework of a 2-complex $C$ is \emph{even} if on every cycle of $C$ the number of red edges is even; otherwise it is \emph{odd}.

\begin{lem}\label{lem101}
Assume that every edge is incident with at least three faces.
 A 2-complex admits a planar rotation system if and only if it admits an even rotation framework.
\end{lem}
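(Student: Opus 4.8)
The statement is a biconditional, so I would prove the two directions separately. The forward direction—if $C$ admits a planar rotation system $\Sigma$, then it admits an even rotation framework—is essentially immediate: a planar rotation system induces, by restriction, a planar rotation system on each link graph $L(v)$ (this is built into the definition of ``planar rotation system of a 2-complex''), and by \autoref{locally_at_edge} (or directly from the compatibility of the induced rotators) the two rotators at each edge $e=vw$ agree or are reverse, so we do get a rotation framework. The point is to check it is \emph{even}. Here I would argue that since $\Sigma$ is a single global rotator choice at each edge, there is a consistent way to read off for each edge whether the induced link-rotators at its two ends are reverse (green) or equal (red); the ``red'' edges are exactly those where the edge-direction convention forces a sign flip. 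Tracing around any cycle $v_0 e_1 v_1 e_2 \cdots e_n v_0$ of $C$ and composing the identifications of link-rotators with the global $\sigma_{e_i}$, the total monodromy around the cycle must be trivial (it is the identity, since we return to the same global data), which forces the number of sign flips—i.e.\ the number of red edges on the cycle—to be even.

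**The reverse direction** is the substantive one. Suppose $C$ has an even rotation framework, i.e.\ a planar rotation system $(\rho_v)$ on each $L(v)$, with each edge coloured red/green and every cycle carrying an even number of red edges. I want to produce a global rotation system $\Sigma=(\sigma_e)$ of $C$ that is planar, meaning its restriction to each $L(v)$ is a planar rotation system of $L(v)$. The idea is: for each edge $e=vw$, I must choose a cyclic orientation $\sigma_e$ of the faces around $e$ so that, under the projection convention (``take $\sigma(e)$ if $e$ is directed towards $v$, the reverse otherwise''), $\sigma_e$ projects to exactly $\rho_v$ at the copy of $e$ in $L(v)$ and to $\rho_w$ at the copy in $L(w)$. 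Since $L(v),L(w)$ are $3$-connected (we are told every edge is incident with at least three faces, and implicitly we are in the locally $3$-connected setting where this lemma is used), the two candidate rotators $\rho_v|_e$ and $\rho_w|_e$ are each equal to $\sigma_e$ or its reverse; the colour of $e$ records whether they are reverse of each other (green) or equal (red) as cyclic orientations of the face-set, \emph{after} accounting for the direction convention. So choosing $\sigma_e$ amounts to choosing, at each edge, a ``sign'' (an orientation of that cyclic set) consistently across all link graphs.

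**Setting up the obstruction.** I would phrase this as a cohomological/connectivity argument. Build an auxiliary graph (or chain complex) whose variables are the orientation choices $s_e\in\mathbb Z/2$ at edges and whose constraints come from the fact that once we fix $\sigma_e$, the rotator it induces at $v$ is determined up to the global reversal freedom of the planar rotation system of $L(v)$ (which, by Whitney's uniqueness theorem cited in \autoref{locally_at_edge}, is a single $\mathbb Z/2$ freedom $t_v$ per vertex). The consistency requirement at an edge $e=vw$ then reads $s_e = t_v + t_w + (\text{colour of }e)$ in $\mathbb Z/2$, where the colour term is the red/green bit. So I need: given the colouring $c\colon E\to\mathbb Z/2$ with $c$ = red-indicator, find $t\colon V\to\mathbb Z/2$ and $s\colon E\to\mathbb Z/2$ satisfying $s_e = t_v+t_w+c_e$. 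But $s$ is free—any $t$ works, just set $s_e := t_v+t_w+c_e$! So naively there is no obstruction at all. The real constraint I am missing is that the \emph{rotators} $\rho_v$ were already fixed as data, not just up to reversal; flipping $t_v$ flips $\rho_v$, changing the induced rotation system at $v$. Wait—but we only need the final $\Sigma$ to be planar on each link graph, and flipping all of $L(v)$'s rotators keeps it planar (planarity is reversal-invariant). So the $t_v$ are genuinely free, and \emph{any} choice gives a planar $\Sigma$.

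**Where evenness enters.** Re-examining: the subtlety is that $\sigma_e$ is \emph{one} cyclic orientation, and it must simultaneously project correctly to $L(v)$ \emph{and} $L(w)$ under the fixed direction of $e$. If I choose $\Sigma$'s value at $L(v)$ to be $\rho_v$ flipped by $t_v$, then $\sigma_e$ is forced by the $L(v)$-side to be $(\rho_v)^{t_v}$ transported via the direction convention; the $L(w)$-side independently forces it to be $(\rho_w)^{t_w}$ transported; these must coincide. That coincidence is the equation $t_v + t_w = c_e$ in $\mathbb Z/2$ (the direction-convention signs having been folded into the definition of $c_e$ via the ``green iff reverse'' rule, with the convention already normalised so that a green edge needs $t_v=t_w$). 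This is a genuine constraint on $t$: it is solvable iff $c$ is a \textbf{coboundary}, i.e.\ iff $\sum_{e\in\gamma} c_e = 0$ for every cycle $\gamma$ of $C$—which is precisely the evenness hypothesis. So the plan is: formalise the direction-convention bookkeeping to see that ``even rotation framework'' $\Leftrightarrow$ ``$c=\delta t$ for some $t\colon V\to\mathbb Z/2$'' (standard fact: a $\mathbb Z/2$-cochain on a graph is a coboundary iff it vanishes on every cycle, by choosing $t$ via a spanning tree and checking fundamental cycles), pick such a $t$, define $\sigma_e$ accordingly, and verify via \autoref{locally_at_edge}/Whitney that $(\sigma_e)$ restricts to $(\rho_v^{t_v})$—a planar rotation system—on each link graph.

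**Main obstacle.** The conceptual content is small but the \textbf{bookkeeping of orientation conventions is the real work}: carefully tracking how the direction of $e$, the ``reverse if directed away'' rule, and the cyclic-orientation-vs-cyclic-ordering distinction interact, so that the red/green colouring genuinely corresponds to a $\mathbb Z/2$-cochain whose coboundary-ness is the evenness condition. I expect a good chunk of the proof to be devoted to fixing these conventions cleanly (perhaps isolating a sub-claim: ``for each edge $e=vw$, the set of pairs $(\text{rotator at }v,\text{rotator at }w)$ that arise from a single $\sigma_e$ is a coset, and swapping which coset-representative one picks at $v$ flips the colour of $e$''). Once that is in place, the spanning-tree argument for solving $c=\delta t$ and the appeal to Whitney uniqueness to conclude planarity are routine.
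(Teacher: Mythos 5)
Your proof is correct and is essentially the paper's argument: the $\mathbb{Z}/2$-cohomological statement ``$c$ vanishes on every cycle $\Leftrightarrow$ $c = \delta t$'' that you reduce to via spanning-tree bookkeeping is exactly what the paper does directly, by rooting a spanning tree and recursively flipping link embeddings so that all tree edges become green (evenness then forces the non-tree edges green as well). Two small remarks. First, your forward direction is more complicated than needed: under the paper's projection convention ($\sigma_e$ at the endvertex $e$ points towards, its reverse at the other), the framework induced by a planar rotation system has \emph{no} red edges at all, so evenness is immediate rather than requiring a monodromy argument. Second, you repeatedly invoke $3$-connectedness of the link graphs (``implicitly we are in the locally $3$-connected setting''), but the lemma neither assumes nor needs it: the fact that the two rotators at an edge are equal or reverse is built into the definition of a rotation framework, and the final planarity check does not need Whitney uniqueness---the induced rotation system at $L(v)$ is $\rho_v$ or its reversal by construction, and planarity is reversal-invariant. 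What \emph{is} needed from the hypothesis ``every edge has $\geq 3$ incident faces'' is that a flip at $v$ actually toggles the colour of every edge incident with $v$ (a rotator on two elements is its own reverse); your cohomological equation $t_v + t_w = c_e$ silently assumes this, so it is worth stating.
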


\begin{proof}
Given a planar rotation system of $C$, the planar rotation systems it induces at the link graphs of $C$ form a rotation framework. All edges of $C$ are coloured green, so this rotation framework is even.

Conversely, let an even rotation framework of $C$ be given. Reversing the planar rotation system at a single vertex $v$ yields a rotation framework where the colours at all edges incident with $v$ swap -- since all vertices of $L(v)$ have degree at least three by assumption.
So this new rotation framework inherits the property that it is even. We refer to this operation as a \emph{flip}.
Now assume that $C$ is connected and pick a rooted spanning tree $T$ of (the 1-skeleton of) $C$. Each vertex of $T$ aside from the root has a unique edge to it from the direction of the root.
Starting at the neighbours of the root, we recursively do flips if necessary to ensure that the unique path in $T$ from the root to any vertex contains only green edges. So $C$ has an even rotation framework so that all edges of $T$ are green. Each edge outside $T$ forms a fundamental circuit with edges of $T$, and by evenness also must be green. So $C$ has a rotation framework where all edges are green.
This defines a planar rotation system as follows: for every directed edge $\vec{e}$ of $C$ directed to $v$, denote by $\sigma_e$ the rotator in $L(v)$ at the vertex $e$. Now $(\sigma_e| e\in E(C))$ is a planar rotation system.
\end{proof}

\begin{lem}\label{lem102}\label{rot_system_exists}
A locally 3-connected 2-complex that does not admit a planar rotation system has a vertex $v$ such that its link graph $L(v)$ is non-planar, has an edge $e$ such that in $C/e$ the link graph $L(e)$ at the contraction vertex $e$ is non-planar or $C$ admits a rotation framework so that there is a cycle containing an odd number of red edges.
\end{lem}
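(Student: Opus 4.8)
The plan is to prove the contrapositive in a strengthened form: assuming $C$ is locally 3-connected and has neither of the first two obstructions (no non-planar link graph, and no edge $e$ whose contraction produces a non-planar link at the contraction vertex), I will produce a rotation framework; then if $C$ still does not admit a planar rotation system, Lemma~\ref{lem101} (in the case every edge meets at least three faces) will force the framework to be odd, i.e.\ to contain a cycle with an odd number of red edges. First I would dispose of the low-degree edges. If an edge $e$ is incident with fewer than three faces, note that deletion/contraction-type arguments or a direct check show such edges cannot obstruct planarity of a framework: an edge incident with $0$ or $1$ face imposes no rotator constraint, and an edge incident with exactly $2$ faces is always colourable green. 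The cleanest route is to reduce to the case where every edge meets at least three faces by observing that edges with at most two incident faces can be handled separately and do not affect the existence of a planar rotation system; alternatively one argues about them inside the framework directly, colouring each such edge green by fiat, which is always consistent. This lets me invoke Lemma~\ref{lem101}.

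The core step is the construction of a rotation framework, for which I would use \autoref{locally_at_edge}. Since $C$ is locally 3-connected, every link graph $L(v)$ is $3$-connected, hence by Whitney's theorem admits a planar rotation system unique up to global reversal (using the hypothesis that each $L(v)$ is planar — which we have, since the first obstruction is assumed absent). Fix one such planar rotation system $\Sigma_v$ at each vertex $v$. Now for each edge $e=vw$: the hypothesis that the second obstruction is absent says $L(e)$ in $C/e$ is planar, so \autoref{locally_at_edge} applies and tells us the rotator at $e$ coming from $\Sigma_v$ and the rotator at $e$ coming from $\Sigma_w$ either agree or are reverse of one another. That is exactly the defining condition of a rotation framework. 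So $(\Sigma_v \mid v \in V(C))$ is a rotation framework of $C$.

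Finally, suppose $C$ admits no planar rotation system. After the reduction above we may assume every edge meets at least three faces, so \autoref{lem101} applies: $C$ admits a planar rotation system if and only if it admits an even rotation framework. We have exhibited a rotation framework; if it were even, $C$ would admit a planar rotation system, contradiction. Hence the framework we built is odd, meaning there is a cycle of $C$ carrying an odd number of red edges, which is the third alternative in the statement. The one point that needs care — and which I expect to be the main obstacle in writing this cleanly — is the handling of edges incident with fewer than three faces, since \autoref{lem101} is stated only under the hypothesis that every edge meets at least three faces, while \autoref{rot_system_exists} as stated has no such restriction; I would resolve this by showing that such edges can be colored green consistently and removed from consideration without changing whether $C$ has a planar rotation system, so that the framework and the odd cycle transfer back to the original $C$.
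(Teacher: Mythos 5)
Your proposal is correct and takes essentially the same route as the paper: assume the first two obstructions are absent, use Whitney uniqueness together with \autoref{locally_at_edge} to turn a choice of planar embedding of each link graph into a rotation framework, then invoke \autoref{lem101} to conclude that the framework must be odd, yielding the required cycle with an odd number of red edges. However, the worry you flag at the end about edges incident with fewer than three faces is vacuous in the locally 3-connected setting: if $L(v)$ is 3-connected then it has minimum degree at least $3$, and the degree of an edge $e$ incident with $v$, viewed as a vertex of $L(v)$, is exactly the number of faces incident with $e$; hence every edge of a locally 3-connected 2-complex already meets at least three faces and the hypothesis of \autoref{lem101} holds automatically, making your proposed reduction step unnecessary.
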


\begin{proof}
Assume that all link graphs of $C$ at vertices are planar and that also for every edge $e$ the link graphs $L(e)$ are planar.
So for each vertex $v$ of $C$ we can pick a planar rotation system of the link graph $L(v)$. By \autoref{locally_at_edge}, this defines a rotation framework.
Since $C$ does not have a planar rotation system, by \autoref{lem101} this rotation framework must be odd.
\end{proof}

\begin{rem}
If a 2-complex satisfies one of the three outcomes of \autoref{rot_system_exists}, then it does not admit a planar rotation system. So \autoref{rot_system_exists} is a characterisation of 2-complexes admitting planar rotation systems. We will not use this remark in our proofs.
\end{rem}

The next lemma is like \autoref{rot_system_exists} under the additional assumption of simply connectedness and this makes us find the cycle with an odd number of red edges as a face boundary.

\begin{lem}\label{rot_system_exists-simplycon}
A simply connected locally 3-connected 2-complex that does not admit a planar rotation system has a vertex $v$ such that its link graph $L(v)$ is non-planar, has an edge $e$ such that in $C/e$ the link graph $L(e)$ at the contraction vertex $e$ is non-planar or $C$ admits a rotation framework so that there is a face boundary containing an odd number of red edges.
\end{lem}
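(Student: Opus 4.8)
The plan is to derive this from \autoref{rot_system_exists} by using simple connectedness to trade a ``bad cycle'' for a ``bad face boundary''. So let $C$ be simply connected, locally $3$-connected, with no planar rotation system, and apply \autoref{rot_system_exists}. If either of its first two conclusions holds we are done immediately, so we may assume that $C$ carries a rotation framework with respect to which some cycle $o$ of (the $1$-skeleton of) $C$ contains an odd number of red edges. I claim that the \emph{same} rotation framework already has a face boundary containing an odd number of red edges; no flips or other manipulations will be needed.

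To prove the claim I would work over $\Fbb_2$. Let $r$ be the $\Fbb_2$-valued $1$-cochain of $C$ that is $1$ on the red edges and $0$ on the green edges; for any closed trail $t$ of $C$ — in particular a cycle or a face boundary, which by definition traverses each of its edges exactly once — the evaluation $\langle r,t\rangle\in\Fbb_2$ is the parity of the number of red edges traversed by $t$, and $\langle r,-\rangle$ is additive on $1$-chains. By hypothesis $\langle r,o\rangle=1$. Since $C$ is simply connected, $H_1(C,\Fbb_2)=0$ (for instance because $H_1(C,\Zbb)=\pi_1(C)^{\mathrm{ab}}$ is trivial and $H_0$ is torsion-free), so every $\Fbb_2$-cycle of $C$ is an $\Fbb_2$-linear combination of boundaries of faces; write $o=\sum_{f\in S}\partial f$ in $Z_1(C,\Fbb_2)$ for a suitable set $S$ of faces. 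Then $1=\langle r,o\rangle=\sum_{f\in S}\langle r,\partial f\rangle$ in $\Fbb_2$, so $\langle r,\partial f\rangle=1$ for some $f\in S$, i.e.\ the boundary of $f$ contains an odd number of red edges, which is exactly the third outcome we want.

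The argument is short and I do not anticipate a genuine obstacle; the points needing care are purely bookkeeping: identifying a graph cycle of $C$ with an element of $Z_1(C,\Fbb_2)$ (passing from the $1$-skeleton to $C$ changes $B_1$ but not $Z_1$), checking that $\langle r,\partial f\rangle$ really records the red-edge parity along the boundary trail of $f$ given that a face traverses each incident edge exactly once, and recording that $H_1(C,\Fbb_2)=0$ under the simple-connectedness hypothesis. If one prefers to avoid invoking homology explicitly, the statement ``every cycle of $C$ is a sum mod $2$ of face boundaries'' can be used directly as the combinatorial form of $H_1(C,\Fbb_2)=0$.
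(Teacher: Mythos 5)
Your argument is essentially the paper's own proof: invoke \autoref{rot_system_exists}, note that simple connectedness makes the face boundaries generate the $\Fbb_2$-cycle space, write the odd cycle as a sum of face boundaries, and use additivity of red parity to locate an odd summand. The cochain/pairing language is just a slightly more formal dressing of the same steps.
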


\begin{proof}
 By simply connectedness the face boundaries generate all cycles over the field $\Fbb_2$. The \emph{red parity} of an edge set is the parity of its number of red edges. Red parity is \emph{additive}; that is, the red parity of the sum (of two edge sets over $\Fbb_2$) is equal to the sum of the red parities of the summands. So take a cycle contains with odd red parity and write it as a sum of face boundaries. One of these summands must have odd parity.
 So this lemma follows from \autoref{rot_system_exists}.
\end{proof}

\section{Space restrictions}\label{restricted_section}

In this section we define space restrictions and prove \autoref{kura_intro_hom}.

Let $C$ be a 2-complex.
Given a face $f$ of $C$, we denote by $C-f$ the 2-complex obtained from $C$ by deleting the face $f$.
Given an edge $e$ of $C$, the 2-complex\footnote{Recall that the boundary of a 2-complex is a closed trail and as such contains each edge at most once.} obtained from $C$ by \emph{(topologically) deleting the edge
$e$} is obtained from $C$ by replacing $e$ by parallel edges, one for each face incident with $e$ in such a way that
the incidence of $e$ with a face is inherited by the copy corresponding to that face; for an example see the deletion of the edge $g$ in
\autoref{fig:space_minor2}).

\begin{rem}
 We think about \lq {deleting an edge}\rq\ topologically; that is, in the geometric realisation of a simplicial complex we delete the points on that edge. The resulting topological space would not be a simplicial complex as the faces incident with the deleted edge would no longer be bounded by cycles. Thus we add a private edge to each such face to repair this defect.
\end{rem}

Given a vertex $v$, the \emph{splitting} of $C$ at $v$ is obtained by replacing $v$ by one copy for each component of the link graph $L(v)$ such that each incidence of $v$ is inherited by the copy whose link graph contains the face or edge as an edge or vertex, respectively. We refer to these copies of $v$ as \emph{splitters}.

A \emph{space restriction} of $C$ is a 2-complex obtained from $C$ by deleting faces, edges and isolated\footnote{A vertex is \emph{isolated} if it has no incidences.} vertices and splitting vertices in an arbitrary order.

\begin{rem}
 We regard deleting an isolated vertex as a special case of splitting that vertex. Deleting an isolated\footnote{An edge is \emph{isolated} if it is not incident with a face.} edge makes this edge disappear.
 Deleting edges and faces commutes.

 In the context of locally connected 2-complexes, it is natural to require that after deletion of faces or edges one always splits all vertices, then these enhanced operations still commute, and there would be only two space restriction operations.
\end{rem}

\begin{obs}
 Space restrictions of 2-complexes embeddable in 3-space are embeddable in 3-space.\qed
\end{obs}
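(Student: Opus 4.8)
\textit{Proof proposal.}
The plan is to reduce to the four elementary operations and to treat each separately, working with PL embeddings in $\Sbb^3$ (this is harmless by the equivalence of topological, PL and smooth embeddings for $2$-complexes cited in the introduction). So let $C$ be PL-embedded in $\Sbb^3$; it suffices to show that each of the following yields a $2$-complex that is again PL-embeddable: deleting a face, topologically deleting an edge, deleting an isolated vertex, and splitting a vertex. Since every space restriction is a finite composition of such moves, iterating then gives the claim.

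Face deletion and isolated-vertex deletion are immediate: in both cases the new complex is (the geometric realisation of) a subcomplex of a subdivision of $C$ — for a face we remove the open $2$-cell, for an isolated vertex we remove the point — so the embedding of $C$ restricts. The content is therefore in the two remaining moves, which are purely local. For topological deletion of an edge $e$, take a regular neighbourhood $N$ of $e$ in $\Sbb^3$; then $(N, N\cap C)$ is PL-homeomorphic to a "book" $D^2\times I$ whose pages are the half-discs of the faces incident with $e$, all meeting along the spine $e=\{0\}\times I$. Inside $D^2\times I$ one may isotope the pages, keeping their outer boundaries fixed, so that the $i$\supth{} page runs from a segment $\{p_i\}\times I$ with the $p_i\in D^2$ pairwise distinct and distinct from $0$; the segments $\{p_i\}\times I$ are exactly the private edges added in the definition of topological edge deletion, and the modified complex is realised inside $N$ while $C$ is unchanged outside $N$. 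Hence $C$ with $e$ deleted embeds in $\Sbb^3$.

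For the splitting of a vertex $v$, take a regular neighbourhood $B$ of $v$, a PL $3$-ball, so that $B\cap C$ is the cone from $v$ over a copy of $L(v)$ drawn on $\partial B$ (edges of $C$ at $v$ becoming points of $\partial B$, faces at $v$ becoming arcs). The components of $L(v)$ occupy pairwise disjoint regions of $\partial B$, hence their cones from $v$ meet only at $v$; choosing pairwise disjoint sub-balls $B_1,\dots,B_k\subseteq B$ and coning the $j$\supth{} component from the centre $v_j$ of $B_j$ instead of from $v$ gives a PL embedding of the split complex inside $B$, agreeing with $C$ outside $B$. Thus splitting preserves embeddability, completing all four cases.

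The only genuinely delicate point is the local normal-form description of $(N,N\cap C)$ near an edge and of $(B,B\cap C)$ near a vertex — i.e. that regular neighbourhoods of simplices of the embedded $2$-complex in $\Sbb^3$ have the stated "book" and "cone" structure, with enough room in the third dimension to perform the separating isotopies. This is standard PL regular-neighbourhood theory, and it is the step one should state carefully (or cite) rather than the combinatorial bookkeeping, which is routine. \easy
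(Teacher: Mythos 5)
Your proof is correct, and it fleshes out exactly the argument the paper has in mind: the observation is stated with a $\square$ and no proof, but the later \autoref{rem:obv} explicitly describes this "remove a regular neighbourhood of the region where the operation takes place, do the operation inside it, and glue back" strategy as the topological way to see that such operations preserve embeddability. The one place I would tighten your write-up is the edge case: the private copies of a topologically deleted edge still share the original endpoints of $e$, so the separated segments $\{p_i\}\times I$ in your book model must taper back to a common point at each end of the spine (equivalently, work in a regular neighbourhood of the interior of $e$ and extend by the identity near the endpoints); the same remark handles loops, where the book is $D^2\times \Sbb^1$ rather than $D^2\times I$.
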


The next lemma gives a condition when topologically deleting edges can be combined with splitting vertices so that no parallel edges remain.

\begin{lem}\label{split-edge_remain_simplicial}
 Let $C$ be a simplicial complex and $W\se V(C)$ a vertex set. Let $X$ be the set of those vertices of $V(C)\sm W$ not contained in a face containing two vertices of $W$.
 Let $C'$ be the 2-complex obtained from $C$ by topologically deleting all edges with an endvertex in $X$ and no endvertex in $W$ and then splitting all vertices in $X$.

 Then $C'$ is a simplicial complex and the link graph at splitters $x'$ of vertices in $X$ is a nontrivial\footnote{A star is \emph{nontrivial} if it contains at least one edge.} star.
 Moreover, every splitter $x'$ that is in $N(W)$ sends a unique edge to $W$ and all other edges incident with $x'$ are incident with a single face.
\end{lem}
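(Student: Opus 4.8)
The plan is to unwind the definitions of topological edge deletion and vertex splitting and check the three assertions directly, vertex by vertex. First I would set up notation: write $C=(V,E,F)$, let $Y$ be the set of edges with an endvertex in $X$ and no endvertex in $W$, and let $C'$ be the result of topologically deleting every edge of $Y$ and then splitting every vertex of $X$. The key local observation is that in $C'$ the link graph $L(x')$ at a splitter of a vertex $x\in X$ is, by the definition of splitting, one connected component of the link graph of $x$ in the 2-complex $C'':=C$ with all edges of $Y$ topologically deleted. So the heart of the argument is to understand $L''(x)$, the link graph of $x$ after the topological edge deletions but before splitting.

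Next I would analyse $L''(x)$ for $x\in X$. In $C$, since $C$ is a simplicial complex, every edge incident with $x$ goes to some neighbour; by the definition of $X$, the vertex $x$ lies in no face containing two vertices of $W$, so every face at $x$ meets $W$ in at most one vertex. Split the edges at $x$ into those going to $W$ (there is at most one such, because two edges $xw_1,xw_2$ with $w_1,w_2\in W$ would, together with the edge $w_1w_2$ — which exists and spans a face since... — actually more carefully: if $x$ had two neighbours in $W$ then some face could contain two vertices of $W$; I would verify from the simplicial-complex structure and the hypothesis defining $X$ that $x$ has at most one neighbour in $W$, call it $w$ if it exists) and the rest, all of which lie in $X\cup(V\sm(W\cup X))$ and hence are edges of $Y$, so they get topologically deleted. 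After deleting those edges, each face $f$ at $x$ that used a deleted edge $xz$ acquires a fresh private copy of that edge, incident only with $f$; and each face at $x$ uses at most one edge to $w$ (if $w$ exists) because it meets $W$ in at most one vertex, hence uses at least one deleted edge. Consequently, in $L''(x)$ every face-edge is incident with the vertex "$xw$" if it is the face's edge to $w$, and otherwise with a private (degree-one) vertex; I would conclude $L''(x)$ is a star centred at the vertex $xw$ (or, if no neighbour in $W$, a disjoint union of single edges, each its own component). This shows the link graph at each splitter $x'$ is a single edge or a nontrivial star with centre the edge to $W$.

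Then the three conclusions fall out. For "$C'$ is a simplicial complex": a 2-complex is simplicial iff no face is a loop or a digon, no two faces have the same boundary, and every link graph is simple (no loops, no parallel edges) — equivalently the link graphs are simple graphs. The link graphs at vertices of $W$ and at the untouched vertices of $V\sm(W\cup X)$ are unchanged from $C$, hence still simple; the link graphs at splitters of vertices of $X$ are stars by the previous step, hence simple; and topological deletion followed by splitting cannot create a repeated face boundary or a short face because the private edges are all distinct. So $C'$ is simplicial. For the "moreover" clause: a splitter $x'$ lies in $N(W)$ exactly when its component of $L''(x)$ contains the vertex $xw$, i.e. the edge from $x$ to its unique neighbour $w\in W$; that component is the whole star centred at $xw$, so $x'$ sends exactly the one edge $x'w$ to $W$, and every other edge at $x'$ corresponds to a leaf of the star, i.e. a private edge incident with a single face. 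This is precisely the assertion.

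\textbf{Main obstacle.} The one genuinely delicate point is the claim that each $x\in X$ has at most one neighbour in $W$ and, more importantly, that every face at $x$ uses at least one edge of $Y$ (so that topological deletion really does detach that face from $x$'s other edges in the link graph). This is where the exact hypothesis defining $X$ — no face at $x$ contains two vertices of $W$ — gets used: a face at $x$ is a triangle (since $C$ is a simplicial complex) on vertices $\{x,a,b\}$; it meets $W$ in at most one vertex, so at least one of $a,b$ lies outside $W$, giving an edge from $x$ to a vertex outside $W$, and since $x\notin W$ that edge lies in $Y$ and is deleted. I would make sure to phrase this triangle-counting carefully, as everything else is bookkeeping with the definitions of topological deletion and splitting from the preceding subsection.
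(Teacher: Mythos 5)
Your plan correctly identifies the local picture — that one should understand the link graph at $x\in X$ after the topological deletions and before splitting, and that its components become the links of the splitters — and correctly observes that the hypothesis defining $X$ forces every face at $x$ to use at least one deleted edge. But there are two problems, one of which is the central point of the lemma.

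The genuine gap is the ``no parallel edges'' step. When you topologically delete an edge $e=xy$, you replace $e$ by one copy for each incident face; before any splitting, all of these copies are \emph{parallel} edges between $x$ and $y$. To conclude that $C'$ is a simplicial complex you must show that these copies get pulled apart by the splitting, i.e.\ that no two copies of the same deleted edge land at the same splitter of $x$. This is exactly what the paper proves, and the ingredient it needs is that $C$ is simplicial, so that two distinct faces through $e$ have distinct third vertices; from this one sees that the copies of $e$ lie in distinct components of $L(x)$ after deletion and hence at distinct splitters. Your proposal never mentions this, and the criterion you use instead --- ``a 2-complex is simplicial iff \dots\ every link graph is simple'' --- is not a correct characterisation: parallel edges in the 1-skeleton are invisible to the link graphs (if $e_1,e_2$ are parallel between $u$ and $v$, then $e_1$ and $e_2$ are merely two non-adjacent vertices of $L(u)$, which can perfectly well be simple). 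So the reasoning ``the splitter links are stars, hence simple, hence $C'$ is simplicial'' does not go through, and the key case it was supposed to handle is precisely the one where topological deletion creates parallel copies.

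A secondary error is the assertion that $x\in X$ has at most one neighbour in $W$. The hypothesis says no \emph{face} at $x$ contains two vertices of $W$; it does not forbid two edges $xw_1,xw_2$ with $w_1,w_2\in W$, provided no single face contains $\{x,w_1,w_2\}$. You flag the worry yourself but then resolve it in the wrong direction. The lemma's conclusions remain true in that case — the edges $xw_1$ and $xw_2$ simply lie in different components of the deleted link, hence at different splitters, so each splitter in $N(W)$ still sends exactly one edge to $W$ — but you need to argue component-by-component rather than asserting $L''(x)$ is a single star centred at a unique $xw$.
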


\begin{proof}
 Let $x$ be a vertex of $X$. The link graph at $x$ after topologically deleting the edges as described (but not yet split at $x$), is a disjoint union of stars, and all those stars that are not single edges are centred at a vertex whose corresponding edge has an endvertex in $W$.
 Recall that since $C$ is simplicial, no two of its faces share two edges. Hence for each edge incident with $x$ that we topologically delete, all its copies correspond to vertices in $L(x)$ that are in different components.

 Thus $C'$ has no parallel edges and thus is a simplicial complex. Above we have shown that the link graphs at splitters are as desired.
\end{proof}

\subsection{Cones}

The goal of this subsection is to find in a simplicial complex with a non-planar link graph a cone over a subdivision of $K_5$ or $K_{3,3}$ as a space restriction, compare \autoref{cone-final}.
Given a graph $G$ without loops, the \emph{cone} over $G$ is the 2-complex whose vertex set is $V(G)$ together with one additional vertex, to which we refer to as the \emph{top}. Its edge set is $E(G)$ together with one edge for every vertex $v$ of $G$ joining $v$ with the top. Its set of faces consists of one face for every edge $e$ of $G$ that is incident $e$ and the two edges from the endvertices of $e$ to the top. Note that if $G$ has no parallel edges, then its cone is a simplicial complex.

\begin{lem}\label{is:cone}
 Let $D$ be a simplicial complex with a vertex $v$. Assume that each edge not incident with $v$ is in a single face, and that this face contains $v$, and that the link graph at every vertex except for $v$ is a star whose centre corresponds to an edge from that vertex to $v$.
 Then $D$ is equal to the cone over the link graph at $v$.
\end{lem}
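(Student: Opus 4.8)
The claim is that a simplicial complex $D$ with a distinguished vertex $v$, in which (i) every edge not through $v$ lies in exactly one face and that face contains $v$, and (ii) for every vertex $w \neq v$ the link graph $L(w)$ is a star centred at the edge $wv$, must literally be the cone over $L(v)$. The strategy is to construct an isomorphism between $D$ and $\operatorname{cone}(L(v))$ by matching up vertices, edges and faces, and then check it preserves all incidences.

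\textbf{First steps: the underlying graph.} First I would analyse the $1$-skeleton of $D$. Identify the vertices: $D$ has $v$ together with the vertices $w \neq v$, and since $L(v)$ has one vertex per edge of $D$ at $v$, i.e.\ per vertex $w$ adjacent to $v$, I must first argue \emph{every} vertex $w \neq v$ is adjacent to $v$. This follows from hypothesis (ii): $L(w)$ is a nontrivial star (it has at least the centre as a vertex), and its centre corresponds to an edge from $w$ to $v$, so such an edge exists. Hence the vertex set of $D$ is $\{v\} \cup V(L(v))$, matching the cone. Next, the edges: each edge of $D$ is either an edge $wv$ at the top (these biject with vertices of $L(v)$, as required for a cone) or an edge $e = w_1 w_2$ with $w_1, w_2 \neq v$; I need such $e$ to correspond to an edge of $L(v)$. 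By hypothesis (i), $e$ lies in a unique face $f_e$ containing $v$; since $D$ is simplicial, $f_e$ is a triangle on $\{v, w_1, w_2\}$, so $f_e$ traverses $e$ together with $w_1 v$ and $w_2 v$, and thus $e$ contributes an edge (namely $f_e$, read in $L(v)$ as joining the vertices $w_1 v$ and $w_2 v$) to $L(v)$. Conversely every edge of $L(v)$ is a face of $D$ incident with $v$ and two edges $w_1 v$, $w_2 v$, whose third edge $w_1 w_2$ is then an edge of $D$ not through $v$. This gives a bijection between edges of $D$ not at $v$ and edges of $L(v)$, completing the match on $1$-skeletons.

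\textbf{Faces.} Now match the faces. A face of $\operatorname{cone}(L(v))$ is, by definition, a triangle $v w_1 w_2$ for each edge $w_1 w_2$ of $L(v)$. I have to show the faces of $D$ are exactly these. Each such triangle is a face of $D$: it is the face $f_e$ from hypothesis (i) associated to $e = w_1 w_2$. Conversely, let $g$ be any face of $D$. If $g$ does not contain $v$, then every edge of $g$ is an edge not through $v$, and by hypothesis (i) each such edge lies in a \emph{unique} face — which must contain $v$ — contradiction; so every face of $D$ contains $v$. A face $g$ containing $v$ traverses two edges at $v$, say $w_1 v$ and $w_2 v$ (distinct since $D$ is simplicial and $g$ is a closed trail), and is a triangle, hence equals $v w_1 w_2$ with $w_1 w_2 \in E(D)$; and $w_1 w_2$ is then an edge of $L(v)$ by the correspondence above. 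Thus faces of $D$ are precisely the cone faces. Assembling the three bijections (on vertices, edges, faces) and noting they are mutually compatible with the incidence relations — a routine check — yields $D = \operatorname{cone}(L(v))$.

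\textbf{Main obstacle.} The only real subtlety is bookkeeping: making sure the bijection on edges of $L(v)$ (which are faces of $D$ incident with $v$) is consistent with the bijection on faces of $D$, and that no face or edge is counted twice — this is where the simpliciality of $D$ (no two faces sharing two edges, every face a genuine triangle) and the \emph{uniqueness} clause in hypothesis (i) do the work. Hypothesis (ii) is needed to rule out "extra" components or isolated pieces not reachable from $v$, i.e.\ to guarantee surjectivity of the vertex bijection; without it one could have disjoint simplicial pieces. So the proof is essentially a careful unwinding of definitions, with the star hypothesis and the "unique face through $v$" hypothesis jointly forcing the rigid cone structure.
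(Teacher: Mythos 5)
Your proof is correct and takes the same approach as the paper, whose own proof is simply the one-line assertion that $D$ and $\operatorname{cone}(L(v))$ have the same vertices, edges and faces; you have spelled out exactly the bijections and incidence checks that assertion compresses. (One tiny terminological slip: having the centre as a vertex makes a star, but in the paper's usage it is \emph{nontrivial} only if it has an edge; this does not affect your argument, which only needs the centre, i.e.\ the edge $wv$, to exist.)
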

\begin{proof}
 $D$ has the same vertex set as the cone over $L(v)$, and the same sets of edges and faces.
\end{proof}

\begin{lem}\label{get-cone}
 Let $C$ be a simplicial complex with a vertex $v$.
 Then $C$ has a space restriction that is the cone over $L(v)$.
\end{lem}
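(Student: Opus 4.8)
The plan is to cut $C$ down to the closed star of $v$ --- the subcomplex consisting of the faces, edges and vertices that meet $v$ --- and then recognise this subcomplex as the cone over $L(v)$ by appealing to \autoref{is:cone}. Concretely, I would first delete every face of $C$ that is not incident with $v$; this is a legitimate space restriction step and it changes neither $L(v)$ (whose edges are precisely the faces at $v$) nor any edge or vertex of $C$. In the resulting simplicial complex $C_0$, an edge $ab$ not incident with $v$ lies in a face of $C_0$ if and only if $vab$ was a face of $C$, and then $vab$ is the unique face of $C_0$ containing it, since $C$ is simplicial. I would then topologically delete every edge not incident with $v$ that lies in no face of $C_0$; being isolated, such edges simply vanish. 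Finally I would delete all remaining isolated vertices other than $v$ itself, and call the outcome $C_1$. Then $C_1$ is a space restriction of $C$, and since only faces, isolated edges and isolated vertices were removed, it is again a simplicial complex.

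It then remains to check that $C_1$ satisfies the hypotheses of \autoref{is:cone} for the vertex $v$, whence $C_1$ equals the cone over its link graph at $v$, which is still $L(v)$ because nothing incident with $v$ was deleted. The points to verify: (i) every surviving vertex $u\neq v$ is a neighbour of $v$ in $C$ --- a vertex outside $N(v)$ lies in no face $vab$, hence in no face of $C_0$, so all its incident edges became isolated and were deleted, after which it too was deleted; moreover the edge $uv$ survives for every remaining $u$, since edges incident with $v$ are never among the deleted edges, even when they lie in no face, so each such $u$ still sends an edge to $v$; (ii) every edge of $C_1$ not incident with $v$ has the form $ab$ with $vab\in F(C)$, and $vab$ is the unique face of $C_1$ containing it and contains $v$; (iii) for $u\neq v$ the link graph $L_{C_1}(u)$ has vertex set $\{uv\}\cup\{uw : vuw\in F(C)\}$ with exactly one edge, namely $vuw$, joining $uv$ to each $uw$, so it is a star whose centre $uv$ corresponds to an edge from $u$ to $v$. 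All three are immediate from the construction.

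I do not expect a substantive obstacle: the content lies entirely in choosing the right sequence of deletions and matching it against the definition of a cone. The one delicate point is the clean-up bookkeeping behind (i): the isolated-edge deletions must be restricted to edges avoiding $v$, since the edges $uv$ have to survive to become the edges from the apex of the cone, and one must confirm that every vertex other than $v$ and its neighbours really is stripped to an isolated vertex and removed, so that \autoref{is:cone} genuinely applies. A minor secondary point is that topological edge deletion could in principle create parallel edges and destroy simpliciality, but this does not occur here because we only ever topologically delete isolated edges, which disappear.
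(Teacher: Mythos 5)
Your proof is correct and follows essentially the same route as the paper's: delete the faces not incident with $v$, strip away the resulting isolated edges and vertices, and then recognise the closed star of $v$ as the cone over $L(v)$ via \autoref{is:cone}. The only difference is that the paper routes the clean-up through \autoref{split-edge_remain_simplicial} with $W=\{v\}$ (a lemma also reused later for combined cones), while you do the bookkeeping directly; in this special case the two coincide, since with every surviving face containing $v$ the paper's topological deletions of non-isolated edges and its vertex splits all reduce to the identity, leaving exactly the isolated-edge and isolated-vertex removals that you carry out.
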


\begin{proof}
 First delete all faces that do not contain the vertex $v$.
 Then apply \autoref{split-edge_remain_simplicial} with $W=\{v\}$ to the resulting simplicial complex. We conclude by \autoref{is:cone}.
\end{proof}

\begin{obs}\label{del-cone}
 Let $C$ be a cone over the graph $G$.
 For every edge $f$ of $G$, the cone over $G-f$ is a space restriction of $C$.
\end{obs}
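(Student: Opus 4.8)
The plan is to exhibit the cone over $G-f$ directly as the result of two deletions applied to $C$. Write $t$ for the top of $C$ and, for each edge $e$ of $G$, let $F_e$ denote the face of $C$ associated with $e$; that is, $F_e$ is the triangle bounded by $e$ together with the two edges joining the endvertices of $e$ to $t$. By the definition of the cone, the edge $f$ of $G$, regarded as an edge of $C$, lies on the boundary of exactly one face of $C$, namely $F_f$: every other face $F_e$ is bounded by $e\neq f$ and two cone edges, none of which is $f$.

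First I would delete the face $F_f$ from $C$, obtaining a 2-complex $C_1$. In $C_1$ the edge $f$ is incident with no face, i.e.\ it is an isolated edge in the sense of the remark following the definition of space restriction; topologically deleting such an edge simply removes it. Let $C_2$ be obtained from $C_1$ by deleting $f$. Deleting a face and then deleting an edge are both among the operations permitted in forming a space restriction, so $C_2$ is a space restriction of $C$.

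It then remains to identify $C_2$ with the cone over $G-f$. Deleting an edge from a graph does not change its vertex set, so $V(G-f)=V(G)$ and hence the cone over $G-f$ has vertex set $V(G)\cup\{t\}$, which is exactly the vertex set of $C_2$. The edges of $C_2$ are the cone edges $vt$ for $v\in V(G)$ together with the edges of $G$ other than $f$; this matches $E(G-f)$ together with one cone edge per vertex of $G-f$, the edge set of the cone over $G-f$. Finally the faces of $C_2$ are precisely the $F_e$ with $e\in E(G)\setminus\{f\}=E(G-f)$, which is exactly the face set of the cone over $G-f$. Hence $C_2$ is the cone over $G-f$, as claimed.

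I do not expect any real obstacle. The one point that needs a moment's care is the observation that $f$ is incident with a unique face of $C$, so that after removing that face the edge $f$ becomes isolated and may be deleted without creating parallel edges or otherwise altering the structure; this is immediate from the description of the faces of a cone.
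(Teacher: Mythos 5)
Your proof is correct and follows the same idea as the paper's: the paper's proof is the one-liner ``Delete the face corresponding to the edge $f$.'' You have fleshed this out by also explicitly deleting the now-isolated edge $f$ (which, depending on one's reading of the paper's face-deletion convention, either is implicit in the face deletion or is a needed second step that the paper leaves tacit) and by verifying the resulting vertex, edge and face sets match the cone over $G-f$; all of these checks are accurate and harmless.
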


\begin{proof}
 Delete the face corresponding to the edge $f$.
\end{proof}

\begin{cor}\label{expli-cone}
 Let $C$ be a cone over a non-planar graph $G$.
 Then $C$ has a space restriction that is a cone over a subdivision of $K_5$ or $K_{3,3}$.
\end{cor}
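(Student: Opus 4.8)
The plan is to combine the Kuratowski theorem for graphs with the two observations \autoref{get-cone} and \autoref{del-cone} that are already available. Since $G$ is non-planar, by Kuratowski's theorem $G$ contains a subgraph $H$ that is a subdivision of $K_5$ or of $K_{3,3}$. Write $H$ as a subgraph obtained from $G$ by repeatedly deleting edges (and then ignoring any resulting isolated vertices, which play no role in the cone construction since the cone over an isolated vertex contributes only a pendant edge to the top which can itself be removed — but in fact it is cleaner to keep those vertices and note that the cone over a graph with isolated vertices still has the cone over the subdivision of $K_5$ or $K_{3,3}$ as a space restriction once we also delete the pendant faces, see below).

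First I would fix a Kuratowski subgraph $H \subseteq G$; so there is a sequence of edges $f_1, \dots, f_k$ of $G$ whose deletion from $G$ yields $H$ (up to isolated vertices). Applying \autoref{del-cone} repeatedly, the cone over $G - f_1 - \cdots - f_k = H$ is a space restriction of the cone over $G$ (each step deletes one face, namely the face of the cone corresponding to $f_i$, and space restrictions compose). So it suffices to pass from the cone over $H$ to a cone over a subdivision of $K_5$ or $K_{3,3}$ — but $H$ \emph{is} already a subdivision of $K_5$ or $K_{3,3}$, so the cone over $H$ is exactly what we want, and we are done, modulo the bookkeeping about vertices of $G$ that became isolated in $H$. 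For those, note that in the cone over $G$ such a vertex $v'$ is joined only to the top by an edge lying in no face (once the faces through its former $G$-edges are deleted); deleting this isolated edge removes $v'$ (this is another space restriction, by the definition of space restriction, which permits deleting isolated edges and isolated vertices). Hence the cone over $H$ viewed as a subdivision of $K_5$ or $K_{3,3}$ on its non-isolated vertex set is a space restriction of the cone over $G$.

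Concretely I would phrase the proof as: by Kuratowski's theorem $G$ has a subgraph $H$ that is a subdivision of $K_5$ or $K_{3,3}$; let $f_1,\dots,f_k$ enumerate the edges of $G$ not in $H$; by $k$ applications of \autoref{del-cone} the cone over $H' := G-f_1-\cdots-f_k$ is a space restriction of $C$; finally delete all now-isolated edges and isolated vertices of this cone to obtain the cone over the subdivision $H$ of $K_5$ or $K_{3,3}$, which is therefore a space restriction of $C$ since space restrictions compose. The only mildly delicate point — and the place I would be most careful — is the treatment of vertices of $G$ that do not survive in $H$: one must check that deleting their cone-edges (which are isolated edges after the relevant faces are gone) is legitimate as a space restriction and that what remains is genuinely the cone over $H$ with $H$ having no isolated vertices, i.e. a cone over a subdivision of $K_5$ or $K_{3,3}$ in the sense defined just before \autoref{is:cone}. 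Everything else is immediate from the composability of space restrictions and the fact, recorded in \autoref{del-cone}, that deleting a single face of a cone over a graph yields the cone over the graph with the corresponding edge removed.
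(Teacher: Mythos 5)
Your proof is correct and follows the paper's own argument: apply Kuratowski's theorem to find a subdivision $H \subseteq G$ of $K_5$ or $K_{3,3}$, then delete the cone-faces corresponding to $E(G)\setminus E(H)$ via repeated use of \autoref{del-cone}. The bookkeeping you add about vertices of $G$ that become isolated in $H$ -- deleting their pendant cone-edges as isolated edges and then the resulting isolated vertices -- is exactly the step the paper leaves implicit, and it is handled correctly since both operations are permitted by the definition of space restriction.
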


\begin{proof}
 By Kuratowski's theorem $G$ has a subgraph that is a subdivision of $K_5$ or $K_{3,3}$.
 Now apply \autoref{del-cone}.
\end{proof}

\begin{lem}\label{cone-final}
  Let $C$ be a simplicial complex with a vertex $v$ whose link graph is non-planar.
 Then $C$ has a space restriction that is the cone over a subdivision of $K_5$ or $K_{3,3}$.
\end{lem}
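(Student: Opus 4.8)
The plan is to combine the two tools just developed: Lemma \ref{get-cone}, which produces the cone over a link graph as a space restriction, and Corollary \ref{expli-cone}, which extracts from a non-planar cone a cone over a Kuratowski subdivision. Concretely, first I would apply Lemma \ref{get-cone} to $C$ at the vertex $v$: this yields a space restriction $C'$ of $C$ that is exactly the cone over $L(v)$. By hypothesis $L(v)$ is non-planar, so $C'$ is a cone over a non-planar graph. Then I would apply Corollary \ref{expli-cone} to $C'$, obtaining a space restriction of $C'$ that is a cone over a subdivision of $K_5$ or $K_{3,3}$.

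The only remaining point is to observe that a space restriction of a space restriction is a space restriction; that is, the space restriction relation is transitive. This is immediate from the definition, since a space restriction is obtained by a finite sequence of face deletions, (topological) edge deletions, isolated-vertex deletions, and vertex splittings, and concatenating two such sequences gives another. Hence the space restriction of $C'$ produced by Corollary \ref{expli-cone} is also a space restriction of $C$, which is the desired conclusion.

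I do not expect any real obstacle here: the lemma is essentially the composition of Lemma \ref{get-cone} and Corollary \ref{expli-cone}, and the proof is a two-line invocation together with the transitivity remark. If one wanted to be slightly more careful, one could note that Lemma \ref{get-cone} already guarantees the intermediate object is a genuine simplicial complex (indeed a cone, hence simplicial when $L(v)$ has no parallel edges, which holds as $C$ is simplicial), so that Corollary \ref{expli-cone} applies without any hygiene issues about parallel edges.
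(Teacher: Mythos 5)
Your proposal is correct and is essentially identical to the paper's proof, which simply says ``Combine \autoref{get-cone} with \autoref{expli-cone}.'' Your additional remark about transitivity of the space restriction relation is a reasonable bit of hygiene that the paper leaves implicit.
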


\begin{proof}
 Combine \autoref{get-cone} with \autoref{expli-cone}.
\end{proof}

\subsection{Combined cones}

This subsection is structured similar to the previous subsection with a few bells and whistles attached, and its goal is to find in every simplicial complex $C$ with an edge $e$ so that in $C/e$ the link graph $L(e)$ at the contraction vertex $e$ is non-planar, a space restriction similar to that of a cone over $K_5$ or $K_{3,3}$. To formalise this we introduce \lq combined cones\rq. This subsection culminates in \autoref{combined-cone-summary} below.

Given two 2-complexes $C$ and $D$ together with a bijection $f$ between two subsets of their vertices, the 2-complex obtained from $C$ by \emph{simplicial gluing} $D$ via $f$ is obtained from the disjoint union of $C$ and $D$ by identifying vertices via the bijection $f$, as well as edges and faces all of whose vertices are in the image of $f$ and that after the identification of the vertices would have the same vertex-sets.  Note that gluings of simplicial complexes are simplicial complexes.
Given a vertex-sum $G=G_1\oplus_e G_2$, the \emph{simplicial combined cone} over this vertex-sum is obtained from the cone over $G_1$ by gluing the cone over $G_2$ via the vertex set consisting of the vertices in faces that are incident with the edge $e$; here we denote the unique vertex in a face $f$ not incident with $e$ by $v_f$. The gluing bijection identifies the vertices $v_f$ from both cones and identifies the endvertices of $e$ in such a way that the endvertex of $e$ that is a top in one cone is identified with the endvertex of $e$ from the other cone that is not the top there.

\begin{eg}\label{eg:alternative-construction}
 Here we provide an alternative construction of the simplicial cone over $G=G_1\oplus_e G_2$. Let $b$ denote the cut in $G$ that separates the vertex sets of $G_1$ and $G_2$.
 Let the vertex set of $C$ consist of the endvertices of $e$ together with the vertex set of $G/b$; that is, the graph obtained from $G$ by contracting the edges of $b$.
 The edge set of $C$ consists of $e$ together with an edge for every vertex of $G$ from its contraction vertex in $G/b$ to the endvertex of $e$ corresponding to the graph $G_i$ that contains that vertex. We also add all edges of the graph $G/b$. We add a face for every edge of $G$. If the edge is in $b$, its boundary consists of the edge $e$ and the two edges to the contraction vertex of $G/b$ of that edge. If the edge is not in $b$, it is an edge of $G/b$ and the boundary of the face is that edge together with the two edges to the graph $G_i$ that are incident with that edge.

 We remark that the construction of $C$ only relies on the graph $G$ and the cut $b$ but not on the gluing isomorphism. Since this is an alternative definition of the simplicial combined cone, it follows that two simplicial combined cones are isomorphic if they have the same graph $G$ and within the same cut $b$.
\end{eg}

\begin{lem}\label{is:cone-combined}
 Let $D$ be a simplicial complex with an edge $e=vw$ so that all its faces and edges contain $v$ or $w$ and all its vertices are adjacent to $v$ or $w$, and if they are adjacent to both, then they are in common face with both of them.
  Assume that for each $u\in \{v,w\}$ the simplicial complex $D$ restricted to the vertex set consisting of $u$ and its neighbours and the faces and edges containing containing $u$ is equal to the cone over $L(u)$.

 Then $D$ is the simplicial combined cone over $L(v)\oplus_e L(w)$.
 \end{lem}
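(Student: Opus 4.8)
The plan is to verify that $D$ and the simplicial combined cone over $L(v)\oplus_e L(w)$ have the same vertices, edges and faces, using the alternative description of the combined cone from \autoref{eg:alternative-construction}. First I would unwind the hypotheses: every vertex of $D$ other than $v,w$ is a neighbour of exactly one of $v,w$, or is a common neighbour lying in a face with both; every edge and face meets $\{v,w\}$; and the ``restriction to $u$ and its neighbours'' is $\Cl(u)$ in the cone sense, i.e.\ $D$ looks locally at $u$ exactly like the cone over $L(u)$. So by \autoref{is:cone} applied to these local restrictions, the part of $D$ ``around $v$'' is the cone over $L(v)$ with top $v$, and similarly around $w$. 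The task is then to see how these two cones are glued inside $D$.

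The key step is to identify the gluing. A vertex $x\neq v,w$ of $D$ adjacent to both $v$ and $w$ must, by hypothesis, lie in a common face with both; since in a simplicial complex a face is determined by its vertex set and each such face contains the edge $e=vw$, the faces through $x$ incident with $e$ are the ones witnessing that $x$ plays the role of a ``$v_f$''. These are exactly the vertices over which the two cones are glued in the definition of the simplicial combined cone, and the identification of $v$ with the non-top copy of $w$ (and vice versa) is forced because in $D$ there is only one edge $e$ with endvertices $v$ and $w$: the copy of $e$ coming from the cone over $L(v)$ (where $v$ is top) and the copy coming from the cone over $L(w)$ (where $w$ is top) must be the same edge $e$ of $D$. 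I would then check that the bijection $f$ between vertices of the two cones that are in faces incident with $e$ — matching $v_f$ with $v_f$ and the two endvertices of $e$ crosswise — is exactly the gluing bijection, and that no further edges or faces of $D$ are unaccounted for: by the hypothesis that every edge and face of $D$ contains $v$ or $w$, each such cell lies in (at least) one of the two local cones, and the simplicial gluing identifies precisely those cells all of whose vertices lie in the common vertex set and which would become equal. Conversely, since gluings of simplicial complexes are simplicial and no spurious identifications occur (again because $D$ is simplicial, so cells are determined by vertex sets), the combined cone has no cells beyond those of $D$.

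The main obstacle I anticipate is bookkeeping the identification of the edge $e$ and the ``$v_f$'' vertices cleanly — that is, showing the gluing bijection in the definition of the simplicial combined cone coincides with the natural bijection coming from $D$, and in particular that the crosswise identification of the endvertices of $e$ (top of one cone with non-top of the other) is the only one consistent with $D$ being simplicial with a single edge between $v$ and $w$. Once that is pinned down, the equality of vertex, edge and face sets is a direct check, and it may be cleanest to phrase it via the alternative construction of \autoref{eg:alternative-construction}, since that description depends only on the graph $L(v)\oplus_e L(w)$ and the separating cut, both of which are manifestly recoverable from $D$.
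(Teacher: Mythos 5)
Your proposal is correct and takes essentially the same approach as the paper: the paper's proof is a one-line assertion that $D$ and the simplicial combined cone have the same vertex, edge and face sets, and you flesh out exactly that check, pinning down the gluing (the crosswise identification of $v$ and $w$ is indeed forced by $e$ being the unique edge between them). One small redundancy: you invoke \autoref{is:cone} to get that the local restrictions are cones, but the lemma's hypothesis already asserts this directly, so that step can be dropped.
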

\begin{proof}
 $D$ has the same vertex set as the simplicial combined cone over $L(v)\oplus_e L(w)$, and the same sets of edges and faces.
\end{proof}

\begin{lem}\label{get-combined cone}
 Let $C$ be a simplicial complex with an edge $e=vw$.
 Then $C$ has a space restriction that is a simplicial combined cone over the vertex-sum $L(e)=L(v)\oplus_e L(w)$.
\end{lem}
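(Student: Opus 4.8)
The plan is to mimic the proof of \autoref{get-cone}, but now working relative to the edge $e=vw$ rather than a single vertex. First I would delete every face of $C$ that is incident with neither $v$ nor $w$. In the resulting simplicial complex $C_1$, every remaining face contains $v$ or $w$. Next I would like to get rid of all edges not incident with $v$ or $w$, and to trim the vertex set down to $v$, $w$, and their common neighbours in faces through $e$. The natural tool is \autoref{split-edge_remain_simplicial} applied with $W=\{v,w\}$: here $X$ is the set of vertices outside $W$ not lying in a face containing both $v$ and $w$, and after topologically deleting the edges with an endvertex in $X$ and none in $W$, and splitting all vertices of $X$, we obtain a simplicial complex $C_2$ in which (by the \lq Moreover\rq\ clause) every splitter $x'$ of a vertex of $X$ sends at most one edge towards $W$ and all its other incident edges lie in a single face.

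Then I would discard the remaining debris: delete all faces and edges that survive at the splitters but are not needed, i.e.\ split off and delete the isolated-ish parts so that what remains is exactly the union, over $u\in\{v,w\}$, of the cone over $L(u)$, glued along the vertices $v_f$ of faces through $e$ and along the endvertices of $e$ themselves. Concretely, after the previous step each vertex of $X$ that is \emph{not} a common neighbour of $v$ and $w$ through a face of $e$ has become one or more splitters each lying in a single face or sending one pendant edge, and these can be deleted (faces first, then the now-isolated edges and vertices) without touching the part around $v$ and $w$. What is left has the property that its restriction to $u$ and its neighbours, with the faces and edges through $u$, is precisely the cone over $L(u)$ — this is where I would invoke \autoref{is:cone} locally at $v$ and at $w$ — and the two pieces meet exactly along the faces incident with $e$, with the tops identified crosswise as in the definition of the simplicial combined cone. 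So \autoref{is:cone-combined} applies and identifies the result as the simplicial combined cone over $L(v)\oplus_e L(w)$, which by \autoref{obs1} is $L(e)$.

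The main obstacle I expect is bookkeeping at the splitters: one must be careful that splitting vertices in $X$ does not inadvertently disconnect or alter the link graphs at $v$ and $w$, and that the edges from $X$-vertices to $v$ or $w$ that we want to keep (namely those in a face through $e$) are not among the ones topologically deleted by \autoref{split-edge_remain_simplicial}. By the choice of $X$, a vertex in a common face of $v$ and $w$ is simply not in $X$, so its edges to $v$ and $w$ are never deleted — this is exactly why $W=\{v,w\}$ is the right choice. A second, minor point is that after these operations one should double-check that no parallel edges are created between $v$ and $w$ themselves or between a kept common neighbour and $v$ or $w$; since $C$ is simplicial and the combined-cone target is simplicial (gluings of simplicial complexes are simplicial, as remarked), this follows, but it deserves a sentence. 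Finally I would note that the edges of the ambient graph among the common neighbours, and any surplus faces, are deleted at the outset or during the trimming step, so that the face set and edge set match the combined cone exactly, completing the verification via \autoref{is:cone-combined}.
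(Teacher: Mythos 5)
Your outline follows the same route as the paper's proof — delete the faces incident with neither $v$ nor $w$ (plus the resulting isolated edges and vertices), apply \autoref{split-edge_remain_simplicial} with $W=\{v,w\}$, identify the restrictions $D_v$ and $D_w$ to $v$ and $w$ as cones over $L(v)$ and $L(w)$ via \autoref{is:cone}, and conclude with \autoref{is:cone-combined}. That part is correct and essentially identical to the paper's argument.

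However, your \lq discard the remaining debris\rq\ step is not merely unnecessary — it is wrong, and would destroy the target complex. After the initial face deletion and the application of \autoref{split-edge_remain_simplicial}, the resulting complex $D$ already \emph{is} the simplicial combined cone over $L(v)\oplus_e L(w)$; there is nothing left over to delete. The splitters of $X$-vertices are not junk. Because every surviving face contains $v$ or $w$, every star in the post-deletion link graph of a vertex $x\in X$ contains a vertex that is an edge of $C$ to $v$ or $w$, so every splitter $x'$ has a (unique) edge to $W$. Such a splitter therefore corresponds precisely to a vertex of $L(v)$ or $L(w)$ that is neither $e$ nor one of the $v_f$'s — and that is a genuine vertex of the cone over $L(v)$ or $L(w)$, hence of the combined cone. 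Deleting the splitters (and their incident faces and edges, as you propose) would produce a proper subcomplex of the combined cone, not the combined cone. For instance, if $C$ has vertices $v,w,a,b,c$ and faces $\{v,w,a\}$, $\{v,a,b\}$, $\{v,b,c\}$, then $X=\{b,c\}$; removing the splitters of $b$ and $c$ would collapse $D$ to a single triangle $\{v,w,a\}$, whereas the combined cone has all five vertices. The paper's proof performs no such clean-up; it verifies directly (the claim $D_v$ is the cone over $L(v)$, and symmetrically for $w$) that $D$ itself satisfies the hypotheses of \autoref{is:cone-combined}. Drop the debris-removal paragraph and what remains is a correct proof.
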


\begin{proof}
 First delete all faces that do not contain an endvertex of the edge $e$, and all vertices and edges that afterwards are not in a face.
 Then apply \autoref{split-edge_remain_simplicial} with $W=\{v,w\}$ to the resulting simplicial complex.
 This lemma gives a space restriction $D$, which is a simplicial complex. Let $X$ be the set of vertices of $D$ that are not in a face with the edge $e$.
 By that lemma the link graph at every vertex $x$ of $X$ is a star, whose centre is an edge of $D$ from $x$ to $v$ or $w$.

 We obtain $D_v$ from $D$ by restricting to the vertex $v$ and its neighbours and the faces and edges containing $v$.

 \begin{sublem}\label{calc123}
  $D_v$ is the cone over $L(v)$.
 \end{sublem}
\begin{cproof}
Let $y$ be a vertex of $D_v$ other than $v$. By definition of $D_v$, the vertex $y$ is a neighbour of $v$. If $y\in X$ its link graph is a star, whose centre is an edge of $D_v$ from $y$ to $v$ or $w$.
Now let $y$ be outside $X$. By construction of $D_v$, all faces containing $y$ also contain the vertex $v$ and thus the edge $yv$. Since $D_v$ is a simplicial complex, the link graph at $y$ is a star centred at the vertex corresponding to the edge $yv$. It follows that every edge not incident with $v$ has degree one in a link graph and thus is in a single face.
So by \autoref{is:cone} $D_v$ is the cone over $L(v)$.
\end{cproof}

We define \lq $D_w$\rq\ like \lq $D_v$\rq\ with \lq $w$\rq\ in place of \lq $v$\rq. Like \autoref{calc123}, one proves that $D_w=L(w)$.
By \autoref{is:cone-combined} $D$ is the simplicial combined cone over $L(v)\oplus_e L(w)$.
\end{proof}

\begin{obs}\label{del-combined}
 Let $C$ be a simplicial combined cone over the vertex-sum $G=G_1\oplus_e G_2$.
 For every edge $f$ of $E(G_1)\sm E(G_2)$ or of $E(G_1)\cap E(G_2)$,
 the simplicial combined cone $C$ has the simplicial combined cone over $G-f=(G_1-f)\oplus_e G_2$ or $G-f=(G_1-f)\oplus_e (G_2-f)$, respectively, as a space restriction.
\end{obs}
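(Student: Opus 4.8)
The plan is to imitate the one-line proof of \autoref{del-cone}. Recall from the construction of the simplicial combined cone (equivalently, from the alternative description in \autoref{eg:alternative-construction}) that the faces of the simplicial combined cone $C$ over $G=G_1\oplus_e G_2$ are in a natural bijection with the edges of $G$: the faces traversing the edge $e$ correspond to the edges of the cut $b:=E(G_1)\cap E(G_2)$ separating the two sides, while the faces avoiding $e$ correspond to the edges lying strictly inside $G_1$ or strictly inside $G_2$, that is, to the edges in $(E(G_1)\cup E(G_2))\sm b$. Accordingly, given $f$, we delete the face $\phi_f$ of $C$ corresponding to $f$, then delete every edge and every vertex of the result that has become isolated, and finally identify the resulting space restriction $C'$ with the asserted combined cone. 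The equalities $G-f=(G_1-f)\oplus_e G_2$ and $G-f=(G_1-f)\oplus_e(G_2-f)$ claimed in the statement are immediate from the definition of vertex sum, so it suffices to show that $C'$ equals the combined cone over $G-f$.

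Suppose first that $f\in E(G_1)\sm E(G_2)$, so $f$ is an edge of $G$ lying strictly inside $G_1$. Then the copy of $f$ in $C$ and the face $\phi_f$ belong only to the subcomplex ``cone over $G_1$'' of $C$ and play no role in the simplicial gluing of the two cones (the gluing only identifies edges and faces all of whose vertices are among the identified ones, and $G_2$ has no edge matching $f$); moreover $\phi_f$ is the unique face of $C$ containing that copy of $f$. Hence, by \autoref{del-cone}, deleting $\phi_f$ and then the now-isolated copy of $f$ turns the cone over $G_1$ into the cone over $G_1-f$ while leaving the cone over $G_2$ and the gluing untouched, so $C'$ is the simplicial combined cone over $(G_1-f)\oplus_e G_2$. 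Alternatively one compares $C'$ directly with \autoref{eg:alternative-construction}, or checks the hypotheses of \autoref{is:cone-combined}.

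Now suppose $f\in b=E(G_1)\cap E(G_2)$, so $f$ is a cut edge of $G$; then $\phi_f$ arises in the gluing by identifying the face of the cone over $G_1$ lying over the edge of $G_1$ at $e$ corresponding to $f$ with the analogous face of the cone over $G_2$. Deleting $\phi_f$ thus amounts to applying \autoref{del-cone} to both cones simultaneously, and re-gluing yields the combined cone over $(G_1-f)\oplus_e(G_2-f)$. I expect the main obstacle — the only step needing genuine care — to be the bookkeeping of the edges that $\phi_f$ shares with the rest of $C$, namely the copy of the edge $e$ and the two cone edges at the contraction vertex $v_f$ of $f$: one must check which of these still lie in a face after the deletion, delete those that have become isolated, and verify that what remains is exactly the combined cone over $G-f$, handling degenerate configurations such as $b=\{f\}$ (where the copy of $e$ becomes isolated) or an endvertex of $f$ having no other incident edge on its side of $G$. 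Once this is settled, \autoref{eg:alternative-construction} (which shows that the combined cone depends only on the underlying graph and the separating cut) or \autoref{is:cone-combined} finishes the identification.
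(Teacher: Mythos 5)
Your proof takes essentially the same approach as the paper's: delete the face of $C$ corresponding to $f$, delete the edges and vertices that thereby become isolated, and observe that this deletion commutes with the gluing of the two cones, so the result is the simplicial combined cone over $G-f$. The paper states this in one line; you supply more detail (in particular the case distinction and the bookkeeping around the shared edges), but the argument is the same.
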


\begin{proof}
 Delete the face corresponding to the edge $f$ in the simplicial combined cone over $G=G_1\oplus_e G_2$, and resulting edges or vertices not incident with any face. Since deletion of $f$ commutes with the gluing, we obtain a simplicial combined cone over $G-f$.
\end{proof}

\begin{cor}\label{expli}
 Let $C$ be a simplicial combined cone over the vertex-sum $G=G_1\oplus_e G_2$ such that $G$ is non-planar but $G_1$ and $G_2$ are planar.
 Then $C$ has a space restriction that is the simplicial combined cone over a vertex-sum $H=H_1\oplus_e H_2$,
 where $H$ is a subdivision of $K_5$ or $K_{3,3}$ and $H_1$ and $H_3$ have both at least three vertices of degree at least three.
\end{cor}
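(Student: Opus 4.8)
The plan is to imitate the proof of \autoref{expli-cone}: find a subdivision of $K_5$ or $K_{3,3}$ inside $G$ and realise it as a space restriction by successively deleting the faces of the combined cone that correspond to the superfluous edges of $G$. Each edge of $G$ lies in exactly one of $E(G_1)\sm E(G_2)$, $E(G_2)\sm E(G_1)$, $E(G_1)\cap E(G_2)$, and, since $G_1\oplus_e G_2=G_2\oplus_e G_1$, \autoref{del-combined} (applied, where necessary, after interchanging $G_1$ and $G_2$) shows that deleting the face of any one such edge again leaves a simplicial combined cone, over the correspondingly smaller vertex-sum. Iterating, for every subgraph $H$ of $G$ the complex $C$ has a space restriction that is the simplicial combined cone over a vertex-sum $H=H_1\oplus_e H_2$ with $H_1\se G_1$ and $H_2\se G_2$, so that both $H_i$ are planar. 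Everything thus reduces to choosing $H$ well, and I would take $H$ to be a non-planar subgraph of $G$ minimal under the subgraph relation, which by Kuratowski's theorem is a subdivision of $K_5$ or $K_{3,3}$.

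Write $A:=V(H_1)\sm\{e\}$ and $B:=V(H_2)\sm\{e\}$, so that $V(H)=A\sqcup B$; the edges of $H$ between $A$ and $B$ are precisely the matched edges $M$ of the vertex-sum, $|M|=\deg_{H_1}(e)=\deg_{H_2}(e)$, and $\deg_{H_i}(x)=\deg_H(x)$ for every vertex $x\ne e$ of $H_i$. The first of the three required vertices of degree at least three in each $H_i$ will be $e$ itself, so I claim $|M|\ge 3$. If not, $M$ is an edge-cut of the connected graph $H$ of size at most two; since $K_5$ and $K_{3,3}$ are $3$-edge-connected, $H$ has no bridge, and any $2$-edge-cut of $H$ consists of two edges on a common subdivided edge and isolates the sub-path $P$ of degree-$2$ vertices between them, which is non-trivial because $G_1,G_2$ have no parallel edges. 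Say $P$ lies in $B$; then $H_2$ is a cycle, while $H_1$ is obtained from $H$ by re-routing $P$ through the vertex $e$ (which belongs to $H_1$), so $H_1$ is again a subdivision of $K_5$ or $K_{3,3}$ and $H_1\se G_1$ is non-planar --- contradicting the planarity of $G_1$.

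It remains to find two more vertices of degree at least three in each $H_i$; by the degree-preservation above this means that each of $A,B$ should contain at least two of the five or six branch vertices of $H$. Suppose $B$ contained at most one. As $K_5$ and $K_{3,3}$ are simple, no subdivided edge of $H$ would then have both ends in $B$, so $B$ would be a disjoint union of degree-$2$ sub-paths (together with at most one branch vertex) and $H_2$ a subdivision of a multigraph on at most two vertices, one of them $e$. The point would then be that such a low-complexity $H_2$, together with the structure that $H[A]$ forces on $H_1$, admits enough rotational freedom at $e$ that the rotators at $e$ of a planar rotation system of $H_1$ and one of $H_2$ can be made reverse of one another; by \autoref{sum_planar1} this would make $H=H_1\oplus_e H_2$ planar, a contradiction. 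With the previous paragraph, each $H_i$ then contains $e$ together with at least two branch vertices of $H$, i.e. at least three vertices of degree at least three.

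\textbf{Expected main obstacle.} The reduction and the bound $\deg_{H_i}(e)\ge 3$ are routine --- the latter using only $2$-edge-connectivity of subdivisions of $K_5$ and $K_{3,3}$ together with the planarity of $G_1,G_2$. The genuine difficulty is the last paragraph: forcing a \emph{balanced} split of the branch vertices of $H$ across the two sides. Minimality of $H$ by itself does not prevent all branch vertices from lying on one side, and excluding this seems to require a real use of the planar embeddings (and connectivity) of $G_1$ and $G_2$ --- either by determining precisely which cyclic orders a subdivision of a two-vertex multigraph can realise at a prescribed vertex, or by re-choosing $H$ so as to move branching onto the deficient side.
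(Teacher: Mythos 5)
Your reduction to $H$ a subdivision of $K_5$ or $K_{3,3}$ and your $\deg_{H_i}(e)\ge 3$ argument via $2$-edge-connectedness are both fine (and more carefully spelled out than in the paper). But the gap you flag in your last paragraph is real, and the paper closes it by a different argument than the one you gesture at. The paper does not argue on the planarity/rotation-system side at all; it argues the contrapositive on the minor side: if one side of the cut, say $B$, contains at most one branch vertex $u$ of $H$, then the $K_5$- or $K_{3,3}$-minor of $H$ ``lives entirely on the other side'', i.e.\ $H_1$ itself has a $K_5$- or $K_{3,3}$-minor, contradicting $H_1\se G_1$ and the assumed planarity of $G_1$. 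Concretely: since $B$ is connected (which holds because $e$ is not a cut vertex of $G_2$ in every context where the corollary is invoked --- the $G_i$ are always at least $2$-connected), all subdivision vertices of $B$ lie on branches incident with $u$, so every branch of $H$ between two branch vertices in $A$ stays in $A$; contracting $B$ to the single vertex $e$ (which reproduces $H_1$ exactly, because the cut is a matching) therefore yields a graph in which $\{e\}$ together with the singletons $\{v_i\}$, $v_i\in A$, are branch sets of a $K$-minor.

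Two remarks on your own proposed route. First, it is not true in general that a subdivision of a two-vertex multigraph admits an \emph{arbitrary} rotator at a prescribed vertex: when $B$ contains no branch vertex and is disconnected, $H_2$ is a bouquet of several cycles through $e$, and the cyclic orders at $e$ realisable by a planar embedding are exactly the ``non-interleaving'' ones --- in this case one \emph{can} have $H_1,H_2$ planar, $H$ a subdivision of $K_{3,3}$, and $\deg_{H_2}(e)\ge 4$ while $H_2$ has only one vertex of degree $\ge 3$; so your statement as written would give a false conclusion. Second, once $e$ is known not to be a cut vertex of $G_2$, the set $B$ is connected and $H_2$ is indeed a parallel graph, which \emph{does} realise every cyclic order at $e$; your \autoref{sum_planar1} argument would then go through. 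So your intuition is recoverable, but only after the connectivity of $B$ is brought into play --- exactly the hypothesis the paper's minor argument also uses implicitly. In short: Steps 1--2 are correct, Step 3 is a genuine gap; the paper fills it by showing a $K$-minor survives on the heavy side of the cut rather than by analysing rotators.
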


\begin{proof}
 By Kuratowski's theorem $G$ has a subdivision of $K_5$ or $K_{3,3}$. So via \autoref{del-combined} and by taking a suitable space restriction if necessary, assume that $G$ is a subdivision of one of these two graphs.
 Now consider the edges of $G$ that correspond to edges of $G_1$ that in there are incident to the vertex-sum vertex $e$. By the definition of vertex sum, these edges form a cut in $G$. If this cut contains only two edges or has only one vertex of degree at least three on one side, then $G_1$ or $G_2$ has one of the graphs $K_5$ or $K_{3,3}$ as a minor (indeed, these 3-connected minors live entirely one side of such a small cut), and thus is non-planar. So we may assume that each side of the cut contains at least two vertices of degree three, and that $e$ has degree at least three in $G_1$ and $G_2$.
 So $G_1$ and $G_2$ both have at least three vertices of degree at least three.
\end{proof}

\begin{lem}\label{expli-computation}
 Up to subdivision, there are exactly five vertex-sums as in \autoref{expli}, one for $K_5$ and four for $K_{3,3}$.
\end{lem}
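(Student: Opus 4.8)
The plan is to reduce the classification to an elementary enumeration of edge cuts of $K_5$ and $K_{3,3}$.

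Recall that a vertex-sum $H=H_1\oplus_e H_2$ is encoded by the pair $(H,b)$, where $b$ is the set of edges of $H$ incident with the vertex-sum vertex $e$; by the definition of vertex sum $b$ is a cut of $H$, say $b=\delta(S)$ with $S=V(H_1)\setminus\{e\}$, and by \autoref{eg:alternative-construction} the associated simplicial combined cone depends up to isomorphism only on $(H,b)$. The first — and most delicate — step is to pass to the quotient ``up to subdivision'': since a subdivision vertex of $H$ has degree $2$ in $H$ and inherits degree $2$ in whichever $H_i$ it lies in, smoothing all subdivision vertices turns $H$ into $K_5$ or $K_{3,3}$ and carries $b$ to a cut of this core graph; one must check that this is well defined on the vertex-sums in question (ruling out any exotic behaviour from subdivision vertices lying strictly inside the cut) and that conversely every cut of the core arises, so that the vertex-sums as in \autoref{expli}, up to subdivision, are in bijection with the cuts $\delta(S)$ of $K_5$ and $K_{3,3}$ satisfying the degree hypothesis of \autoref{expli}, taken up to the automorphism group of the core and up to exchanging $S$ with its complement (the latter because the two orderings of a vertex sum give the same cone).

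Next I would record what the degree hypothesis of \autoref{expli} says about a cut $\delta(S)$ of the core $H\in\{K_5,K_{3,3}\}$. Every vertex of $H_i$ other than $e$ keeps its degree from $H$, while $e$ has degree $|b|$; since $K_5$ and $K_{3,3}$ are $3$-edge-connected, $|b|\ge 3$ always, so $e$ always has degree at least three in $H_i$. Hence $H_i$ has at least three vertices of degree $\ge 3$ if and only if the corresponding side of the cut contains at least two branch vertices of $H$, i.e.\ at least two of the non-subdivision vertices of the core. Moreover the resulting $H_1$ and $H_2$ are automatically planar, since each is a minor of $K_5$ or $K_{3,3}$ on at most five vertices and so has no $K_5$- and no $K_{3,3}$-minor; so the class to be counted is exactly the set of cuts $\delta(S)$ of $K_5$ and $K_{3,3}$, up to symmetry, with $2\le|S|\le|V(H)|-2$.

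Then I would do the enumeration. For $K_5$: up to complementation only $|S|=2$ occurs, and $\mathrm{Aut}(K_5)=S_5$ is transitive on $2$-subsets, giving one vertex-sum. For $K_{3,3}$ with parts $A,B$: if $|S|=2$ then $S$ is either a pair with one vertex in each part or a pair inside one part, and $\mathrm{Aut}(K_{3,3})$ is transitive on each of these two families; if $|S|=3$ then, up to complementation, $S$ is either one of the parts $A,B$ or has two vertices in one part and one in the other, and $\mathrm{Aut}(K_{3,3})$ is transitive on each of these two families. That is four vertex-sums for $K_{3,3}$; these four together with the one for $K_5$ are pairwise non-isomorphic because the corresponding pairs of multigraphs $(H_1,H_2)$ are pairwise non-isomorphic (equivalently, the cuts lie in distinct automorphism orbits). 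Hence there are exactly five vertex-sums as in \autoref{expli}, one for $K_5$ and four for $K_{3,3}$. The main obstacle I anticipate is the first step: making the passage ``up to subdivision'' precise and verifying that it genuinely identifies every such vertex-sum with a cut of the core graph; the degree computation and the orbit count are then routine.
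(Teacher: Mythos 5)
Your proposal is correct and follows the same route as the paper: invoke \autoref{eg:alternative-construction} to reduce to counting pairs $(H,b)$ up to symmetry, then enumerate the cuts of $K_5$ and $K_{3,3}$ with at least two vertices on each side to get $1+4=5$. The paper's proof is much terser and does not spell out the passage to the core graph under smoothing of subdivision vertices, nor the translation of the degree hypothesis into the two-branch-vertices-per-side condition, both of which you fill in carefully; the enumeration itself agrees exactly.
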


\begin{proof}
 By \autoref{eg:alternative-construction} the simplicial combined cones over $G=G_1\oplus_e G_2$ are uniquely determined by the graph $G$ and the cut $b$ between the vertex sets of $G_1$ and $G_2$. While $K_5$ has only one such cut with at least two vertices on either side, the number of non-atomic nonempty cuts in $K_{3,3}$ is four: there are two cuts with 2 vertices versus 4 vertices and there are two cuts with 3 vertices versus 3 vertices.
\end{proof}

\begin{lem}\label{combined-cone-summary}
Let $C$ be a simplicial complex so that there is an edge $e=vw$ so that in $C/e$ the link graph $L(e)$ at the contraction vertex $e$ is non-planar but the link graphs at $v$ and $w$ in $C$ are planar.
Then $C$ has a space restriction that is a simplicial combined cone over from one of the five classes in \autoref{expli-computation}.
\end{lem}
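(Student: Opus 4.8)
The plan is to chain together \autoref{get-combined cone}, \autoref{expli} and \autoref{expli-computation}, using that space restrictions compose.

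First I would apply \autoref{get-combined cone} to the edge $e=vw$ of $C$ to obtain a space restriction $D$ of $C$ that is a simplicial combined cone over the vertex-sum $L(e)=L(v)\oplus_e L(w)$. By \autoref{obs1} the graph $L(e)$ occurring here is exactly the link graph at the contraction vertex of $C/e$; so by hypothesis $L(e)$ is non-planar, while its two summands $L(v)$ and $L(w)$ are planar.

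Next I would feed $D$ into \autoref{expli} with $G=L(e)$, $G_1=L(v)$ and $G_2=L(w)$; its hypotheses hold by the previous paragraph. This yields a space restriction of $D$ that is a simplicial combined cone over a vertex-sum $H=H_1\oplus_e H_2$, where $H$ is a subdivision of $K_5$ or $K_{3,3}$ and $H_1$ and $H_2$ each have at least three vertices of degree at least three. By \autoref{expli-computation} there are, up to subdivision, exactly five such vertex-sums, hence this combined cone lies in one of the five classes described there. Finally I would observe that a space restriction of a space restriction is again a space restriction — concatenating two sequences of face deletions, edge deletions, deletions of isolated vertices and vertex splittings produces another such sequence — so the combined cone just obtained is a space restriction of $C$ itself, as required.

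The proof is therefore essentially a matter of bookkeeping; the only step where any care is needed is the identification in the second paragraph, namely checking that the vertex-sum over which \autoref{get-combined cone} builds $D$ is precisely $L(v)\oplus_e L(w)$ and that the planarity hypotheses of \autoref{expli} are met by it. I do not anticipate a genuine obstacle, since the substantive content of this subsection has already been established in \autoref{expli} and \autoref{expli-computation}.
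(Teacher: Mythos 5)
Your proposal is correct and follows exactly the same route as the paper: the paper's proof simply says to combine \autoref{get-combined cone} with \autoref{expli} and then invoke \autoref{expli-computation}. Your write-up just supplies the bookkeeping (verifying the hypotheses of \autoref{expli} are met by $L(e)=L(v)\oplus_e L(w)$ and noting transitivity of space restrictions) that the paper leaves implicit.
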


\begin{proof}
 Combine \autoref{get-combined cone} with \autoref{expli}. Then use the explicit description of \autoref{expli-computation}.
\end{proof}

\subsection{M\"obius obstructions}

We call a face $f$ of a 2-complex $C$ \emph{red} if $C$ admits a rotation framework so that the boundary of $f$ contains an odd number of red edges; this is the last outcome of \autoref{rot_system_exists-simplycon}, and in the previous two subsection we dealt with the other two outcomes of that lemma.
In this section we deal with this final outcome.  This subsection culminates in  \autoref{moebius-summary} below.

A \emph{M\"obius framework} of a simplicial complex $C$ consists of a face $f$ of $C$ with vertices $v_1$, $v_2$, $v_3$
and a collection of six paths
$(P_i|i\in [6])$ such that $P_i$ and $P_{i+3}$ are vertex-disjoint paths in the link graph at $v_i$ for $i\in [3]$ and do not contain any of the vertices corresponding to edges of $f$, and so that the endvertex of $P_i$ and the start vertex of $P_{i+1}$ correspond to edges that together with the edge $v_iv_{i+1}$ form a face of $C$. We refer to the six faces obtained in this way as the \emph{witnesses} of the M\"obius framework. We denote M\"obius frameworks by tuples $(f,v_1,v_2,v_3, (P_i|i\in [6]))$.

\begin{lem}\label{bad_moebius}
 A locally 3-connected simplicial complex $C$ with a red face $f$ admits a M\"obius framework.
\end{lem}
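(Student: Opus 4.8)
The plan is to start from the rotation framework witnessing that $f$ is red and extract from it the six paths in the link graphs required by the definition of M\"obius framework. Let $v_1,v_2,v_3$ be the vertices of the triangular face $f$, and fix a rotation framework of $C$ in which the boundary of $f$ (the triangle $v_1v_2v_3$) has an odd number of red edges; since a triangle has three edges, this means either one or three of its edges are red. The key local picture is this: around each vertex $v_i$ the edge $v_iv_{i+1}$ appears as a vertex of the link graph $L(v_i)$, and the two faces of $C$ incident with both $v_i$ and $v_{i+1}$ (one of them being $f$) correspond to the two edges of $L(v_i)$ at that vertex lying "on either side" in the planar rotation system. Because $L(v_i)$ is $3$-connected, its planar rotation system is unique up to reversal (Whitney, as invoked in \autoref{locally_at_edge}), so the colour (red/green) of each edge $v_iv_{i+1}$ has an intrinsic meaning: it records whether the chosen planar embeddings of $L(v_i)$ and $L(v_{i+1})$ induce the same or reversed cyclic order on the faces around the edge.

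First I would make precise how "red on the triangle" obstructs a consistent choice. Deleting from each $L(v_i)$ the vertex corresponding to the edge $v_iv_{i+1}$ and the vertex corresponding to $v_{i-1}v_i$, and using $3$-connectivity, one finds in $L(v_i)$ two disjoint paths joining the four edges of $C$ that are "the neighbours" of these two link-graph vertices on the respective sides dictated by the embedding of $f$; concretely, the endpoint of the path leaving $v_i$ on one side of $v_iv_{i+1}$ should be the edge of $C$ that, together with $v_iv_{i+1}$, bounds the face glued next to $f$ along $v_iv_{i+1}$ from the appropriate side. Carrying this around the triangle $v_1v_2v_3$, consistency of "this face is on the left" fails exactly because of the parity of red edges: if we could orient the triangle and pick sides coherently, all the local rotators would glue up to a planar rotation system on the link graphs near $f$, contradicting that $f$ is red. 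Thus the parity obstruction forces the six path-endpoints to "wrap around" like a M\"obius band, which is precisely the incidence condition demanded of the witnesses $(P_i)$.

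So the concrete construction I would carry out: for each $i\in[3]$ choose the face $g_i\neq f$ incident with the edge $v_iv_{i+1}$ "on the side of $f$" as determined by the rotation framework — more carefully, use the rotator at $v_iv_{i+1}$ to pick the face immediately following the position of $f$ in one fixed rotational direction — and let $a_i, b_i$ be the two edges of $C$ (other than $v_iv_{i+1}$, $v_iv_{i-1}$, and the edges of $f$) through which $g_i$ and the "$f$-side face at $v_iv_{i-1}$" enter $L(v_i)$. Then in $L(v_i)$, which is $3$-connected and hence $3$-connected after removing the (at most two) vertices corresponding to the edges of $f$ at $v_i$ — wait, one must be careful: we need the paths $P_i, P_{i+3}$ to avoid all vertices of $L(v_i)$ corresponding to edges of $f$, so I would invoke that $L(v_i)$ minus those two vertices is still $2$-connected (it is, since the two removed vertices are non-adjacent in a $3$-connected graph once $C$ is genuinely a simplicial complex near $f$, or at worst we use a Menger/fan argument), giving two vertex-disjoint paths $P_i$ from $a_i$ to $b_i$-type endpoints. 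The endpoint conditions "the end of $P_i$ and the start of $P_{i+1}$ together with $v_iv_{i+1}$ form a face" are exactly arranged by having both point to the two edges of the common face $g_i$.

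The main obstacle I anticipate is the bookkeeping of the parity: showing that the red-parity of the triangle being odd is precisely the obstruction to the six paths closing up "cylindrically" rather than "M\"obius-like", and in particular verifying that in the odd case one genuinely gets the M\"obius incidence pattern described (endpoint of $P_i$ matched with start of $P_{i+1}$ for all $i\in[6]$ cyclically, with the single orientation-reversal) rather than a configuration that could be re-indexed away. I would handle this by a careful sign/orientation computation around the triangle, tracking for each edge $v_iv_{i+1}$ whether the chosen planar embeddings of $L(v_i)$ and $L(v_{i+1})$ agree or are reversed at that edge — this is exactly the red/green colour — and observing that the product of the three "transition signs" equals $(-1)^{(\#\text{red edges of }f)}$, which is $-1$ precisely when $f$ is red; this $-1$ is the monodromy forcing the M\"obius pattern. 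A secondary, more routine obstacle is ensuring the paths can be chosen to avoid the $f$-vertices of each link graph and to be pairwise vertex-disjoint ($P_i$ disjoint from $P_{i+3}$ in $L(v_i)$), which follows from $3$-connectivity of the link graphs via Menger's theorem applied to the four relevant terminals.
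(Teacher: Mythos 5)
Your high-level plan matches the paper's (extract paths in the link graphs whose endpoints match up according to witness faces, then use the red-parity of the triangle as a monodromy forcing the M\"obius incidence pattern). But there is a genuine gap at the step where you produce the two disjoint paths $P_i,P_{i+3}$ inside $L(v_i)$.

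You want to delete from $L(v_i)$ the two vertices corresponding to the edges of $f$ at $v_i$ (namely $v_iv_{i+1}$ and $v_iv_{i-1}$) and argue the remainder is $2$-connected, supporting this with the claim that these two vertices are non-adjacent. That claim is false: these two vertices are joined in $L(v_i)$ precisely by the edge $f$ (a face of $C$ corresponds to an edge of the link graph between the two link-graph vertices of its edges at $v_i$). Removing two adjacent vertices from a $3$-connected graph need not leave a $2$-connected graph (e.g.\ deleting the hub and a rim vertex of a wheel $W_4$ yields a path), so the Menger-style existence of two disjoint paths avoiding both $f$-vertices does not follow from what you wrote, and the ``or at worst use a Menger/fan argument'' fallback is not substantiated. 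The paper avoids this entirely: it takes a plane embedding of the $3$-connected $L(v_i)$ and defines $P_i$ and $P_{i+3}$ as the two face boundaries of that embedding bordering the edge $f$ with the two endvertices of $f$ removed. Since $L(v_i)$ is $2$-connected those boundaries are cycles, so one gets paths for free; they automatically avoid the $f$-vertices by construction; their endpoints are automatically the rotator-neighbours of $f$ (exactly the edges you want to land on the witness faces); and their internal disjointness follows because a common internal vertex together with the edge $f$ would be a separator, contradicting $3$-connectivity. You should also note that the paper then performs a normalisation (swapping $P_2$ with $P_5$ if needed) before carrying out the parity computation, and that the monodromy step is done precisely via a claim (that the discs $D_i$ and $D_{i+1}$ lie on the same side of the oriented edge $f$ if and only if the rotators at $v_iv_{i+1}$ are reverse), which your ``product of transition signs'' sketch gestures at but does not pin down.
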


\begin{proof}For $i\in [3]$, let $v_i$ be one of the three vertices of the face $f$.
By assumption the link graph $L(v_i)$ at $v_i$ is 3-connected. In there the vertices corresponding to the edges $v_iv_{i+1}$ and $v_iv_{i-1}$ are joined by an edge corresponding to $f$; recall that we denote this edge by $f$. By assumption the link graph $L(v_i)$ is planar and pick a plane embedding.
Let $D$ and $D'$ be the two face boundaries of that embedding that include the edge $f$. Since $L(v_i)$ is 2-connected, the sets $D$ and $D'$ are cycles \cite{MoharThomassen}.
We obtain the two paths $P_i$ and $P_{i+3}$ from $D$ and $D'$, respectively, by deleting the two endvertices of $f$.
Any vertex in the intersection of $P_i$ and $P_{i+3}$ together with the edge $f$ separates the graph $L(v_i)$. Since this graph is 3-connected by assumption, this is not possible, so $P_i$ and $P_{i+3}$ and are vertex-disjoint.

By construction the endvertices of these two paths correspond to edges of $C$ that together with one of the edges $v_iv_{i\pm 1}$ lie in a common face. We pick a direction on these paths so that the start vertex lies in a common face with $v_iv_{i-1}$, whilst the endvertex lies in a common face with $v_iv_{i+ 1}$. These common faces are adjacent to $f$ in the rotators at $v_iv_{i-1}$ and $v_iv_{i+ 1}$, respectively.

Now assume that $C$ admits a rotation framework. Then in the link graphs $L(v_i)$ and $L(v_{i+1})$ at the rotator at the vertex $v_iv_{i+1}$ the edge $f$ has the same two neighbours.
So by exchanging the roles of $P_2$ and $P_5$ if necessary, we ensure that the endvertex of $P_1$ and the start vertex of $P_2$ correspond to edges that together with the edge $v_1v_2$ form a face of $C$. Similarly, we ensure that
the endvertex of $P_2$ and the start vertex of $P_3$ correspond to edges that together with the edge $v_2v_3$ form a face of $C$.

Given $i\in \Zbb_6$, denote by $D_i$ the disc of the embedding of the link graph $L(v_{j})$ with $j=i \ mod \ 3$ that has the path $P_i$ and the edge $f$ in its boundary (by 3-connectivity this disc is uniquely determined and referred to $D$ or $D'$ in the above construction).

Recall that $C$ is a directed 2-complex and so the face $f$ is equipped with an orientation. When we consider the face $f$ as an edge in link graphs, this orientation induces a direction of the edge $f$. So the \emph{left side} and the \emph{right side} of the edge $f$ are well-defined.

\begin{sublem}\label{new99}
In embeddings of the link graphs $L(v_i)$ and $L(v_{i+1})$ the discs $D_i$ and $D_{i+1}$ are on the same side of the edge $f$ if and only if the rotators at the vertex $v_iv_{i+1}$ are reverse.
\end{sublem}
\begin{cproof}
The edge $f$ is incident with the vertex $v_iv_{i+1}$ in both link graphs $L(v_i)$ and $L(v_{i+1})$.
We have already constructed a face so that its edges in the link graphs  $L(v_i)$ and $L(v_{i+1})$ are incident with the vertex $v_iv_{i+1}$ and in both discs $D_i$ and $D_{i+1}$; we denote this face by $g$.
So $g$ -- considered as an edge of the link graphs $L_(v_i)$ and $L(v_{i+1})$ -- is on the same side of $f$ if and only if $D_i$ and $D_{i+1}$ are on the same side of $f$.
So the rotators at the vertex $v_iv_{i+1}$ determine on which side of $f$ the edge $g$ is.
\end{cproof}

Now assume that $C$ admits a rotation framework $\Sigma$ so that the boundary of $f$ contains an odd number of red edges.
Applying \autoref{new99} recursively at all edges of $f$ and using the fact that $f$ contains an odd number of red edges implies that the endvertex of $P_3$ and the start vertex of $P_4$ correspond to edges that together with the edge $v_1v_2$ form a face of $C$. A similar statement is true for $P_1$ and $P_6$. It follows that $(f,v_1,v_2,v_3, (P_i|i\in [6]))$ is a M\"obius framework.
\end{proof}

\begin{lem}\label{moe-reverse}
In a simplicial complex $C$ with a M\"obius framework, every rotation framework has a red face.
\end{lem}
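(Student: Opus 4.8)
The goal is the converse direction of the story: a M\"obius framework forces every rotation framework to have a red face. I would argue by contradiction. Suppose $C$ has a M\"obius framework $(f,v_1,v_2,v_3,(P_i|i\in[6]))$ but some rotation framework $\Sigma$ has no red face; since $f$ is a face, in particular the boundary of $f$ has an even number of red edges under $\Sigma$. The plan is to run the analysis of \autoref{bad_moebius} ``in reverse'': the six paths $P_i$ live in the link graphs $L(v_1),L(v_2),L(v_3)$ (two per vertex), are vertex-disjoint pairs avoiding the edges of $f$, and the matching conditions on their endpoints say that consecutive paths $P_i,P_{i+1}$ land in a common witness face incident to the edge $v_jv_{j+1}$ of $f$. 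The key tool is \autoref{new99}: for each edge $v_jv_{j+1}$ of $f$, the two discs $D_j,D_{j+1}$ (bounded by $P_j$ resp.\ $P_{j+1}$ together with the edge $f$ in the relevant plane embeddings) lie on the same side of the directed edge $f$ in their link graphs if and only if the rotators at $v_jv_{j+1}$ are reverse, i.e.\ iff that edge is green.

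First I would fix, for each $i\in[3]$, a plane embedding of the 3-connected planar link graph $L(v_i)$ compatible with $\Sigma$ (unique up to global reversal by Whitney, used already in \autoref{locally_at_edge}); this is what makes ``the side of $f$'' well-defined in each link graph. For each $i\in\Zbb_6$ let $D_i$ be the disc of the appropriate embedding bounded by $P_i\cup f$; assign to $D_i$ a sign $\epsilon_i\in\{\pm1\}$ recording whether it is on the left or right side of the directed edge $f$. Then I would trace around the boundary of $f$: going from $v_1v_2$ to $v_2v_3$ to $v_3v_1$ and back (six steps in $\Zbb_6$, since each vertex $v_j$ is visited by two consecutive indices $j$ and $j+3$ of the framework), \autoref{new99} tells me that $\epsilon_{i+1}=\epsilon_i$ precisely when the corresponding edge of $f$ is green, and $\epsilon_{i+1}=-\epsilon_i$ when it is red. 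Multiplying these relations around the full cycle of length six gives $\epsilon_1 = (-1)^{r}\epsilon_1$ where $r$ is the number of red edges on the boundary of $f$ counted with the appropriate multiplicity coming from how the framework winds.

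The crucial combinatorial point — and the step I expect to be the main obstacle — is bookkeeping the winding: the six paths are arranged so that the three paths $P_1,P_2,P_3$ traverse the three edges of $f$ once, and $P_4,P_5,P_6$ traverse them again, but because it is a \emph{M\"obius} framework the two traversals are ``glued with a flip'', so that after going once around $f$ the sign has been multiplied by $(-1)^{(\text{red count on }\partial f)}$ and also by an extra $-1$ coming from the Möbius identification (this is exactly the asymmetry ``$P_1$ with $P_6$'' and ``$P_3$ with $P_4$'' noted in the definition, as opposed to $P_i$ with $P_{i+3}$). Working this out carefully should yield: $\epsilon_1 = (-1)^{(\text{red edges on }\partial f)+1}\epsilon_1$, forcing the number of red edges on $\partial f$ to be odd. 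This contradicts the assumption that $\Sigma$ has no red face, so every rotation framework of $C$ has a red face. I would present the winding computation by indexing everything over $\Zbb_6$ and carefully recording which disc sits at which vertex (the map $i\mapsto i\bmod 3$ from the definition), so that \autoref{new99} can be applied uniformly at each of the six incidences; the single genuine idea is that the Möbius gluing contributes the parity-flipping extra sign, and everything else is the same orientation-tracking already set up in \autoref{bad_moebius}.
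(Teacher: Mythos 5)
Your plan is essentially the paper's own argument, with the sign algebra carried out explicitly rather than normalised away: the paper first flips embeddings so that all three edges of $f$ are green and then traces $P_1,P_2,P_3,P_4$ directly, finding $P_4$ on the same side as $P_1$ and contradicting that $P_1$ and $P_4$ bound opposite faces at $f$ in $L(v_1)$. Since you flagged the winding bookkeeping as a possible obstacle, let me pin down where the ``extra $-1$'' actually comes from: it is not produced by the six \autoref{new99} relations (those multiply out to the vacuous $\epsilon_1=(-1)^{2r}\epsilon_1$), but by the separate constraint $\epsilon_i=-\epsilon_{i+3}$. This holds because $P_i$ and $P_{i+3}$ are vertex-disjoint in $L(v_i)$ and avoid the endvertices of the edge $f$, so $D_i$ and $D_{i+3}$ are the two distinct faces of the plane embedding of $L(v_i)$ incident with $f$, and hence must lie on opposite sides of $f$. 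Combining $\epsilon_4=(-1)^{r}\epsilon_1$ (three applications of \autoref{new99} across the edges of $f$, with $r$ the number of red edges on $\partial f$) with $\epsilon_4=-\epsilon_1$ yields $r$ odd, which is the contradiction you want; there is no need to go around all six indices.
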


\begin{proof}
 Let $(f,v_1,v_2,v_3, (P_i|i\in [6]))$ be a M\"obius framework of $C$. We claim that $f$ is red with respect to every rotation framework of $C$.
 Suppose for a contradiction that $C$ admits a rotation framework where $f$ is not red. Then we can flip the embeddings at the link graphs at vertices of $f$ so that all three edges of $f$ are green.
 By symmetry assume that in the embedding of the link graph $L(v_1)$ the path $P_1$ is on the left of $f$, and $P_4$ is on its right. Since the edge $v_1v_2$ is green, $P_2$ is on the left of $f$ in the embedding of $L(v_2)$. Since $v_2v_3$ is green, $P_3$ is on the left of $f$. Since $v_3v_1$ is green, we conclude that $P_4$ is on the left of $f$, a contradiction to our assumption.
\end{proof}

A triangulation of the M\"obius-strip is \emph{nice} if it has a central cycle of length three and all edges of face degree two are on the central cycle or have exactly one endvertex on the central cycle.
A \emph{M\"obius obstruction} is obtained from a nice triangulation of the M\"obiusstrip by attaching a face at the central cycle. A M\"obius obstruction is an example of a torus crossing obstruction and as such does not embed in 3-space, see \autoref{chapterV} for a proof.

\begin{lem}\label{pre-prs-moe}
 A M\"obius obstruction admits a M\"obius framework.
\end{lem}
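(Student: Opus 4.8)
The plan is to exhibit a M\"obius framework $(f,v_1,v_2,v_3,(P_i\mid i\in[6]))$ directly from the structure of a M\"obius obstruction $C$. Recall that $C$ is built from a nice triangulation $M$ of the M\"obius strip with central cycle $v_1v_2v_3$ of length three, together with one extra face $f$ attached along that central triangle. So $f$ has vertices $v_1,v_2,v_3$ as required, and the first task is to understand the link graph $L(v_i)$ in $C$ well enough to locate the two paths emanating from $v_i$.

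The key observation is that cutting the M\"obius strip along its central cycle yields an annulus, and correspondingly, walking around $v_i$ in $M$ one passes through a sequence of triangles whose union of edges, viewed in $L(v_i)$, is a path $Q_i$ between the two copies of the edge $v_iv_{i-1}$ and $v_iv_{i+1}$ that are the two attachments of the central cycle at $v_i$; in the complex $C$ we additionally add the edge $f$ joining those two link-vertices, so $L(v_i)$ contains $Q_i$ together with the chord $f$, forming a cycle (plus possibly further pendant/low-degree structure coming from the triangulation, which by niceness only involves edges of face-degree two incident to the central cycle and hence does not interfere). First I would make this precise: identify $Q_i$ as the link-graph path read off the fan of triangles at $v_i$ in $M$, delete the two endvertices (the link-vertices corresponding to the two central edges at $v_i$, which are exactly the vertices corresponding to edges of $f$) to obtain a single path, and then split it into $P_i$ and $P_{i+3}$. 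The natural splitting is that the M\"obius strip, when one goes around the central cycle, flips sides exactly once, so the two "sides" of the central cycle are actually a single connected band: this forces the consecutive fans at $v_1,v_2,v_3$ to chain together into six arcs $P_1,\dots,P_6$ with $P_i$ and $P_{i+3}$ living in $L(v_i)$, vertex-disjoint (they lie on opposite sides of the chord $f$ in the planar link graph, and any shared vertex would together with $f$ separate $L(v_i)$, impossible by local $3$-connectivity — exactly as in the proof of \autoref{bad_moebius}).

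The remaining conditions are then matching conditions at the three central edges: the endvertex of $P_i$ and the start vertex of $P_{i+1}$ must correspond to two edges which together with $v_iv_{i+1}$ bound a face of $C$. This is precisely the statement that the fan at $v_i$ and the fan at $v_{i+1}$ share the triangle of $M$ incident to the edge $v_iv_{i+1}$ immediately "after" the central cycle — i.e. the triangles of $M$ fit together consistently along $v_iv_{i+1}$, which they do because $M$ is a genuine triangulated surface. The one-sided (non-orientable) nature of the M\"obius strip is what makes the indices wrap around with the $+3$ shift after going once around $v_1v_2v_3$, so that $P_1,\dots,P_6$ close up into a single cyclic chain rather than two disjoint $3$-chains (which is what would happen for an annulus). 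I would spell this out by tracking, along the central cycle, which side of each central edge the "next" band triangle lies on, and observing that the product of the three side-swaps is a swap — this is the combinatorial incarnation of one-sidedness.

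The main obstacle I expect is bookkeeping: setting up a clean description of $L(v_i)$ for a nice triangulation of the M\"obius strip (dealing with the face-degree-two edges allowed by niceness, which can create extra vertices of degree $\le 2$ in the link graph), and then verifying that the six arcs one reads off are pairwise of the right disjointness type and satisfy all six face-matching conditions simultaneously with the correct orientations. None of this is deep — it is the standard correspondence between a triangulated surface and its link graphs — but it requires care with the indexing $i\in\Zbb_6$ and with the choice of directions on the $P_i$ so that start/end vertices pair up as demanded. I do not anticipate needing anything beyond \autoref{bad_moebius}-style arguments (local $3$-connectivity to get vertex-disjointness) together with the defining combinatorics of a M\"obius obstruction.
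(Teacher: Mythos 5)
Your plan coincides with the paper's: take $f$ to be the face attached at the central cycle, read off $P_i$ and $P_{i+3}$ as arcs in the link graph at $v_i$, and use the twist of the Möbius strip to see that the six face-matching conditions chain up into one $\Zbb_6$-cycle rather than two $\Zbb_3$-cycles. The paper states the crucial structural observation more sharply: each central link graph $L(v_i)$ is a parallel graph (a cycle through the spokes at $v_i$ plus the chord $f$) and, by niceness, its two branch vertices $v_iv_{i\pm 1}$ have degree exactly three. That degree-three condition is what makes the $P_i$, their directions, and the matchings forced once one of them is chosen.

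One justification you offer would fail under scrutiny. You argue that $P_i$ and $P_{i+3}$ are vertex-disjoint because a shared vertex ``would together with $f$ separate $L(v_i)$, impossible by local 3-connectivity --- exactly as in the proof of \autoref{bad_moebius}.'' But a Möbius obstruction is not locally 3-connected: the link graph at each $v_i$ is a cycle plus a chord, so it already has a 2-separator (the two endpoints of $f$), and the link graphs at the boundary vertices are mere paths. \autoref{bad_moebius} takes local 3-connectivity as a hypothesis and its separation argument does not transfer here. The disjointness you want is instead immediate from the parallel-graph structure: the two arcs of a cycle between two fixed non-adjacent vertices have disjoint interiors, which is also the correct content of your other, parenthetical remark that they ``lie on opposite sides of the chord $f$.'' Your worry about pendant or low-degree structure in $L(v_i)$ is similarly unwarranted: $v_i$ is interior to the triangulated Möbius strip, so its link in $M$ is a single cycle with nothing hanging off it, and the role of niceness is to make the branch vertices have degree three, not to exclude pendants.
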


\begin{proof}
 Take the face bounded by the central cycle as the face $f$ of the M\"obius framework and the link graphs at its endvertices are parallel graphs\footnote{A \emph{parallel graph} consists of two vertices, called the \emph{branch vertices}, and a set of
disjoint paths between them. Put another way, start with a graph with only two vertices and all
edges going between these two vertices, now subdivide these edges arbitrarily,
see \autoref{fig:para5}.} whose branching vertices have degree three and take the unique choices possible for the paths $P_i$. This defines a M\"obius framework.
\end{proof}

\begin{cor}\label{prs-moe}
 A M\"obius obstruction admits no planar rotation system.
\end{cor}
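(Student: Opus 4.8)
The plan is to derive a contradiction from the assumption that a Möbius obstruction $C$ has a planar rotation system, feeding \autoref{pre-prs-moe} into \autoref{moe-reverse}.

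First I would record the easy observation that a planar rotation system of \emph{any} 2-complex restricts to a rotation framework in which every edge is green; this is exactly the first sentence of the proof of \autoref{lem101}, and it uses no hypothesis on face degrees. Concretely, if $\Sigma=(\sigma_e\mid e\in E(C))$ is a planar rotation system and $e=vw$, then the rotator that $\Sigma$ induces at the vertex $e$ of $L(w)$ is $\sigma_e$, while the one it induces at $e$ in $L(v)$ is the reverse of $\sigma_e$, so the two rotators at $e$ are reverse of one another and $e$ is coloured green; moreover each induced rotation system on a link graph is planar by the very definition of a planar rotation system of a 2-complex. Hence this framework is a bona fide rotation framework of $C$, and in it every face boundary contains zero red edges, in particular an even number of them.

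Next, by \autoref{pre-prs-moe} the Möbius obstruction $C$ admits a Möbius framework $(f,v_1,v_2,v_3,(P_i\mid i\in[6]))$, so \autoref{moe-reverse} applies and tells us that with respect to every rotation framework of $C$ some face — concretely the distinguished face $f$ — has a boundary containing an odd number of red edges. Applying this to the green-everywhere rotation framework constructed in the previous paragraph yields a face whose boundary has both an even (namely zero) and an odd number of red edges, which is absurd. Therefore $C$ admits no planar rotation system.

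I do not expect a genuine obstacle here. The one point that deserves a moment of care is the correct reading of \autoref{moe-reverse}: its proof in fact establishes the stronger assertion that the face $f$ of the Möbius framework has odd red parity on its boundary in \emph{every} rotation framework, and it is precisely this uniform statement that closes the argument. The complementary subtlety is simply to note that producing a rotation framework from a planar rotation system is free — unlike the reverse implication in \autoref{lem101}, it needs no assumption that each edge lies in at least three faces — so the argument is valid for Möbius obstructions even though they contain edges of small face degree.
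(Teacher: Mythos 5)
Your proposal is correct and follows essentially the same route as the paper's terse one-liner (``By \autoref{pre-prs-moe} this follows from \autoref{moe-reverse}''), simply making explicit the implicit step that a planar rotation system induces an all-green rotation framework. Your remark that this direction of \autoref{lem101} requires no hypothesis on face degrees is a sensible precaution, since Möbius obstructions do contain edges of face degree one and two.
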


\begin{proof}
By \autoref{pre-prs-moe} this follows from \autoref{moe-reverse}.
\end{proof}

\begin{lem}\label{pre-basic-topo}[\cite{armstrong}]
Let  $S$ be a connected surface with boundary that admits a triangulation $T$ so that the Euler-characteristic $V-E-F$ is zero.
If $S$ has a single boundary component, it is the M\"obius strip. If it has two boundary components it is an untwisted strip (so homeomorphic to a disc with a hole).
\end{lem}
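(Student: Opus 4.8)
The plan is to deduce this directly from the classification theorem for compact surfaces with boundary. First I would note that since $T$ is a finite triangulation, $S$ is compact; write $b\ge 1$ for its number of boundary components and recall that the Euler characteristic of the triangulation is $\chi(S)=V-E+F$ (the $V-E-F$ in the statement should be read as the usual $V-E+F$). By the classification of compact connected surfaces with boundary (see \cite{armstrong}), $S$ is homeomorphic either to a sphere with $g\ge 0$ handles and $b$ open discs removed, in which case $\chi(S)=2-2g-b$, or to a sphere with $k\ge 1$ crosscaps and $b$ open discs removed, in which case $\chi(S)=2-k-b$; moreover these invariants determine $S$ up to homeomorphism.

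Next I would impose $\chi(S)=0$ and solve the resulting integer constraints. In the orientable case $2-2g-b=0$, so $b=2-2g$; since $g\ge 0$ and $b\ge 1$ are integers, the unique solution is $g=0$, $b=2$, namely the annulus, i.e.\ the untwisted strip, homeomorphic to a disc with one hole. In the non-orientable case $2-k-b=0$, so $b=2-k$; since $k\ge 1$ and $b\ge 1$, the unique solution is $k=1$, $b=1$, namely the M\"obius strip. Consequently $b\in\{1,2\}$: a surface with $\chi(S)=0$ and one boundary component must be non-orientable, hence the M\"obius strip, and one with two boundary components must be orientable, hence the annulus, which is exactly the claim.

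As a double check, and to obtain this without invoking the boundary version of the classification, I would cap off each boundary circle of $S$ with a disc to get a closed connected surface $\widehat S$ with $\chi(\widehat S)=\chi(S)+b=b$. Every closed surface has Euler characteristic at most $2$, attaining $2$ only for the sphere and $1$ only for the projective plane, so $b\le 2$; if $b=2$ then $\widehat S$ is the sphere and $S$ is the sphere minus two open discs, i.e.\ the annulus, while if $b=1$ then $\widehat S$ is the projective plane and $S$ is the projective plane minus one open disc, i.e.\ the M\"obius strip.

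I do not expect a genuine obstacle here: the content is entirely the classification of surfaces. The only points to keep straight are the Euler-characteristic bookkeeping under capping off boundary components, the normalisation $\chi(S)=V-E+F$, and the (standard) fact that removing an open disc from a connected surface is well defined up to homeomorphism, so that $\widehat S$ indeed recovers $S$.
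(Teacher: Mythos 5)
The paper does not give a proof of this lemma; it simply cites Armstrong's textbook. Your argument is the standard one via the classification of compact connected surfaces with boundary, and it is correct: with $\chi(S)=V-E+F=0$ (you rightly read the $V-E-F$ in the statement as a typo for $V-E+F$), the only solutions to $2-2g-b=0$ with $g\geq 0$, $b\geq 1$ and to $2-k-b=0$ with $k\geq 1$, $b\geq 1$ are $(g,b)=(0,2)$ and $(k,b)=(1,1)$, giving the annulus and the M\"obius strip respectively. Your alternative via capping off boundary circles and using $\chi(\widehat S)=\chi(S)+b=b\leq 2$ is also a clean, self-contained route that avoids the boundary version of the classification; either suffices, and both are in the spirit of what the citation to Armstrong intends.
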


\begin{cor}\label{basic-topo}
Let  $S$ be a connected surface with nontrivial boundary\footnote{Recall that boundaries of 2-manifolds with boundary are always 1-dimensional, so homeomorphic to $\Sbb^1$.} that admits a triangulation $T$ so that there is a cycle $o$ such that  all edges of face degree two in $T$ are on $o$ or have exactly one endvertex on $o$, and no point on $o$ is on the boundary. Then $S$ is homeomorphic to a M\"obius-strip or an untwisted strip.
\end{cor}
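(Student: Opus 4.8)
The plan is to reduce the statement to \autoref{pre-basic-topo}: since that lemma already classifies connected triangulated surfaces with boundary of Euler characteristic zero, it suffices to show that the triangulation $T$ of $S$ satisfies $V-E+F=0$.

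First I would extract the structural content of the hypotheses. The boundary edges of $T$ (those of face degree one) are exactly the edges lying on $\partial S$; since $o$ meets no point of $\partial S$, every boundary edge has both endvertices off $o$, and consequently every edge both of whose endvertices lie on $o$ has face degree two and so, by hypothesis, is an edge of $o$ itself. Thus $o$ has no chords. Secondly, every triangle of $T$ has a vertex on $o$: otherwise all three of its edges have both endvertices off $o$, hence face degree one, so the triangle would be a connected component of $S$ homeomorphic to a disc, contradicting that $S$ is connected and $o$ lies in its interior.

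Next I would classify the triangles of $T$ by the number $j\in\{1,2,3\}$ of vertices they have on $o$; write $t_j$ for the number of such triangles, $W$ for the set of vertices of $T$ off $o$, and $k$ for the length of $o$. Using that $o$ has no chords: a triangle with three vertices on $o$ uses all three edges of $o$ and forces $k=3$; a triangle with two vertices on $o$ uses exactly one edge of $o$; and a triangle with exactly one vertex on $o$ has its opposite edge inside $W$, hence of face degree one. Double counting incidences between triangles and edges of each of the three kinds — edges lying on $o$, edges with exactly one endvertex on $o$, and edges with both endvertices off $o$ — yields $(\text{number of boundary edges})=t_1$, $(\text{number of face-degree-two edges with exactly one endvertex on }o)=t_1+t_2$, and $2k=t_2+3t_3$. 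Since $\partial S$ is a disjoint union of circles it has equally many edges and vertices, and all its vertices lie in $W$; so $(\text{number of boundary edges})=|W|$ provided no vertex of $W$ lies off $\partial S$. Substituting these identities into $V-E+F=(k+|W|)-(k+2t_1+t_2)+(t_1+t_2+t_3)$ collapses it to $V-E+F=|W|-t_1+t_3$, which is $0$ once $|W|=t_1$ and $t_3=0$.

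So everything comes down to excluding the two offending configurations, which I expect to be the one genuinely delicate point. A triangle with three vertices on $o$ has vertex set exactly $V(o)$, i.e.\ $o$ bounds a face of $T$; and a vertex $w\in W$ lying off $\partial S$ has a link that is a cycle, so all edges at $w$ have face degree two and, being incident with $w\notin o$, have their other endvertex on $o$, whence $w$ is adjacent to all of $o$ and its link — a cycle through vertices of $o$ — must be $o$ itself, so $w$ cones $o$ off. In either case $o$ would bound a disc in $S$, and this is precisely what the setting in which the corollary is applied excludes: there $o$ is the boundary of a face that has been deleted from an ambient simplicial complex, and $S$ is a thin neighbourhood of $o$. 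Granting that neither configuration occurs, the count above gives $V-E+F=0$, and \autoref{pre-basic-topo} finishes the proof.
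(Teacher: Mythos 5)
Your overall strategy coincides with the paper's: establish $V-E+F=0$ and invoke \autoref{pre-basic-topo}. You organise the count around the numbers $t_1,t_2,t_3$ of triangles with one, two, or three vertices on $o$, whereas the paper counts two classes of edges directly, but the arithmetic is essentially the same. Your preparatory observations (that $o$ is chordless, and that every triangle meets $o$) are correct and are also implicit in the paper's argument.

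What you add --- and this is the genuinely useful content of your write-up --- is that you make explicit that the count only closes if $t_3=0$ and if no vertex of $T$ lies off both $o$ and $\partial S$. You are right that the stated hypotheses do not force either of these, and indeed the corollary is false as written: take $S$ to be a disc, triangulated with an interior triangle $o=a_1a_2a_3$ coned off by a single interior vertex $w$ (faces $wa_1a_2,wa_2a_3,wa_3a_1$) and a triangulated annulus between $o$ and a boundary hexagon. Every face-degree-two edge is then on $o$ or has exactly one endvertex on $o$ (the three edges of $o$, the three cone edges $wa_i$, and the nine spokes $a_ib_j$), and $o$ avoids $\partial S$; yet $V-E+F=1$ and $S$ is a disc, not a M\"obius strip or annulus. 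The sentence in the paper's proof, ``All vertices lie on the central cycle $o$ or on one of the boundary cycles,'' is asserted without justification and is exactly what fails here (the vertex $w$ lies on neither), so the paper's proof has the same gap, only silently. You are also right that in the one place the corollary is applied (\autoref{moebius_restr}) a neighbourhood of $o$ is a M\"obius strip, so $o$ is one-sided and cannot bound a disc, which rules out both of your offending configurations. But that should be promoted into the hypotheses of the corollary --- e.g.\ ``$o$ does not bound a disc in $S$,'' or equivalently ``$o$ is not a face boundary and every vertex of $T$ lies on $o$ or on $\partial S$'' --- and then both the paper's count and yours go through cleanly. Appealing to the application context, as you do in your last sentence, is not a proof of the statement as written; it is evidence that the statement needs an extra hypothesis.
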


\begin{proof}
 $S\sm o$ has one or two components. So $S$ has at most two boundary components, and at least one by assumption. So by \autoref{pre-basic-topo}, it suffices to show that $V-E+F=0$.
 All vertices lie on the central cycle $o$ or on one of the boundary cycles.
 We split the edges into two sets, those that lie on $o$ or on the boundary -- the number of them is equal to the number of vertices -- and the edges between $o$ and the boundary.
 All edges of the second type lie in exactly two faces and each face contains precisely two edges of type two.
 So $V-E+F=0$.
\end{proof}

\begin{lem}\label{moebius_restr}
 If $C$ has a M\"obius framework, it has a M\"obius obstruction as a space restriction.
\end{lem}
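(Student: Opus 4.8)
The plan is to start from a M\"obius framework $(f,v_1,v_2,v_3,(P_i|i\in[6]))$ of $C$ and produce, by deleting faces, edges and isolated vertices and splitting vertices, a M\"obius obstruction. First I would throw away everything not needed to witness the framework: delete all faces other than $f$ and the six witness faces $g_1,\dots,g_6$ of the framework, and then delete all edges and isolated vertices that are no longer incident with any surviving face. What remains is a small simplicial complex $C_0$ whose faces are $f$ together with the six witnesses; the edges of $C_0$ are the three edges $v_iv_{i+1}$ of $f$ together with the edges of $C$ corresponding to the endvertices of the paths $P_i$ (i.e. the vertices of the link graphs $L(v_i)$ that are endpoints of the $P_i$), plus the at most twelve edges from the $v_j$ appearing on the witnesses.

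Next I would analyse the geometric realisation $|C_0|$ (or a suitable subcomplex of it). The point of the definition of a M\"obius framework is exactly that the six witness faces, glued along the three edges of $f$ in the cyclic pattern prescribed by the pairing $P_i\leftrightarrow P_{i+1}$ and $P_{i+3}\leftrightarrow P_{i+3+1}$, close up into an annular band around the cycle $f$, but with a single twist --- because the framework pairs $P_3$ with $P_4$ (rather than $P_6$ with $P_4$), which is precisely the obstruction that \autoref{moe-reverse} detects. So $f$ together with the six witnesses forms a triangulated surface with boundary, and I would verify it is connected with nontrivial boundary and that every edge of face degree two either lies on the central cycle $o$ (the boundary of $f$) or has exactly one endvertex on $o$ --- the witnesses are precisely the triangles straddling between $o$ and the outer boundary, and each has two edges of the latter type. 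Then \autoref{basic-topo} applies and tells me this surface is homeomorphic to a M\"obius strip or an untwisted strip; the twist built into the framework rules out the untwisted case, so it is a M\"obius strip, and its triangulation is nice by construction (central cycle of length three, and the face-degree-two condition just checked). Deleting $f$ from this subcomplex and re-attaching it at $o$ exhibits a M\"obius obstruction as a space restriction of $C$.

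The main obstacle I anticipate is the bookkeeping needed to confirm that the surface assembled from $f$ and the six witnesses is genuinely a triangulated surface with boundary (every edge in at most two faces, link of every vertex a path or a cycle) and, crucially, that it has a single boundary component rather than two --- that is, that the framework really does encode a M\"obius band and not an annulus. This is where one must use that the framework pairs $P_3$ with $P_4$: tracing the boundary curve of the band, one follows along outer edges of the witnesses and each crossing of one of the $v_i v_{i+1}$ edges either keeps us on the same side or switches sides according to \autoref{new99}, and the M\"obius-framework condition forces an odd number of switches, so the boundary is a single circle. A secondary nuisance is that $C_0$ as defined might carry a few extra vertices or edges of the $v_j$ on the witnesses that are not part of the clean strip complex; these are handled by further face/edge deletions and vertex splittings (splitting off the components of link graphs that do not meet the strip), which is routine since space restrictions allow arbitrary orders of such operations. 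Once the strip is isolated cleanly, applying \autoref{basic-topo} and the twist argument finishes the proof.
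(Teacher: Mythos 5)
You keep only $f$ and the six witness faces, but the paper's construction (and in fact the definition of a nice triangulation of the M\"obius strip) requires strictly more: it also keeps every face of $C$ corresponding to an edge of one of the paths $P_i$. These faces are not optional clutter; they are the fans of triangles at the vertices $v_i$ that close up the strip. Concretely, let $P_1$ have start vertex $s_1$ and endvertex $e_1$ in $L(v_1)$; the witness at $v_1v_2$ contributes the edge $e_1$ (as an edge of $C$) with face degree $1$ in your $C_0$, and the witness at $v_3v_1$ contributes $s_1$ with face degree $1$, unless $s_1=e_1$. So whenever any $P_i$ has more than one vertex (which is the generic situation --- e.g.\ the frameworks produced by \autoref{bad_moebius} come from face boundaries of $3$-connected plane link graphs and need not be trivial), your $C_0-f$ has boundary edges ending at $v_i$, hence $v_i$ itself lies on the boundary of the realisation. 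This violates the hypothesis of \autoref{basic-topo}, namely that no point of the central cycle $o$ lies on the boundary, and indeed in that case $C_0-f$ is not even a surface near $v_i$ (the link is a disjoint union of paths, not a single path or cycle). Note also that you cannot repair this by a space restriction: contracting the $P_i$ to single vertices would be a space minor, but the statement asserts a space \emph{restriction}, which disallows contraction.

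The paper's proof differs from yours exactly at this point. It deletes all faces except $f$, the six witnesses, and the faces corresponding to edges of the $P_i$; topologically deletes all edges except those of $f$ and those corresponding to vertices of the $P_i$; and then splits all vertices. Only after including the path-faces do the link graphs at $v_1,v_2,v_3$ become cycles (so these vertices are interior), and $D-f$ becomes a triangulated surface to which \autoref{basic-topo} applies. Your recognition that a small neighbourhood of $o$ carries a twist, and your use of \autoref{basic-topo}, are both in line with the paper; the gap is entirely in which faces you retain.
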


\begin{proof}
Let a M\"obius framework $(f,v_1,v_2,v_3, (P_i|i\in [6]))$ be given.
 We obtain $D$ from $C$ by deleting all faces except for $f$, the six witnesses of the M\"obius framework and those faces corresponding to edges of the paths $P_i$. Then we topologically delete all edges except for those of $f$ and those corresponding to vertices of the graphs $P_i$. We complete the definition of $D$ by splitting all vertices.

 We claim that $D$ is a M\"obius obstruction. For this, it remains to show that $D-f$ is a triangulation of a M\"obius-strip.
 Since $D-f$ is locally connected by construction, we deduce that $D-f$ is a triangulation of some connected surface with nontrivial boundary; note that splitting vertices ensures that the boundary is 1-dimensional.
 Let $o$ be the boundary cycle of $f$.
 As a small neighbourhood of $o$ includes a M\"obius-strip in $D-f$, the geometric realisation of $D-f$ is not an untwisted strip . So by \autoref{basic-topo}, $D-f$ is a triangulation of a M\"obius-strip.
\end{proof}

\begin{lem}\label{moebius-summary}
 If a locally 3-connected simplicial complex has a red face, then it has a M\"obius obstruction as a space restriction.
\end{lem}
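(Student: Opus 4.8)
The plan is to obtain \autoref{moebius-summary} by simply chaining the two main lemmas already established in this subsection, so that essentially no new work is required. First I would invoke \autoref{bad_moebius}: by hypothesis $C$ is a locally 3-connected simplicial complex possessing a red face $f$, which is exactly the input of that lemma, and it outputs a M\"obius framework $(f,v_1,v_2,v_3,(P_i\mid i\in[6]))$ of $C$. Then I would feed this framework into \autoref{moebius_restr}, which asserts that any complex carrying a M\"obius framework admits a M\"obius obstruction as a space restriction; applying it yields the desired space restriction of $C$.

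The only thing to check is that the hypotheses line up cleanly: \autoref{bad_moebius} needs local 3-connectivity and simpliciality (both assumed here), and \autoref{moebius_restr} needs only the existence of a M\"obius framework (just produced). Since the red face of the conclusion of \autoref{bad_moebius} is the same face $f$ used to build the framework, and \autoref{moebius_restr} does not require any further structure, the composition is immediate. I do not expect any genuine obstacle in this step — all the substance (building the six link-graph paths via Whitney's uniqueness and 3-connectivity in \autoref{bad_moebius}, and verifying via \autoref{basic-topo} that deleting down to $f$, the witnesses, and the faces along the $P_i$ leaves a nice triangulation of the M\"obius strip in \autoref{moebius_restr}) has already been carried out. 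Thus the proof is: combine \autoref{bad_moebius} with \autoref{moebius_restr}.
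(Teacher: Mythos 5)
Your proposal is exactly the paper's proof: combine \autoref{bad_moebius} with \autoref{moebius_restr}, and the hypotheses do line up as you checked. Nothing to add.
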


\begin{proof}
 Combine \autoref{bad_moebius} with \autoref{moebius_restr}.
\end{proof}

\subsection{Proof of \autoref{kura_intro_hom} assuming \autoref{emb_to_rot}}

We denote by $\Zcal$ the following list:
\begin{enumerate}
 \item cones over subdivisions of $K_5$ or $K_{3,3}$;
 \item simplicial combined cones from the five families of \autoref{expli-computation};
 \item M\"obius obstructions.
\end{enumerate}

\begin{rem}
 All members of $\Zcal$ are simply connected simplicial complexes.
\end{rem}

\begin{thm}\label{subdiv-kura}\label{rot_minor}
Let $C$ be a simply connected locally 3-connected 2-dimensional simplicial complex.
Then $C$ has a planar rotation system if and only if $C$ has no space restriction from
the list $\Zcal$.
\end{thm}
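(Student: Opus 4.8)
The plan is to prove the two directions separately, using the structural results assembled in this section. For the forward direction, suppose $C$ has a planar rotation system. I would argue contrapositively: each member of $\Zcal$ fails to have a planar rotation system, and since planar rotation systems are inherited by space restrictions (as a space restriction only deletes faces, edges and isolated vertices and splits vertices, none of which can destroy a planar rotation system — deleting faces and edges only restricts the data, splitting a vertex breaks a link graph into its components each of which is still a disjoint union of spheres, and deleting isolated vertices is trivial), a space restriction from $\Zcal$ would force $C$ itself to lack one. So it remains to check that the three families in $\Zcal$ admit no planar rotation system. For cones over subdivisions of $K_5$ or $K_{3,3}$, the link graph at the top is a subdivision of $K_5$ or $K_{3,3}$, hence non-planar, so no planar rotation system exists. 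For simplicial combined cones over a vertex-sum $G=G_1\oplus_e G_2$ with $G$ non-planar but $G_1,G_2$ planar: by \autoref{eg:alternative-construction} the link graph at the contraction vertex obtained by contracting $e$ is exactly $G$ (a vertex-sum of $L(v)$ and $L(w)$ via \autoref{obs1}), which is non-planar; so by \autoref{contr_pres_planar} a planar rotation system of the combined cone would induce a planar rotation system of the contracted complex, contradicting non-planarity of that link graph. For M\"obius obstructions, \autoref{prs-moe} is exactly the statement that they admit no planar rotation system.

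For the backward direction, suppose $C$ has no planar rotation system; I must produce a space restriction from $\Zcal$. Apply \autoref{rot_system_exists-simplycon}: $C$ either (i) has a vertex $v$ with $L(v)$ non-planar, or (ii) has an edge $e$ with $L(e)$ in $C/e$ non-planar, or (iii) admits a rotation framework with a face boundary containing an odd number of red edges, i.e.\ has a red face. In case (i), apply \autoref{cone-final} to get a space restriction that is a cone over a subdivision of $K_5$ or $K_{3,3}$, which is in $\Zcal$. In case (iii), apply \autoref{moebius-summary} to get a M\"obius obstruction as a space restriction, again in $\Zcal$. Case (ii) is the one needing a little care: I first want to reduce to the situation where $L(v)$ and $L(w)$ are both planar. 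If $L(v)$ (say) is non-planar, then I am back in case (i) and done. Otherwise both $L(v)$ and $L(w)$ are planar, and \autoref{combined-cone-summary} applies to give a space restriction that is a simplicial combined cone from one of the five families of \autoref{expli-computation}, which lies in $\Zcal$.

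The main obstacle, as I see it, is making sure the case analysis in the backward direction is genuinely exhaustive and that the reduction in case (ii) is legitimate — in particular, that when we land in case (ii) with $L(v)$ or $L(w)$ non-planar we can cleanly hand off to \autoref{cone-final} rather than \autoref{combined-cone-summary}, since the latter's hypothesis explicitly requires both $L(v)$ and $L(w)$ planar. This is a matter of splitting case (ii) into the sub-case where some endvertex link is non-planar (absorbed into case (i)) and the sub-case where both are planar (handled by \autoref{combined-cone-summary}); no new ideas are needed. A secondary point worth stating explicitly is the observation, used in the forward direction, that space restrictions preserve the property of admitting a planar rotation system — this is routine from the definitions of face deletion, topological edge deletion, vertex splitting and isolated-vertex deletion, but should be recorded.

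\begin{proof}
We first show that no member of $\Zcal$ admits a planar rotation system; since admitting a planar rotation system is preserved under deleting faces, deleting edges, deleting isolated vertices and splitting vertices, it follows that any $C$ with a planar rotation system has no space restriction in $\Zcal$.

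A cone over a subdivision of $K_5$ or $K_{3,3}$ has, as its link graph at the top, that very subdivision, which is non-planar; hence it has no planar rotation system. A simplicial combined cone over $G=G_1\oplus_e G_2$ with $G$ non-planar has, by \autoref{eg:alternative-construction} together with \autoref{obs1}, the non-planar graph $G$ as the link graph $L(e)$ at the contraction vertex of $e$; by \autoref{contr_pres_planar} a planar rotation system of the combined cone would induce one on this link graph, which is impossible. Finally, by \autoref{prs-moe} a M\"obius obstruction admits no planar rotation system.

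Conversely, suppose $C$ has no planar rotation system. By \autoref{rot_system_exists-simplycon}, one of the following holds: there is a vertex $v$ with $L(v)$ non-planar; there is an edge $e=vw$ such that $L(e)$ in $C/e$ is non-planar; or $C$ admits a rotation framework with a face boundary containing an odd number of red edges, i.e.\ $C$ has a red face.

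If $C$ has a vertex $v$ with $L(v)$ non-planar, then by \autoref{cone-final} $C$ has a space restriction that is a cone over a subdivision of $K_5$ or $K_{3,3}$, which lies in $\Zcal$.

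If $C$ has a red face, then by \autoref{moebius-summary} $C$ has a M\"obius obstruction as a space restriction, which lies in $\Zcal$.

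In the remaining case there is an edge $e=vw$ with $L(e)$ in $C/e$ non-planar. If $L(v)$ or $L(w)$ is non-planar, we are in the first case above. Otherwise both $L(v)$ and $L(w)$ are planar, so \autoref{combined-cone-summary} applies and $C$ has a space restriction that is a simplicial combined cone from one of the five families of \autoref{expli-computation}, which lies in $\Zcal$.
\end{proof}
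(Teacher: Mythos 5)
Your proof is correct and follows the same route as the paper: verify that the members of $\Zcal$ admit no planar rotation system and that this is preserved under space restrictions (forward), and apply \autoref{rot_system_exists-simplycon} plus the three lemmas \autoref{cone-final}, \autoref{combined-cone-summary}, \autoref{moebius-summary} to find a member of $\Zcal$ (backward). The paper applies \autoref{combined-cone-summary} in case (ii) without explicitly spelling out the sub-case split on whether $L(v),L(w)$ are planar; your version makes this reduction explicit, which is harmless and indeed consistent with how \autoref{rot_system_exists-simplycon} is proved (case (ii) there arises only after all vertex link graphs are found planar), so no new content is needed beyond what you wrote.
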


\begin{proof}
First we verify that the finitely many members of $\Zcal$ do that admit a planar rotation system. Clearly, cones over subdivisions of $K_5$ or $K_{3,3}$ do not admit planar rotation systems.
If $C$ is a simplicial combined cone from $\Zcal$ then it has an edge $e$ so that the link graph $L(e)$ at the contraction vertex $e$ is not planar.
Since the existence of planar rotation system is preserved by contracting non-loop edges by \autoref{contr_pres_planar}, such $C$ also do not admit planar rotation system.
Finally a M\"obius obstruction does not admit a planar rotation systems by \autoref{prs-moe}.
Since having a planar rotation system is preserved under taking space restrictions, it follows that if $C$ has a space restriction from $\Zcal$, then it does not admit a planar rotation system.

Conversely, assume that $C$ has no planar rotation system. Our aim is to find a space restriction from $\Zcal$.
By \autoref{rot_system_exists-simplycon}, $C$ has a vertex $v$ such that its link graph $L(v)$ is non-planar, has an edge $e$ such that in $C/e$ the link graph $L(e)$ at the contraction vertex $e$ is non-planar or $C$ admits a rotation framework so that there is a face boundary containing an odd number of red edges.
In these three cases we find a member of $\Zcal$ by applying \autoref{cone-final}, \autoref{combined-cone-summary} and \autoref{moebius-summary}, respectively.
\end{proof}

\begin{proof}[Proof of \autoref{kura_intro}.]
By \autoref{emb_to_rot} a simply connected simplicial complex is
embeddable in \Sthree\
if and only if it has a planar rotation system. So \autoref{kura_intro} is implied by
\autoref{rot_minor}.
\end{proof}

\begin{proof}[Proof of \autoref{kura_intro_hom}.]
By \autoref{combi_intro_extended} a
simplicial complex
with $H_1(C,\Fbb_p)=0$ is embeddable if and
only if it is simply connected and it has a planar rotation system. So \autoref{kura_intro_hom} is
implied by
\autoref{rot_minor}.
\end{proof}

\section{Space minors}\label{sec:space}

In this section we introduce the space minor relation, prove a few basic properties and show that the subset of $\Zcal$ of its space minor-minimal elements is finite.

A \emph{space minor} of a 2-complex is obtained by successively performing one of the five
operations.
\begin{enumerate}
 \item contracting an edge that is not a loop;
 \item deleting a face (and all edges or vertices only incident with that face);
 \item contracting a face of size one\footnote{Although we do not need it in our proofs, it seems
natural to allow contractions of faces of size one. This simply removes this face from $C$ and removes its unique edge from all other faces.} or two if its two edges are not loops;
 \item splitting a vertex;
 \item topologically deleting an edge.
\end{enumerate}

\begin{rem}
 A little care is needed with contractions of faces. This can create faces traversing edges
multiple times. In this paper, however, we do not contract faces consisting of two
loops and we only perform these operations on 2-complexes whose faces have size at most three.
Hence it could only happen that after contraction some face traverses an edge twice but in
opposite direction. Since faces have size at most three, these traversals are adjacent. In this
case
we omit the two opposite traversals of the edge from the face. We delete faces incident with no
edge. This ensures that the class of 2-complexes with faces of size at most three is closed under
face contractions.
\end{rem}

A 2-complex is \emph{3-bounded} if all its faces are incident with at most three edges.
The closure
of the class of simplicial complexes by space minors is the class of 3-bounded 2-complexes.

It is easy to see that the space minor operations preserve
embeddability in \Sthree\ (or in any
other 3-dimensional manifold), see \autoref{rem:obv} below, and the first three commute when well-defined.\footnote{In order for the
contraction of a face to be defined we need the face to have at
most two edges. This may force contractions of edges to happen before the contraction of the
face.}

\begin{lem}\label{well-founded}
 The space minor relation is well-founded\footnote{A partial order is \emph{well-founded} if every strictly decreasing chain is finite. For example, a well-ordering is a well-founded total order.}.
\end{lem}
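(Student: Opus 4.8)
The plan is to exhibit a monovariant: a function $\mu$ from $3$-bounded $2$-complexes to a well-ordered set (e.g.\ $\Nbb$ with the usual order, or a lexicographically ordered finite tuple of naturals) that strictly decreases under every one of the five space minor operations. Since the codomain is well-founded, no infinite strictly decreasing chain of space minors can exist, which is exactly the claim.

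First I would check each operation to see what quantity it affects. Contracting a non-loop edge decreases $|E|$ by one; topologically deleting an edge increases $|E|$ (by one per incident face), so $|E|$ alone is not monotone. Deleting a face decreases $|F|$ by one (and possibly removes some now-isolated edges and vertices), and does not increase $|E|$. Contracting a face of size one or two removes that face, so decreases $|F|$; it may also delete the face's unique edge from the other faces but creates no new edges and, as the remark explains, the class stays $3$-bounded without any face growing. Splitting a vertex increases $|V|$ but leaves $|E|$ and $|F|$ unchanged. So the natural candidate is a lexicographic tuple ordered by: first the number of faces $|F|$, then something that dominates both edge-count changes and the vertex-splitting. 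Concretely I would try $\mu(C) = \bigl(|F|,\ \text{(a bound on how much further splitting can still increase $|V|$)},\ |E|\bigr)$ or, more robustly, note that topological edge-deletion of an edge incident with $k$ faces replaces $1$ edge by $k$ edges but these new edges are each incident with exactly one face, so one can weight edges by (number of incident faces minus one) or simply bound the total number of edge-copies ever producible in terms of $\sum_{f} |f| \le 3|F|$.

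The cleanest route: set $\mu(C) = \bigl(\,|F|\,,\ \sum_{v\in V}(c(v)-1)\,,\ |E|\,\bigr)$ where $c(v)$ is the number of components of the link graph $L(v)$, ordered lexicographically on $\Nbb^3$. Face deletion and face contraction strictly decrease the first coordinate (and one checks the later coordinates do not matter since the first already drops). A vertex split replaces $v$ by $c(v)$ splitters each with connected link graph, so $\sum_v(c(v)-1)$ strictly decreases while $|F|$ is unchanged — this handles splitting. Edge contraction and topological edge deletion both leave $|F|$ unchanged and do not increase $\sum_v(c(v)-1)$ (contraction merges two link graphs along a vertex-sum, which cannot raise the total component count in the relevant sense; topological deletion of $e$ only replaces $e$ by parallels, again not raising it); and edge contraction strictly decreases $|E|$. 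The one genuinely delicate point is topological edge deletion: it increases $|E|$, so it must be forced to decrease an earlier coordinate. I would resolve this by observing that topologically deleting an edge $e$ incident with $\ge 2$ faces strictly increases $c(v)$ for an endvertex $v$ only to then be immediately eligible for splitting — better, replace the middle coordinate by $\sum_v (\text{size of largest component of }L(v))$ or directly bound: after topologically deleting every edge incident with two or more faces, every edge lies in $\le 1$ face, and such edges can never again be topologically deleted nontrivially nor contracted-into-growth; so I would instead let the middle coordinate count the number of edges incident with at least two faces, which topological deletion strictly decreases while face operations don't increase and the others leave alone except edge contraction (which then drops $|E|$ in the last slot when the middle slot is unchanged).

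The main obstacle, then, is pinning down a single coordinate ordering under which \emph{all five} operations visibly decrease, because topological edge-deletion pushes $|E|$ the "wrong" way and vertex-splitting pushes $|V|$ the "wrong" way, while the two face operations are the only ones that touch $|F|$. Once the right lexicographic tuple is fixed — first $|F|$, then (number of edges lying in $\ge 2$ faces), then $|E|$ — verifying the eight-or-so cases is routine bookkeeping using the fact that the paper's conventions keep everything $3$-bounded and keep link-graph incidences under control, and well-foundedness of $\Nbb^3$ under the lexicographic order finishes the argument.
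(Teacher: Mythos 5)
Your overall strategy — find a map into a well‑ordered set that strictly drops under every space‑minor operation — is the same in spirit as the paper's, but your final candidate
$\mu(C)=\bigl(|F|,\ \text{number of edges incident with $\ge 2$ faces},\ |E|\bigr)$
does \emph{not} strictly decrease under operation~4, vertex splitting. Splitting a vertex replaces $v$ by several copies and only reassigns incidences of $v$; it changes no face, no edge, and no edge--face incidence, so all three coordinates of $\mu$ are left exactly unchanged. Hence $\mu$ is not a strict monovariant, and your argument as stated does not rule out chains containing nontrivial splits. Your earlier candidate, with $\sum_v(c(v)-1)$ in the middle slot, has the dual defect you noticed along the way: topologically deleting an edge $e$ of face degree $k\ge 2$ replaces the vertex $e$ of $L(v)$ by $k$ vertices of degree one, which can disconnect $L(v)$ (for instance when $e$ is a cut‑vertex of $L(v)$), so this coordinate can strictly \emph{increase} under operation~5 while $|F|$ stays fixed, making that tuple go up. Each of your two candidates repairs one of the two ``wrong-direction'' operations but reintroduces the other, and you never exhibit a single tuple that covers all five.

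The gap is easy to close in your own style: append a fourth slot that splitting does decrease, e.g.\ $\bigl(|F|,\ \text{\# edges in $\ge 2$ faces},\ |E|,\ \sum_v(c(v)-1)\bigr)$ in $\Nbb^4$ with lexicographic order. Operations~2 and~3 drop the first slot; operation~5 drops the second; operation~1 drops the third and never raises the second; a nontrivial split leaves the first three fixed and strictly drops the fourth. (Operations~1 and~5 may raise the fourth slot, but they already drop an earlier one.) By contrast, the paper works with the single quantity $S=\sum_e \deg(e)$, the total number of edge--face incidences, which strictly drops under operations~1--3 and is invariant under~4--5, and then separately bounds the number of nontrivial applications of~4 and~5 by noting that the number of vertices and edges incident with some face stays bounded while nontrivial~4 raises $|V|$ and nontrivial~5 raises $|E|$. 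Either route is fine once vertex splitting is genuinely accounted for; your submitted tuple does not account for it.
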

\begin{proof}
The \emph{face degree} of an edge $e$ is the number of faces incident with $e$.
We consider the sum $S$ of all face degrees ranging over all edges.
 None of the five above operations increases $S$. And 1, 2 and 3
always strictly decrease $S$. Hence we can apply 1, 2 or 3 only a bounded number of times.

Note that operations 4 and 5 do not create isolated vertices or edges. Thus after a finite number of steps, we may assume
that we do not attempt any further operations 4 or 5 to isolated vertices or edges.
Applying splitting to a vertex with connected link graph and applying topological deletion of an edge to an edge incident with a single face is the identity operation.
We refer to such applications of 4 and 5 as \emph{trivial}.

Since no operation increases the sizes of the faces, the
total number of vertices and edges incident with faces is bounded.
Operation 4, when not the identity, increases the number
of vertices and preserves the number of edges. For operation 5 it is the other way round.
Hence all operations of 4 and 5 can only be nontrivially applied a bounded number of
times.
\end{proof}

  \begin{figure} [htpb]
\begin{center}
   	  \includegraphics[height=2cm]{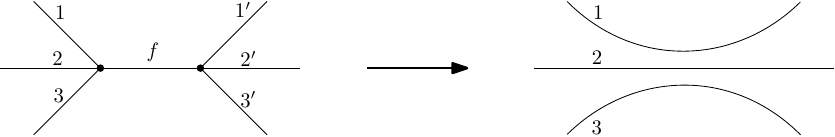}
   	  \caption{The operation that in the link graph corresponds to contracting a face $f$
only incident with a single edge $\ell$. The edge $\ell$ must be a loop. Hence in the link graph we
have two vertices for $\ell$ which are joined by the edge $f$. On the left we depicted that
configuration. Contracting $f$ in the complex yields the configuration on the right. Formally, we
delete $f$ and both its endvertices and add for each face $x$ of size at least
two  traversing $\ell$ an edge as follows. Before the contraction, the link graph contains two
edges corresponding to  the traversal of $x$ of $\ell$. These edges have precisely two distinct
endvertices that are not vertices corresponding to $\ell$. We add an edge between these two
vertices.
}\label{fig:contr_e}
\end{center}\vspace{-0.7cm}
   \end{figure}

\begin{lem}\label{rot_closed_down}
If a 2-complex $C$ has a planar rotation system, then all its space minors do.
\end{lem}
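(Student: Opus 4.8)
The plan is to check that each of the five space-minor operations preserves the existence of a planar rotation system; since \autoref{well-founded} tells us every space minor is obtained by finitely many such operations, this suffices. For operation~1 (contracting a non-loop edge) this is exactly the first half of \autoref{contr_pres_planar}. For operation~4 (splitting a vertex $v$) I would argue that splitting only replaces $L(v)$ by its connected components and redistributes the incidences accordingly, while leaving all other link graphs untouched; a planar rotation system of $C$ restricts to a planar rotation system on each component of $L(v)$ (a disjoint union of spheres restricted to a union of components is still a disjoint union of spheres), and hence gives a planar rotation system of the split complex. For operation~5 (topologically deleting an edge $e$): deleting $e$ replaces it by one parallel copy $e_f$ for each incident face $f$; the link graph $L(u)$ at an endvertex $u$ of $e$ changes by replacing the vertex $e$ (of some degree $d$) by $d$ vertices of degree one, one per incident face, and link graphs at other vertices are unchanged. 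Embedding this modified $L(u)$ in the sphere is done by taking the old embedding and splitting the vertex $e$ into $d$ leaves placed in the small faces around where $e$ used to be; this keeps the surface a sphere, so planarity is preserved, and one gets a planar rotation system of the new complex by reading off these rotators.

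For operation~2 (deleting a face $f$, together with edges or vertices left incident with no face): deleting $f$ removes, in each link graph $L(v)$ with $v$ on $f$, the edge(s) of $L(v)$ corresponding to the traversals of $f$ at $v$; deleting an edge from a graph embedded in a sphere leaves it embedded in a sphere, so each such $L(v)$ retains a planar rotation system (obtained by deleting that edge from each rotator). Removing now-isolated edges and vertices only deletes isolated vertices/whole pieces of some link graphs and causes no trouble. For operation~3 (contracting a face $g$ of size one or two whose edges are not loops): size two reduces to the size-one case after first contracting one of its edges, which is operation~1; and a size-one face must sit on a loop, which is excluded by the hypothesis on $g$ being a face whose edges are not loops — so in fact under the stated restrictions operation~3 is only ever applied in situations reducible to operations~1 and~2, or to the explicitly handled configuration of \autoref{fig:contr_e}, where one checks directly that the indicated surgery on the relevant link graph (deleting the two $\ell$-vertices and the $f$-edge and joining up the remaining two vertices for each traversing face) turns a sphere into a sphere.

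I expect the main obstacle to be operation~3, the contraction of small faces, because it is the only operation whose effect on link graphs is not simply a vertex-split or an edge/vertex deletion: it performs the genuinely 3-dimensional surgery pictured in \autoref{fig:contr_e}, and one has to verify carefully that identifying the appropriate pairs of link-graph vertices along the ``tube'' corresponding to the contracted face keeps every affected link graph planar, i.e.\ that the resulting cyclic orders still arise from an embedding in a disjoint union of spheres. The other four operations are routine once one observes that planarity of a rotation system is exactly sphericity of the associated surfaces of the link graphs, and that sphericity is preserved under deleting edges/vertices and under splitting vertices of a plane graph. I would state the whole proof as: ``By \autoref{well-founded} it suffices to treat a single operation; operations~1,~2,~4,~5 are handled by the remarks above (using \autoref{contr_pres_planar} for~1), and operation~3 by the surgery of \autoref{fig:contr_e}, each of which preserves the property that every link graph embeds in a disjoint union of spheres.''
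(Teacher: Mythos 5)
Your treatment of operations~1, 2, 4 and~5 is fine and essentially matches the paper (which also disposes of~2, 4, 5 as ``clearly'' and cites \autoref{contr_pres_planar} for~1). The problem is operation~3, where your argument has a genuine gap.

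First, the reduction ``size two reduces to the size-one case after first contracting one of its edges'' is not a correct reduction. Contracting a size-two face onto an edge leaves the two endvertices $u,v$ distinct and merges $e_1,e_2$ into a single surviving edge; by contrast, your composition (contract $e_1$, then contract the resulting size-one face) identifies $u$ with $v$ and destroys \emph{both} edges. You can see this concretely in the proof of \autoref{cone-red}: there a size-two face is contracted onto an edge so that the top of the cone and the contraction vertex remain joined by an edge, yielding the cone over $G/f$; your composition would collapse those two vertices as well, which is not the cone over $G/f$. So showing planarity is preserved by your composition does not show it is preserved by the actual operation. The paper instead observes that contracting a size-two face acts in each relevant link graph as an ordinary edge contraction, and plane graphs are closed under edge contraction (this is the remark recorded just after \autoref{minimal_minor}).

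Second, for the size-one case you say ``one checks directly that the indicated surgery \dots turns a sphere into a sphere,'' but this is exactly the point that needs an argument. The surgery of \autoref{fig:contr_e} deletes the two copies of the loop $\ell$ from $L(v)$ together with the edge $f$ and rejoins the dangling edges in pairs; whether this keeps the surface a sphere depends entirely on the two rotators at the two $\ell$-vertices. The paper makes this explicit: by the convention built into the definition of planar rotation systems at loops, those two rotators are reverses of one another, and only for that reason does the merge preserve the plane embedding. Without invoking that convention the claim is not justified, since gluing along two cyclic orders that are not reverse can raise the genus. Finally, a small misreading: the restriction ``if its two edges are not loops'' applies to size-two faces; size-one faces necessarily sit on a loop and are not excluded (the footnote to that clause says so explicitly), so the fig:contr\_e case is not some residual corner case but the entire size-one case.
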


\begin{proof}
By \autoref{contr_pres_planar} existence of planar rotation systems is preserved by contracting
edges that are not loops.  Clearly the operations 2, 4 and 5 preserve planar rotation systems as
well. Since contracting a face of size two corresponds to locally in the link graph contracting the
corresponding edges, contracting faces of size two preserves planar rotation systems as noted after
\autoref{minimal_minor}. The operation that corresponds to contracting a face of size one in the
link graph is
explained in \autoref{fig:contr_e}.
   Now let $f$ be an edge in the link graph corresponding to a face of size one.
   Since the rotators at the two endvertices of $f$ are reverse of one another (by the definition of planar rotation systems of link graphs at vertices incident with loops), contracting $f$ as a face of $C$ preserves the embedding of the link graph at the unique vertex incident with $f$ in the plane. Thus contracting
a face of size one preserves planar rotation systems.
\end{proof}

\begin{rem}\label{rem:obv}
 For all space minor operations, a topological way to prove that they preserve embeddability is to remove from the embedding a small neighbourhood of the region where a single operation takes place, do the operation in there and glue it back on.
 However, there are also algebraic proofs, which perhaps are less intuitive but easier to verify. We illustrate this by formally proving that contracting a face of size one, contraction of an edge and splitting a vertex preserves embeddability.

 Firstly, let $C$ be a 2-complex with a face $f$ of size one embedded in $\Sbb^3$. Extend $C$ to a simply connected 2-complex embedded in $\Sbb^3$ by \autoref{general} (making $C$ locally connected can easily be done by considering a \lq thickening\rq); call it $D$.
 Now $D/f$ has a planar rotation system by \autoref{rot_closed_down} and is simply connected, so by \autoref{combi_intro} is embeddable in $\Sbb^3$; thus $C/f$ is embeddable.

 Secondly, let $C$ be a 2-complex with a nonloop $e$. Then $C$ embeds in $\Sbb^3$ if and only if $C/e$ embeds in $\Sbb^3$. We can argue similar as above via \autoref{general}, refering to \autoref{sum_planar1} and \autoref{sum_planar2} in place of \autoref{rot_closed_down}

Thirdly, let $D$ be a 2-complex obtained from an embedded complex $C$ by splitting a vertex $v$. We obtain $D'$ from $D$ by adding a new vertex and introducing new edges from it to all splitters of $v$. It suffices to show that $D'$ is embeddable. Contracting the newly added edges from $D'$ yields the 2-complex $C$. Now use the above fact that a 2-complex $X$ with a nonloop edge $e$ embeds in $\Sbb^3$ if and only if $X/e$ embeds in $\Sbb^3$. Since $C$ is obtained from $D'$ by contracting a tree, we deduce that $D'$ embeds in $\Sbb^3$. Hence the subcomplex $C$ embeds in $\Sbb^3$.
\end{rem}

\begin{lem}\label{cone-red}
Let $H$ and $G$ be graphs without loops so that $H$ is a minor of $G$.
The cone over $G$ has the cone over $H$ as a space minor.
\end{lem}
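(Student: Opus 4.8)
The plan is to reduce the statement to the two generating cases of the minor relation: deletion of an edge of $G$ and contraction of a non-loop edge of $G$. Since $H$ is a minor of $G$, there is a sequence of edge deletions and edge contractions transforming $G$ into $H$ (deletions of isolated vertices play no role for cones, since the cone construction only adds a top vertex and edges to it). Space minors compose, so it suffices to show: (i) if $f$ is an edge of $G$, then the cone over $G-f$ is a space minor of the cone over $G$; and (ii) if $f=xy$ is a non-loop edge of $G$, then the cone over $G/f$ is a space minor of the cone over $G$.

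For (i), this is essentially \autoref{del-cone}: deleting the face of the cone corresponding to $f$ (together with the now face-free edge $f$, which we remove as an isolated edge, being careful about the bookkeeping in the definition of face deletion that also discards edges incident with no face) yields exactly the cone over $G-f$. One should note that deleting a face is space minor operation 2, and that discarding the resulting isolated edge is harmless. For (ii), I would contract, in the cone over $G$, the edge $f=xy$ of $G$ itself (viewed as a non-loop edge of the 2-complex, so operation 1 is available). After contracting $f$, the two faces of the cone incident with $f$ — the face on $f$ and the two spokes $x\,\textit{top}$ and $y\,\textit{top}$ — become a face of size two spanned by the single spoke $z\,\textit{top}$ (where $z$ is the identified vertex) traversed, a priori, twice; by the convention for $3$-bounded complexes recalled in the remark after \autoref{well-founded}, the two opposite traversals cancel and this face disappears. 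Every other face of the cone over $G$ corresponding to an edge $e$ of $G$ incident with $x$ or $y$ now has its spoke to $x$ or $y$ replaced by the spoke to $z$; a face corresponding to a parallel class collapsing in $G/f$ behaves the same way. What remains is precisely the cone over $G/f$: vertex set $V(G/f)$ plus the top, spokes from each vertex of $G/f$ to the top, and one face per edge of $G/f$.

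The main obstacle I expect is the careful treatment of parallel edges and of faces that degenerate under contraction. When contracting $f=xy$ in $G$, edges of $G$ that were parallel after contraction (e.g. two edges $xu$ and $yu$ becoming parallel $zu$ edges in $G/f$) correspond in the cone to two distinct faces sharing the spoke $z\,\textit{top}$ and the single edge $zu$; since $G/f$ as a graph is allowed parallel edges (the excerpt explicitly permits loops and parallel edges for graphs in this chapter), its cone is defined to keep both faces, so no further identification is needed and there is no clash. The other delicate point is that loops of $G/f$ could be created only if $G$ has a triangle through $f$, but the cone construction is only asserted for graphs without loops in the hypothesis — and indeed $G/f$ can have loops; here one checks that the statement \autoref{cone-red} as written assumes $H,G$ loopless, so along the contraction path producing $H$ one may always route through the intermediate graphs, being slightly informal about loops in strictly intermediate steps, or alternatively observe that the whole argument only uses that $f$ is a non-loop edge of the 2-complex at the moment of contraction, which is automatic since $f\in E(G)$ is not a loop of $G$. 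Assembling (i) and (ii) along a fixed minor-witnessing sequence, and invoking that space minors are transitive, completes the proof.
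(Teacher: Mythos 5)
Your overall strategy — reduce to single edge deletions and contractions, invoke \autoref{del-cone} for the deletion case, and handle the contraction case by contracting the corresponding edge of $G$ inside the cone — is exactly the paper's. But your execution of the contraction case has a genuine error. After contracting the edge $f=xy$ of $G$ inside the cone over $G$, the face corresponding to $f$ does have size two, but it is bounded by two \emph{distinct} parallel edges: the former spoke $x\,\textit{top}$ and the former spoke $y\,\textit{top}$, which are now both edges from the identified vertex $z$ to the top. Contracting the edge $xy$ identifies only its endvertices, not other edges incident with them, so the two spokes remain distinct. Consequently, the face is not a single edge traversed twice in opposite directions, and the cancellation convention from the remark after \autoref{well-founded} (which is about faces arising from \emph{face} contractions) simply does not apply here. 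If you stop after contracting $xy$, you are left with a 2-complex that has a doubled spoke from $z$ to the top and still carries the size-two face between them — this is not the cone over $G/f$.

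The missing step is an explicit use of space-minor operation 3: contract the face of size two (whose two edges are the parallel spokes, and are non-loops) onto a single edge. This merges the two parallel spokes into a single spoke from $z$ to the top, after which the resulting 2-complex is precisely the cone over $G/f$. This is the step the paper performs (``and then the face $f$ gets size two, and also contract it onto an edge''). Your treatment of the parallel-edges and loop bookkeeping in $G/f$ is fine and essentially matches what one needs; it is only this face-contraction step that is missing and replaced by an incorrect cancellation claim.
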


\begin{proof}
By \autoref{del-cone}, in order to make an inductive proof work it suffices to consider the case that $H=G/f$ for some edge $f$ of $G$.
Now in the cone over $G$, let $f$ be the face corresponding to the edge $f$ in the link graph at the top. Let $e$ be the edge of $f$ that is not incident with the top.
Now contract $e$ and then the face $f$ gets size two, and also contract it onto an edge. The resulting 2-complex is the cone over $H$.
\end{proof}

\begin{rem}(Motivation)
 Simplicial combined cones do not work so well with space minors. The problem is \autoref{cone-red} is not true for combined cones.
 To see this, note that the simplicial combined over an edge $e$ can contain a tetrahedron at the edge $e$, where the two faces containing $e$ are in both cones and each cone contributes one of the remaining two faces.
 Combined cones over $G_1\oplus_e G_2$ where every edge of $G_2$ is subdivided cannot contain tetrahedra, and in fact when embedded in 3-space the embedding has only one chamber. So such 2-complexes cannot have a space minor where the embedding has more than one chamber; for example, when the 2-complex includes a tetrahedron.
 This example shows that the minimal members of the five families of \autoref{expli-computation} are a little bit nasty to compute -- although they are clearly finite.

 However, this is completely artificial and we can reduce the constructions from \autoref{expli-computation} to five excluded minors if we give up that we want the excluded minors to be simplicial complexes, as follows.
 \end{rem}

Given two 2-complexes $C$ and $D$ with isomorphic subcomplexes $C'$ and $D'$, respectively, the 2-complex obtained from $C$ by \emph{gluing} $D$ at $C'$ is obtained from the disjoint union of $C$ and $D$ by identifying $C'$ and $D'$ via their isomorphism.
Given a vertex-sum $G=G_1\oplus_e G_2$, the \emph{combined cone} over this vertex-sum is obtained from the cone over $G_1$ by gluing the cone over $G_2$ via the subcomplex consisting of the edge $e$ and its incident faces in both cones such that in the resulting complex the tops of the cones are glued onto distinct endvertices of $e$.

\begin{rem}
 While simplicial combined cones are always simplicial complexes, the gluing process for combined cones may create parallel edges.
\end{rem}

It is straightforward to minor simplicial combined cones down to combined cones over the same graphs:

\begin{lem}
 Let $C$ be a simplicial combined cone over $G=G_1\oplus_e G_2$. Let $D$ be the 2-complex obtained from $C$ by topologically deleting all edges not incident with an endvertex of $e$.
 Then $D$ is equal to the combined cone $G=G_1\oplus_e G_2$.
 \qed
\end{lem}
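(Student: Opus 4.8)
The plan is to verify directly that the two $2$-complexes have the same vertices, edges, and faces. Recall that $C$ is the simplicial combined cone over $G = G_1 \oplus_e G_2$, so by definition it is obtained from the cone over $G_1$ by simplicially gluing the cone over $G_2$ along the vertex set of the faces incident with $e$; and the combined cone over $G_1 \oplus_e G_2$ is obtained from the cone over $G_1$ by gluing the cone over $G_2$ along the subcomplex consisting of $e$ and its two incident faces. The operation producing $D$ from $C$ topologically deletes every edge not incident with an endvertex $v$ or $w$ of $e$; since topological deletion commutes with the gluing (it is a local operation away from $e$), it suffices to understand what topological deletion of those edges does to each of the two cones separately.

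First I would analyse a single cone: let $K$ be the cone over a loopless graph $H$ with top $t$, and say the edge $e = vw$ is an edge of $H$ (so $e$ is not a spoke). The edges of $K$ are the edges of $H$ together with the spokes $ut$ for $u \in V(H)$; the only ones incident with $v$ or $w$ are $e$ itself, the spokes $vt$ and $wt$, and the edges of $H$ at $v$ or $w$. So in forming $D$ from $K$ we topologically delete every edge of $H$ not meeting $\{v,w\}$ and every spoke $ut$ with $u \notin \{v,w\}$. I would observe that, after deleting these edges, each face of $K$ corresponding to an edge $f$ of $H$ still has a private boundary (a copy of each deleted edge is kept for it), and the resulting complex is precisely the local model described in the definition of the (non-simplicial) combined cone: the faces corresponding to edges of $H$ meeting $\{v,w\}$ carry real spokes $vt$ or $wt$, while faces corresponding to other edges of $H$ carry only private copies. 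The key point is that topological deletion of the spoke $ut$ together with the two edges of $H$ on a triangular face $f$ turns $f$ into a face whose three boundary edges are private — equivalently, after splitting this would be a free triangle — matching exactly the combinatorial data of a combined cone.

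Next I would run this analysis on both $G_1$ and $G_2$ simultaneously and check the gluing compatibility: the vertices surviving with nontrivial incidences are $v$, $w$, the top of the cone over $G_1$, the top of the cone over $G_2$, and one vertex $v_f$ for each face $f$ incident with $e$ in either cone; the edge $e$ and its two incident faces survive in both cones and are identified by the gluing, exactly as in the definition of the combined cone, with the two tops glued onto the two distinct endvertices of $e$. Comparing the vertex sets, edge sets, and face sets of $D$ and of the combined cone over $G_1 \oplus_e G_2$ term by term then gives equality.

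I expect the main obstacle to be purely bookkeeping: making precise the claim that topological deletion "commutes with the gluing" and that the copies of deleted edges retained for the faces of $C$ line up with the private edges appearing in the definition of the combined cone — in particular checking that no unintended identifications or parallel-edge collisions occur at $e$ and its two incident faces, which are the only cells shared by the two sides of the gluing. Since the statement is marked with \qed and no proof is given, I would keep this verification terse, essentially citing \autoref{split-edge_remain_simplicial} and \autoref{is:cone-combined} for the structure of the cones and the gluing, and simply noting that the edge and face sets match by inspection.
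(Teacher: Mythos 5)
There is a genuine error at the very start of your single-cone analysis, and it propagates through the whole argument. You take $K$ to be the cone over $H$ with top $t$ and assume ``$e = vw$ is an edge of $H$ (so $e$ is not a spoke).'' But in the definition of a (simplicial) combined cone over $G_1\oplus_e G_2$, the symbol $e$ names the common vertex of $G_1$ and $G_2$ in the vertex sum, and the ``edge $e$'' of the cone is the \emph{spoke} from that vertex to the top. After the gluing, the two endvertices of $e$ are precisely the two tops (each identified with the copy of the vertex $e$ from the other cone), as stated explicitly: ``the tops of the cones are glued onto distinct endvertices of $e$.'' Consequently, in each cone \emph{every} spoke is incident with one of these two endvertices, so no spoke is ever topologically deleted. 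The edges actually deleted are exactly the edges of $G_1$ and $G_2$ not incident with the shared vertex $e$. Your claim that one deletes ``every spoke $ut$ with $u\notin\{v,w\}$,'' and the ensuing ``free triangle'' picture of faces whose three boundary edges are all private, therefore describes a different and incorrect complex: in $D$ the face for such an edge $h$ of $G_1$ keeps its two genuine spokes and only $h$ itself is replaced by a private copy.

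The second load-bearing claim, that ``topological deletion commutes with the gluing (it is a local operation away from $e$),'' also fails for the \emph{simplicial} gluing. Simplicial gluing identifies \emph{all} cells whose vertices lie in the gluing vertex set and match after identification; in particular it may identify an edge $h$ of $G_1$ with an edge $h'$ of $G_2$ when both join two glued neighbours of $e$, and these are exactly edges far from $e$. Since each $G_i$-edge sits in a unique face of its cone, topological deletion applied cone-by-cone is the identity, and regluing would just reproduce $C$, not the combined cone. The actual content of the lemma is the opposite of ``commutes'': the simplicial combined cone differs from the (non-simplicial) combined cone precisely by these extra identifications between $h$ and $h'$, and topologically deleting such an identified edge (which now lies on two faces) manufactures two private parallel copies, thereby \emph{undoing} the identification and recovering the combined cone verbatim, while deletion of a non-identified $G_i$-edge (one face) is a no-op. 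Replacing the spoke/non-spoke confusion and this last observation for the case of identified $G_i$-edges is what is needed; the cited \autoref{split-edge_remain_simplicial} and \autoref{is:cone-combined} are not the relevant tools here.
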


\begin{lem}\label{cone-red-combined}
Let $G$ be obtained from $H$ by subdividing an edge.
Let $C$ be the combined cone over $G=G_1\oplus_e G_2$  such that the vertex $e$ has degree more than two.
Then there is a subdivision edge $f$ of $G$ such that $C$ has a space minor over one of the vertex-sums $H= (G_1/f) \oplus_e G_2$ or $H= G_1 \oplus_e (G_2/f)$.
\end{lem}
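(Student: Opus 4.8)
The plan is to mimic the proof of \autoref{cone-red} as closely as possible, carrying the combined-cone structure along through the sequence of space-minor operations. Write $G = G_1 \oplus_e G_2$ and let $f$ be the edge of $G$ that gets subdivided to produce a graph isomorphic to (a supergraph of) $H$; say $f$ is replaced by a path $f'f''$ through a new degree-two vertex $x$. Since $f$ is an edge of $G_1$ or of $G_2$, by the symmetry of the two sides assume $f\in E(G_1)$. There are two sub-cases according to whether $f\in E(G_1)\cap E(G_2)$ or $f\in E(G_1)\sm E(G_2)$; in the first case contracting the subdivision vertex on one side should suffice, in the second case it only involves $G_1$. In either case I want to produce, inside the combined cone $C$ over the subdivided graph, the face corresponding to one of the two subdivision edges and an edge of that face not incident with $e$ or with a top, contract that edge so the face drops to size two, then contract the size-two face onto an edge, exactly as in the proof of \autoref{cone-red}. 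The result should be the combined cone over the vertex-sum obtained from the subdivided $G$ by undoing the subdivision on the appropriate side, i.e. $(G_1/f')\oplus_e G_2$ or $(G_1/f')\oplus_e(G_2/f')$.

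First I would set up notation precisely: in the combined cone over the subdivided graph, the new degree-two vertex $x$ (the subdivision point of $f$) is joined to the top $t_1$ of the $G_1$-cone by an edge; it carries two incident faces, one for $f'$ and one for $f''$. Let $f''$ be the subdivision edge of $G$ meeting the rest of $G_1$ at a vertex other than an endvertex of $e$ if possible (so that the relevant endvertex is not $e$); if $f$ was incident with $e$ this needs a short separate check using the hypothesis that $\degree_G(e)>2$ in the combined cone, which guarantees that after the contraction no loop or degenerate face is created. Then take the face $F$ of $C$ corresponding to $f''$: its boundary is the edge $f''$ together with the two cone edges from its endpoints to $t_1$. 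The edge $f''$ is a non-loop edge of $C$ (its two endpoints are $x$ and another vertex of $G_1$), and it is not a loop, so I may contract it; this identifies $x$ with that vertex, turns $F$ into a size-two face, and by the size-at-most-three convention introduces nothing pathological. Then contract $F$ (now of size two, with two non-loop edges, as long as we are not in the forbidden two-loops situation — which we are not, again by $\degree_G(e)>2$ and simpliciality of the starting complex) onto an edge. One checks directly that the resulting 2-complex is precisely the combined cone over $(G_1/f'')\oplus_e G_2$ (respectively $(G_1/f'')\oplus_e(G_2/f'')$ when $f\in E(G_1)\cap E(G_2)$), using that contraction of the face commutes with the gluing exactly as in \autoref{del-combined}, and that $G_1/f''$ is isomorphic to $H_1$.

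The main obstacle I expect is the bookkeeping in the intersection case $f \in E(G_1)\cap E(G_2)$: subdividing $f$ in $G$ subdivides it simultaneously on both sides of the vertex-sum, and in the combined cone the subdivision vertex $x$ is joined to both tops $t_1$ and $t_2$, with \emph{four} faces incident with its two subdivision edges rather than two. Undoing the subdivision then requires contracting a face on each side (or arguing that a single contraction of the appropriate edge $f''$, which lies in the shared subcomplex, propagates correctly through the gluing). I would handle this by first doing the edge contraction of $f''$ — which lives in the shared edge set and so is a legitimate single operation on $C$ — reducing both incident faces on each side to size two, and then contracting those size-two faces; the commutation of these face contractions with the gluing (as in \autoref{del-combined}) shows the outcome is the combined cone over $(G_1/f'')\oplus_e (G_2/f'')$. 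The only real content beyond routine checking is verifying that none of these three operations is blocked (no loop contraction, no contraction of a face made of two loops, no face-size overflow), and this follows from simpliciality of the ambient starting complex together with the standing hypothesis $\degree_G(e) > 2$, which ensures the two cone edges incident with $e$ stay distinct and non-loop throughout. I would close with the explicit identification of the resulting complex with the claimed combined cone, invoking \autoref{eg:alternative-construction} to recognise it from its underlying graph and cut.
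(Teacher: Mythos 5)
Your core argument is the paper's: use $\degree_G(e)>2$ to conclude $e$ is not the subdivision vertex and hence that some subdivision edge is not incident with $e$; in the combined cone, contract the base (non-cone) edge of the corresponding face, so the face drops to size two, and then contract that face onto an edge. That sequence is correct and is exactly how the paper proceeds, and it yields the combined cone over $(G_1/f)\oplus_e G_2$ (or $G_1\oplus_e(G_2/f)$ by symmetry).

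However, the ``intersection case $f\in E(G_1)\cap E(G_2)$'' you spend much of the proposal on is both unnecessary and incorrectly analysed, and it indicates a misunderstanding of the combined-cone structure. (Small preliminary slip: you write ``let $f$ be the edge of $G$ that gets subdivided'' — the subdivided edge belongs to $H$, not $G$; $G$ has the two subdivision edges.) More substantively: the subdivision edge you actually contract is, by your own choice, not incident with $e$, hence not a cut edge of $G$, hence lies in exactly one of $E(G_1)\sm E(G_2)$ and $E(G_2)\sm E(G_1)$. The contraction is therefore always one-sided; the formula $(G_1/f'')\oplus_e(G_2/f'')$ does not arise, and indeed is not even meaningful since $f''\notin E(G_2)$. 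Likewise your worry that the subdivision vertex $x$ carries ``four faces incident with its two subdivision edges rather than two'' is wrong: a face corresponding to a cut edge of $G$ is a \emph{single} face of the combined cone, because the two copies (one from each cone) are identified by the gluing, so $x$ is incident with exactly $\degree_G(x)=2$ faces no matter where $x$ sits. And even when $x$ is joined to the second top $t_2$, the two contractions you perform involve only the face $f''$ and the two cone edges to $t_1$, so this is irrelevant. Deleting the spurious case split and the ``short separate check'' leaves an argument that is correct and equivalent to the paper's.
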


\begin{proof}
 Since the vertex $e$ has degree more than two, one of the subdivision edges is not incident with $e$; denote that edge by $f$ and assume $f$ is an edge of $G_1$.
 Note that $H= (G_1/f) \oplus_e G_2$.
 Let $x$ be the unique edge of the face $f$ that is not incident with an endvertex of the edge $e$.
 Now in $C$ contract $x$ and then the face $f$ gets size two, and also contract it onto an edge. The resulting 2-complex is the combined cone over $H= (G_1/f) \oplus_e G_2$.
\end{proof}

   \begin{figure} [htpb]
\begin{center}
   	  \includegraphics[height=1.5cm]{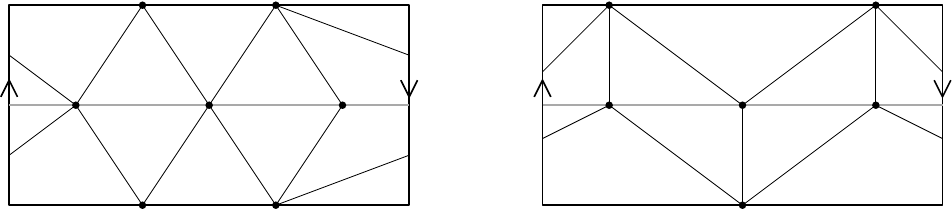}
   	  \caption{The only two nice space minor minimal triangulations of the M\"obius strip.}\label{fig:moe}
\end{center}\vspace{-0.7cm}
   \end{figure}

\begin{lem}\label{nice_moe}
 There are only two nice space minor minimal triangulations of the M\"obius strip; these are depicted in \autoref{fig:moe}
\end{lem}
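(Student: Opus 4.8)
The plan is to analyse nice triangulations of the M\"obius strip directly and see which of the space-minor operations applicable inside a triangulation (topological edge deletion and vertex splitting, together with the contraction-type moves that keep us within triangulations of a M\"obius strip) can be applied nontrivially; a triangulation is then space-minor minimal precisely when none of them applies nontrivially. First I would recall the structural restriction coming from the word \emph{nice}: the central cycle $o$ has length three, and every edge of face degree two either lies on $o$ or has exactly one endvertex on $o$. Combined with the computation in the proof of \autoref{basic-topo} --- namely that $V-E+F=0$ for such a triangulation, with the vertices partitioned into those on $o$ (three of them) and those on the boundary cycle, the edges partitioned into ``boundary-or-central'' edges (as many as there are vertices) and ``cross'' edges joining $o$ to the boundary (each lying in exactly two faces, each face containing exactly two of them) --- this pins down the combinatorial type up to the length $n$ of the boundary cycle and the ``shear'' with which the cross edges wind around. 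So the first main step is: show that a nice triangulation of the M\"obius strip is determined, up to isomorphism and subdivision, by finitely much data, and enumerate the small cases.

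Next I would identify the reductions. If the boundary cycle has a vertex $b$ of degree two in the triangulation (i.e.\ incident with only two cross edges and its two boundary edges), then the two triangles at $b$ share a cross edge; one can contract an edge at $b$ (or topologically delete and split) to remove $b$ while staying a nice triangulation of the M\"obius strip --- this is exactly the subdivision-reduction, and it strictly decreases the face-degree sum, so it is a genuine space minor step. Hence in a minimal example every boundary vertex meets at least three cross edges. Dually, I would check when a cross edge can be removed: a cross edge that can be ``flipped'' or deleted without destroying the property of being a nice triangulation of the M\"obius strip gives another reduction. Running these reductions, the boundary length $n$ is forced down to its minimum possible value consistent with the niceness constraint and with the surface actually being a M\"obius strip (rather than an untwisted strip --- this is where \autoref{pre-basic-topo} and the twist come in). I expect $n$ is forced to be $3$ or $4$, giving exactly the two triangulations in \autoref{fig:moe}.

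Concretely, the steps in order: (1) invoke $V-E+F=0$ and the niceness partition to parametrise nice triangulations by boundary length $n$ and a winding parameter; (2) show a boundary vertex of cross-degree two yields a space-minor reduction, so in a minimal one all boundary vertices have cross-degree $\ge 3$; (3) deduce $2(\text{\#cross edges}) = \sum_{b}(\text{cross-degree of }b) \ge 3n$ and combine with the count $\#\text{cross edges} = \#\text{faces}$ and each face having two cross edges, i.e.\ $\#\text{cross edges} = \tfrac32 F$ and $F = 2\cdot(\#\text{cross edges})/\,$... to bound $n$ from above; (4) for the finitely many remaining $(n,\text{winding})$ check by hand which are M\"obius (not untwisted) and which admit no further nontrivial reduction, and verify the two survivors are exactly those of \autoref{fig:moe}; (5) conclude. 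The main obstacle will be step (2)--(3): making the ``boundary vertex of small cross-degree is reducible'' argument airtight as a space-\emph{minor} step inside the class of M\"obius obstructions (one must re-attach the central face and check the reduced object is again a M\"obius obstruction, i.e.\ the reduced triangulation is still nice and still a M\"obius strip, not an untwisted strip), and correctly bounding $n$ so that the residual case analysis is genuinely finite and small.
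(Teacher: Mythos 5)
Your overall shape is right and is essentially the paper's: double-count the edges between the central triangle $o$ and the boundary cycle $b$, restrict boundary degrees by exhibiting space-minor reductions, and do a small residual case analysis. But step (2) is wrong, and in a way that changes the answer. You claim that in a minimal nice triangulation every boundary vertex has at least three cross edges (equivalently, degree $\geq 5$). That is false: the triangulation with boundary degree sequence $5,4,5,4$ is space-minor minimal, and it has two boundary vertices of cross-degree $2$. The reduction you describe (contract a boundary edge at a low-degree boundary vertex, then contract the resulting $2$-gon) does let you kill a boundary vertex of cross-degree $1$ (degree $3$) unconditionally. For a boundary vertex of cross-degree $2$, contracting a boundary edge to a cross-degree-$3$ neighbour creates a parallel pair of cross edges to one vertex of $o$ that does \emph{not} collapse via a $2$-gon (the merged vertex would want cross-degree $3+2-1=4>3$), so the result is not a simplicial complex and you cannot conclude a smaller M\"obius obstruction. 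The reduction at cross-degree-$2$ vertices works only when two \emph{adjacent} boundary vertices both have cross-degree $2$; that is the correct restriction, and it is what yields the two solutions of $2d_5+d_4=6$, namely $(d_5,d_4)=(3,0)$ and $(2,2)$. With your too-strong bound, steps (3)--(4) would deliver only the $5,5,5$ triangulation and miss the second minimal example.

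There is also a minor slip in step (3): each cross edge has exactly one endvertex on $b$, so $\sum_{b\text{-vertices}}\text{cross-deg}=\#\text{cross edges}$, not twice that. The correct identity, which the paper uses as its double count, compares $\sum_{b\text{-vertices}}\text{cross-deg}=3d_5+2d_4$ with a count from the side of $o$ (where each of the three vertices of $o$ contributes a top-degree and a bottom-degree in $\{1,2\}$, and each degree-$5$ boundary vertex forces exactly one of these six quantities to equal $1$), giving $3d_5+2d_4=12-d_5$. Fixing step (2) to the weaker but correct pair of conditions -- degree $\geq 4$, and no two adjacent degree-$4$ boundary vertices -- and then running the arithmetic gives exactly the two triangulations in \autoref{fig:moe}.
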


\begin{proof}
Let $C$ be a nice triangulation of the M\"obius strip that is space minor minimal. So all vertices are on the central cycle $o$ or on the boundary cycle, which we call $b$. We give $b$ the structure of a directed cycle.
Since $C$ is a triangulation, it is uniquely determined by the sequence of degrees of the vertices on $b$, read in the order in which they appear on $b$.
Since $o$ contains exactly three vertices, each vertex of $b$ has degree at most 5.
If a vertex of $b$ has degree three, contract one of its incident edges on $b$ and the resulting face of size two. We obtain a simplicial complex and thus a smaller M\"obius obstruction. So by minimality of $C$, all vertices of $b$ have degree at least 4.
If two vertices that are adjacent on $b$ have degree four, then we can contract the edge joining them and the resulting face of size two and obtain a simplicial complex, and thus a smaller M\"obius obstruction.
So by minimality of $C$, on $b$ no two vertices of degree 4 are adjacent.
We denote by $d_5$ the number of degree five vertices of $b$ and by $d_4$ the number of degree four vertices of $b$.
\begin{sublem}\label{calc12}
 $12-d_5=3d_5+ 2d_4$.
\end{sublem}

\begin{cproof}
We count the edges between $o$ and $b$ in two different ways.
Since every degree five vertex of $b$ contributes three edges and every degree four vertex of $b$ contributes two edges, this number is $3d_5+ 2d_4$.

Now let $v$ be a vertex of $o$, and let $N$ be a small open neighbourhood around the point $v$ in the M\"obius strip. The cycle $o$ divides $N$ into two components; we shall refer to these components as \emph{top} and \emph{bottom}.
The \emph{top degree} of $v$ is the number of edges incident with $v$ that are in the top component.
If the top degree of a vertex of $o$ is more than two, since its top-degree neighbours are consecutive on $b$, it has a degree three neighbour, which is not possible as shown above.
So the top degree is at most two. The top degree cannot be zero as $o$ does not bound a disc as the triangulation is nice. So the top degree is one or two.
If the top degree is one, then the unique top-neighbour $x$ on $b$ has degree five and the edge from $v$ to $x$ is its middle edge in the rotator of $x$ in the embedding.
Similarly we defined the \emph{bottom degree}, and do these same analysis.
Let $t$ be the number of vertices on $o$ whose top degrees are one plus the number of vertices on $o$ whose bottom degree are one.
By the above $t=d_5$.
So the number of edges across is $12-d_5$ (indeed, we have 3 top-degrees and 3 bottom-degrees and if $d_5=0$ all count twice).
\end{cproof}

Rearranging the equation of \autoref{calc12} yields $6=2d_5+d_4$. This equation has the non-negative integer solutions $(d_5=3, d_4=0)$ and $(d_5=2,d_4=2)$. Since $b$ has length at least three, we have that the additional constraint that $d_5+d_4\geq 3$ and since no two degree four vertices can be adjacent on $b$, we conclude that $d_5\geq 2$. So there are no other solutions that are relevant to us.

The solution $(d_5=3, d_4=0)$ gives the sequence $5,5,5$ of degrees, which corresponds to the right triangulation of \autoref{fig:moe}.
Since no two vertices of degree four can be adjacent on $b$, the solution $(d_5=2,d_4=2)$ can only give the (cyclic) sequence $5,4,5,4$; this sequence corresponds to the left triangulation of \autoref{fig:moe}.

\end{proof}

\begin{cor}\label{nice_moe_cor}
 There are only two  space minor minimal M\"obius obstructions; these are obtained from the simplicial complexes of \autoref{fig:moe} by attaching a face at the central cycle.\qed
\end{cor}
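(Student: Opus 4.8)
The plan is to read Corollary~\ref{nice_moe_cor} off the proof of Lemma~\ref{nice_moe}: the distinguished face of a M\"obius obstruction plays no role in the reductions performed there. Recall that a M\"obius obstruction is $M=T+f$, where $T$ is a nice triangulation of the M\"obius strip and $f$ is the face bounded by its central $3$-cycle $o$. Note that $f$ can be recovered from $M$: it is the unique face of $M$ whose deletion lowers the face degree of every edge of $o$ from three to two, since $o$ bounds no face of $T$ and hence no triangle of $T$ meets two edges of $o$; thus the three edges of $o$ are exactly the edges of $M$ of face degree three, all other edges having face degree at most two.

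First the forward direction. Let $M=T+f$ be minimal in the space minor order among M\"obius obstructions, and let $T_1,T_2$ be the two triangulations of Figure~\ref{fig:moe}. Suppose $T\notin\{T_1,T_2\}$. By the proof of Lemma~\ref{nice_moe}, $T$ then has a vertex of degree three on the boundary cycle $b$, or two consecutive vertices of degree four on $b$; contracting the corresponding $b$-edge together with the size-two face it creates yields a strictly smaller nice triangulation $T'$. This operation is supported on $b$ and so involves neither $o$ nor $f$; applying it to $M$ therefore yields $T'+f$, again a M\"obius obstruction (as $T'$ is nice with central cycle $o$) and strictly smaller than $M$, contradicting minimality. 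Hence $T\in\{T_1,T_2\}$ and $M\in\{T_1+f,T_2+f\}$. (This is just the reading of the proof of Lemma~\ref{nice_moe} in which one keeps track of the attached face, as that proof already implicitly does when it speaks of ``a smaller M\"obius obstruction''.)

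For the converse I would check that $T_1+f$ and $T_2+f$ are two distinct, space minor minimal M\"obius obstructions. They are distinct, and in fact incomparable. On the one hand, $T_1+f$ is not a space minor of $T_2+f$: the sum of face degrees over all edges is non-increasing under space minors by the proof of Lemma~\ref{well-founded}, and a short computation gives $33$ for $T_1+f$ against $30$ for $T_2+f$. On the other hand, $T_2+f$ is not a space minor of $T_1+f$: in a space minor chain realising $T_2+f\le T_1+f$, the three edges of the distinguished triangle of $T_2+f$ have face degree three, so their ancestors in $T_1+f$ can only be the three edges of $o$; this forces the chain to keep the distinguished face, whence projecting the chain away from that face realises $T_2\le T_1$, contradicting the space minor minimality of $T_1$ asserted in Lemma~\ref{nice_moe}. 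Finally, each of $T_1+f$ and $T_2+f$ has, by well-foundedness, some minimal M\"obius obstruction as a space minor, which by the forward direction is $T_1+f$ or $T_2+f$; since the two are incomparable, each is minimal itself, so these are exactly the two space minor minimal M\"obius obstructions.

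The only delicate point, and the one I expect to be the main obstacle, is the claim just used that a space minor chain between two M\"obius obstructions may be assumed to preserve the distinguished face --- equivalently, that after deleting or contracting the distinguished face one can never return to a M\"obius obstruction. I would settle this by tracking face degrees: the edges of face degree three in $T_1+f$ are precisely the three edges of $o$ carrying the distinguished face, a M\"obius obstruction always has exactly three such edges forming a triangle, and a short case analysis of the five space minor operations within the small complexes involved (in particular, handling the merges produced by contracting size-two faces) shows this configuration cannot be recreated once destroyed. Granting this, the corollary follows immediately from Lemma~\ref{nice_moe}.
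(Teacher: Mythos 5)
The paper derives this corollary from \autoref{nice_moe} with no further argument: the reductions carried out in the proof of that lemma act only on the boundary cycle $b$ of the triangulated strip, so they commute with attaching the face $f$ at the central cycle $o$, and a space-minor-minimal M\"obius obstruction therefore projects to a space-minor-minimal nice triangulation. Your first paragraph spells out exactly this observation, and it is correct.

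Your remaining paragraphs address a point the paper leaves implicit, namely that $T_1+f$ and $T_2+f$ are incomparable in the space minor order, so that there really are two minimal M\"obius obstructions rather than one. The face-degree-sum argument for $T_1+f\not\le T_2+f$ is fine, using the monotonicity established in the proof of \autoref{well-founded}. For the reverse direction you flag a gap yourself, and it is a real one: the assertion that a space minor chain from $T_1+f$ to $T_2+f$ must retain the distinguished face does not follow just from locating the three face-degree-three edges, because contracting a size-two face merges two edges of face degrees $d_1,d_2$ into a single edge of face degree $d_1+d_2-2$, which can exceed both; so the face-degree-three edges of the target complex need not have ancestors among the three edges of $o$ in $T_1+f$. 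The gap is fillable along the lines you sketch: once every edge of an intermediate complex has face degree at most two, no space minor operation can ever raise a face degree above two (edge contraction, vertex splitting and topological edge deletion do not increase face degrees, face deletion only decreases them, and size-two face contraction gives $d_1+d_2-2\le 2$ when $d_1,d_2\le 2$), so a chain that deletes the distinguished face can never again reach a M\"obius obstruction; it remains only to exclude chains that contract $f$, which requires first contracting an edge of $o$, and that case does demand the explicit check you defer. So as written your proof is incomplete at exactly the step you identify, though to be fair the paper does not address the incomparability at all.
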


Let $\Zcal'$ consist of the cones over $K_5$ and $K_{3,3}$, the five combined cones over $K_5$ and $K_{3,3}$ (compare \autoref{expli-computation}) and the M\"obius obstructions from \autoref{nice_moe_cor}.

\begin{lem}\label{final-list}
Every element of $\Zcal$ has a space minor in $\Zcal'$. The set $\Zcal'$ has only nine elements.
\end{lem}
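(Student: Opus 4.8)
The plan is to treat the three types of elements of $\Zcal$ in turn, descending in each case to a member of $\Zcal'$ by means of the lemmas of this section. The first type is immediate: if $C$ is the cone over a subdivision $G'$ of $K_5$ or of $K_{3,3}$, then $K_5$ (respectively $K_{3,3}$) is a minor of $G'$, so by \autoref{cone-red} the cone over $K_5$ (respectively $K_{3,3}$) is a space minor of $C$, and both of these cones lie in $\Zcal'$.

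For the second type, let $C$ be a simplicial combined cone from one of the five families of \autoref{expli-computation}, say over a vertex-sum $G=G_1\oplus_e G_2$ with $G$ a subdivision of $K_5$ or $K_{3,3}$. First topologically delete all edges of $C$ not incident with an endvertex of $e$; by the (unlabelled) lemma preceding \autoref{cone-red-combined} this yields the combined cone over the same vertex-sum $G=G_1\oplus_e G_2$. Then apply \autoref{cone-red-combined} repeatedly to remove subdivision vertices one at a time: viewing the current graph as obtained from a graph with one fewer subdivision vertex by subdividing an edge, each application produces, as a space minor, the combined cone over a vertex-sum with one fewer subdivision vertex and the same defining cut. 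The hypothesis that the vertex-sum vertex has degree more than two holds throughout, since by \autoref{expli} it has degree at least three on each side to begin with, and the contractions performed by \autoref{cone-red-combined} only involve subdivision edges not incident with that vertex and hence never decrease its degree. After finitely many steps no subdivision vertices remain; since by \autoref{eg:alternative-construction} a combined cone is determined by its underlying graph together with its defining cut, and these data are unchanged by the process, the result is the combined cone over the corresponding vertex-sum of $K_5$ or $K_{3,3}$, that is, one of the five combined cones in $\Zcal'$.

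For the third type, let $M$ be a M\"obius obstruction. Whenever a M\"obius obstruction is not space-minor minimal among M\"obius obstructions it has, by definition, a proper space minor that is again a M\"obius obstruction; starting from $M$ and iterating, and using that space minors compose and that the space minor relation is well-founded (\autoref{well-founded}), this process must terminate, producing a space minor of $M$ that is a M\"obius obstruction and is space-minor minimal among all M\"obius obstructions. By \autoref{nice_moe_cor} this is one of the two M\"obius obstructions in $\Zcal'$, so $M$ has a space minor in $\Zcal'$. Finally, for the count, $\Zcal'$ is the union of the two cones over $K_5$ and $K_{3,3}$, the five combined cones of \autoref{expli-computation} (one for $K_5$, four for $K_{3,3}$), and the two minimal M\"obius obstructions of \autoref{nice_moe_cor}, so $|\Zcal'|=2+5+2=9$.

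I expect the middle case to be the main obstacle. One must verify carefully that the iterated application of \autoref{cone-red-combined} is legitimate — in particular that the degree hypothesis on the vertex-sum vertex is preserved at every step — and that the process genuinely terminates at one of the prescribed five combined cones, rather than at a combined cone over some other non-subdivided graph or with a different cut. The remaining two cases are routine given \autoref{cone-red}, \autoref{well-founded} and \autoref{nice_moe_cor}.
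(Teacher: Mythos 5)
Your proof is correct and follows essentially the same route as the paper's: the paper's (terser) argument likewise reduces the three types of obstruction in $\Zcal$ by applying \autoref{cone-red}, \autoref{cone-red-combined}, and \autoref{nice_moe_cor} respectively, and then counts $2+5+2=9$. Your additional care about the iterated use of \autoref{cone-red-combined} — preservation of the degree hypothesis at $e$, invariance of the underlying graph-plus-cut data, and termination — is exactly what makes that step rigorous, and your well-foundedness argument for the M\"obius case is a clean way to reach a space-minor-minimal M\"obius obstruction before invoking \autoref{nice_moe_cor}.
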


\begin{proof}
Recall that the list $\Zcal$ is the combination of the lists given in \autoref{cone-final},  \autoref{combined-cone-summary} and \autoref{moebius-summary}.
Now apply to these lists \autoref{cone-red}, \autoref{cone-red-combined} and \autoref{nice_moe_cor}, respectively.
Finally, $|\Zcal'|=2+5+2=9$.
\end{proof}

\section{Concluding remarks}\label{concl77}

In an earlier version of this paper, which is available on arxiv \cite{3space1v3},
we use \autoref{rot_system_exists} instead of \autoref{rot_system_exists-simplycon} to compute the excluded minors.
The disadvantage of this earlier approach is that it is much more complicated and the list of excluded minors is much longer (in fact in that earlier version we did not manage to preserve simple connectedness and being a simplicial complex for all the obstructions, which added clutter to the list). However, the advantage is that with the old way one can characterise the existence of a planar rotation system combinatorically in terms of finitely many obstructions --  without the assumption of simple connectedness; details can be found in
{\cite[Theorem 7.1]{{3space1v3}}}. For general (not necessarily locally 3-connected) simply connected simplicial complexes, we take the road via \autoref{rot_system_exists} (as the stronger assumptions of \autoref{rot_system_exists-simplycon} do not seem available to us in this more general context), see \autoref{chapterV} for details.

\autoref{kura_intro_hom} is a structural characterisation of which locally 3-connected
2-dimensional simplicial complex $C$ with trivial first homology group embed in 3-space. Does
this have algorithmic consequences?
The methods of this chapter give an algorithm that checks in linear\footnote{Linear in the
number of faces of $C$.} time  whether a locally 3-connected 2-dimensional simplicial complex
has a planar rotation system. For general 2-dimensional simplicial complex we obtain a quadratic
algorithm, see \autoref{chapterV} for details. But how easy is it to check
whether
$C$ is simply connected? For simplicial complexes in general this is not decidable; indeed for
every finite presentation of a group one can build a 2-dimensional simplicial complex that has that
fundamental group.
However, for simplicial complexes that embed in some (oriented) 3-manifold; that is, that have a
planar
rotation system, this problem is known as the sphere recognition problem.
Recently it was shown that sphere recognition lies in NP \cite{{Iva01},{Sch11}} and co-NP
assuming the generalised Riemann hypothesis \cite{{HeuZen16},{Zen16}}. It is an open question
whether there is a polynomial time algorithm.

	\chapter{The topological part of the proof of \autoref{kura_intro_hom}}\label{chapterII}

\section{Abstract}
 We prove that 2-dimensional simplicial complexes with trivial first homology group have
	topological embeddings in 3-space if and only if there are embeddings of their link graphs
in the plane that are compatible at the edges and they are simply connected.

\section{Introduction}

Here we give combinatorial characterisations for when certain simplicial complexes embed in
3-space. This completes the proof of a 3-dimensional analogue of Kuratowski's characterisation
of
planarity for graphs, started in \autoref{chapterI}.

\vspace{.3cm}

A (2-dimensional) simplicial complex has a topological embedding in
3-space if and only if it has a piece-wise linear embedding if and only if it has a differential
embedding \cite{{Bin59},{Hatcher3notes},{Pap43}}.\footnote{However this is not equivalent to having
a linear embedding, see \cite{Bre83}, and \cite{mtw_hardness} for further references. }
Perelman proved that every compact simply connected 3-dimensional
manifold is isomorphic to the 3-sphere \Sthree\ \cite{{{Perelman1}, {Perelman3},
{Perelman2}}}.  In
this chapter we use Perelman's theorem to prove a combinatorial characterisation of which
simply
connected simplicial complexes can be topologically embedded into $\Sbb^3$ as follows.

The \emph{link graph} at a vertex $v$ of a simplicial complex is the graph whose vertices
are the edges incident with $v$ and
whose edges are the faces incident with $v$ and the incidence relation is as in $C$, see
\autoref{fig:intro}.    \begin{figure} [htpb]
\begin{center}
   	  \includegraphics[height=3cm]{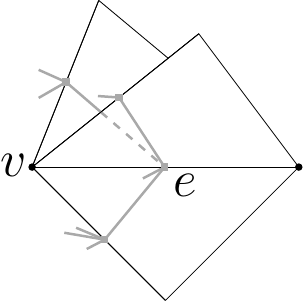}
   	  \caption{The link graph at the vertex $v$ is indicated in grey. The edge
$e$ projects down to a vertex in the link graph. The faces incident with $e$
project down to edges. }\label{fig:intro}
\end{center}\vspace{-0.7cm}
   \end{figure}
Roughly, a \emph{planar rotation system} of a simplicial complex $C$ consists of cyclic
orientations $\sigma(e)$ of the faces incident
with each edge $e$ of $C$ such that there are embeddings in the plane of the link graphs
such that at vertices $e$
the cyclic orientations of the incident edges agree with the cyclic orientations $\sigma(e)$.
It is easy to see that if a simplicial complex $C$ has a topological embedding into some oriented
3-dimensional manifold, then it has a
planar rotation system. Conversely, for simply connected simplicial complexes the
existence of
planar rotation systems is enough to characterise embeddability into \Sthree:

\begin{thm}\label{combi_intro}
 Let $C$ be a simply connected simplicial complex. Then $C$ has a topological embedding into
$\Sbb^3$ if and only if $C$ has a planar rotation system.
\end{thm}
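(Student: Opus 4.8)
The forward implication is the routine one, sketched already above: an embedding of $C$ into an oriented $\Sbb^3$ records, around each edge $e$, the cyclic order $\sigma(e)$ in which the incident faces leave $e$ (read off from the orientation of a small transverse disc), while the boundary sphere of a small ball about the image of a vertex $v$ meets $C$ in an embedded copy of $L(v)$ whose rotation at each vertex $e$ agrees, up to the orientation convention, with $\sigma(e)$; so the induced rotation system on each link graph is planar. The whole content of the theorem is the converse, and the plan is the following: from a planar rotation system $\Sigma=(\sigma(e)\mid e\in E(C))$ of a simply connected simplicial complex $C$ build a compact $3$-manifold $M$ that contains $C$ and deformation retracts onto it --- a \emph{thickening} of $C$ dictated by $\Sigma$ --- and then identify $M$, after capping off its boundary, with $\Sbb^3$ by invoking Perelman's theorem.

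\textbf{Constructing the thickening.} Replace each vertex $v$ by a $3$-ball $B_v$ and fix an embedding of $L(v)$ into $\partial B_v\cong\Sbb^2$ realising the rotators that $\Sigma$ induces at $v$; this is possible precisely because $\Sigma$ is planar. Replace each edge $e$ by a solid tube $T_e\cong D^2\times[0,1]$ and each face $f$ (a triangle) by a \emph{plate} $P_f$, a thickened $2$-cell. Glue the two ends $D^2\times\{0\}$, $D^2\times\{1\}$ of $T_e$ onto disc neighbourhoods, inside the two relevant boundary spheres, of the vertices of the link graphs corresponding to $e$; glue each plate $P_f$ along three rectangular strips on its side to the tubes of the three edges of $f$; and arrange the cyclic order in which the strips coming from the various faces wrap around the lateral boundary $\partial D^2\times[0,1]$ of a tube $T_e$ to be $\sigma(e)$. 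The compatibility condition built into the definition of a planar rotation system --- that the rotator induced at $e$ from one endpoint is $\sigma(e)$ and the one induced from the other endpoint is its reverse --- is exactly what makes these gluings consistent at both ends of every tube and along every edge of every face. As $B_v,T_e,P_f$ are $3$-manifolds and every gluing is along a codimension-$0$ submanifold of their boundaries, $M$ is a compact $3$-manifold with boundary, and collapsing each tube to its core, each plate to its face, and each ball to its vertex displays $C$ as a deformation retract of $M$. Verifying that these local pieces genuinely assemble into a manifold --- equivalently, that the link of each vertex of $C$ in $M$ is a $2$-sphere, which is where planarity of $\Sigma$ rather than mere planarity of each individual link graph is used --- is, I expect, the main technical obstacle; the rest is essentially formal bookkeeping.

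\textbf{Recognising $\Sbb^3$.} Since $M\simeq C$, we have $\pi_1(M)=\pi_1(C)=1$ and $H_*(M;\Fbb_2)\cong H_*(C;\Fbb_2)$; in particular $H_1(M;\Fbb_2)=0$. The half-lives-half-dies consequence of Poincar\'e--Lefschetz duality then forces $H_1(\partial M;\Fbb_2)=0$, so every component of $\partial M$ is a closed surface with trivial first $\Fbb_2$-homology, hence a $2$-sphere. Cap off each of these spheres with a $3$-ball to obtain a closed $3$-manifold $\hat M\supseteq M\supseteq C$; by van Kampen $\pi_1(\hat M)=\pi_1(M)=1$, and $\hat M$ is connected since $C$ is, so $\hat M$ is a closed simply connected $3$-manifold. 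By Perelman's proof of the Poincar\'e conjecture $\hat M\cong\Sbb^3$, and the inclusion $C\hookrightarrow\hat M$ is the desired embedding; being that of a $2$-complex in a $3$-manifold, it may be taken piecewise linear. If $C$ is disconnected, one applies the argument to each component and places the resulting pieces in disjoint balls of $\Sbb^3$.
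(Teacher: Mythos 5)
Your proof is essentially correct and takes a genuinely different route from the paper's. The paper does not build a regular-neighbourhood thickening of $C$; instead it constructs an associated topological space $T(C,\Sigma)$ by gluing, directly onto $C$, the insides $\hat S$ of standard embeddings of the \emph{local surfaces} $S$ of $(C,\Sigma)$ into $\Sbb^3$ (so $\hat S$ is a handlebody if $S$ has genus). Because the gluing happens directly along $C$ rather than along the boundary of a thickening, the paper's verification that $T(C,\Sigma)$ is a manifold genuinely requires each vertex link complex to be a \emph{single} sphere, not a disjoint union; this forces the paper to make a detour through cut vertices (\autoref{is_loc_con}, \autoref{block-lem}) and run an induction on their number. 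Simple connectedness of $T(C,\Sigma)$ is then obtained by a van Kampen argument (\autoref{is simply connected}) that relies on the observation that $\pi_1(S)\to\pi_1(\hat S)$ is onto for a handlebody, so each gluing step can only kill $\pi_1$. Perelman then identifies $T(C,\Sigma)$ with $\Sbb^3$.

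Your argument sidesteps both of these detours. The regular neighbourhood $M$ is a manifold at every vertex by construction (a small ball around $v$ is $B_v$, regardless of whether $L(v)$ is connected), so you need no cut-vertex induction. And rather than controlling $\pi_1$ of the added pieces by a handlebody argument, you use Poincar\'e--Lefschetz duality ("half lives, half dies") on the orientable thickening $M\simeq C$ to force $H_1(\partial M;\Fbb_2)=0$, hence every boundary component to be a sphere, so that capping with balls is available and $\pi_1$ is unchanged. This is closer in spirit to the paper's remark in \autoref{alg_topo} and to what the paper proves in \autoref{nullt}/\autoref{loc_are_spheres}, but the paper reaches those sphere statements by a combinatorial dual-complex Euler count rather than by duality, and uses them only for the extension \autoref{combi_intro_extended}, not in the proof of \autoref{combi_intro} itself. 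Two details you mention as potential obstacles are in fact automatic once the setup is in place: the link of $v$ in $M$ is $\partial B_v\cong\Sbb^2$ by fiat, with planarity of $\Sigma$ being used earlier (it is exactly the condition under which $L(v)$ admits an embedding into $\Sbb^2$ realising the prescribed rotators); and orientability of $M$, which your duality argument silently needs, comes from the fact that the rotators are cyclic \emph{orientations} compatible at the two ends of every edge, precisely as the paper records in \autoref{is_manifold}.
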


A related result has been proved by Skopenkov for general 3-manifolds (not necessarily compact) \cite{Skopenkov94}.
The main result of this chapter is the following extension of \autoref{combi_intro}.

\begin{thm}\label{combi_intro_extended}
 Let $C$ be a simplicial complex such that the first homology group $H_1(C,\Fbb_p)$ is trivial for
some prime $p$. Then $C$ has a topological embedding into \Sthree\ if and only if $C$ is simply
connected and it has a planar rotation system.
\end{thm}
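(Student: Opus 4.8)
The plan is to reduce \autoref{combi_intro_extended} to \autoref{combi_intro} by showing that, under the hypothesis $H_1(C,\Fbb_p)=0$, a simplicial complex $C$ that has a planar rotation system and is simply connected already satisfies the hypotheses of \autoref{combi_intro}; conversely, if $C$ embeds in $\Sbb^3$ then it certainly has a planar rotation system (this direction is the easy implication, valid for any embeddable $C$, since an embedding into an oriented $3$-manifold induces one as remarked in the excerpt), so the only extra claim in that direction is that $C$ is simply connected. So the two things to prove are: (i) if $C$ embeds in $\Sbb^3$ then $\pi_1(C)=1$; and (ii) if $C$ is simply connected and has a planar rotation system, it embeds in $\Sbb^3$ --- but (ii) is exactly \autoref{combi_intro}, so there is genuinely nothing to do there. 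Hence the whole content is (i) together with checking that the homology hypothesis is what makes (i) go through.

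First I would handle the embedding-implies-simply-connected direction. Suppose $C$ is embedded in $\Sbb^3$. Take a regular neighbourhood $N$ of $C$ in $\Sbb^3$; it is a compact $3$-manifold with boundary that deformation retracts onto $C$, so $\pi_1(N)\cong\pi_1(C)$ and $H_1(N,\Fbb_p)\cong H_1(C,\Fbb_p)=0$. The complement $\Sbb^3\setminus\operatorname{int}(N)$ is also a compact $3$-manifold with the same boundary $\partial N$. The hard part will be to rule out the possibility that $\pi_1(C)$ is a nontrivial perfect group (which would be invisible to $\Fbb_p$-homology); this is where one must use $3$-manifold topology rather than pure homology. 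The route I would take is via the boundary: a standard half-lives-half-dies argument shows that the map $H_1(\partial N,\Fbb_p)\to H_1(N,\Fbb_p)$ has image of dimension exactly half of $\dim H_1(\partial N,\Fbb_p)$, so $H_1(\partial N,\Fbb_p)=0$, i.e.\ every component of $\partial N$ is a sphere. Capping those spheres with balls (using the irreducibility of $\Sbb^3$, or just Alexander's theorem, to see the balls are embedded) turns $N$ into a closed $3$-manifold $M$ with $\pi_1(M)\cong\pi_1(N)\cong\pi_1(C)$, since attaching $3$-balls along $2$-spheres does not change $\pi_1$. Now if $\pi_1(C)$ were nontrivial, $M$ would be a closed $3$-manifold with nontrivial (but $\Fbb_p$-acyclic) fundamental group; I would then invoke Perelman's geometrisation / elliptisation theorem --- already used in this chapter for \autoref{combi_intro} --- to derive a contradiction: a closed $3$-manifold with finite nontrivial $\pi_1$ is a spherical space form and has nontrivial $H_1(-,\Fbb_p)$ for some $p$ dividing $|\pi_1|$, while a closed $3$-manifold with infinite $\pi_1$ has infinite $\pi_1$-abelianisation unless... and here one closes the loop using that $\pi_1$ of an irreducible $3$-manifold covered by the argument is either finite or the manifold is aspherical with residually finite, hence not $\Fbb_p$-acyclic-for-all-$p$, fundamental group. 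Concretely: a nontrivial finitely presented group that is the fundamental group of a closed $3$-manifold and has $H_1(-,\Fbb_p)=0$ for the given $p$ cannot exist once geometrisation is in hand. This step --- converting "$\Fbb_p$-homologically trivial" into "genuinely trivial fundamental group" for $3$-manifold groups --- is the main obstacle, and it is the reason Perelman is invoked in this chapter.

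Once (i) is established, the proof assembles quickly: for the forward implication, $C$ embeds in $\Sbb^3$ $\Rightarrow$ $C$ has a planar rotation system (immediate) and $C$ is simply connected (step (i)); for the backward implication, $C$ simply connected with a planar rotation system $\Rightarrow$ $C$ embeds in $\Sbb^3$ is precisely \autoref{combi_intro}, whose hypotheses are now met verbatim. Combining the two gives the stated equivalence. I would be careful about one bookkeeping point: in the backward direction the hypothesis "$H_1(C,\Fbb_p)=0$ for some prime $p$" is not even used --- it is subsumed by simple connectedness --- so the theorem is really only asymmetric in the forward direction, where the homology hypothesis is doing the work of promoting "$C$ embeds, hence has a planar rotation system, hence (by Perelman, as above) is simply connected" past the potential obstruction of a nontrivial superperfect fundamental group. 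I would flag explicitly that without some such homological input the forward direction genuinely fails, since one can build non-simply-connected $\Fbb_p$-acyclic $2$-complexes (e.g.\ presentation complexes of superperfect groups), which motivates the precise form of the hypothesis.
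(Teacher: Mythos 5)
Your high-level framework is correct: the only real content is showing that if $C$ embeds in $\Sbb^3$ and $H_1(C,\Fbb_p)=0$, then $C$ is simply connected, after which everything reduces to \autoref{combi_intro}. Your route for this is genuinely different from the paper's: the paper proves that the local surfaces of the induced planar rotation system are all spheres via a purely combinatorial Euler-characteristic count on the dual complex (\autoref{geq}, \autoref{euler_double_counting}, \autoref{loc_are_spheres}), and then recovers simple connectedness from \autoref{is_simply_connected2} via retraction onto $C$; your regular-neighbourhood argument is closer in spirit to the sketch in \autoref{alg_topo}. Your first two steps (regular neighbourhood $N$ with $\pi_1(N)\cong\pi_1(C)$ and $H_1(N,\Fbb_p)=0$; half-lives-half-dies over $\Fbb_p$ to conclude $\partial N$ is a union of spheres) are sound and are a clean alternative to the paper's Euler-characteristic counting.

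However, the final step, where you pass from ``$M$ is a closed orientable $3$-manifold with $H_1(M,\Fbb_p)=0$'' to ``$\pi_1(M)=1$'' via Perelman, contains a genuine error. The claim that a spherical space form with nontrivial finite $\pi_1$ must have $H_1(-,\Fbb_p)\neq 0$ for some $p$ dividing $|\pi_1|$ is false: the Poincar\'e homology sphere has $\pi_1$ the binary icosahedral group (order $120$, perfect) and $H_1=0$ over $\Zbb$, hence over every $\Fbb_p$. More generally, no amount of geometrisation will let you deduce triviality of $\pi_1$ purely from $\Fbb_p$-acyclicity of a closed $3$-manifold; the essential extra information you have, and fail to use at this point, is that $N$ embeds in $\Sbb^3$. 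The correct and much shorter finish is van Kampen applied directly to the decomposition $\Sbb^3 = N \cup_{\partial N} W$ with $W = \Sbb^3 \sm \operatorname{int}(N)$: since $\partial N$ is a disjoint union of $2$-spheres, $\pi_1(\Sbb^3)$ is the free product of $\pi_1(N)$, the $\pi_1(W_j)$ over components $W_j$ of $W$, and a free group of rank $(\text{\#spheres})-(\text{\#components of }W)$; as $\pi_1(\Sbb^3)=1$, every free factor is trivial, so in particular $\pi_1(N)=\pi_1(C)=1$. With this replacement your approach works and avoids both the capping-off step and the appeal to Perelman in this direction (Perelman is still needed in \autoref{combi_intro} itself, of course).
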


This implies characterisations of topological embeddability into \Sthree\ for the classes of
simplicial complexes with abelian fundamental group and simplicial complexes in general, see
\autoref{beyond} for details.

This chapter is organised as follows. After reviewing some
elementary definitions in \autoref{basics}, in \autoref{prelims2}, we
introduce rotation systems, related concepts and prove basic properties of them. In
Sections \ref{sec4} and \ref{sec5} we prove \autoref{combi_intro}.
The proof of \autoref{combi_intro_extended} in \autoref{sec6} makes use of \autoref{combi_intro}.
Further extensions are derived in \autoref{beyond}. In \autoref{non-or} we discuss how one could
characterise embeddability of 2-complexes in general 3-manifolds combinatorially.

\section{Basic definitions}\label{basics}

In this short section we recall some elementary definitions that are important for this chapter.

A \emph{closed trail} in a graph is a cyclically ordered sequence $(e_n|n\in \Zbb_k)$ of distinct
edges $e_n$ such that the
starting vertex of $e_{n}$ is equal to the endvertex of $e_{n-1}$.
An (abstract) (2-dimensional) \emph{complex} is a graph\footnote{In this paper graphs are allowed
to have
parallel edges and loops. All graphs and 2-complexes considered in this paper are finite.} $G$
together with a
family of
closed trails in $G$, called the \emph{faces} of the complex.
We denote complexes $C$ by triples $C=(V,E,F)$, where $V$ is the set of \emph{vertices}, $E$ the
set of \emph{edges} and $F$ the set of
faces. We assume furthermore that every vertex of a complex is incident with an edge and every edge
is incident with a face.
The \emph{1-skeleton} of a complex $C=(V,E,F)$ is the graph $(V,E)$.
A \emph{directed} complex is a complex together with a choice of
direction at each of its edges and a choice of orientation at each of its faces.
For an edge $e$, we denote the direction chosen at $e$ by $\vec{e}$.
For a face $f$, we denote the orientation chosen at $f$ by $\vec{f}$.

Examples of complexes are (abstract) (2-dimensional) simplicial complexes.
In this paper all simplicial complexes are directed -- although we will not always say it
explicitly. A \emph{(topological) embedding} of a simplicial complex $C$ into a topological space
$X$ is an injective
continuous map from (the geometric realisation of) $C$ into $X$.
In our notation we suppress the embedding map and for example write `$\Sbb^3\sm C$' for
the topological space obtained from
$\Sbb^3$ by removing all points in the image of the embedding of $C$.
A 2-complex is \emph{reasonable} if each vertex or edge is incident with a face.
\begin{eg}
 Simply connected 2-complexes are reasonable.
\end{eg}
\begin{rem}
All 2-complexes considered in this paper are reasonable, and we add this assumption to all lemmas of this paper.
This assumption is necessary for various constructions and we believe it is much simpler for the reader to assume it throughout than having to carry it along all the time, and add a few technical details to some otherwise fairly natural proofs. Graph theoretically speaking, this just means that 2-complexes and their link graphs have no isolated vertices.
\end{rem}

In this paper, a \emph{surface} is a compact 2-dimensional manifold (without boundary)\footnote{We
allow surfaces to be disconnected. }.
Given an embedding of a graph in an oriented surface, the \emph{rotation system} at a vertex $v$ is
the cyclic orientation\footnote{A \emph{cyclic orientation} is a choice of one of the two
orientations
of a cyclic ordering.}  of the edges incident with $v$ given by `walking around' $v$ in the
surface in a small circle in the direction of the orientation.
Conversely, a choice of rotation system at each vertex of a graph $G$ defines an embedding of $G$
in an oriented surface as explained in \autoref{chapterI}.

A \emph{cell complex} is a graph $G$ together with a set of directed walks such that each
direction of an edge of $G$ is in precisely one of these directed walk es. These directed walks are
called the \emph{cells}. The geometric realisation of a cell complex is obtained from (the
geometric realisation of) its graph by gluing discs so that the cells are the boundaries of these
discs. The geometric realisation is always an oriented surface.
Note that cell complexes need
not be complexes as cells are allowed to contain both directions of an edge.
The \emph{rotation system of} a cell complex $C$ is the rotation system of the graph of $C$ in the
embedding in the oriented surface given by $C$.

\section{Rotation systems}\label{prelims2}

In this section we introduce rotation systems of complexes and some related concepts.

The \emph{link graph} of a \scom\ $C$ at a vertex $v$ is the graph whose vertices are the
edges incident with $v$.
The edges are the faces incident\footnote{A face is incident with a vertex if
there is an edge incident with both of them.} with $v$.
The two endvertices of a face $f$ are those vertices corresponding to the two
edges of $C$ incident with $f$ and $v$.
We denote the link graph at $v$ by $L(v)$.

A \emph{rotation system} of a directed complex $C$ consists of for each edge $e$ of $C$ a cyclic
orientation\footnote{If the edge $e$ is only incident with a single face, then $\sigma(e)$ is
empty.} $\sigma(e)$ of the faces incident
with $e$.

Important examples of rotation systems are those \emph{induced} by topological embeddings of
2-complexes $C$ into \Sthree\  (or more generally in some oriented 3-manifold); here for an edge
$e$
of $C$,
the cyclic orientation $\sigma(e)$ of the
faces incident with $e$ is the ordering in which we see the faces when walking around some
midpoint of $e$ in a circle of small radius\footnote{Formally this means that the circle
intersects each face in a single point and that it can be contracted onto the chosen midpoint of
$e$ in such a way that the image of one such contraction map intersects each face in an
interval.} -- in the direction of the orientation of \Sthree. It can be shown that $\sigma(e)$ is
independent of the chosen circle if small enough and of the chosen midpoint.

Such rotation systems have an additional property:
let  $\Sigma=(\sigma(e)|e\in E(C))$ be a rotation system of a simplicial complex $C$ induced by
a topological embedding of $C$ in the 3-sphere.
Consider a 2-sphere of small diameter around a vertex $v$. We
may assume that each edge of $C$ intersects this 2-sphere in at most
one point and that each face intersects it in an interval or not at all. The
intersection of the 2-sphere and $C$ is a graph: the link graph at $v$. Hence link
graphs of 2-complexes with rotation systems induced from embeddings in 3-space must always be
planar. Even more is true: the
cyclic orientations $\sigma(e)$ at the edges of $C$ form -- when projected down to a link graph
to rotators at the vertices of the link graph -- a rotation system at the link graph, see
\autoref{fig:link_project}.

   \begin{figure} [htpb]
\begin{center}
   	  \includegraphics[height=3cm]{./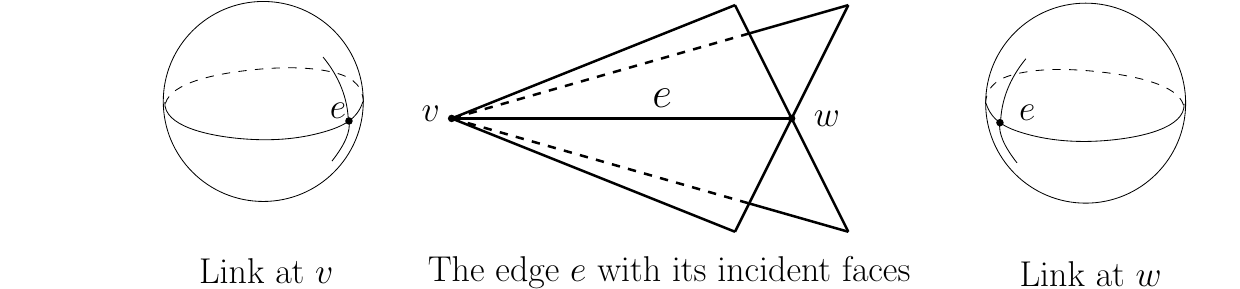}
   	  \caption{The cyclic orientation $\sigma(e)$ of the faces incident with the edge $e$ of
$C$ projects down to rotators at $e$ in the link graphs at either endvertex of $e$. In these link
graphs, the projected rotators at $e$ are reverse.
}\label{fig:link_project}
\end{center}\vspace{-0.7cm}
   \end{figure}

Next we shall define `planar rotation systems' which roughly are rotation systems satisfying
such an additional property.
The cyclic orientation $\sigma(e)$ at the edge $e$ of a rotation system defines a rotation
system $r(e,v, \Sigma)$ at each vertex $e$ of
a link graph $L(v)$: if the directed edge $\vec{e}$ is directed towards $v$ we take $r(e,v,
\Sigma)$ to be $\sigma(e)$.
Otherwise we take the reverse of $\sigma(e)$. As explained in \autoref{basics}, this
defines an embedding of the link graph into an oriented surface. The
\emph{link complex} for
$(C,\Sigma)$ at the vertex $v$ is the cell complex obtained from the link graph $L(v)$
by
adding the faces of the above
embedding of $L(v)$ into the oriented surface.
By definition,  the geometric realisation of the link complex is
always a surface. To shortcut notation, we will not distinguish between the
link complex and its geometric realisation and just say things like:
`the link complex is a sphere'.
A \emph{planar rotation system} of a directed simplicial complex $C$ is a rotation system such that
for
each vertex $v$ all link complexes are spheres -- or disjoint unions of spheres (if the link graph
is not connected).
The paragraph before shows the following. We say that a simplicial complex $C$ is
\emph{locally connected} if all link graphs are connected.

\begin{obs}\label{obo}
Rotation systems induced by topological embeddings  of locally connected\footnote{
\autoref{obo} is also true without the assumption of `local connectedness'. In that
case however
the link complex is
disconnected. Hence it is no longer directly given by the drawing of the link graph on a
ball of small radius as above.} simplicial complexes in the 3-sphere are planar.
 \qed
\end{obs}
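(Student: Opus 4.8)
The plan is to make precise the picture described in the paragraph preceding the statement: for each vertex $v$, a small $2$-sphere around the embedded image of $v$ carries the link graph $L(v)$ with exactly the rotation system $r(\,\cdot\,,v,\Sigma)$, and this sphere therefore \emph{is} the link complex of $(C,\Sigma)$ at $v$.

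First I would set up the local picture. Let an embedding of a locally connected simplicial complex $C$ into $\Sthree$ be given, let $\Sigma=(\sigma(e)\mid e\in E(C))$ be the induced rotation system, and fix a vertex $v$. Choose a $2$-sphere $S$ of sufficiently small radius centred at the image of $v$. After shrinking the radius if necessary we may assume that $S$ meets $C$ in general position, so that every edge of $C$ incident with $v$ meets $S$ in exactly one point, no other point of the $1$-skeleton lies on or inside $S$ except $v$, every face incident with $v$ meets $S$ in a single arc whose two endpoints are the points where its two edges at $v$ cross $S$, and no other face meets $S$. Then $S\cap C$ is a graph drawn on $S$, and matching intersection points with edges of $C$ at $v$ and intersection arcs with faces of $C$ at $v$ exhibits it as (a drawing of) the link graph $L(v)$, with the incidence relation inherited from $C$.

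Next I would identify the rotation system. The orientation of $\Sthree$ restricts, via the outward-normal-first convention, to an orientation of $S$, so the drawing of $L(v)$ on $S$ determines a rotation system $\rho$ at each vertex of $L(v)$. I claim $\rho(e)=r(e,v,\Sigma)$ for every vertex $e$ of $L(v)$: the small circle transverse to the edge $e$ used to define $\sigma(e)$ can be slid along $e$ toward $v$ until it becomes a small circle in $S$ around the point $e\cap S$, and this deformation does not change the cyclic order in which it meets the faces incident with $e$; comparing orientations, the boundary orientation of $S$ makes this circle carry $\sigma(e)$ when $\vec e$ points toward $v$ and the reverse of $\sigma(e)$ otherwise, which is precisely the definition of $r(e,v,\Sigma)$ from \autoref{prelims2}. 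Since, as recalled in \autoref{basics}, the embedding of a graph into an oriented surface is determined up to homeomorphism by its rotation system, the link complex for $(C,\Sigma)$ at $v$ — obtained from $L(v)$ by attaching the faces of the embedding in the oriented surface given by $r(\,\cdot\,,v,\Sigma)$ — is homeomorphic to any oriented surface carrying $L(v)$ with that rotation system. By the previous step $S$ is such a surface; as $L(v)$ is connected (local connectedness), the link complex is the single $2$-sphere $S$. Since $v$ was arbitrary, all link complexes of $(C,\Sigma)$ are spheres, i.e.\ $\Sigma$ is planar.

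The only genuinely delicate points are the general-position step — guaranteeing that for $S$ small enough $S\cap C$ is exactly $L(v)$, with one crossing point per incident edge, one arc per incident face, and nothing else — and bookkeeping the orientation so that the reversal built into the definition of $r(e,v,\Sigma)$ matches the reversal forced by orienting $S$ as the boundary of the ball around $v$. Both are routine but should be stated with care; everything else is immediate from the correspondence between rotation systems and embeddings in oriented surfaces.
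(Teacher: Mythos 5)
Your argument is essentially the one the paper itself gestures at in the discussion immediately preceding the observation (small $2$-sphere around $v$, intersection with $C$ gives $L(v)$, the edge-rotators project to the vertex-rotators of $L(v)$, whence the link complex is the sphere), and the paper then states the observation with a \(\qed\), treating that discussion as the proof. Your write-up fills in the general-position bookkeeping and the orientation comparison more carefully than the paper does, which is fine.

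One small imprecision worth flagging: you invoke ``the embedding of a graph into an oriented surface is determined up to homeomorphism by its rotation system'' to conclude that the link complex is homeomorphic to \emph{any} oriented surface carrying $L(v)$ with that rotation system. This is not quite right as stated — a rotation system determines the \emph{cellular} embedding uniquely, while the same graph with the same rotation system also has non-cellular embeddings in higher-genus surfaces. Your conclusion is nevertheless correct here because a connected graph embedded in $\Sbb^2$ is automatically cellularly embedded (every face of a connected plane graph is a disc), so the sphere $S$ you constructed \emph{is} the surface determined by $r(\,\cdot\,,v,\Sigma)$. You do invoke connectedness, but for the slightly different reason of getting a single sphere rather than a disjoint union; the cellularity point is the one that actually closes the argument and is worth making explicit.
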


Next we will define the \emph{local surfaces of a topological embedding} of a simplicial complex
$C$ into \Sthree.
The local surface at a connected component of $\Sbb^3\sm C$ is the following.
Pour concrete into this connected component.
The surface of the concrete is a 2-dimensional manifold.
The local surface is the simplicial complex drawn at the surface by the vertices, edges and faces
of $C$.
Note that if an edge $e$ of $G$ is incident with more than two faces that are on the
surface, then the surface will contain at least two clones of the edge $e$, see
\autoref{fig:loc_surf}.

   \begin{figure} [htpb]
\begin{center}
   	  \includegraphics[height=3cm]{./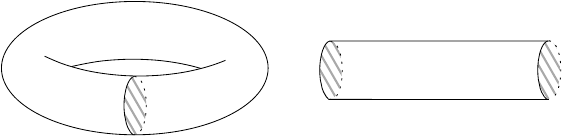}
   	  \caption{On the left we depicted the torus with an additional face
attached on a genus reducing curve in the inside. On the right we depicted the
local surface of its inside component. It is a sphere and contains two
copies of the newly added face (and its incident edges).}\label{fig:loc_surf}
\end{center}\vspace{-0.7cm}
   \end{figure}

Now we will define \emph{local surfaces for a pair $(C,\Sigma)$} consisting of a complex
$C$ and one of its rotation systems
$\Sigma$.
\autoref{topo_to_combi} below says that under fairly general circumstances the local surfaces of a
topological embedding are the local
surfaces of the rotation system induced by that topological embedding. The set of faces of
a local surface will be an equivalence
class of the set of orientations of faces of $C$.
The \emph{local-surface-equivalence relation} is the symmetric transitive closure of the following
relation.
An orientation $\vec f$ of a face $f$ is \emph{locally related} via an edge $e$ of
$C$ to an orientation $\vec{g}$ of a face $g$ if $f$ is just before $g$ in $\sigma(e)$ and
$e$
is traversed positively by $\vec f$ and
negatively by $\vec g$ and in $\sigma(e)$ the
faces $f$ and $g$ are adjacent.
Here we follow the convention that if the edge $e$ is only incident with a single face, then the
two orientations of that face
are related via $e$.
Given an equivalence class of the local-surface-equivalence relation, the \emph{local surface} at
that equivalence class is the following
 complex whose set of faces is (in bijection with) the set of orientations
in that equivalence class.
We obtain the complex from the disjoint union of the faces of these orientations by
gluing together two of these faces $f_1$ and $f_2$ along two of their edges if these edges are
copies
of the same
edge $e$ of $C$ and
$f_1$ and $f_2$ are related via $e$. Of course, we glue these two edges in a
way that endvertices are identified only with copies of the same vertex of $C$.
Hence each  edge of a local surface is incident with precisely two faces. Hence
its
geometric realisation is always a
is a surface. Similarly as for link complexes, we shall just say things like `the
local surface is a sphere'.
\begin{obs}\label{loc_is_con}
Local surfaces of planar rotation systems are always connected.
\qed
\end{obs}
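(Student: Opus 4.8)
\medskip

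The plan is to exploit the defining property of a planar rotation system: that every link complex is a disjoint union of spheres. The local surface at an equivalence class $\mathcal{E}$ of the local-surface-equivalence relation is built by gluing oriented faces of $C$ along copies of edges of $C$; its connectedness is a statement about the graph whose nodes are the face-orientations in $\mathcal{E}$ and whose adjacencies are the "locally related via $e$" relations. I would argue that this adjacency graph is already connected before we even restrict attention to a single equivalence class in the way the definition suggests — that is, I would first pin down precisely what forces two face-orientations to land in the same class, and then show that the local surface carries enough structure (every edge incident with exactly two faces) to rule out a disconnected outcome.

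\medskip

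The key steps, in order: First, recall that for a fixed edge $e$ of $C$, the cyclic orientation $\sigma(e)$ of the faces incident with $e$ is a \emph{single} cyclic order, so the orientations locally related via $e$ form one alternating cyclic chain (each face appearing with the orientation traversing $e$ positively is related to its successor appearing with the reverse traversal, and so on around $\sigma(e)$). Thus, locally at each edge, the relation is "as connected as possible". Second, translate the gluing picture into the link-complex picture: the local surface near a vertex $v$ of $C$ is exactly read off from the link complex at $v$ for $(C,\Sigma)$, and since that link complex is a sphere (or disjoint union of spheres), the faces of $C$ around $v$ that meet a given component of the local surface correspond to a single face of the spherical link complex — in particular, the "local surface germs" at $v$ are controlled by a connected object. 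Third, combine these two local statements: starting from any face-orientation $\vec f$ in the equivalence class, one can walk to any adjacent face-orientation either by crossing an edge (using step one) or by passing through the neighbourhood of a vertex (using step two), and since $C$ itself — or rather the relevant subcomplex supporting the local surface — is connected along faces, this walk reaches every face-orientation in the class. Hence the local surface, being a surface glued from a connected family of faces along identified edges, is connected.

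\medskip

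The main obstacle I expect is bookkeeping the distinction between a face of $C$ and its two orientations, and making sure the "walk" argument genuinely stays inside a single local-surface-equivalence class rather than accidentally jumping between two different local surfaces. The delicate point is at a vertex $v$: a priori the faces of $C$ incident with $v$ could split among several components of the link complex at $v$ (if the link graph at $v$ is disconnected), and one must check that face-orientations sitting in \emph{different} components of the link complex at $v$ genuinely belong to \emph{different} local surfaces — otherwise the claim could fail. But this is precisely handled by the definition of the local-surface-equivalence relation together with the fact that a planar rotation system makes each such component a sphere: within one spherical component the dual adjacency of faces is connected, and across components there is no local relation to bridge them. Once this is spelled out, connectedness of the local surface is immediate, since by construction every edge of the local surface is incident with exactly two of its faces, so there are no "free boundary" ambiguities to worry about.
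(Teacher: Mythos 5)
You correctly identify the right object to study — the graph on the face-orientations of an equivalence class with edges given by the ``locally related via $e$'' pairs — but then you take a long and unnecessary detour through link complexes and planarity, and raise concerns that are not actually issues. The observation is in fact immediate from the definitions and does not use planarity at all. The local-surface-equivalence relation is defined as the symmetric transitive closure of ``locally related,'' so an equivalence class is, by construction, precisely a connected component of the graph whose vertices are face-orientations and whose edges are the ``locally related via $e$'' pairs. In building the local surface at a class, two faces are glued along an edge exactly when the corresponding orientations are ``locally related via $e$.'' Hence the face-adjacency graph of the local surface is literally the restriction of the relation graph to that equivalence class — a connected component — so the local surface is connected. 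There is nothing to ``walk'' through vertices for, no need to invoke that link complexes are spheres, and no danger of ``accidentally jumping between two different local surfaces'': the equivalence class is closed under the very relation that generates the gluings, so every chain stays inside the class by definition.

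Your step that invokes planarity (that link complexes are spheres) would be relevant if you were proving that local surfaces are spheres (as in \autoref{loc_are_spheres}), or that the associated space $T(C,\Sigma)$ is a manifold (as in \autoref{is_manifold}); for bare connectedness it is irrelevant, and your proof, while arriving at the right conclusion, obscures that the statement holds for every rotation system and is a one-line consequence of how equivalence classes and the gluing are set up.
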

A \emph{(2-dimensional) orientation} of a complex $C$ such that each edge is in precisely two faces
is a choice of orientation of each
face of $C$ such that each edge is traversed in opposite directions by the chosen orientation of
the two incident faces.
Note that a complex whose geometric realisation is a surface has an orientation if and only its
geometric realisation is orientable.

\begin{obs}\label{loc_is_orientable}
The set of orientations in a local-surface-equivalence class defines an orientation of its local
surface.

In particular, local surfaces are cell complexes.
\qed
\end{obs}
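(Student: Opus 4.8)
The plan is to unwind the definitions; this observation has essentially no content beyond bookkeeping. Recall that a 2-dimensional orientation of a complex all of whose edges lie in exactly two faces is, by definition, a choice of orientation of each face such that the two faces incident with any edge traverse that edge in opposite directions, and that we have just recorded that every edge of the local surface at an equivalence class $K$ of the local-surface-equivalence relation lies in precisely two faces. So it suffices to check that, when the faces of the local surface are oriented by the orientations in $K$, every edge of the local surface is traversed in opposite directions by its two incident faces.

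First I would fix an edge $e'$ of the local surface. By construction the local surface is obtained from the disjoint union of the (oriented) faces indexed by $K$ by a gluing; hence $e'$ arises by identifying a copy of some edge $e$ of $C$ sitting in an orientation $\vec f\in K$ with a copy of $e$ sitting in an orientation $\vec g\in K$, and this identification was performed exactly because $\vec f$ and $\vec g$ are locally related via $e$ (in the degenerate case that $e$ is incident with a single face, $\vec f$ and $\vec g$ are the two orientations of that face, declared related via $e$ by the stated convention). Using that $e'$ lies in precisely two faces, these two orientations $\vec f,\vec g$ are exactly the two faces incident with $e'$. Now recall what \emph{locally related via $e$} means: $e$ is traversed positively by $\vec f$ and negatively by $\vec g$ (and oppositely in the single-face case by convention). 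Therefore the two copies of $e$ that are identified to form $e'$ are traversed in opposite directions by $\vec f$ and $\vec g$. Since $e'$ was arbitrary, the orientations in $K$ constitute a 2-dimensional orientation of the local surface, so the local surface is an orientable surface together with a distinguished orientation.

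For the final clause I would observe that an oriented surface arising from a complex is automatically a cell complex: take its $1$-skeleton as the underlying graph and its oriented faces as the prescribed directed walks; because each edge is traversed in opposite directions by its two incident oriented faces, each of the two directions of each edge occurs in exactly one of these walks, which is precisely the defining condition of a cell complex (and the geometric realisation then agrees with the oriented surface, as it should). The only point requiring any care is the handling of edges of $C$ incident with a single face, of loops, and of mild degeneracies (a face of $C$ appearing with both orientations in $K$, or a boundary trail repeating an edge); but in each of these cases the conventions set up in the definition of the local-surface-equivalence relation make the opposite-traversal condition hold verbatim, so this is a routine definition chase rather than a genuine obstacle.
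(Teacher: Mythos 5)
Your proof is correct and is exactly the definition-chasing argument the paper has in mind when it states the observation with a bare \qed: the gluing in the construction of a local surface happens precisely when two orientations are (symmetrically) locally related via an edge $e$, and local relatedness requires opposite traversal of $e$, so the orientations in the equivalence class form an orientation of the local surface, from which the cell-complex property follows immediately. No issues.
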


We will not use the following lemma in our proof of \autoref{combi_intro}
 However, we think that it gives a
better intuitive understanding of local surfaces.
\begin{lem}\label{topo_to_combi}
 Let $C$ be a connected and locally connected complex embedded into $\Sbb^3$ and let $\Sigma$ be
the induced planar rotation system.
 Then the local surfaces of the topological embedding are equal to the local surfaces for
$(C,\Sigma)$.\qed
\end{lem}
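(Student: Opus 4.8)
The plan is to establish the identification \emph{locally} --- near each face, each edge and each vertex of $C$ --- and then assemble these local identifications into the claimed equality of complexes. First I would fix the local dictionary between orientations of faces and sides of faces. At an interior point of a face $f$ the complement $\Sbb^3\sm C$ has exactly two local components, one on each side of $f$; together with the orientation of $\Sbb^3$ (right-hand rule) this makes orienting $f$ the same as selecting one of its two sides, so the orientations of faces of $C$ are canonically in bijection with pairs $(\text{face},\text{side})$, and under this bijection an orientation $\vec f$ occurs as a face of the concrete surface of a component $R$ of $\Sbb^3\sm C$ exactly when the corresponding side of $f$ is adjacent to $R$. Next, at an interior point of an edge $e$ a small transverse disc is cut by the incident faces into wedges, one between each pair of faces consecutive in the cyclic order in which the faces appear around $e$ --- which by definition is exactly $\sigma(e)$. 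Each wedge is a local component of $\Sbb^3\sm C$ meeting $e$, flanked by one side of each of the two faces bounding it, and the key point is to check, tracking the orientation of $\Sbb^3$, the direction $\vec e$ and the chosen orientations of the two faces, that the wedge lying between $f$ (just before) and $g$ (just after) in $\sigma(e)$ is flanked precisely by the side of $f$ belonging to the orientation that traverses $e$ positively and the side of $g$ belonging to the orientation that traverses $e$ negatively. In other words, two sides flank a common wedge at $e$ if and only if the corresponding orientations are locally related via $e$ in the sense of the local-surface-equivalence relation; the degenerate conventions match the same way (if $e$ is incident with a single face there is one wedge, flanked by both sides of that face; if with two faces there are the two wedges corresponding to the two cyclic adjacencies). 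Finally, at a vertex $v$ the analogous statement is that the local components of $\Sbb^3\sm C$ around a small ball at $v$, read off on a small sphere about $v$, are exactly the faces of the link complex of $(C,\Sigma)$ at $v$; this is immediate since $\sigma$ restricts on $L(v)$ to the rotation system defining that link complex, which is planar by the argument of \autoref{obo}.

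With the dictionary in place, I would assemble the global statement. Pouring concrete into a component $R$ of $\Sbb^3\sm C$ and resolving the pinches created along the edges and vertices of $C$ --- so that each wedge of $R$ at an edge $e$ contributes its own clone of $e$, and similarly at vertices --- produces a complex whose faces are the sides of faces adjacent to $R$, two of them glued along a clone of $e$ exactly when they flank a common wedge of $R$ at $e$. Through the local dictionary above this is precisely the complex obtained from the orientations $\vec f$ whose side meets $R$ by gluing $\vec f$ to $\vec g$ along a clone of $e$ whenever they are locally related via $e$, i.e.\ the restriction of the combinatorial local-surface construction for $(C,\Sigma)$ to those orientations. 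Since every wedge lies in a single component of $\Sbb^3\sm C$, the relation ``locally related via some edge'' never leaves the set of orientations meeting a fixed $R$, so each local-surface-equivalence class lies inside the set of orientations meeting a single component $R$; for the reverse inclusion one must see that the set of orientations meeting $R$ is a \emph{single} class, equivalently that the concrete surface of $R$ is connected. This is the point at which the hypothesis that $C$ is connected enters: if the concrete surface of $R$ had two components, then $C$, being disjoint from $R$, would have to join them through the remaining components of $\Sbb^3\sm C$, and a Jordan--Brouwer-type separation argument shows this is impossible for connected $C$ (alternatively one may invoke \autoref{loc_is_con}, valid since the induced rotation system is planar, once it is known that each combinatorial local surface lies in the concrete surface of a unique component). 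Putting these together, the concrete surface of each $R$ is exactly one combinatorial local surface of $(C,\Sigma)$, and conversely, so the two families coincide; the orientation each carries --- from \autoref{loc_is_orientable} on the combinatorial side and from being the boundary of the concrete solid inside oriented $\Sbb^3$ on the topological side --- agrees under this identification.

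The step I expect to be the main obstacle is the orientation bookkeeping in the edge-local part: one has to fix once and for all mutually compatible conventions relating the orientation of $\Sbb^3$, the edge directions $\vec e$, the face orientations $\vec f$, the cyclic order defining $\sigma(e)$, and the meaning of ``$e$ traversed positively'', and then verify that the asymmetric clause in the definition of ``locally related via $e$'' falls out correctly, together with the one-face and two-face degenerate cases and the vertex-local statement about the link complex. The remaining ingredients --- describing the resolution of pinches at edges and vertices precisely enough that the concrete surface is literally a cell complex glued from copies of the faces of $C$, and the connectivity argument that uses connectedness of $C$ --- are of a routine topological kind.
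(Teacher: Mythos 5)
The paper does not actually prove this lemma: it is stated with the \verb|\qed| placed immediately after the statement and preceded by the remark ``We will not use the following lemma in our proof of \autoref{combi_intro}.~However, we think that it gives a better intuitive understanding of local surfaces.'' So there is no proof in the paper to compare yours against, and what follows assesses your argument on its own terms.

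Your overall strategy --- establish a local dictionary at faces, edges and vertices between the ``pouring concrete'' picture and the combinatorial local-surface construction, then assemble globally --- is the natural one and the face/edge/vertex-local parts, while orientation-bookkeeping-heavy as you note, do carry through. The genuine gap is the final connectivity step. You reduce the remaining work to showing that the concrete surface of each component $R$ of $\Sbb^3\sm C$ is connected, and you offer two arguments, neither of which closes this. Your first suggestion (``a Jordan--Brouwer-type separation argument shows this is impossible for connected $C$'') at best shows that the \emph{point-set} boundary $\partial R$ is connected; this is strictly weaker than connectivity of the concrete surface, since a face of $C$ with both sides adjacent to $R$ contributes \emph{two} faces to the concrete surface and nothing forces those two copies to lie in the same component of it --- so the concrete surface can disconnect even when $\partial R$ does not. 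Your second suggestion (invoke \autoref{loc_is_con}) only tells you that each combinatorial local-surface-equivalence class yields a connected closed surface, i.e.\ that each such class is a \emph{single} connected component of the concrete surface of the region it sits in; it does not tell you that the concrete surface of $R$ has only one component, which is exactly what you still need. In other words, after your reductions you know that the concrete surface of $R$ decomposes as a disjoint union of combinatorial local surfaces, one per equivalence class adjacent to $R$; what is missing is the argument that this union has exactly one member, and that argument must genuinely use \emph{local} connectedness of $C$ (not just connectedness), which your sketch does not do. Until this is filled in, the claimed equality between topological and combinatorial local surfaces is not established.
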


There is the following relation between vertices of local surfaces and faces of link
complexes.

\begin{lem}\label{loc_inc_AND_loc_surfaces}
Let $\Sigma$ be a rotation system of a simplicial complex $C$.
There is a bijection $\iota$ between the set of vertices of local
surfaces for $(C,\Sigma)$ and the set of faces of link
complexes for $(C,\Sigma)$, which maps each vertex $v'$ of a local surface cloned from the vertex
$v$ of $C$ to a face $f$ of the link complex at $v$ such that the rotation system at $v'$ is an
orientation of $f$.
\end{lem}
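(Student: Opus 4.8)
The plan is to construct the bijection $\iota$ explicitly by tracing through the definitions of local surface and link complex. First I would recall that a vertex $v'$ of a local surface $L$ for $(C,\Sigma)$ arises as a clone of some vertex $v$ of $C$; concretely, the faces of $L$ incident with $v'$ form a cyclically ordered collection of oriented faces of $C$, all sharing the vertex $v$, where consecutive ones are glued along copies of edges incident with $v$. The key observation is that each such oriented face $\vec{f}$ incident with $v'$, when projected to the link graph $L(v)$, becomes a \emph{directed} edge of $L(v)$ (the orientation of $f$ together with the edge $vv_f$ structure fixes a direction), and the gluing relation "$\vec{f_1}$ and $\vec{f_2}$ related via the edge $e$ incident with $v$" translates precisely into "$f_1$ is immediately followed by $f_2$ in the rotator $r(e,v,\Sigma)$ at the vertex $e$ of $L(v)$". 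Thus the cyclic sequence of oriented faces around $v'$ is exactly a closed walk in $L(v)$ which traverses each directed edge according to the rotation system — that is, it is a face (cell) of the link complex at $v$. I would set $\iota(v') = $ this cell.

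Next I would verify that $\iota$ is well-defined and bijective. Well-definedness: the construction above shows that from $v'$ we obtain a genuine cell of the link complex at $v$, and that the rotation system at $v'$ (the cyclic orientation of the faces of $L$ around $v'$) is, under the identification of faces of $L$ around $v'$ with directed edges of $L(v)$, one of the two orientations of that cell's boundary walk — this is the final clause of the statement. For surjectivity, given a face $f$ of the link complex at $v$, its boundary is a closed walk in $L(v)$ traversing directed edges compatibly with the rotators $r(e,v,\Sigma)$; reading off the corresponding oriented faces of $C$ and their gluings along the relevant edges produces precisely one vertex of a local surface (the local-surface-equivalence class restricted near $v$ closes up into exactly this cycle), giving a preimage. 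For injectivity, two distinct vertices $v'_1, v'_2$ of local surfaces either sit over different vertices of $C$ — then their images lie in different link complexes — or over the same vertex $v$ but correspond to different cyclic sequences of oriented faces, hence to different cells of the link complex. So $\iota$ is a bijection.

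The main obstacle I anticipate is bookkeeping the directions and orientations carefully: one must check that the "locally related via $e$" relation defining local surfaces matches up, edge for edge and with the correct handedness, with "consecutive in the rotator $r(e,v,\Sigma)$", keeping track of the convention that $r(e,v,\Sigma)$ is $\sigma(e)$ or its reverse according to whether $\vec{e}$ points towards $v$, and the sign convention ($\vec{f}$ traverses $e$ positively, $\vec{g}$ negatively) in the definition of locally related. A clean way to organise this is to fix, for the link complex at $v$, the correspondence: oriented face $\vec{f}$ of $C$ incident with $v$ $\longleftrightarrow$ directed edge of $L(v)$ obtained by orienting the edge $f$ of $L(v)$ from the vertex $e_1$ (the edge of $C$ that $\vec{f}$ enters) to the vertex $e_2$ (the edge it leaves), and then check that the gluing in the local surface along a copy of $e$ is exactly the head-to-tail concatenation of directed edges at the vertex $e$ of $L(v)$ prescribed by the rotator there. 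Once this dictionary is set up, the rest is routine, and the last sentence of the statement — that the rotation system at $v'$ is an orientation of $\iota(v')$ — is essentially immediate from it.
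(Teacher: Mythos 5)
Your construction is essentially the paper's proof, made explicit. The paper packages the same idea via the auxiliary ``$v$-equivalence relation'' (the symmetric–transitive closure of ``locally related'' restricted to orientations of faces incident with $v$): it notes that faces of the link complex at $v$ are in bijection with $v$-equivalence classes, that each such class is contained in a local-surface-equivalence class, and that clones of $v$ inside a local surface $S$ are in bijection with the $v$-equivalence classes inside $S$'s class. Your dictionary — oriented face incident with $v$ $\leftrightarrow$ directed edge of $L(v)$, gluing via $e$ $\leftrightarrow$ consecutive in the rotator $r(e,v,\Sigma)$ — is exactly the content of that intermediate bijection, unpacked at the level of boundary walks rather than equivalence classes, and the orientation/handedness bookkeeping you flag is precisely what the paper dismisses in its final line as ``straightforward to check.''
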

\begin{proof}
The set of faces of the link complex at $v$ is in bijection with the set of
$v$-equivalence classes; here the \emph{$v$-equivalence relation} on the set of orientations of
faces of $C$
incident with $v$ is
the symmetric transitive closure of the relation `locally related'. Since we work in a subset of
the orientations, every $v$-equivalence
class is contained in a local-surface-equivalence class.
On the other hand the set of all clones of a vertex $v$ of $C$ contained in a local surface $S$ is
in bijection with the set of
$v$-equivalence classes contained in the local-surface-equivalence class of $S$.
This defines a bijection $\iota$ between the set of vertices of local
surfaces for $(C,\Sigma)$ and the set of faces of link
complexes for $(C,\Sigma)$.

It is straightforward to check that $\iota$ has all the properties claimed in the lemma.
\end{proof}

\begin{cor}\label{cor7}
Given a local surface of a simplicial complex $C$ and one of its vertices $v'$ cloned from a
vertex
$v$ of $C$, there is a homeomorphism from a neighbourhood around $v'$ in the local surface to the
cone with top $v'$
over the face boundary of $\iota(v')$ that fixes $v'$ and the edges and faces incident with
$v'$ in a neighbourhood around $v'$.
\qed
\end{cor}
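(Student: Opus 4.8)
The plan is to reduce the corollary to a purely combinatorial matching between the triangles of the local surface $S$ incident with $v'$ and the triangles of the cone over the closed walk $\iota(v')$, and then to read off the homeomorphism from that matching; the surface-theoretic ingredients are standard and the content sits entirely in \autoref{loc_inc_AND_loc_surfaces}.

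First I would describe $S$ near $v'$. Since $C$ is simplicial, each face of $C$ is a triangle, hence so is every face of $S$ (each being an orientation of a face of $C$). As each edge of $S$ is incident with exactly two faces, $S$ is a closed surface, and the faces of $S$ incident with $v'$ are exactly the orientations of faces of $C$ lying in the $v$-equivalence class that $\iota$ assigns to $v'$ (a face of $S$ has $v'$ as a vertex iff it lies in that class). Two such triangles sharing an edge at $v'$ are glued there, and since that class is connected under the relation ``locally related via an edge at $v$'', these triangles form a single umbrella $T_1,\dots,T_m$ around $v'$, cyclically ordered by the rotation system of the cell complex $S$ at $v'$, with $T_i$ and $T_{i+1}$ sharing an edge $a_{i+1}$ at $v'$ (each $a_i$ a clone of an edge of $C$ incident with $v$). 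Their union with $v'$ and these edges is a closed disc $N$, the required neighbourhood.

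Second I would unwind $\iota$. By \autoref{loc_inc_AND_loc_surfaces} the rotation system of $S$ at $v'$ is an orientation of the cell $\iota(v')$ of the link complex at $v$. Spelling out the definitions of the link complex and of local surfaces, this says precisely that, after labelling each edge $a_i$ of $S$ at $v'$ by the vertex of $L(v)$ it clones (an edge of $C$ at $v$) and each triangle $T_i$ by the edge of $L(v)$ it orients (a face of $C$ at $v$), the cyclic sequence $a_1, T_1, a_2, T_2,\dots, a_m, T_m$ is exactly the boundary walk of $\iota(v')$ in $L(v)$, traversed in the direction of that orientation. This is the only step requiring care with the definitions; everything else is formal.

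Finally I would assemble the map. The cone $\Gamma$ with top $v'$ over the boundary walk of $\iota(v')$ consists of $v'$, that walk, and one triangle $\Delta_i$ per edge of the walk (the cone over that edge); by the previous paragraph its edges and triangles incident with $v'$ are, via their $C$-labels, in bijection with the $a_i$ and $T_i$, with matching incidences at $v'$. Define $\phi\colon N\to\Gamma$ by $v'\mapsto v'$, $a_i\mapsto$ the corresponding cone edge, $T_i\mapsto\Delta_i$, extended affinely on each triangle; step two makes the gluings along the $a_i$ compatible, so $\phi$ is a well-defined homeomorphism of discs fixing $v'$ and sending each edge and face of $S$ at $v'$ to the edge or face of $\Gamma$ carrying the same $C$-label, as required. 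The main obstacle is the bookkeeping in the second step; the rest (an umbrella around a manifold point of a triangulated surface, and the cone over a circle being a disc) is routine.
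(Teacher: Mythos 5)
Your proof is correct and takes the approach the paper intends: the corollary is stated with \texttt{\textbackslash qed} and no argument, precisely because once Lemma~\ref{loc_inc_AND_loc_surfaces} tells you that the rotation system of the local surface at $v'$ is an orientation of the cell $\iota(v')$, the umbrella of triangles around $v'$ is forced to be combinatorially identical to the cone over the boundary walk of $\iota(v')$, and the piecewise-affine identification is then routine. Your three steps (describe the star of $v'$ as a single umbrella using that the $v$-equivalence class is connected, translate the lemma's statement about rotation systems into a matching of the cyclic sequence $a_1,T_1,\dots,a_m,T_m$ with the boundary walk, then glue affine pieces) fill in exactly this bookkeeping, and your observation that $\sim_a$ agrees with the relation generated by gluings along edges at $v$ is justified in the simplicial setting because two distinct faces of a simplicial complex sharing both a vertex $v$ and an edge $e$ must have $v\in e$.
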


The definitions of link graphs and link complexes can be generalised from
simplicial complexes to
complexes as follows.
The \emph{link graph} of a complex $C$ at a vertex $v$ is the graph whose vertices are
the edges incident with $v$.
For any traversal of a face of the vertex $v$, we add an edge between the two vertices that when
considered as edges of $C$
are in the face just before and just after that traversal of $v$. We stress that we allow parallel
edges
and loops.
Given a complex $C$, any rotation system $\Sigma$ of $C$ defines rotation systems at each link
graph of $C$.
Hence the definition of link complex extends.

\section{Constructing piece-wise linear embeddings}\label{sec4}

In this section we prove \autoref{combi} below, which is used in the proof of
\autoref{combi_intro}.
   \begin{figure} [htpb]
\begin{center}
   	  \includegraphics[height=2cm]{./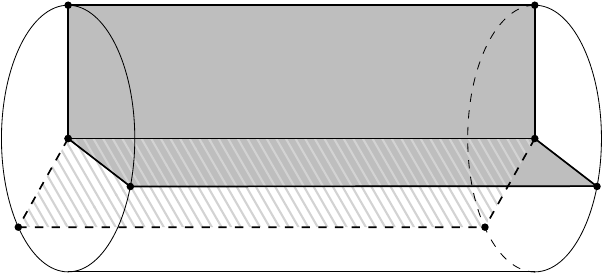}
   	  \caption{A cylinder with an embedded 2-complex.}\label{fig:cylinder}
\end{center}\vspace{-0.7cm}
   \end{figure}
\begin{eg}\label{solid_klein}
Here we give the definition of the Solid Klein Bottle and construct an embedding of a 2-complex $C$
in the Solid Klein Bottle that does not induce a planar rotation system.

Given a solid cylinder (see \autoref{fig:cylinder}), there are two ways to identify the
bounding discs: one way the boundary becomes a torus and the other way the boundary becomes a
Klein Bottle. We refer to the topological space obtained from the solid cylinder by identifying as
in the second case as the \emph{Solid Klein Bottle}. In \autoref{fig:cylinder} we embedded a
2-complex in
the solid cylinder. Extending the above identification of the cylinder to the embedded 2-complex,
induces an embedding of this new 2-complex in the Solid Klein Bottle. This embedding does not
induce
a planar rotation system.
\end{eg}

Next we show that the Solid Klein Bottle is the only obstruction that prevents embeddings of
2-complexes in 3-manifolds to induce planar rotation systems.

\begin{lem}\label{prs_klein}
 Let $M$ be a 3-manifold that does not include the Solid Klein Bottle as a submanifold.
 Then any embedding of a 2-complex $C$ in $M$ induces a planar rotation system.
\end{lem}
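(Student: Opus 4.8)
The plan is to show that if an embedding $\phi\colon C\hookrightarrow M$ fails to induce a planar rotation system, then $M$ contains a Solid Klein Bottle, contradicting the hypothesis. Recall that the induced rotation system $\Sigma$ is planar precisely when every link complex for $(C,\Sigma)$ is a sphere (or a disjoint union of spheres). So first I would fix a vertex $v$ whose link complex is \emph{not} a sphere, and argue that the link complex at $v$ is nevertheless a closed surface — this is immediate from the construction, since every edge of a link complex is incident with exactly two faces. The only way a connected component of this surface can fail to be a sphere is if it has positive genus or is non-orientable; but by \autoref{obo} and the way $r(e,v,\Sigma)$ is built from $\sigma(e)$, the link complex is by construction a \emph{cell complex}, hence an oriented surface. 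Therefore the offending component has positive genus.

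The key geometric step is to realise a piece of $M$ near $v$ as a cone-like neighbourhood over the link. Concretely, take a small ball $B$ around $\phi(v)$ in $M$ (using that $M$ is a manifold, $B$ is homeomorphic to an open $3$-ball, so $\partial B\cong \Sbb^2$); the intersection $\phi(C)\cap \partial B$ is the link graph $L(v)$ drawn on the $2$-sphere $\partial B$, and $\phi(C)\cap B$ is the cone over this drawing with apex $\phi(v)$, as in \autoref{cor7}. Since $L(v)$ embeds in the $2$-sphere $\partial B$, the rotation system that this embedding induces at the vertices of $L(v)$ must be the planar one — and by definition of the induced rotation system this planar embedding of $L(v)$ \emph{is} (up to the standard identification) the link complex at $v$. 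Hence the link complex at $v$ is planar after all. Wait — this shows the link \emph{graph} is planar, but the link \emph{complex} (which records which faces are glued where via $\sigma(e)$) could still differ from the spherical one. The resolution: the faces of the link complex at $v$ correspond to the connected components of $\partial B \setminus L(v)$ \emph{as seen from the two sides}, i.e.\ to the local-surface pieces passing through $v$; so a non-spherical link complex forces two faces that, on $\partial B$, bound the same disc region — which can only happen if the embedding near $v$ is not locally flat in a way that a manifold forbids, \emph{unless} a non-orientable handle is hidden.

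So the heart of the argument is the following local analysis, which I expect to be the main obstacle: translate "the link complex at $v$ has a component of positive genus" into the statement "a regular neighbourhood in $M$ of that component of $C$ near $v$ contains a Solid Klein Bottle." The idea is that a component of the link complex that is a torus or higher-genus orientable surface sitting inside the $2$-sphere $\partial B$ is impossible for an \emph{embedded} complex (two distinct faces of $C$ cannot occupy the same side of an edge on $\partial B$ while staying embedded in the ball) — so the genus must come from a non-orientable gluing, i.e.\ some edge $e$ at $v$ where $\sigma(e)$ forces the two local-surface sheets through $e$ to close up with a flip. Thicken $C$ slightly inside $M$ along the chain of faces realising this flip: one gets a solid cylinder (thickened disc) whose two ends are identified with a reversal, which is by definition the Solid Klein Bottle (\autoref{solid_klein}). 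I would make "thicken along a chain of faces with a flip" precise using \autoref{cor7} to control the neighbourhood of each vertex and the fact that along each edge the embedding is standard (a stack of half-discs). The delicate point is verifying that the reversal is genuinely orientation-reversing on the disc cross-section rather than an orientation-preserving rotation; this is exactly where the non-planarity of the link complex — as opposed to mere non-planarity of the link graph, which cannot occur — gets used, since an orientation-preserving closure would yield a solid torus and a spherical link complex.
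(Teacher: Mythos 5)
Your argument is local, but the Solid Klein Bottle is a global obstruction, and this mismatch is fatal. You implicitly fix some induced $\Sigma$, find a vertex $v$ whose link complex is non-spherical, and then try to extract a Solid Klein Bottle from a neighbourhood of $v$. But a non-spherical link complex at one vertex does \emph{not} force a Solid Klein Bottle anywhere — it may simply be caused by a poor choice in defining $\Sigma$. The paper's notion of ``induced rotation system'' depends on a choice of orientation of a small circle around each edge; flipping that choice at a single edge reverses $\sigma(e)$ and can turn a spherical link complex into a toroidal one. Concretely, embed the cone over $K_4$ in $\Sbb^3$ and reverse the orientation used at one apex-edge when defining $\sigma$: the resulting link complex at the apex becomes a torus (trace the face walks: you get one face of length 9 and one triangle, so $\chi=4-6+2=0$). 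Yet $\Sbb^3$ contains no Solid Klein Bottle. So the inference ``non-spherical link complex $\Rightarrow$ Solid Klein Bottle'' is false, and the heart of your proposal collapses. Relatedly, the lemma only asserts that \emph{some} induced rotation system is planar; you would need to argue that \emph{every} choice of $\Sigma$ fails before concluding anything about $M$, which you never do.

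What the paper actually does is choose the orientations coherently and detect the obstruction globally. Fix a vertex-ball orientation, propagate compatible orientations to edge-cylinders and then to adjacent vertex-balls along a spanning tree of the 1-skeleton of $C$. The only place compatibility can break is at a non-tree edge $e$; in that case the fundamental cycle of $e$ together with the vertex-balls and edge-cylinders along it assembles into a Solid Klein Bottle (an orientation-reversing loop in $M$). If that never happens, the globally compatible orientations make every link complex literally $\partial B\cong\Sbb^2$, so the resulting $\Sigma$ is planar. Your proposal is missing this spanning-tree/consistency step entirely: the flip at an edge has to be followed around a \emph{cycle} of $C$ before it yields a Klein bottle, and the cycle has to be the fundamental cycle relative to a chosen tree, not something visible from a single vertex. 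Finally, your ``resolution'' paragraph — that two faces of the non-spherical link complex must bound the same disc of $\partial B$ — is not correct: the face walks in both surfaces use each directed edge exactly once, so they just group the directed edges differently; no coincidence of disc regions is forced.
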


\begin{proof}
 By treating different connected components separately, we may assume that $C$ is connected.
 Around each vertex of $C$ we pick a small neighbourhood that is an open 3-ball, and around each
edge of $C$ we pick a small neighbourhood that is an open cylinder. Next we define orientations on
these neighbourhoods. For that we pick an arbitrary vertex and pick for its neighbourhood one of
the two orientations arbitrarily. Then we pick compatible orientations at the
neighbourhoods of the incident edges. As $C$ is connected, we can continue recursively along a
spanning tree of $C$ until we picked orientations at all neighbourhoods of vertices and edges. If
for a vertex and an incident edge, their neighbourhoods have incompatible orientations, this edge
must be outside the spanning tree and we can build a Solid Klein Bottle from its fundamental cycle
as follows. Indeed, sticking together the balls at the vertices and the cylinders at the edges gives
a Solid Klein Bottle.
By assumption, this does not occur, so all orientations of neighbourhoods are compatible.

Let $\Sigma$ be the rotation system induced by this embedding of $C$ in $M$ with respect to the
orientations chosen above. Clearly, this rotations system $\Sigma$ is planar.
\end{proof}

\begin{cor}\label{prs_orient}
 Any embedding of a 2-complex $C$ in an orientable 3-manifold $M$ induces a planar rotation system.
\end{cor}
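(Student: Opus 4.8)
The plan is to deduce this immediately from \autoref{prs_klein}: it suffices to observe that an orientable $3$-manifold $M$ cannot contain the Solid Klein Bottle as a submanifold, and then \autoref{prs_klein} applies verbatim. So the whole content of the argument is the purely topological claim that the Solid Klein Bottle does not embed as a codimension-zero submanifold of any orientable $3$-manifold.

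To establish this claim, I would first recall that any codimension-zero submanifold of an orientable manifold is itself orientable, since it inherits an orientation by restriction. Hence it is enough to check that the Solid Klein Bottle is \emph{not} orientable. The cleanest way to see this is via its boundary: the boundary of the Solid Klein Bottle is, by construction (see \autoref{solid_klein}), a Klein bottle, which is a non-orientable surface; and the boundary of an orientable compact manifold is always orientable. Therefore the Solid Klein Bottle is non-orientable, and so it is not a submanifold of the orientable manifold $M$.

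With that in hand the corollary is one line: since $M$ is orientable it does not include the Solid Klein Bottle as a submanifold, so by \autoref{prs_klein} every embedding of a $2$-complex $C$ in $M$ induces a planar rotation system. I do not anticipate any real obstacle here — the only point requiring a sentence of care is matching the word ``submanifold'' as used in \autoref{prs_klein} (where it arises as the union of coordinate balls around vertices and coordinate cylinders around edges, a codimension-zero piece) with the orientability-inheritance fact, which is exactly what is needed.
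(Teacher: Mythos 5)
Your proof is correct and takes essentially the same route as the paper: reduce to \autoref{prs_klein} by noting that an orientable $3$-manifold cannot contain the Solid Klein Bottle as a submanifold. The paper simply cites this last fact as well-known, whereas you spell out the (correct) short justification via the non-orientable boundary; this is the same approach, just with one citation unpacked.
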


\begin{proof}
 It is well-known that a 3-manifold is orientable if and only if it does not have the Solid Klein
Bottle as a submanifold. Hence \autoref{prs_orient} follows from \autoref{prs_klein}.
\end{proof}

Throughout this section we fix a connected and locally connected simplicial complex $C$ with a
rotation system $\Sigma$.
An associated topological space $T(C,\Sigma)$ is defined as follows.
For each local surface $S$ of $(C,\Sigma)$ we take an embedding into \Sthree\ as follows.
Let $g$ be the genus of $S$. We start with the unit ball in \Sthree\ and then identify $g$ disjoint
pairs of discs through the outside.\footnote{We have some flexibility along which paths on the
outside we do these identifications but we do not need to make particular choices for our
construction to work.} The constructed surface is isomorphic to $S$, so this defines an embedding of
$S$.
Each local surface
is oriented and we denote by $\hat S$ the topological space obtained from \Sthree\ by deleting all
points on the outside of $S$. For each local surface $S$, let $\varphi_S$ be the map which maps
every cell of $S$ to the cell of $C$ it is cloned from.
\begin{eg}
If the map $\varphi_S$ is injective, then $S$ is a subcomplex of $C$. By taking
3-dimensional baricentric
subdivisions, one could ensure that the maps $\varphi_S$ are injective. However, our proof
below also goes through smoothly without any injectivity assumptions.
\end{eg}
We obtain  $T(C,\Sigma)$ from the simplicial complex $C$ by
gluing\footnote{Given a map $f$ from a topological space $X$ to a topological space $Y$, the space
obtained from $Y$ by gluing $X$ via $f$ is the following topological space. Its point set is
obtained from the disjoint union of $X$ and $Y$
by factoring out the equivalence relation generated from $f$, where $x\in X$ and $y\in Y$ are
related if $f(x)=y$. The topology of this space is defined through the universal property that it is
the densest topology such that the injection from $X$ into this space and the lift of $f$ into this
space are continuous.}
onto each
local surface $S$ the topological space $\hat S$ along $S$ via $\varphi_S$.

We remark that associated topological spaces may depend on the chosen embeddings of the local
surfaces $S$ into \Sthree. However, if all local surfaces are spheres, then any two associated
topological spaces are isomorphic and in this case we shall talk about `the' associated topological
space.

Clearly, associated topological spaces $T(C,\Sigma)$ are compact and connected as $C$ is connected.

\begin{lem}\label{is_manifold}
The rotation system $\Sigma$ is planar if and only if the associated topological
space $T(C,\Sigma)$ is an oriented 3-dimensional manifold.
\end{lem}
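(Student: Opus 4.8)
The plan is to check the manifold condition locally, point by point in $T(C,\Sigma)$, and to match each local picture against the planarity of the corresponding link complex. A point of $T(C,\Sigma)$ lies either in the interior of a glued-on piece $\hat S$ (where it manifestly has a Euclidean neighbourhood, since $\hat S$ is a subspace of $\Sbb^3$ with boundary $S$), or in the part coming from $C$ itself, and there the only points that require attention are the images of the vertices of $C$ — points in the relative interior of an edge or a face have neighbourhoods that are obviously half-disc-bundles glued into balls in the ambient $\hat S$'s, giving $\Rbb^3$. So the whole statement reduces to analysing a neighbourhood of each vertex $v$ of $C$ inside $T(C,\Sigma)$.

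First I would describe that neighbourhood explicitly. Take a small ball around $v$ in (the geometric realisation of) $C$; its boundary sphere-like object meets $C$ in exactly the link graph $L(v)$. Each of the local surfaces $S$ incident to a clone $v'$ of $v$ contributes, via \autoref{cor7}, a cone over the face boundary of $\iota(v')$ glued on; and by \autoref{loc_inc_AND_loc_surfaces} the faces $\iota(v')$ of the link complex at $v$ are precisely the faces of that cell complex, each appearing once. Hence a punctured neighbourhood of $v$ in $T(C,\Sigma)$ is homeomorphic to the mapping-cylinder-type gluing: take the link complex at $v$ (a closed surface, since it is a cell complex by \autoref{loc_is_orientable}), and for each of its faces glue a copy of the corresponding $\hat S$-neighbourhood (a half-ball) along that face. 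In other words, a neighbourhood of $v$ in $T(C,\Sigma)$ is the cone over the link complex at $v$, with $v$ as cone point; I would phrase this as: $T(C,\Sigma)$ is locally at $v$ the cone over the link complex. The cone over a closed surface $X$ is a $3$-manifold (at the cone point) if and only if $X$ is a $2$-sphere (or disjoint union of $2$-spheres, accommodating the locally-disconnected case) — this is the standard fact that links in a manifold are spheres, and conversely; one direction is immediate, the other uses that a non-spherical closed surface has non-trivial $H_1$ or $H_2$ so its cone is not a homology manifold.

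Putting the two directions together: if every link complex at every vertex is a disjoint union of spheres — i.e. $\Sigma$ is planar by definition — then every vertex of $T(C,\Sigma)$ has a neighbourhood that is a cone over spheres, hence a ball (or disjoint balls, but local connectedness of $C$ keeps it connected at each vertex), and combined with the easy interior analysis above $T(C,\Sigma)$ is a closed $3$-manifold; orientability follows from the orientations on the local surfaces (via \autoref{loc_is_orientable}) fitting together with a fixed orientation of $C$'s edge-germs, which I would note orients $T(C,\Sigma)$. Conversely, if $\Sigma$ is not planar, some link complex at some vertex $v$ is a closed surface other than a union of spheres, so the cone point $v$ has no Euclidean neighbourhood and $T(C,\Sigma)$ fails to be a $3$-manifold. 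I expect the main obstacle to be the bookkeeping in the first half of the second paragraph: establishing rigorously, from Corollaries \ref{cor7} and \ref{loc_inc_AND_loc_surfaces}, that the glued-on local-surface pieces around $v$ assemble to exactly the cone over the link complex with no overcounting and no missing sectors — i.e. that the combinatorial incidence bijection $\iota$ genuinely patches up to a homeomorphism of a punctured neighbourhood. Once that local model is pinned down, both implications are short.
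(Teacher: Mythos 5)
Your proposal is essentially correct and, for the main direction (planar $\Rightarrow$ manifold), follows the same route as the paper: analyse neighbourhoods point by point, reduce the nontrivial work to the vertices of $C$, and identify a neighbourhood of a vertex $v$ with the cone over the link complex at $v$ via \autoref{cor7} and \autoref{loc_inc_AND_loc_surfaces}. The one place where you differ genuinely is the converse. The paper dispatches the direction ``$T(C,\Sigma)$ is an oriented $3$-manifold $\Rightarrow\Sigma$ is planar'' globally, by citing \autoref{prs_orient}: any embedding of a $2$-complex into an orientable $3$-manifold induces a planar rotation system, and by construction the induced rotation system of $C\hookrightarrow T(C,\Sigma)$ is $\Sigma$ itself. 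You instead argue locally, observing that once the neighbourhood of $v$ is identified with the cone over the link complex, the manifold condition at $v$ is \emph{equivalent} to that link complex being a $2$-sphere (a non-spherical closed oriented surface has nontrivial $H_1$, so its cone is not a homology manifold at the cone point). Both arguments are valid. Your local version has the slight advantage of yielding both implications from a single local model and not needing \autoref{prs_orient} at all; the paper's version is shorter because it reuses machinery already in place and avoids the (mild) algebraic topology needed to justify ``cone over a non-sphere is not a manifold.''

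Two small points worth tightening if you were to write this up fully. First, the edge case: a small neighbourhood of an interior point of an edge $e$ is not quite a ``half-disc-bundle''; it is a union of wedge-shaped pieces (one per clone of $e$ in a local surface) glued cyclically along faces according to $\sigma(e)$, and the fact that this closes up to a ball uses that $\sigma(e)$ is a genuine cyclic orientation — this is true independent of planarity, but deserves a sentence rather than ``obviously.'' Second, you should note (as you implicitly do) that edge and face points are handled \emph{unconditionally}, so that in the converse direction the only possible failure of the manifold condition is at a vertex; this is what makes the local argument clean.
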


 \begin{proof}
 \autoref{prs_orient} implies that if the topological space $T(C,\Sigma)$ is a 3-dimensional
orientable 3-manifold, then the rotation system $\Sigma$ is planar.
 Conversely, now assume that $\Sigma$ is a planar rotation system.
  We have to show that the topological space $T(C,\Sigma)$ is an orientable 3-manifold.
  It suffices to show that $T(C,\Sigma)$ is a 3-manifold since then orientability follows
immediately from the construction of $T(C,\Sigma)$.
So we are to show that there is a neighbourhood around any point $x$ of  $T(C,\Sigma)$ that is
isomorphic
to the closed 3-dimensional ball $B_3$.

 If $x$ is a point not in $C$, this is clear. If $x$ is an interior point of a face $f$, we obtain
a neighbourhood of $x$ by gluing together neighbourhoods of copies of $x$ in the local surfaces
that
contain an orientation of $f$.
Each orientation of $f$ is contained in local surfaces exactly once. Hence we glue together the two
orientations of $f$ and
clearly $x$ has a neighbourhood isomorphic to $B_3$.

Next we assume that $x$ is an interior point of an edge $e$.  Some open neighbourhood of
$x$ is isomorphic to the
topological space obtained from gluing
together for each copy of $e$ in a local surface, a neighbourhood around a copy $x'$ of $x$ on
those edges.
A neighbourhood around $x'$ has the shape of a piece of a cake, see \autoref{fig:piece_of_cake}

   \begin{figure} [htpb]
\begin{center}
   	  \includegraphics[height=2cm]{./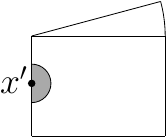}
   	  \caption{A piece of a cake. This space is obtained by taking the product of a triangle
with the unit interval. The edge $e$ is mapped to the set of points
corresponding to some vertex of the triangle. }\label{fig:piece_of_cake}
\end{center}\vspace{-0.5cm}
   \end{figure}

First we consider the case that $x$ has several copies.
As $\sigma(e)$ is a cyclic orientation, these pieces of a cake are glued together in a cyclic way
along faces. Since each cyclic
orientation of a face appears exactly once in local surfaces, we identify in each gluing step the
two cyclic orientations of a face.
Informally, the overall gluing will be a `cake' with $x$ as an interior point.
Hence a neighbourhood of $x$ is isomorphic to $B_3$.
If there is only one copy of $x'$, then the copy of $e$ containing $x'$ is incident with the two
orientations of a single face.
Then we obtain a neighbourhood of $x$ by identifying these two orientations. Hence there is a
neighbourhood of $x$ isomorphic to $B_3$.

It remains to consider the case where $x$ is a vertex of $C$. We obtain a neighbourhood of $x$ by
gluing together neighbourhoods of copies
of $x$ in local surfaces. We shall show that we have one such copy for every face of the link
complex for $(C,\Sigma)$ and a neighbourhood of $x$ in such a copy is given by the cone over that
face with $x$ being the top of the cone, see \autoref{fig:paste}.
   \begin{figure} [htpb]
\begin{center}
   	  \includegraphics[height=4cm]{./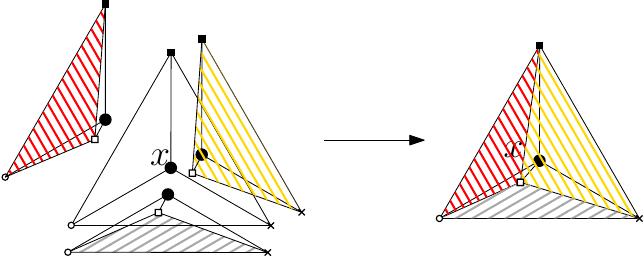}
   	  \caption{In this example the link complex of $x$ is a
tetrahedron. The three faces visible in our drawing are highlighted in red, gold and grey.
   	  On the left we see how the four cones over the faces of the link complex
are pasted together to form the
cone over the link complex depicted on the right. }\label{fig:paste}
\end{center}\vspace{-0.5cm}
   \end{figure}
We shall show that the glued together neighbourhood
is the cone over the link complex with $x$ at the top. Since $\Sigma$ is planar and $C$ is locally
connected, the link complex is isomorphic to the 2-sphere. Since the cone over the 2-sphere is a
3-ball, the neighbourhood of $x$ has the desired type.

Now we examine this plan in detail.
By \autoref{loc_inc_AND_loc_surfaces} and \autoref{cor7}, the copies are
mapped by the
bijection $\iota$ to the faces of the link complex at $x$ and a neighbourhood around such a copy
$x'$ is isomorphic to the cone
with top $x'$ over the face $\iota(x')$.
We glue these cones over the faces $\iota(x')$ on their faces that are obtained from edges of
$\iota(x')$ by adding the top $x'$.

The glued together complex is isomorphic to the cone over the complex $S$ obtained by gluing
together the faces $\iota(x')$ along edges, where we always glue the edge the way round so that
copies of the same vertex of the local
incidence graph are identified. Hence the vertex-edge-incidence relation and the
edge-face-incidence relation of $S$ are the same as for
the link complex at $x$. The same is true for the cyclic orderings of edges on faces.
So $S$ is equal to the link complex at $x$.

Hence a neighbourhood of $x$ is isomorphic to a cone with top $x$ over the link complex
at $x$. Since $\Sigma$ is a
planar rotation system, the link complex is a disjoint union of spheres. As $C$ is
locally connected, it is a sphere. Thus its
cone is isomorphic to $B_3$.
 \end{proof}

\begin{lem}\label{is simply connected}
 If $C$ is simply connected, then any associated topological space $T(C,\Sigma)$ is simply
connected.
\end{lem}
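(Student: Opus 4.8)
The plan is to compute $\pi_1(T(C,\Sigma))$ via the Seifert--van Kampen theorem, attaching the local surfaces one at a time, exploiting that the boundary of a handlebody includes into it by a $\pi_1$-surjection. Two facts about the pieces being glued drive the argument. First, every local surface $S$ of $(C,\Sigma)$ is a connected closed orientable surface: it is a cell complex by \autoref{loc_is_orientable} (so its geometric realisation is a closed orientable surface), and it is connected because its set of face-orientations forms a single equivalence class of the local-surface-equivalence relation, any two members of which are linked by a chain of faces pairwise glued along a common edge. Hence $S$ has some genus $g\ge 0$. Second, by the construction of $T(C,\Sigma)$ the solid piece $\hat S$ attached along $S$ is obtained from a $3$-ball $B^3$ by attaching $g$ three-dimensional $1$-handles (the $g$ identifications of disc pairs through the outside), so $\hat S$ is a connected genus-$g$ handlebody with $\partial\hat S=S$, and the inclusion $S=\partial\hat S\hookrightarrow\hat S$ induces a surjection $\pi_1(S)\twoheadrightarrow\pi_1(\hat S)$: indeed $\pi_1(\hat S)$ is free of rank $g$, and a free generating set is represented by loops lying on $\partial\hat S$, each running once over one of the handles along its visible (lateral) part and returning through $\partial B^3$.

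Now list the local surfaces of $(C,\Sigma)$ as $S_1,\dots,S_k$, with attaching maps $\varphi_{S_j}\colon S_j\to C$. Set $T_0=C$ and let $T_j$ be obtained from $T_{j-1}$ by gluing $\hat S_j$ along $\varphi_{S_j}$; this is legitimate since $\varphi_{S_j}(S_j)\subseteq C\subseteq T_{j-1}$ and the interiors of the $\hat S_j$ are pairwise disjoint and disjoint from $C$, so that $T_k=T(C,\Sigma)$. Each pair $(\hat S_j,S_j)$ is a compact manifold-with-boundary pair, hence a good pair; thickening $T_{j-1}$ by an open collar of $S_j$ inside $\hat S_j$ yields an open cover of $T_j$ by a set deformation retracting onto $T_{j-1}$, by the interior of $\hat S_j$ together with that collar, and by an intersection deformation retracting onto $S_j$. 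All three are path-connected (by induction $T_{j-1}$ is connected, and $C,\hat S_j,S_j$ are connected), so Seifert--van Kampen gives $\pi_1(T_j)\cong\pi_1(T_{j-1})\ast_{\pi_1(S_j)}\pi_1(\hat S_j)$. Since $\pi_1(S_j)\to\pi_1(\hat S_j)$ is surjective, in this pushout every element of $\pi_1(\hat S_j)$ equals the image of an element of $\pi_1(S_j)$, hence of $\pi_1(T_{j-1})$; thus $\pi_1(T_j)$ is a quotient of $\pi_1(T_{j-1})$. As $\pi_1(T_0)=\pi_1(C)$ is trivial by hypothesis, induction on $j$ gives $\pi_1(T_j)=1$ for all $j$, and in particular $\pi_1(T(C,\Sigma))=\pi_1(T_k)=1$.

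The main obstacle is the care needed to run Seifert--van Kampen on an adjunction space with a possibly non-injective attaching map $\varphi_{S_j}$ --- one must pass to honest open sets and verify path-connectedness at each stage --- together with the clean identification of $\hat S$ as a handlebody and the surjectivity $\pi_1(\partial\hat S)\twoheadrightarrow\pi_1(\hat S)$, which is precisely what makes each gluing step only collapse, and never enlarge, the fundamental group. Note that planarity of $\Sigma$ is never used, consistent with how the lemma is stated.
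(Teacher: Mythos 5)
Your proof is correct and takes essentially the same approach as the paper: iteratively apply Seifert--van Kampen as each piece $\hat S$ is glued on, and use that $\pi_1(\partial\hat S)\to\pi_1(\hat S)$ is surjective so that the fundamental group can only be collapsed at each step. Your version makes explicit what the paper leaves implicit---chiefly the handlebody structure of $\hat S$ behind the surjectivity, and the choice of open cover---but the argument is the same.
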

\begin{proof}
This is a consequence of Van Kampen's Theorem {\cite[Theorem 1.20]{{Hatcher}}}.
Indeed, we obtain $T$
from $T(C,\Sigma)$ by deleting all interior points of the sets $\hat S$ for local surfaces $S$ that
are not in a small open neighbourhood of $C$. This can be done in such a
way that $T$ has a deformation retract to $C$, and thus is
simply connected.
Now we recursively glue the spaces $\hat S$ back onto $T$.
In each step we glue a single space $\hat S$. Call the space obtained after $n$ gluings $T_n$.

The fundamental group of $\hat S$ is a quotient of the fundamental group of the topological space that is the
intersection of $T_n$ and $\hat S$ (this follows from the construction of the embedding of the
surface $S$ into the space $\hat S$). The fundamental group of $T_n$ is trivial by induction.
So we can apply
Van Kampen's Theorem to deduce that the gluing space $T_{n+1}$ has trivial fundamental group. Hence
the final gluing space $T(C,\Sigma)$ has trivial fundamental group. So it is simply connected.
\end{proof}

The converse of \autoref{is simply connected} is true if all local surfaces for $(C,\Sigma)$ are
spheres as follows.

\begin{lem}\label{is_simply_connected2}
 If all local surfaces for $(C,\Sigma)$ are spheres and the associated topological space
$T(C,\Sigma)$ is simply connected, then $C$ is simply connected.
\end{lem}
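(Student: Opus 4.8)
The plan is to run the argument of \autoref{is simply connected} in reverse. Recall $T(C,\Sigma)$ is built from $C$ by gluing, for each local surface $S$, the space $\hat S$ (a handlebody, here a $3$-ball since $S$ is a sphere) onto $C$ along $S$ via $\varphi_S$. First I would fix a loop $\gamma$ in $C$ and, using simple connectedness of $T(C,\Sigma)$, obtain a disc $D$ in $T(C,\Sigma)$ bounding $\gamma$. The goal is to homotope $D$ off the interiors of the glued-in balls $\hat S$, so that the final disc lies in (a deformation retract of) $C$, witnessing that $\gamma$ is null-homotopic in $C$. So $C$ is simply connected.

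The key steps, in order: (1) Record that $\hat S$ is a closed $3$-ball for every local surface $S$, since $S$ is a sphere and $\hat S$ is obtained from $\Sbb^3$ by deleting the outside of the embedded sphere; in particular each $\hat S$ is simply connected with simply connected boundary $S$. (2) Put $D$ in general position with respect to the surfaces $S$, so that $D\cap S$ is a finite union of circles in the interior of $D$; pick an innermost such circle $c$, bounding a subdisc $D_c\subseteq D$ whose interior lies entirely inside one $\hat S$ (or entirely outside all of them). (3) Since $\hat S$ is a ball and $c\subseteq S=\partial\hat S$, the circle $c$ also bounds a disc $D'_c$ on $S$; replace $D_c$ by $D'_c$ and push slightly off $S$, strictly reducing the number of intersection circles. (4) Iterate until $D$ misses the interiors of all $\hat S$, i.e. $D\subseteq T$, where $T$ is the subspace of $T(C,\Sigma)$ deformation-retracting onto $C$ (as in \autoref{is simply connected}); composing with this retract gives a null-homotopy of $\gamma$ in $C$.

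The main obstacle is step (3): an innermost circle $c$ on $D$ need not be innermost on the surface $S$, so the replacement disc $D'_c\subseteq S$ may itself meet $D$ in further circles — the naive swap need not decrease the total intersection count. The standard fix is to choose $c$ innermost on $D$ (so $D_c$ has interior disjoint from every $S$), which is always possible, and then argue that after the swap and a small pushoff the new intersection pattern has strictly fewer circles, because the interior of $D_c$ contributed none and $D'_c$ can be taken in a collar of $S$ disjoint from the rest of $D$ away from $c$ after the pushoff; care is needed because distinct local surfaces $S_1,S_2$ may share points of $C$, so one works with one $\hat S$ at a time and uses that the $\hat S$ meet $T(C,\Sigma)$ only along the respective $S_i\subseteq C$. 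An alternative, cleaner route avoiding transversality is to invoke Van Kampen directly: writing $T(C,\Sigma)$ as the iterated pushout $T_{n+1}=T_n\cup_{(T_n\cap\hat S)}\hat S$ and running the induction backwards — since $\pi_1(\hat S)=1$ and the inclusion $T_n\cap\hat S\hookrightarrow\hat S$ is $\pi_1$-surjective, Van Kampen gives $\pi_1(T_{n+1})$ as a quotient of $\pi_1(T_n)$; triviality of $\pi_1(T(C,\Sigma))=\pi_1(T_N)$ forces $\pi_1(T_0)=1$, and $T_0\simeq C$. I would present the Van Kampen version as the main argument, as it matches the bookkeeping already set up in \autoref{is simply connected}.
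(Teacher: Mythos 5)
Your preferred Van Kampen argument has a genuine gap. You correctly note that (with $\hat S$ a ball) Van Kampen exhibits $\pi_1(T_{n+1})$ as a quotient of $\pi_1(T_n)$, but then conclude that triviality of $\pi_1(T_N)$ forces triviality of $\pi_1(T_0)$. That inference is invalid: a chain of surjections $\pi_1(T_0)\onto\pi_1(T_1)\onto\cdots\onto\pi_1(T_N)=1$ says nothing about $\pi_1(T_0)$; the very first map could already kill a nontrivial group. (This is precisely the forward-direction reasoning used in \autoref{is simply connected}, which only needs the quotient structure; reversing it requires strictly more.) To make the backward induction work you need each gluing step to be a $\pi_1$-isomorphism, not merely a surjection, and the relevant ingredient is not $\pi_1$-surjectivity of $T_n\cap\hat S\hookrightarrow\hat S$ (which is vacuous when $\pi_1(\hat S)=1$) but triviality of $\pi_1(T_n\cap\hat S)$ itself. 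That is where the sphere hypothesis has to enter: choosing the Van Kampen pieces as $U=T_{n+1}\setminus\{p\}$ (deformation retracting onto $T_n$ since $\hat S$ is a ball) and $V$ an open ball around an interior point $p$ of $\hat S$, the intersection $U\cap V$ is a punctured open ball, homotopy equivalent to $\Sbb^2$ and hence simply connected, so $\pi_1(T_{n+1})\cong\pi_1(T_n)$ and the descent goes through.

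The paper sidesteps the induction altogether. Since each $\hat S$ is a closed $3$-ball, pick one interior point $p_S$ of each; radially retracting $\hat S\setminus\{p_S\}$ onto $S\subseteq C$ yields a retraction of $T(C,\Sigma)\setminus\{p_S: S \text{ a local surface}\}$ onto $C$. A nullhomotopy in $T(C,\Sigma)$ of a loop of $C$ is a map of a $2$-dimensional domain into a $3$-dimensional space and can be perturbed to miss the finitely many points $p_S$; composing with the retraction gives a nullhomotopy inside $C$. This is lighter than your first sketch as well: you never need $D$ to be an embedded disc meeting each $S$ transversally in circles (which brings in Dehn's lemma and the innermost-circle bookkeeping you flagged as the obstacle), only a singular disc that misses finitely many interior points, which is elementary.
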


\begin{proof}
Let $\varphi$ an image of $\Sbb^1$ in $C$. Since  $T(C,\Sigma)$ is simply connected, there is
a homotopy from $\varphi$ to a point of $C$ in $T(C,\Sigma)$.
We can change the homotopy so that it avoids an interior point of each local
surface of the embedding.
Since each local surface is a sphere, for each local surface without the chosen point there is a
continuous projection to its
boundary. Since these projections are continuous, the concatenation of them with
the homotopy is continuous. Since this concatenation is constant on $C$ this
defines a homotopy of $\varphi$ inside $C$. Hence $C$ is simply connected.
\end{proof}

We conclude this section with the following special case of \autoref{combi_intro}.

\begin{thm}\label{combi}
Let $\Sigma$ be a rotation system of a locally connected simplicial complex $C$.
Then $\Sigma$ is planar if and
only if $T(C,\Sigma)$ is an oriented 3-manifold.
Furthermore, if $\Sigma$ is planar and $C$ is simply connected, then $T(C,\Sigma)$ must be the 3-sphere.
\end{thm}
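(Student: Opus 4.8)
The first equivalence is essentially \autoref{is_manifold}, so the real content is the ``furthermore'' clause: if $\Sigma$ is planar and $C$ is simply connected, then $T(C,\Sigma)\cong\Sbb^3$. The plan is to invoke Perelman's theorem (the Poincar\'e conjecture): a compact simply connected $3$-manifold is homeomorphic to $\Sbb^3$. So it suffices to check the three hypotheses of that theorem for $T(C,\Sigma)$.

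First I would record that $T(C,\Sigma)$ is a $3$-manifold: this is exactly \autoref{is_manifold}, using that $\Sigma$ is planar. Second, $T(C,\Sigma)$ is compact: $C$ is a finite simplicial complex, there are finitely many local surfaces, and each $\hat S$ is obtained from the closed $3$-ball in $\Sbb^3$ by finitely many handle identifications, hence compact; $T(C,\Sigma)$ is a finite union of compact pieces glued along compact subspaces, so it is compact (this was already noted in the paragraph before \autoref{is_manifold}). Third, $T(C,\Sigma)$ is simply connected: this is \autoref{is simply connected}, which uses that $C$ is simply connected. (I should also note $T(C,\Sigma)$ is connected since $C$ is connected, which is part of the standing assumption in this section and was likewise already observed.) Since $\Sigma$ is planar, all local surfaces are spheres by \autoref{loc_is_con} together with the sphere conclusion in the proof of \autoref{is_manifold} — wait, more precisely planarity forces each link complex to be a sphere, but local surfaces need not be; however for the application of Perelman we do not need the local surfaces to be spheres, only the three properties above. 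So I would not rely on that.

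Putting these together: $T(C,\Sigma)$ is a compact, connected, simply connected $3$-manifold, hence by Perelman's theorem it is homeomorphic to $\Sbb^3$. For the forward direction of the first equivalence, I would just cite \autoref{is_manifold} verbatim; for the reverse direction likewise. The only genuinely new ingredient is the citation of Perelman's theorem, which is available to us as stated in the chapter introduction.

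The main obstacle is not conceptual but bookkeeping: one must be slightly careful that $T(C,\Sigma)$ is a manifold \emph{without boundary} so that Perelman applies — each face of $C$ ends up incident with exactly two local-surface copies which get glued to the two sides coming from the $\hat S$ pieces, and each edge- and vertex-neighbourhood is a full ball by the analysis in the proof of \autoref{is_manifold}, so there are no boundary points; and one must make sure ``compact'' and ``connected'' are genuinely in hand before quoting Perelman. All of these were essentially established already, so the proof is short: it assembles \autoref{is_manifold}, \autoref{is simply connected}, compactness/connectedness of $T(C,\Sigma)$, and Perelman's theorem.
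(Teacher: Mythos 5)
Your proposal is essentially the paper's own proof: cite \autoref{is_manifold} for the equivalence, then combine \autoref{is simply connected} with Perelman's theorem for the ``furthermore'' clause, noting compactness and connectedness of $T(C,\Sigma)$ along the way. The only small point worth aligning is that the theorem statement itself does not assume $C$ connected, so one should (as the paper does) begin by reducing to a single connected component rather than appealing to a standing assumption.
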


\begin{proof}
By treating different connected components separately, we may assume that $C$ is connected.
The first part follows from \autoref{is_manifold}. The second part follows from \autoref{is simply
connected}
and Perelman's theorem \cite{Perelman1, Perelman2, Perelman3} that any compact simply connected
3-manifold is isomorphic to the 3-sphere.
\end{proof}

\begin{rem}\label{P_rem}
We used Perelman's theorem in the proof of \autoref{combi}. On the other hand it
together with Moise's theorem \cite{moise}, which says
that every compact 3-dimensional manifold has a triangulation, implies Perelman's theorem in the orientable case as
follows. Let $M$ be a simply connected oriented 3-dimensional compact manifold.
 Let $T$ be a triangulation of $M$, and let $C$ be the simplicial complex obtained from $T$ by
deleting the 3-dimensional cells (recall that 3-dimensional cells of a triangulation are homeomorphic to closed 3-balls).
Since $M$ is orientable, there is a rotation system given by the embedding of $C$ into $T$; denote it by $\Sigma$.
It is clear from that construction that $T$ is equal to the triangulation given by
the embedding of $C$ into $T(C,\Sigma)$. Hence we can apply \autoref{is_simply_connected2} to
deduce that $C$ is simply connected.
Hence by \autoref{combi} the topological space $T(C,\Sigma)$, into which $C$ embeds, is isomorphic
to the 3-sphere. Since $T(C,\Sigma)$ is isomorphic to $M$, we deduce that $M$ is isomorphic to  the
3-sphere.
We will not use this remark in our proofs.
\end{rem}

\section{Cut vertices}\label{sec5}

In this section we deduce \autoref{combi_intro} from \autoref{combi} proved in the last section.
Given a prime $p$, a \scom\ $C$ is \emph{$p$-nullhomologous} if every directed cycle of $C$ is
generated over $\Fbb_p$ by the boundaries of faces of $C$. Note that a simplicial complex $C$
is  $p$-nullhomologous if and only if the first homology group $H_1(C,\Fbb_p)$ is trivial.
Clearly, every simply connected \scom\ is $p$-nullhomologous.

A vertex $v$ in a connected complex $C$ is a \emph{cut vertex} if the 1-skeleton of $C$ without $v$
is a disconnected graph\footnote{We
define this in terms of the 1-skeleton instead of directly in terms of $C$ for a technical reason:
The object obtained from a simplicial
complex by deleting a vertex may have edges not incident with faces. So it would not be a
2-dimensional simplicial complex in the
terminology of this paper.}.
A vertex $v$ in an arbitrary, not necessarily connected, complex $C$ is \emph{a cut vertex} if it
is a
cut vertex in a connected
component of $C$.

\begin{lem}\label{is_loc_con}
 Every $p$-nullhomologous simplicial complex without a cut vertex is locally connected.
\end{lem}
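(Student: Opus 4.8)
The plan is to prove the contrapositive in the most natural direction: assuming a link graph $L(v)$ is disconnected, produce a cut vertex, under the hypothesis that $C$ is $p$-nullhomologous. So suppose the link graph $L(v)$ has at least two components, say $K_1$ and $K_2$. Recall that vertices of $L(v)$ are edges of $C$ at $v$ and edges of $L(v)$ are faces at $v$; a face at $v$ uses exactly two edges at $v$, so the component of $L(v)$ containing a given edge $e$ at $v$ records exactly which edges at $v$ are reachable from $e$ by walking through faces all of which pass through $v$. I would let $A_i$ be the set of edges of $C$ at $v$ that are vertices of $K_i$, and attempt to show that $v$ separates the endvertices of edges in $A_1$ from those of edges in $A_2$ in the $1$-skeleton of $C$ — that is, that $v$ is a cut vertex.

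The core step is to rule out a path in $C - v$ from a vertex adjacent to $v$ via $A_1$ to a vertex adjacent to $v$ via $A_2$. Here is where $p$-nullhomology enters. Suppose such a path $P$ exists; together with one edge $e_1\in A_1$ from $v$ and one edge $e_2\in A_2$ from $v$, it closes up to a cycle $o$ through $v$. By the hypothesis, $o$ is generated over $\Fbb_p$ by face boundaries, so $o=\sum_j \partial f_j$ for faces $f_j$ of $C$. I would then look at this equation locally at $v$: restricting the chain equation to the edges incident with $v$ (i.e. projecting to the link graph), the faces $f_j$ that actually pass through $v$ contribute their corresponding edges of $L(v)$, and the constraint $\partial o$ restricted at $v$ forces $e_1$ and $e_2$ to have the same coefficient-pattern relationship — but in the link graph $L(v)$, the chain $\sum_{j: v\in f_j}(\text{edge of }L(v)\text{ for }f_j)$ is a $1$-chain whose boundary in $L(v)$ is supported on $\{e_1,e_2\}$ with nonzero (equal) coefficients, which means $e_1$ and $e_2$ lie in the same component of $L(v)$, contradicting $e_1\in A_1$, $e_2\in A_2$. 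This is the key homological bookkeeping and I expect it to be the main obstacle: one must set up the chain complex of $C$ and its relation to the chain complex of $L(v)$ carefully, track the field $\Fbb_p$ coefficients, and make sure the "restriction to edges at $v$" map commutes appropriately with boundary maps. In characteristic $p$ rather than $\mathbb Z$ this is actually cleaner since one need not worry about signs versus orientations beyond what the directed structure already supplies.

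Having established the local obstruction, I would finish as follows. If $C$ is not locally connected, pick $v$ with $L(v)$ disconnected and $A_1, A_2$ as above; the argument shows no path in the $1$-skeleton minus $v$ connects a neighbour of $v$ reached through $A_1$ to one reached through $A_2$. Since $C$ has no isolated edges (the reasonableness convention in force throughout the paper), every edge at $v$ has its other endvertex genuinely present, so both sides are nonempty, and the $1$-skeleton of $C$ minus $v$ is therefore disconnected — thus $v$ is a cut vertex, contradicting the hypothesis. One mild subtlety to address is the non-connected case: a cut vertex is defined component-wise, and $L(v)$ being disconnected is likewise a purely local condition, so restricting to the connected component of $C$ containing $v$ loses nothing, and I would simply note this reduction at the start.

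I should also double-check the degenerate possibilities: if some edge $e$ at $v$ lies in no face, that is excluded by reasonableness; if $A_1$ or $A_2$ is empty then $L(v)$ is connected after all; and if the closing cycle $o$ happens to be non-simple or to reuse $v$, one can still run the homology argument since $\Fbb_p$-cycles need not be simple and the restriction-to-$v$ computation only cares about coefficients on edges at $v$. The upshot is that the proof is short modulo the one careful lemma identifying the boundary-at-$v$ of a face chain with the corresponding boundary in $L(v)$, which is essentially the statement that the link graph's chain complex is the "link" of $v$ in the chain complex of $C$ — a fact that is visibly true but worth stating explicitly before invoking it.
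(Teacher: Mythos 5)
Your proposal is correct and is essentially the paper's own argument: take edges $e_1,e_2$ of $C$ at $v$ lying in different components of $L(v)$, close a path in $C-v$ between their other endvertices into a cycle $o$ through $v$, and derive a contradiction by restricting a putative $\Fbb_p$-combination $\sum_j c_j\,\partial f_j=o$ to the faces incident with $v$ and reading it off as a $1$-chain in $L(v)$ whose boundary is supported on $\{e_1,e_2\}$. The paper finishes the same step by a handshake-lemma parity count on a multigraph built from the multiplicities, whereas you finish by noting that a $\Fbb_p$-boundary has zero coefficient-sum on each component of $L(v)$; these are the same underlying fact, and your formulation is the one that transparently covers odd $p$.
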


\begin{proof}
 We construct for any vertex $v$ of an arbitrary simplicial complex $C$ such that the link graph
$L(v)$ at $v$ is not
connected and $v$ is not a cut vertex a cycle containing $v$ that is not generated by the face
boundaries of $C$.

Let $e$ and $g$ be two vertices in different components of $L(v)$. These are edges of $C$ and let
$w$ and $u$ be their endvertices
different from $v$. Since $v$ is not a cut vertex, there is a path in $C$ between $u$ and $w$ that
avoids $v$.
This path together with the edges $e$ and $g$ is a cycle $o$ in $C$ that contains $v$.

Our aim is to show that $o$ is not generated by the boundaries of faces of $C$.
Suppose for a contradiction that $o$ is generated. Let $F$ be a family of faces whose boundaries
sum
up to $o$.
Let $F_v$ be the subfamily of faces of $F$ that are incident with $v$. Each face in $F_v$ is an
edge of $L(v)$ and each vertex of $L(v)$
is incident with an even number (counted with multiplicities) of these edges except for $e$ and $g$
that are incident with an odd number of
these faces.
Let $X$ be the connected component of the graph $L(v)$ restricted to the edge set $F_v$ that
contains the vertex $e$.
We obtain $X'$ from $X$ by adding $k-1$ parallel edges to each edge that appears $k$ times in
$F_v$.
Since $X'$ has an even number of vertices of odd degree also $g$ must be in $X$. This is a
contradiction to the assumption that $e$ and $g$ are in different components of $L(v)$. Hence $o$
is not generated by the boundaries of
faces of $C$. This completes the proof.
\end{proof}

Given a connected complex  $C$ with a cut vertex $v$ and a connected component $K$ of the
1-skeleton of $C$ with $v$ deleted, the
\emph{complex attached} at $v$
centered at $K$ has vertex set $K+v$ and its edges and faces are those of $C$ all of whose incident
vertices are in $K+v$.

\begin{lem}\label{block-lem}
A connected simplicial complex $C$ with a cut vertex $v$ has a piece-wise linear
embedding into $\Sbb^3$
if and only if all complexes attached at
$v$ have
a piece-wise linear embedding into $\Sbb^3$.
\end{lem}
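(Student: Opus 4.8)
The statement is an "if and only if" about piece-wise linear embeddability of a connected simplicial complex $C$ with a cut vertex $v$: $C$ embeds in $\Sbb^3$ iff each complex attached at $v$ does. The forward direction is essentially immediate: if $C$ embeds in $\Sbb^3$, then restricting the embedding to any complex $C_K$ attached at $v$ (centred at a component $K$ of the $1$-skeleton minus $v$) gives an embedding of $C_K$, since $C_K$ is a subcomplex of $C$. So the whole content is the backward direction: given PL embeddings of each $C_K$ in $\Sbb^3$, assemble them into a PL embedding of $C$.

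The plan for the backward direction is to glue the embeddings together near the common vertex $v$. First I would fix, for each attached complex $C_K$, a PL embedding $\iota_K\colon C_K\hookrightarrow\Sbb^3$, and choose a small closed PL ball $B_K$ around the image $\iota_K(v)$ that meets $\iota_K(C_K)$ in a cone: concretely, $B_K\cap\iota_K(C_K)$ is the cone with apex $\iota_K(v)$ over the link of $v$ in $C_K$ drawn on $\partial B_K$ (this uses that PL embeddings are locally conical, so after a subdivision one can assume the intersection with a small ball is exactly this cone). The complement $\Sbb^3\sm\mathrm{int}(B_K)$ is again a closed $3$-ball $B_K'$, and it contains a PL copy of $C_K$ minus the open cone around $v$, meeting $\partial B_K'$ precisely in the link of $v$ in $C_K$. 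Now take a single $3$-sphere, pick disjoint closed PL balls $D_K$ in it indexed by the components $K$, one for each attached complex, together with one more disjoint ball $D_0$; place inside each $D_K$ a scaled copy of the pair $(B_K', \text{the embedded piece of }C_K)$, and inside $D_0$ place a "star" consisting of the single point $v$ joined by disjoint arcs to a point on each $\partial D_K$. The key point is that the link of $v$ in $C$ is the disjoint union of the links of $v$ in the various $C_K$ (by the definition of "complex attached at $v$": edges and faces of $C$ are partitioned among the $C_K$ according to which component their non-$v$ vertices lie in), so we can route the arcs and glue the boundary data so that on $\partial D_K$ the link-of-$v$-in-$C_K$ from the star matches the one coming from the embedded piece of $C_K$. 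Gluing along these boundary $2$-spheres produces a PL embedding of all of $C$ in $\Sbb^3$.

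The step I expect to be the main obstacle — really the only nontrivial point — is arranging the matching of the link data on the gluing spheres and checking that the result is genuinely an embedding of $C$ (no unintended intersections) and that it is PL. This is where one must be careful: one needs that the embedding of the piece of $C_K$ inside $B_K'$ can be isotoped so that its trace on $\partial B_K'$ is an arbitrarily prescribed PL drawing of the link graph of $v$ in $C_K$ that is ambient-isotopic to the given one on the $2$-sphere — and then use the fact (classical in PL topology, or alternatively the connected-sum-of-balls argument in the spirit of \autoref{sum_planar1} and \autoref{is_manifold}) that gluing two $3$-balls along a $2$-sphere in their boundaries yields a $3$-ball, hence the total space is $\Sbb^3$. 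A cleaner alternative, staying closer to the machinery already set up in this paper, is to phrase everything in terms of planar rotation systems: the rotation system of $C$ is obtained by taking the disjoint union of the rotation systems of the $C_K$ (an edge of $C$ lies in exactly one $C_K$, so its cyclic ordering of incident faces is inherited), and since the link graph of $v$ in $C$ is the disjoint union of the link graphs of $v$ in the $C_K$, planarity of each induced link-complex is preserved; combined with \autoref{combi} (or \autoref{combi_intro}) and the observation that simple connectedness of $C$ follows from that of the $C_K$ via Van Kampen at the cut vertex $v$, this yields a PL embedding of $C$. Either route works; I would present the rotation-system route as the main argument since the required lemmas are already available, and I would keep the topological gluing picture as the intuition.
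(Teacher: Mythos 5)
Your second route---the one you say you would present as the main argument---is circular and does not match the hypotheses of the lemma. In this paper \autoref{combi_intro} is proved by induction on the number of cut vertices, and the inductive step of that induction is precisely \autoref{block-lem}; so deducing \autoref{block-lem} from \autoref{combi_intro} derives the lemma from itself. Substituting \autoref{combi} avoids the circularity but introduces a new mismatch: \autoref{combi} requires $C$ to be locally connected, and a simplicial complex with a cut vertex $v$ cannot be locally connected, since every triangular face incident with $v$ has its other two vertices in a single component of the $1$-skeleton minus $v$, so the link graph at $v$ splits into the disjoint union of the link graphs of $v$ in the attached complexes and is disconnected. On top of this, both \autoref{combi} and \autoref{combi_intro} assume simple connectedness, which is not a hypothesis of \autoref{block-lem}; the lemma must hold for arbitrary connected simplicial complexes with a cut vertex precisely so that it can serve as the reduction step in the proof of \autoref{combi_intro}. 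So the rotation-system route should be dropped entirely.

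Your first route---direct PL gluing near $v$---is the right approach and is essentially what the paper does, though the paper's execution is simpler and asymmetric: fix an embedding of one attached complex $X$; near the point $v$ choose closed balls, one for each remaining attached complex, each meeting $X$ only at $v$ and meeting each other only at $v$; then embed each remaining attached complex inside its reserved ball with its copy of $v$ identified to the point $v$ of $X$. This avoids your extra central ball $D_0$ and the matching of link drawings on gluing spheres. In either version the nontrivial PL fact being used is that a PL-embedded simplicial complex with a marked vertex can be re-embedded into a closed ball with that vertex on the boundary sphere; this follows from local conicality of PL embeddings and standard regular-neighbourhood arguments, and it is the step your isotopy remark is reaching for.
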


\begin{proof}
 If $C$ has an embedding into $\Sbb^3$, then clearly all complexes attached at $v$ have an
embedding. Conversely suppose that all
complexes attached at $v$ have an embedding into $\Sbb^3$. Pick one of these complexes arbitrarily,
call it $X$ and fix an embedding of it
into $\Sbb^3$. In that embedding pick for each component of $C$ remove $v$ except that for $X$ a
closed ball contained in $\Sbb^3$ that
intersects $X$ precisely in $v$ such that all these closed balls intersect pairwise only at $v$.
Each complex attached at $v$, has a
piece-wise linear embedding into the
3-dimensional unit ball as they have embeddings into $\Sbb^3$ such that some open set is disjoint
from the complex. Now we attach these
embeddings into the balls of the embedding of $X$ inside the reserved balls by identifying the
copies of $v$. This defines an embedding of
$C$.
\end{proof}

Recall that in order to prove \autoref{combi_intro} it suffices to show that any simply
connected simplicial complex $C$ has a piece-wise linear embedding into
$\Sbb^3$ if and only if $C$ has a planar rotation system.

\begin{proof}[Proof of \autoref{combi_intro}.]
Clearly if a simplicial complex is embeddable into $\Sbb^3$, then it has a planar rotation system.
For the other implication, let $C$ be a simply connected simplicial complex and $\Sigma$ be a
planar rotation system.
We prove the theorem by induction on the number of cut vertices of $C$. If $C$ has no cut vertex,
it is locally connected by \autoref{is_loc_con}. Thus it has a piece-wise linear embedding into
\Sthree\ by \autoref{combi}.

Hence we may assume that $C$ has a cut vertex $v$.
As $C$ is simply connected, every complex attached at $v$ is simply connected. Hence by the
induction hypothesis each of these complexes has a piece-wise linear embedding into \Sthree.
Thus $C$ has a piece-wise linear embedding into \Sthree\ by  \autoref{block-lem}.
\end{proof}

\section{Local surfaces of planar rotation systems}\label{sec6}

The aim of this section is to prove \autoref{combi_intro_extended}. A shorter proof is sketched
in \autoref{alg_topo} using algebraic topology. As a first step in that
direction, we first prove the following.

\begin{thm}\label{loc_are_spheres}
 Let $C$ be a locally connected $p$-nullhomologous \scom\ that has a planar rotation system. Then
all
local
surfaces of the planar rotation system are spheres.
\end{thm}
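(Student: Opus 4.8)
The plan is to deduce the statement from a Mayer--Vietoris computation of the mod-$p$ homology of the associated $3$-manifold $T(C,\Sigma)$, together with Poincar\'e duality. First I would reduce to the case that $C$ is connected (treating components separately) and fix the set-up: writing $T:=T(C,\Sigma)$, since $\Sigma$ is planar and $C$ is locally connected, \autoref{is_manifold} (equivalently the first part of \autoref{combi}) gives that $T$ is a compact connected oriented $3$-manifold without boundary; by construction it is obtained from $C$ by gluing, along each of the finitely many local surfaces $S$ of $(C,\Sigma)$, a copy of the region $\hat S\subseteq\Sbb^3$ cut off by an embedded copy of $S$. As $\Sbb^3$ admits a genus-$g(S)$ Heegaard splitting along that embedded $S$, both $\hat S$ and its complement are handlebodies of genus $g(S)$, so in particular $\hat S$ is a handlebody of genus $g(S)$ with $\partial\hat S=S$, where $g(S)$ denotes the genus of $S$ (local surfaces are orientable by \autoref{loc_is_orientable}). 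Finally, $p$-nullhomologous means exactly $H_1(C;\Fbb_p)=0$.

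Next I would run Mayer--Vietoris with $\Fbb_p$-coefficients for the decomposition of $T$ into a regular neighbourhood of $C$ and the handlebodies $\hat S$, whose overlap deformation retracts onto $\bigsqcup_S S$. The inputs are standard: $H_2(\hat S)=0$ and $H_1(\hat S)\cong\Fbb_p^{\,g(S)}$ (a handlebody is homotopy equivalent to a wedge of circles), $H_1(S)\cong\Fbb_p^{\,2g(S)}$ (closed orientable surface), and the boundary inclusion $S\hookrightarrow\hat S$ induces a \emph{surjection} $H_1(S)\twoheadrightarrow H_1(\hat S)$ (it is surjective already over $\Zbb$, hence after $\otimes\,\Fbb_p$). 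Using $H_1(C;\Fbb_p)=0$ and this surjectivity, and noting that the boundary map $H_1(T)\to\bigoplus_S\widetilde H_0(S)$ vanishes because all the pieces $S,\hat S,C$ are connected, the Mayer--Vietoris sequence yields $H_1(T;\Fbb_p)=0$.

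Then I would apply Poincar\'e duality over the field $\Fbb_p$ to the closed oriented $3$-manifold $T$: together with the universal coefficient theorem this gives $H_2(T;\Fbb_p)\cong H^1(T;\Fbb_p)\cong H_1(T;\Fbb_p)=0$. Feeding $H_2(T;\Fbb_p)=0$ and $H_1(C;\Fbb_p)=0$ back into the Mayer--Vietoris sequence shows that the map $\bigoplus_S H_1(S;\Fbb_p)\to\bigoplus_S H_1(\hat S;\Fbb_p)$ is \emph{injective}. But the left-hand side has $\Fbb_p$-dimension $2\sum_S g(S)$ while the right-hand side has dimension $\sum_S g(S)$, so injectivity forces $g(S)=0$ for every local surface $S$; that is, every local surface is a $2$-sphere, as desired.

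The step I expect to be the only real subtlety is the topological bookkeeping: first, that $T(C,\Sigma)$ is genuinely a \emph{closed} (boundaryless) manifold --- this is precisely the ``$\Sigma$ planar $\Rightarrow$ $T$ a manifold'' direction of \autoref{is_manifold}, whose proof produces a Euclidean neighbourhood of every point; and second, that Mayer--Vietoris is legitimate even though the gluing maps $\varphi_S\colon S\to C$ need not be injective. The latter is harmless because each $\varphi_S$ is cellular, so $T$ is a CW pushout of $C\hookleftarrow\bigsqcup_S S\hookrightarrow\bigsqcup_S\hat S$ along cofibrations, for which the Mayer--Vietoris sequence is available. Everything else is formal; indeed this is essentially the short algebraic-topology route to the statement.
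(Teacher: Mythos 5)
Your proof is correct, but it takes a genuinely different route from the paper's. The paper's argument stays entirely on the combinatorial side: it introduces the dual complex $D$ of $(C,\Sigma)$ and proves two opposite inequalities for the quantity $|V(C)|-|E|+|F|-|V(D)|$. The lower bound (\autoref{geq}) is a rank computation in the edge--face incidence matrix over $\Fbb_p$ using $H_1(C,\Fbb_p)=0$; the upper bound (\autoref{euler_double_counting}) comes from summing Euler's formula over all link complexes of $C$ and of $D$ and then applying \autoref{loc_surface_is_inc}, which identifies link complexes of $D$ with surface duals of local surfaces of $C$. Forcing equality shows those link complexes, hence the local surfaces, are spheres. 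Your proof instead passes to the associated 3-manifold $T(C,\Sigma)$, runs Mayer--Vietoris on the pushout $T=C\cup\bigsqcup_S\hat S$ with each $\hat S$ a handlebody, and invokes Poincar\'e duality for the closed oriented $T$ to obtain $H_2(T;\Fbb_p)=0$, after which the dimension count $2\sum g(S)\le\sum g(S)$ forces every genus to vanish. Both are valid. The paper's version is elementary and self-contained --- no Poincar\'e duality, no Mayer--Vietoris, no handlebody theory --- and the dual-complex machinery it develops is reused later in the series; your version is shorter if one is willing to import standard 3-manifold topology, and it is close in spirit to the sketch the paper itself gives in \autoref{alg_topo} for the related \autoref{combi_intro_extended}. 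Your observation that Mayer--Vietoris must be taken in the CW-pushout/cofibration form, because the gluing maps $\varphi_S$ need not be injective, is exactly the point one needs to be careful about; with that, the argument is complete.
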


Before we can prove \autoref{loc_are_spheres} we need some preparation.
The complex \emph{dual} to a simplicial $C$ with a rotation system $\Sigma$ has as its set of
vertices the set of local
surfaces of
$\Sigma$.
Its set of edges is the set of faces of $C$, and an edge is incident with a vertex if the
corresponding face is in the corresponding local
surface. The faces of the dual are the edges of $C$. Their cyclic ordering is as given by $\Sigma$.
In particular, the
edge-face-incidence-relation of the dual is the same as that of $C$ but with the roles of edges and
faces interchanged.

Moreover, an orientation $\vec{f}$ of a face $f$ of $C$ corresponds to the direction of $f$ when
considered as an edge of the dual complex
$D$ that points
towards
the vertex of $D$ whose local-surface-equivalence class contains $\vec{f}$. Hence the direction of
the dual complex $C$ induces a direction
of
the complex $D$.
By $\Sigma_C=(\sigma_C(f)|f\in E(D))$ we denote the following rotation system for $D$: for
$\sigma_C(f)$ we take the orientation
$\vec{f}$ of $f$
in the directed complex $C$.

In this paper we follow the convention that for edges of $C$ we use the letter $e$ (with possibly
some subscripts) while for
faces of $C$ we use the letter $f$. In return, we use the letter $f$ for the edges of a dual
complex of $C$ and $e$ for its faces.

\begin{lem}\label{D_con}
 Let $C$ be a connected and locally connected simplicial complex. Then for any rotation system,
the dual complex $D$ is connected.
 \end{lem}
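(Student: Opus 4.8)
The plan is to prove that the graph $D$ is connected by showing that all of its \emph{edges} — that is, the faces of $C$ — lie in a single connected component; since each local surface is by definition a nonempty equivalence class of oriented faces, every vertex of $D$ is incident with at least one edge, so this suffices. Throughout I would use the fact, immediate from the definition of $D$, that two faces $f,g$ of $C$ are incident with a common vertex of $D$ precisely when some local surface of $(C,\Sigma)$ contains both $f$ and $g$.

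The first step is a purely local observation: for every edge $e$ of $C$, all faces incident with $e$ lie in one component of $D$. Writing $\sigma(e)=(f_1,\dots,f_m)$, note that since $C$ is simplicial each $f_i$ traverses $e$ exactly once, so exactly one orientation of $f_i$ traverses $e$ positively; by the definition of the local-surface-equivalence relation this orientation is locally related via $e$ to the orientation of $f_{i+1}$ that traverses $e$ negatively (and when $m=1$ the two orientations of the unique incident face are related via $e$ by convention). Hence $f_i$ and $f_{i+1}$ lie in a common local surface, so they are incident with a common vertex of $D$; chaining along the cyclic orientation $\sigma(e)$ then places all of $f_1,\dots,f_m$ in one component of $D$. (This is essentially just checking that the closed trails declared to be the faces of $D$ are genuine closed trails.)

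The second step uses local connectedness to pass from edges to vertices of $C$: any two faces $f,f'$ of $C$ incident with a common vertex $v$ lie in the same component of $D$. Since $C$ is locally connected, $L(v)$ is connected, so there is a walk in $L(v)$ from an endvertex of $f$ to an endvertex of $f'$; this walk is a sequence of faces $f=g_0,g_1,\dots,g_k=f'$ in which consecutive faces $g_j,g_{j+1}$ share a vertex of $L(v)$, i.e.\ are both incident with some edge of $C$ at $v$. By the first step each consecutive pair lies in one component of $D$, hence so do $f$ and $f'$. The third step then uses connectedness of $C$ to finish: given arbitrary faces $f,f'$, choose a vertex $v_0$ of $f$ and a vertex $v_n$ of $f'$ and a path $v_0 v_1\cdots v_n$ in the $1$-skeleton of $C$. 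Each edge $v_iv_{i+1}$ is incident with some face $h_i$ (every edge of $C$ is incident with a face), and the pairs $(f,h_0)$, $(h_i,h_{i+1})$ and $(h_{n-1},f')$ each consist of two faces sharing a vertex of $C$; by the second step they all lie in one component of $D$, so $f$ and $f'$ do too. Thus all faces of $C$ lie in a single component of $D$, and $D$ is connected.

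The main point requiring care is the first step: one must correctly match the two orientations of each face $f_i$ with the two directions in which $f_i$ can traverse $e$, so as to be sure that consecutive faces around $e$ genuinely land in a common local-surface-equivalence class. After that the argument is routine bookkeeping with walks in graphs, and the hypotheses are used exactly where one expects — local connectedness in the second step (it is genuinely needed; a bowtie of two triangles at a common vertex has disconnected dual), and connectedness of $C$ in the third.
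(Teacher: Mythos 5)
Your proof is correct and follows essentially the same route as the paper's, which works with a relation the paper calls $C$\nobreakdash-equivalence (transitive closure of ``shares a face'') on the edges of $C$, shows it is all of $E(C)$ by connectedness and local connectedness, and then transfers it across the duality to $D$\nobreakdash-equivalence of faces. Your three steps are the unpacked version of that transfer: your Step~2 is the paper's ``any two edges incident with a common vertex are $C$\nobreakdash-equivalent,'' and your Step~3 is the walk along the $1$\nobreakdash-skeleton. The one thing you make explicit that the paper leaves tacit is your Step~1: the paper concludes from ``$C$ and $D$ have the same edge/face incidence relation'' that $D$\nobreakdash-equivalent edges of $D$ lie in a common component of $D$, which silently uses that the cyclic orientation $\sigma(e)$ really does define a closed trail in the graph of $D$, i.e.\ that consecutive faces in $\sigma(e)$ land in a common local surface. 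You verify this directly from the local-surface-equivalence relation, which is a worthwhile clarification but not a different proof.
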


\begin{proof}
 Two edges of $C$ are \emph{$C$-related} if there is a face of $C$ incident with both of them.
 They are \emph{$C$-equivalent} if they are in the transitive closure of the symmetric relation
`$C$-related'.
 Clearly, any two $C$-equivalent edges of $C$ are in the same connected component. If $C$ however
is locally connected, also the converse
is true: any two edges in the same connected component are $C$-equivalent. Indeed, take a path
containing these two edges. Any
two edges incident with a common vertex are $C$-equivalent as $C$ is locally connected. Hence any
two edges on the path are $C$-equivalent.

We define \emph{$D$-equivalent} like `$C$-equivalent' with `$D$' in place of `$C$'.
Now let $f$ and $f'$ be two edges of $D$. Let $e$ and $e'$ be edges of $C$ incident with $f$ and
$f'$, respectively. Since $C$ is connected
and locally connected the edges $e$ and $e'$ are $C$-equivalent. As $C$ and $D$ have the same
edge/face incidence relation, the edges $f$
and $f'$ of $D$ are $D$-equivalent. So any two edges of $D$ are $D$-equivalent. Hence $D$ is
connected.
\end{proof}

First, we prove the following, which is reminiscent of euler's formula.

 \begin{lem}\label{geq}
Let $C$ be a locally connected $p$-nullhomologous \scom\ with a planar rotation system and $D$ the
dual complex.
Then
\[
 |V(C)|-|E|+|F|-|V(D)|\geq 0
\]

Moreover, we have equality if and only if $D$ is $p$-nullhomologous.
\end{lem}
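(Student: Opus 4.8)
Here is the approach I would take.

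The plan is to show that the quantity $|V(C)|-|E|+|F|-|V(D)|$ is equal to $\dim_{\Fbb_p} H_1(D;\Fbb_p)$, which is manifestly nonnegative and which vanishes exactly when $D$ is $p$-nullhomologous; everything below is over $\Fbb_p$, and I write $c$ for the number of connected components of $C$. The starting point is a purely combinatorial identity: the edges of $D$ are the faces of $C$ and the faces of $D$ are the edges of $C$, so $\chi(D):=|V(D)|-|E(D)|+|F(D)|=|V(D)|-|F|+|E|$, whence
\[
|V(C)|-|E|+|F|-|V(D)| \;=\; |V(C)|-\chi(D).
\]
Since $D$ is two-dimensional, $\chi(D)=\dim H_0(D)-\dim H_1(D)+\dim H_2(D)$, so it suffices to compute $\dim H_0(D)$ and $\dim H_2(D)$.

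For $H_0(D)$: local surfaces of $\Sigma$, and all incidences of $D$, stay inside single components of $C$, so $D$ is the disjoint union of the dual complexes of the components of $C$, and \autoref{D_con} makes each of these connected; hence $\dim H_0(D)=c$. For $H_2(D)$: as $D$ has no $3$-cells, $H_2(D)=\ker\partial_2^D$. I would identify $C_2(D)=\Fbb_p^{E}$ with the cochain group $C^1(C)$ and $C_1(D)=\Fbb_p^{F}$ with $C^2(C)$ via the obvious dual bases, and verify that under this identification $\partial_2^D$ is precisely the simplicial coboundary map $\delta^1\colon C^1(C)\to C^2(C)$: the face of $D$ indexed by an edge $e$ of $C$ is the closed trail whose edge-sequence is $\sigma(e)$, and tracing the orientation conventions carried by the dual complex (the direction of an edge $f$ of $D$ points to the local surface containing $\vec f$, and the cyclic order of the faces of $D$ is $\Sigma$) one finds the coefficient of a face $f$ in $\partial_2^D(e)$ to equal the incidence coefficient of $e$ in $\partial_2^C(f)$. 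Therefore $H_2(D)=\ker\delta^1=Z^1(C;\Fbb_p)$. Since over a field cohomology is computed by the transposed complex, $\dim H^1(C;\Fbb_p)=\dim H_1(C;\Fbb_p)=0$ because $C$ is $p$-nullhomologous; hence $Z^1(C;\Fbb_p)$ coincides with $B^1(C;\Fbb_p)=\im\delta^0$, and $\dim\im\delta^0=|V(C)|-\dim\ker\delta^0=|V(C)|-c$ (a $0$-cocycle is a function on $V(C)$ constant on each component). So $\dim H_2(D)=|V(C)|-c$.

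Substituting these,
\[
|V(C)|-|E|+|F|-|V(D)| \;=\; |V(C)|-\chi(D) \;=\; |V(C)|-\bigl(c-\dim H_1(D)+(|V(C)|-c)\bigr) \;=\; \dim H_1(D;\Fbb_p),
\]
which is $\ge 0$, with equality if and only if $H_1(D;\Fbb_p)=0$, i.e.\ if and only if $D$ is $p$-nullhomologous. The step that needs the most care — the main technical obstacle — is the hands-on check that $\partial_2^D=\delta^1$: over $\Fbb_2$ this is immediate (no signs), but for odd $p$ it requires bookkeeping, matching the direction of each edge of $D$ and the traversal sense of each face of $D$ against the chosen orientation of the corresponding face of $C$. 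One can also sidestep this entirely by running the dual computation on $C$: the local surfaces of $\Sigma$ yield classes $z_S\in H_2(C;\Fbb_p)$ whose only linear relations are $\sum_{S\subseteq C_k}z_S=0$, one per component $C_k$, so they span a subspace of dimension $|V(D)|-c$ in $H_2(C;\Fbb_p)$; together with $\chi(C)=c+\dim H_2(C;\Fbb_p)$ this gives the inequality, and identifying $H_2(C;\Fbb_p)$ with $Z^1(D;\Fbb_p)$ recovers the equality criterion in the same form $\dim H_1(D;\Fbb_p)$.
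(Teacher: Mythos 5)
Your proof is correct, and it lands on the same underlying identity the paper establishes, namely that $|V(C)|-|E|+|F|-|V(D)|$ equals $\dim_{\Fbb_p} H_1(D;\Fbb_p)$. The bookkeeping is different, though: the paper never computes $\chi(D)$, $H_0(D)$ or $H_2(D)$ explicitly. Instead it sets $r=\operatorname{rank}(\partial_2)$ (the edge--face incidence matrix), observes $r=Z_C$ since $H_1(C;\Fbb_p)=0$ and $r\le Z_D$ in general, and then uses the elementary graph formulas $Z_C=|E|-|V(C)|+k_C$, $Z_D=|F|-|V(D)|+k_D$ together with $k_C=k_D$ (\autoref{D_con}) to get $|V(C)|-|E|+|F|-|V(D)|=Z_D-Z_C=Z_D-r=\dim H_1(D;\Fbb_p)$. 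Your route is the chain-complex version of the same argument: you compute $\chi(D)$ via $H_*(D)$, identify $H_2(D)=\ker\partial_2^D$ with $Z^1(C)$, and collapse $Z^1=B^1$ using $H^1(C;\Fbb_p)\cong H_1(C;\Fbb_p)^\vee=0$. What the paper's version buys is brevity, since it stays entirely inside rank arithmetic on a single matrix and two graph cycle-space formulas; what your version buys is transparency, since it names the quantity being computed ($\dim H_1(D)$) from the outset and exhibits $H_2(D)$ concretely. The one step you flag as delicate --- matching $\partial_2^D$ with $\delta^1_C$ including signs over $\Fbb_p$ for odd $p$ --- is an implicit assumption in the paper's proof as well (it is exactly what makes ``the rank of the edge--face incidence matrix'' simultaneously compute $\operatorname{rank}\partial_2^C$ and $\operatorname{rank}\partial_2^D$), so you are not on weaker ground there than the paper.
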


\begin{proof}
Let $Z_C$ be the dimension over $\Fbb_p$ of the cycle
space of $C$. Similarly we define $Z_D$.
Let $r$ be the rank of the edge-face-incidence matrix over $\Fbb_p$. Note that $r\leq Z_D$ and that
$r=Z_C$ as $H_1(C,\Fbb_p)=0$.
So $Z_D-Z_C\geq 0$. Hence it suffices to prove the following.
 \begin{sublem}\label{euler_cycle_space}
\[ |V(C)|-|E|+|F|-|V(D)|= Z_D-Z_C \]
 \end{sublem}
 \begin{cproof}
 Let $k_C$ be the number of connected components of $C$ and $k_D$ be the number of connected
components of $D$.
Recall that the space orthogonal to the cycle space (over $\Fbb_p$) in a graph $G$ has dimension
$|V(G)|$ minus the number of connected
components of $G$. Hence $Z_C=|E|-|V(C)|+k_C$ and $Z_D=|F|-|V(D)|+k_D$. Subtracting the first
equation from the second yields:
\[
|V(C)|-|E|+|F|-|V(D)|+(k_D-k_C)= Z_D-Z_C
\]
 Since the dual complex of the disjoint union of two simplicial complexes (with planar rotation
systems) is the disjoint union of their
dual
complexes, $k_C\leq k_D$. By \autoref{D_con} $k_C= k_D$. Plugging this into the equation before,
proves the claim.
 \end{cproof}

 This completes the proof of the inequality. We have equality if and only if $r= Z_D$. So the
`Moreover'-part follows.
 \end{proof}

Our next goal is to prove the following, which is also reminiscent of euler's formula but here the
inequality goes the other way round.

\begin{lem}\label{euler_double_counting}
 Let $C$ be a locally connected \scom\ with a planar rotation system $\Sigma$ and $D$ the dual
complex.
 Then: \[
 |V(C)|-|E|+|F|-|V(D)|\leq 0 \] with equality if and only if all link complexes for
$(D,\Sigma_C)$
are spheres.
\end{lem}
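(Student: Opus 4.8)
The statement is an Euler-characteristic count comparing the complex $C$ with its dual $D$, together with a characterisation of equality. The natural approach is to double-count the incidences of the two cell complexes and to use Euler's formula on the individual link complexes. Recall that the link complex at a vertex $v$ of $C$ (for the rotation system $\Sigma$) is a closed surface, and dually the link complex at a vertex $S$ of $D$ (for $\Sigma_C$) — where $S$ ranges over the local surfaces of $\Sigma$ — is also a closed surface. For each such surface one has Euler's formula $V - E + F = 2 - 2g \le 2$, with equality exactly when the surface is a sphere; this is where the inequality will come from.

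\textbf{Key steps.} First I would set up, for each vertex $v$ of $C$, the link complex $L_C(v)$ and record its vertex/edge/face counts: vertices are the edges of $C$ at $v$, edges are the faces of $C$ at $v$, and faces are, by \autoref{loc_inc_AND_loc_surfaces}, in bijection with the vertices of local surfaces cloned from $v$. Applying Euler's formula to $L_C(v)$ gives
\[
\#\{\text{edges of }C\text{ at }v\} - \#\{\text{faces of }C\text{ at }v\} + \#\{\text{vertices of }D\text{ cloned from }v\} \le 2,
\]
with equality iff $L_C(v)$ is a sphere. Summing over all $v \in V(C)$, and using that each edge of $C$ has two endvertices, each face of $C$ (being a triangle in a simplicial complex) has three, and each vertex of $D$ is cloned from some number of vertices of $C$ equal to the number of its incident faces of $D$ — wait, more carefully: the left side must be massaged so the sums telescope. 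I would instead do the symmetric computation on $D$: for each vertex $S$ of $D$, apply Euler's formula to the link complex $L_D(S)$ for $(D,\Sigma_C)$, whose vertices are the edges of $D$ at $S$ (= faces of $C$ in $S$), whose edges are the faces of $D$ at $S$ (= edges of $C$ on $S$), and whose faces are — by \autoref{loc_inc_AND_loc_surfaces} applied to $D$ — in bijection with vertices of local surfaces of $(D,\Sigma_C)$ cloned from $S$; but a local surface of $(D,\Sigma_C)$ is, by the construction of the dual, exactly a link complex of $C$, hence has exactly one vertex cloned from each vertex $v$ of $C$ lying on it. So summing Euler's formula for $L_D(S)$ over all $S \in V(D)$ gives $\sum_S (\#\{f : f \in S\} - \#\{e : e \text{ on } S\} + \#\{v : v \text{ meets } S\}) \le 2|V(D)|$, i.e. $2|F| - 2|E| + 2|V(C)| \le 2|V(D)|$, where the factor $2$ on $|F|$ comes from each face of $C$ lying in exactly two local surfaces (one for each of its two orientations), the factor $2$ on $|E|$ from each edge of $C$ contributing its two ends...

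\textbf{Main obstacle.} The delicate point is getting the coefficients right in the double-count, in particular verifying that $\sum_{S} \#\{e : e \text{ lies on } S\}$ equals $2|E|$ and $\sum_S \#\{f : f \in S\}$ equals $2|F|$ — both follow because each edge of $C$ lies on exactly two local surfaces (locality plus planarity: \autoref{loc_is_con}, and the "cake" picture from the proof of \autoref{is_manifold}) — no, each edge of $C$ is an edge of $D$, and in $\Sigma_C$ it lies in the link complexes of the local surfaces incident to it, which is more than two in general. The cleaner route is to count vertices of local surfaces of $(D,\Sigma_C)$ in two ways: once as $\sum_{S \in V(D)} \#\{v \in V(C) : v \text{ on } S\}$, and once as $\sum_{v \in V(C)} \#\{S : v \text{ on } S\} = \sum_v \#\{\text{faces of } L_C(v)\}$, which by Euler applied to each $L_C(v)$ is $\le \sum_v (2 - \#\{\text{edges at }v\} + \#\{\text{faces at }v\}) = 2|V(C)| - 2|E| + 3|F|$... and the remaining bookkeeping must be reconciled with the symmetric identities $\sum_S\#\{\text{edges of }D\text{ at }S\} = |E|\cdot(\text{something})$. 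I expect the real work is precisely this bookkeeping; once every term is identified with $|V(C)|, |E|, |F|, |V(D)|$ via the bijection $\iota$ of \autoref{loc_inc_AND_loc_surfaces} (used both for $C$ and for $D$, noting local surfaces of $(D,\Sigma_C)$ are link complexes of $C$), the inequality drops out, and equality holds iff every summand Euler inequality is tight, i.e. iff every link complex $L_D(S)$ for $(D,\Sigma_C)$ is a sphere. Finally I would note $D$ is connected by \autoref{D_con}, so no stray component-count corrections are needed.
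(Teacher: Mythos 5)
Your overall strategy is the one the paper actually uses: sum Euler's formula over the two families of link complexes, exploit that the $C$-side link complexes are spheres ($\chi = 2$, an \emph{equality}) while the $D$-side link complexes are only known to be connected surfaces ($\chi \le 2$, an \emph{inequality}), and reconcile the two sums. But the proposal stops short of the actual bookkeeping, and in the places where you do commit to a formula the counts are off.

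Concretely: your tentative identity $\sum_S \#\{e : e \text{ on } S\} = 2|E|$ is wrong, as you yourself notice mid-paragraph; the correct count is $\sum_{e \in E} \degree(e)$ — the face $e$ of $D$ passes through vertices of $D$ once per face of $C$ incident with $e$, not twice. You then pivot to a "cleaner route" but never close it: you need to prove that the total number $a'$ of faces of $(D,\Sigma_C)$-link-complexes equals the total number $a$ of faces of $(C,\Sigma)$-link-complexes, and the bridge you gesture at ("a local surface of $(D,\Sigma_C)$ is exactly a link complex of $C$") is not the right statement. The right bridge is \autoref{loc_surface_is_inc}: $L_D(S)$ is the \emph{surface dual} of the local surface $S$ of $(C,\Sigma)$, so its faces biject with the vertices of $S$; summing over $S$ gives $a' = \#\{\text{vertices of local surfaces of } (C,\Sigma)\}$, and then \autoref{loc_inc_AND_loc_surfaces} (the bijection $\iota$) converts that count into $a$. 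With $a = a'$ in hand, together with $\sum_f \degree(f) = \sum_e \degree(e)$ (both count edge--face incidences of $C$), the two Euler sums
\[
2|V(C)| = 2|E| - \sum_f \degree(f) + a, \qquad 2|V(D)| \ge 2|F| - \sum_e \degree(e) + a'
\]
subtract to give the claim. Two further items you should not skip: the first displayed relation is an equality (spheres), so your "$\le$" there is the wrong sign; and the second relation needs the $D$-link-complexes to be \emph{connected} before $\chi \le 2$ is usable, which again comes from \autoref{loc_surface_is_inc} together with \autoref{loc_is_con}. As written, your argument identifies the right tools but does not actually produce a proof.
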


Before we can prove this, we need some preparation.
 By $a$ we denote the sum of the faces of link complexes for
$(C,\Sigma)$. By $a'$ we denote the sum over the faces of link complexes for
$(D,\Sigma_C)$.
  Before proving that $a$ is equal to $a'$ we prove that it is useful by showing the following.

 \begin{fact}\label{claim1}
  \autoref{euler_double_counting} is true if $a=a'$ and all link complexes for
$(D,\Sigma_C)$ are connected.
 \end{fact}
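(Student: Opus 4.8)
The plan is to run the standard Euler-characteristic double count twice, once on $C$ and once on its dual complex $D$, express both $a$ and $a'$ in terms of $|V(C)|$, $|E|$, $|F|$ and $|V(D)|$, and then feed in the hypothesis $a=a'$. The inequality $\le 0$ will come out of the count on $D$ (where link complexes need only be surfaces, hence have Euler characteristic at most $2$), while the count on $C$ will be an exact identity (there the link complexes are genuine spheres).

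First I would compute $a$. By \autoref{loc_inc_AND_loc_surfaces}, or directly from the definition, $a=\sum_{v\in V(C)}(\text{number of faces of the link complex at }v)$. Each such link complex is a cell complex, hence a closed orientable surface; since $\Sigma$ is planar and $C$ is locally connected it is a single $2$-sphere, so $|V(L(v))|-|E(L(v))|+(\text{faces of the link complex at }v)=2$. Summing over $v$ and using that $C$ is simplicial — each edge has two endvertices, and each triangular face contributes one corner to the link graph at each of its three vertices — gives $\sum_v|V(L(v))|=2|E|$ and $\sum_v|E(L(v))|=3|F|$, whence
\[ a \;=\; 2|V(C)|-2|E|+3|F|. \]

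Next I would carry out the same count for $D$ with the rotation system $\Sigma_C$, recalling that the vertices of $D$ are the local surfaces, $|E(D)|=|F|$ and $|F(D)|=|E|$. Again $a'=\sum_{S\in V(D)}(\text{faces of the link complex of }D\text{ at }S)$, each such link complex is a closed orientable surface, and by hypothesis it is connected, so its Euler characteristic is at most $2$, with equality exactly when it is a $2$-sphere. For the incidence counts: each edge of $D$ (a face of $C$) has exactly two ends among the local surfaces, so $\sum_S|V(L_D(S))|=2|E(D)|=2|F|$; and the face of $D$ at an edge $e$ of $C$ is a closed trail whose length is the face-degree $d_e$ of $e$, contributing $d_e$ corners to the link graphs of $D$, so $\sum_S|E(L_D(S))|=\sum_e d_e=3|F|$ (once more because $C$ is simplicial). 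Summing $|V(L_D(S))|-|E(L_D(S))|+(\text{faces at }S)\le 2$ over $S$ yields
\[ a' \;\le\; 2|V(D)|+|F|, \]
with equality if and only if every link complex for $(D,\Sigma_C)$ is a sphere. Substituting $a=a'$ and combining the two displays, $2|V(C)|-2|E|+3|F|=a=a'\le 2|V(D)|+|F|$, which rearranges precisely to $|V(C)|-|E|+|F|-|V(D)|\le 0$; and the inequality is tight exactly when the $D$-side inequality is tight, i.e. exactly when all link complexes for $(D,\Sigma_C)$ are spheres. This is \autoref{euler_double_counting}.

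I expect the only genuinely delicate points to be bookkeeping: verifying that $D$ really has the claimed incidence data (in particular that each face of $C$ meets exactly two local surfaces, counted with multiplicity, and that the face of $D$ at $e$ has length $d_e$), and being clear about where the hypothesis \emph{all link complexes for $(D,\Sigma_C)$ are connected} enters — it is exactly what licenses the termwise bound $\chi\le 2$, since a disconnected link complex (a disjoint union of several spheres) would have Euler characteristic larger than $2$ and the inequality could fail.
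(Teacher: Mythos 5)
Your proof is correct and follows essentially the same route as the paper: run the Euler double count over link complexes of $C$ (equality, since the link complexes there are genuine spheres) and over link complexes of $D$ (inequality $\chi\le 2$, licensed by the connectedness hypothesis), pair up the incidence counts, and feed in $a=a'$. The only cosmetic difference is that you specialize the incidence bookkeeping to the simplicial case, replacing both $\sum_{f\in F}\deg(f)$ and $\sum_{e\in E}\deg(e)$ by $3|F|$, whereas the paper keeps the two sums symbolic and observes that they agree because each counts the nonzero entries of the edge-face incidence matrix. Since \autoref{euler_double_counting} is indeed stated for simplicial complexes, your specialization is valid and loses no generality here.
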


\begin{proof}
Given a face $f$ of $C$, we denote the number of edges incident with $f$ by
$deg(f)$.
Our first aim is to prove that

\begin{equation}\label{eq1}
  2 |V(C)|= 2 |E|-\sum_{f\in F} \degree(f) + a
\end{equation}

To prove this equation, we apply Euler's formula \cite{DiestelBookCurrent} in the link complexes
for $(C,\Sigma)$.
Then we take the sum of all these equations over all $v\in V(C)$.
Since $\Sigma$ is a planar rotation system, all link complexes are a disjoint union of
spheres.
Since $C$ is locally connected, all link complexes are connected and hence are spheres.
So they have euler characteristic two. Thus we get the term $2 |V(C)|$ on
the left hand side.
By definition, $a$ is the sum of the faces of link complexes for
$(C,\Sigma)$.

The term $2 |E|$ is the sum over all vertices of link complexes for
$(C,\Sigma)$.
Indeed, each edge of $C$ between the two vertices $v$ and $w$ of $C$ is a vertex
of precisely the two link complexes for $v$ and $w$.

The term $\sum_{f\in F} \degree(f)$ is the sum over all edges of link complexes for $(C,\Sigma)$.
Indeed, each face $f$ of $C$ is in precisely those link complexes for
vertices on the boundary of $f$.
This completes the proof of \eqref{eq1}.

Secondly, we prove the following inequality using a similar argument.
Given an edge $e$ of $C$, we denote the number of faces incident with $e$ by
$\degree(e)$.
\begin{equation}\label{eq2}
  2 |V(D)| \geq 2 |F|-\sum_{e\in E} \degree(e) + a'
\end{equation}

To prove this, we apply Euler's formula in link complexes
for $(D,\Sigma_C)$, and take the sum over all $v\in V(D)$.
Here we have `$\geq$' instead of `$=$' as we just know by assumption that the link
complexes are connected but they may not be a sphere.
So we have $2 |V(D)|$ on the left and $a'$ is the sum over the faces of link complexes for
$(D,\Sigma_C)$.

The term $2 |F|$ is the sum over all vertices of link complexes for
$(D,\Sigma_C)$.
Indeed, each edge of $D$ between the two different vertices $v$ and $w$ of $D$ is a vertex
of precisely the two link complexes for $v$ and $w$. A loop gives rise to two vertices
in the link graph at the vertex
it is attached to.

The term $\sum_{e\in E} \degree(e)$ is the sum over all edges of link complexes for $(D,\Sigma_C)$.
Indeed, each face $e$ of $D$ is in the link complex at $v$ with multiplicity equal to
the number of times it traverses $v$.
This completes the proof of \eqref{eq2}.

By assumption, $a=a'$.
The sums  $\sum_{f\in F} \degree(f)$ and $\sum_{e\in E} \degree(e)$ both count
the number
of nonzero entries of $A$, so they are equal.
Subtracting \eqref{eq2} from \eqref{eq1}, rearranging and dividing by 2 yields:

\[
 |V(C)|-|E|+|F|-|V(D)|\leq 0
\]
 with equality if and only if all link complexes for $(D,\Sigma_C)$
are spheres.
\end{proof}

 \vspace{0.3 cm}

 Hence our next aim is to prove that $a$ is equal to $a'$.
 First we need some preparation.

 Two cell complexes $C$ and $D$ are \emph{(abstract)
surface
duals}
 if the set of vertices of $C$ is (in bijection with) the set of faces of $D$,
 the set of edges of $C$ is the set of edges of $D$ and the set of faces of $C$ is the set of
vertices of $D$.
 These three bijections preserve incidences.

 \begin{lem}\label{loc_surface_is_inc}
 Let $C$ be a simplicial complex and $\Sigma$ be a rotation system and let $D$ be the dual.
  The surface dual of a local surface $S$ for $(C,\Sigma)$ is equal to the link complex
for $(D,\Sigma_C)$ at the vertex $\ell$
of $D$ that corresponds to $S$.
 \end{lem}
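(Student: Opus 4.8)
The plan is to unravel both sides of the claimed equality directly from their definitions and observe that they describe the same combinatorial object. Fix a vertex $\ell$ of $D$ corresponding to a local surface $S$ for $(C,\Sigma)$. Recall that $S$ is the cell complex whose faces are the orientations $\vec f$ in one local-surface-equivalence class, glued along copies of edges $e$ of $C$ according to the relation ``$\vec f$ locally related to $\vec g$ via $e$''. On the dual side, $\ell$ as a vertex of $D$ is incident with exactly those edges $f$ of $D$ (i.e.\ faces $f$ of $C$) whose chosen orientation lies in the equivalence class of $S$, and the link complex $L_D(\ell)$ for $(D,\Sigma_C)$ has these faces $f$ as its \emph{vertices}, the faces of $D$ (i.e.\ edges $e$ of $C$) incident with $\ell$ as its \emph{edges}, with cyclic orderings read off from $\Sigma_C$, and the faces of $L_D(\ell)$ are by definition the faces obtained from the embedding of the link graph induced by $\Sigma_C$.

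**The three bijections.**
First I would exhibit the bijection on vertices: vertices of $S$ are the clones of vertices $v$ of $C$, one for each $v$-equivalence class contained in the local-surface-equivalence class of $S$ (this is exactly the analysis in the proof of \autoref{loc_inc_AND_loc_surfaces}); faces of $L_D(\ell)$ are, by the very definition of link complex, in bijection with the faces of the embedding of the link graph of $D$ at $\ell$ determined by $\Sigma_C$, and tracing through the definition of $\Sigma_C$ (whose rotator at a face $f$ of $C$, viewed as an edge of $D$, is the orientation $\vec f$ in $C$) these faces correspond precisely to those $v$-equivalence classes. Second, edges of $S$ are copies of edges $e$ of $C$ incident with a face of $S$; edges of $L_D(\ell)$ are faces $e$ of $D$ incident with $\ell$, and a face $e$ of $D$ is incident with $\ell$ exactly when some orientation incident with $e$ lies in the class of $S$ — these match up. Third, faces of $S$ are the orientations $\vec f$ in the class, i.e.\ the vertices $f$ of $L_D(\ell)$, again matching. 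The final step is to check that these three bijections respect the incidence relations in both directions, which amounts to comparing the gluing rule defining $S$ (faces $\vec f,\vec g$ meet along a copy of $e$ when they are locally related via $e$, with endpoints identified consistently with copies of vertices of $C$) against the rule by which $\Sigma_C$ assembles the link complex at $\ell$ (the cyclic order of edges $e$ around a vertex $f$ of $L_D(\ell)$ is $\sigma_C(f)=\vec f$); both say the same thing because $C$ and $D$ share an edge/face incidence relation with the roles of edges and faces swapped, and the rotation data on $D$ is exactly the face-orientation data of $C$.

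**The main obstacle.**
The genuinely delicate point is matching the \emph{cyclic/rotational} data, not merely the incidence counts: one must be careful that the orientation conventions (when a face traverses an edge positively versus negatively, and how $\sigma(e)$ and its reverse project into the two relevant link graphs) are tracked consistently so that the face boundaries of the embedding of $L_D(\ell)$ coincide cell-by-cell with the vertex-stars of $S$, rather than with their reverses. This is precisely the kind of bookkeeping handled in \autoref{loc_inc_AND_loc_surfaces} and \autoref{cor7}, so I would lean on those — in fact the cleanest route is to observe that the desired statement is essentially the ``dual'' reformulation of \autoref{loc_inc_AND_loc_surfaces}: a vertex $v'$ of a local surface $S$ of $C$ maps under $\iota$ to a face of the link complex of $C$ at $v$, and the surface-dual operation converts ``link complex at a vertex, faces'' into ``something, vertices'', while the dual complex $D$ converts ``local surfaces of $C$'' into ``vertices of $D$''; chaining these gives the claim. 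So rather than reproving the combinatorics from scratch, I expect the proof to read: unwind the definitions of surface dual, local surface, and link complex, and note each structural bijection is either tautological or supplied by \autoref{loc_inc_AND_loc_surfaces} together with the definition of $\Sigma_C$; the incidence-preservation check is routine given the shared edge/face incidence of $C$ and $D$. The write-up can therefore be short, with the one-line content being: the vertex/edge/face sets and incidences of the surface dual of $S$ agree tautologically with those of $L_D(\ell)$ once $\iota$ and the definition of the dual complex are in place.
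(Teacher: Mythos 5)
Your proposal gets the structure right: you correctly identify the three bijections (vertices/edges/faces) that need to be exhibited, and you correctly flag that the genuinely delicate point is verifying that the \emph{cyclic/rotational} data is matched consistently, so that the face boundaries of $L_D(\ell)$ coincide cell-by-cell with the vertex-stars of $S$. But you then do not actually carry out this step -- you write that you ``would lean on'' \autoref{loc_inc_AND_loc_surfaces} and \autoref{cor7} and that the statement is ``essentially the dual reformulation'' of the former, and that you ``expect the proof to read'' a certain way. These phrasings flag a plan, not a proof, and the plan as stated does not close the argument: \autoref{loc_inc_AND_loc_surfaces} and \autoref{cor7} are statements about the link complexes of $C$ (they produce the bijection $\iota$ between vertices of local surfaces of $(C,\Sigma)$ and faces of link complexes of $(C,\Sigma)$), whereas what you need here is a statement about the faces of the link complex of $D$ at $\ell$ and how they arise from the embedding induced by $\Sigma_C$. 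There is no prior lemma that hands you that for free; the vague ``chaining'' you sketch does not produce the required incidence-preserving identification of 2-cells.

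The paper closes exactly this gap with a tool you do not invoke: the Edmonds--Hefter--Ringel rotation principle. The paper's proof has three moves. First, it verifies the easy identification ``vertices of $\bar L = L_D(\ell)$ are faces of $S$.'' Second, it identifies the edges of $S$ -- as triples $(e,\vec f,\vec g)$ with $\vec f,\vec g$ locally related via $e$ and in the class of $S$ -- with edges of the link graph $L_D(\ell)$, concluding that $L_D(\ell)$ is the dual \emph{graph} of the cell complex $S$. This matches your first two bijections. Third -- and this is where your argument stops short -- it observes that the rotation systems at the vertices of $\bar L$ are precisely the cyclic orientations of the local-surface-equivalence class, which by \autoref{loc_is_orientable} define an orientation of $S$, so $\bar L$ and the surface dual $S^*$ have the same rotation systems; the Edmonds--Hefter--Ringel principle then immediately upgrades ``same $1$-skeleton and same rotation system'' to ``isomorphic cell complexes.'' This is exactly the mechanism for avoiding the cell-by-cell face-boundary check you identify as the obstacle. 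Without it (or an explicit, worked-out substitute), your proof has a genuine hole at the one point where the content of the lemma lives.
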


 \begin{proof}
 It is immediate from the definitions that the vertices of the link complex $\bar L$ at $\ell$
are the faces of $S$.
 The edges of $S$ are triples $(e,\vec{f},\vec{g})$, where $e$ is an edge of $C$ and $\vec{f}$ and
$\vec{g}$ are orientations of faces of
$C$ that are related via $e$ and are in the local-surface-equivalence class for $S$.
Hence in $D$, these are triples $(e,\vec{f},\vec{g})$ such that $\vec{f}$ and $\vec{g}$ are
directions of edges that
point towards $\ell$ and $f$ and $g$ are adjacent in the cyclic ordering of the face $e$. This are
precisely the edges of the link graph
$L(\ell)$.
Hence the link graph $L(\ell)$ is the dual graph\footnote{ The \emph{dual graph} of a cell complex
$C$ is the graph $G$
 whose set of vertices is (in bijection with) the set of faces of $C$ and whose set of edges is the
set of edges of $C$.
 The incidence relation between the vertices and edges of $G$ is the same as the incidence
relation between the faces and edges
of $C$.} of the cell complex $S$.

Now we will use the Edmonds-Hefter-Ringel
rotation principle, see {\cite[Theorem 3.2.4]{{MoharThomassen}}}, to deduce that the link complex
$\bar L$ at $\ell$ is the surface dual of $S$. We denote the unique cell complex that is a surface
dual of $S$ by $S^*$. Above we have shown that $\bar L$ and $S^*$ have the same 1-skeleton.
Moreover, the rotation systems at the vertices of the link
complex $\bar L$ are given by the cyclic orientations
in  the local-surface-equivalence class for $S$. By \autoref{loc_is_orientable} these
local-surface-equivalence classes define an orientation of $S$. So $\bar L$ and $S^*$ have the same
rotation systems. Hence by the Edmonds-Hefter-Ringel
rotation principle $\bar L$ and $S^*$ have to be isomorphic. So $\bar L$ is a surface dual of $S$.
 \end{proof}

\begin{proof}[Proof of \autoref{euler_double_counting}.]
 Let $C$ be a locally connected simplicial complex and $\Sigma$ be a rotation system and let $D$
be the dual. Let $\Sigma_C$ be as
defined above.
By \autoref{loc_is_con} and \autoref{loc_surface_is_inc} every link complex for
$(D,\Sigma_C)$ is connected.
By \autoref{claim1}, it suffices to show that the sum over all faces of link complexes
of $C$ with respect to $\Sigma$ is equal
to
the sum over all faces of link complexes for $D$ with respect to $\Sigma_C$.
By \autoref{loc_surface_is_inc}, the second sum is equal to the sum over all vertices of local
surfaces for $(C,\Sigma)$.
This completes the proof by \autoref{loc_inc_AND_loc_surfaces}.
\end{proof}

 \begin{proof}[Proof of \autoref{loc_are_spheres}.]
 Let $C$ be a $p$-nullhomologous locally connected simplicial complex that has a planar rotation
system $\Sigma$.
 Let $D$ be the dual complex.
 Then by \autoref{euler_double_counting} and \autoref{geq}, $C$ and $D$ satisfy Euler's formula,
that is:
 \[
 |V(C)|-|E|+|F|-|V(D)|= 0
\]
 Hence by \autoref{euler_double_counting}  all link complexes for $(D,\Sigma_C)$ are
spheres. By \autoref{loc_surface_is_inc}
these are dual to the local surfaces for $(C,\Sigma)$. Hence all local surfaces for $(C,\Sigma)$
are spheres.
 \end{proof}

The following theorem gives three equivalent characterisations of the class of locally
connected simply connected simplicial complexes embeddable in
 $\Sbb^3$.

\begin{thm}\label{nullt}
Let $C$ be a locally connected simplicial complex embedded into $\Sbb^3$. The following are
equivalent.
\begin{enumerate}
 \item $C$ is simply connected;
\item $C$ is $p$-nullhomologous for some prime $p$;
\item all local surfaces of the planar rotation system induced by the topological embedding are
spheres.
\end{enumerate}
\end{thm}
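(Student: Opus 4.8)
The plan is to prove the cycle $(1)\Rightarrow(2)\Rightarrow(3)\Rightarrow(1)$. The implication $(1)\Rightarrow(2)$ is immediate, since every simply connected simplicial complex is $p$-nullhomologous for every prime $p$. For $(2)\Rightarrow(3)$, I would first note that because $C$ is locally connected and embedded in $\Sbb^3$, the rotation system $\Sigma$ induced by the embedding is planar by \autoref{obo}, and the ``local surfaces of the planar rotation system induced by the topological embedding'' appearing in (3) are by definition the local surfaces for $(C,\Sigma)$. Then, assuming $C$ is $p$-nullhomologous for some prime $p$, \autoref{loc_are_spheres} applies verbatim --- its hypotheses are precisely that $C$ be locally connected, $p$-nullhomologous, and equipped with a planar rotation system --- and yields that every local surface for $(C,\Sigma)$ is a sphere, which is (3).

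The substantial implication is $(3)\Rightarrow(1)$. Treating connected components separately, assume $C$ is connected, and let $\Sigma$ again be the induced planar rotation system. Fix a closed regular neighbourhood $R$ of $C$ in $\Sbb^3$; since $R$ deformation retracts onto $C$, it suffices to show that $\pi_1(R)$ is trivial. Let $N_1,\dots,N_k$ be the closures of the components of $\Sbb^3\setminus R$, so that $\Sbb^3 = R\cup N_1\cup\dots\cup N_k$ with $R\cap N_i = \partial N_i$. Each $\partial N_i$ is (homeomorphic to) the topological local surface of the embedding at the corresponding complementary region, which by \autoref{topo_to_combi} equals the local surface for $(C,\Sigma)$ there; this is connected by \autoref{loc_is_con} and a sphere by (3), so $\partial N_i\cong \Sbb^2$. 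Now rebuild $\Sbb^3$ by gluing $N_1,\dots,N_k$ back onto $R$ one at a time, each along its $2$-sphere boundary; since a $2$-sphere is simply connected, van Kampen's theorem gives at each stage that the fundamental group of the new space is the free product of the previous one with $\pi_1(N_i)$. Hence $1 = \pi_1(\Sbb^3)\cong\pi_1(R)*\pi_1(N_1)*\dots*\pi_1(N_k)$, and since a free product is trivial only if all its factors are, $\pi_1(C)\cong\pi_1(R)$ is trivial, i.e.\ $C$ is simply connected. (Alternatively: by Alexander's theorem each $N_i$ is a $3$-ball, so $\Sbb^3$ is obtained from $C$ by attaching a ball along each of its spherical local surfaces, which is exactly the construction of $T(C,\Sigma)$; thus $T(C,\Sigma)\cong\Sbb^3$ is simply connected, and \autoref{is_simply_connected2} then gives $(3)\Rightarrow(1)$.)

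The step I expect to require the most care is the passage, inside $(3)\Rightarrow(1)$, from ``the combinatorial local surface for $(C,\Sigma)$ at a complementary region is a sphere'' to ``the boundary of that region is a $2$-sphere''. This is exactly what \autoref{topo_to_combi} supplies, but to invoke it safely one must track the cloning of edges and vertices of $C$ that occurs whenever a single edge of $C$ borders one region along more than two of the cyclically ordered sectors around it, and verify that the geometric realisation of the combinatorial local surface really is the boundary surface of the region. Granting that identification, the rest is routine bookkeeping with \autoref{obo}, \autoref{loc_are_spheres} and \autoref{topo_to_combi}.
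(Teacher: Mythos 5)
Your proof is correct, and the first two implications $(1)\Rightarrow(2)$ and $(2)\Rightarrow(3)$ coincide with the paper's (immediate, respectively via \autoref{loc_are_spheres} after noting via \autoref{obo} that the induced rotation system is planar). The interesting divergence is in $(3)\Rightarrow(1)$. The paper's proof consists of one line: pass from combinatorial to topological local surfaces via \autoref{topo_to_combi}, then invoke \autoref{is_simply_connected2}. Strictly speaking this needs the additional input that $T(C,\Sigma)$ is simply connected; the intended argument is that when all local surfaces of the embedding are spheres, each complementary region is a $3$-ball (Alexander's theorem), and reattaching those balls reproduces both $\Sbb^3$ and the construction of $T(C,\Sigma)$, so $T(C,\Sigma)\cong\Sbb^3$. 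Your parenthetical ``alternative'' makes exactly this chain explicit, so it is the paper's route, spelled out. Your primary argument, however, is genuinely different: you decompose $\Sbb^3 = R\cup N_1\cup\dots\cup N_k$ with $R$ a regular neighbourhood of $C$, use $(3)$, \autoref{topo_to_combi} and \autoref{loc_is_con} only to conclude $\partial N_i\cong\Sbb^2$, and then run Van Kampen to get $1=\pi_1(\Sbb^3)\cong\pi_1(R)*\pi_1(N_1)*\dots*\pi_1(N_k)$, whence $\pi_1(C)\cong\pi_1(R)=1$. This buys you two things: you never need to identify $T(C,\Sigma)$ with $\Sbb^3$ (so no appeal to Alexander/Schoenflies that an embedded $2$-sphere bounds a ball --- only $\pi_1(\Sbb^2)=1$ is used), and you never need to know anything about $\pi_1(N_i)$, since the free-product structure kills all factors at once. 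The paper's route through \autoref{is_simply_connected2} is a direct nullhomotopy-pushing argument of similar flavour, but delegating to that lemma hides the implicit $T(C,\Sigma)\cong\Sbb^3$ step; your Van Kampen version is more self-contained at the modest cost of invoking the group-theoretic fact that a free product is trivial only when all factors are.
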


\begin{proof}
Clearly, 1 implies 2. To see that 2 implies 3, we assume that $C$ is $p$-nullhomologous.
Let $\Sigma$ be the planar rotation system induced by the topological embedding of $C$ into
$\Sbb^3$.
By \autoref{loc_are_spheres} all local
surfaces for
$(C,\Sigma)$ are spheres.

It remains to prove that 3 implies 1.
So assume that $C$ has an embedding into
$\Sbb^3$ such that all local surfaces of the planar rotation system induced by the topological
embedding are spheres.
By treating different connected components separately, we may assume that $C$ is connected. By
\autoref{topo_to_combi} all local surfaces
of the topological embedding are spheres.
Thus 3 implies 1 by \autoref{is_simply_connected2}.

\end{proof}

\begin{rem}
Our proof actually proves the strengthening of \autoref{nullt} with `embedded
into $\Sbb^3$' replaced by  `embedded into a simply connected oriented 3-dimensional compact
manifold.' However this strengthening is equivalent to \autoref{nullt} by Perelman's theorem.
\end{rem}

Recall that in order to prove \autoref{combi_intro_extended}, it suffices to show that every
$p$-nullhomologous simplicial complex $C$ has a piece-wise linear embedding into
$\Sbb^3$ if and only if it is simply connected and $C$ has a planar rotation system.

\begin{proof}[Proof of \autoref{combi_intro_extended}.]
Using an induction argument on the number of cut vertices as in the proof of \autoref{combi_intro},
we may assume that $C$ is locally connected.
If $C$ has a piece-wise linear embedding into
$\Sbb^3$, then it has a planar rotation system and it is simply connected by \autoref{nullt}.
The other direction follows from \autoref{combi_intro}.
\end{proof}

\begin{rem}\label{alg_topo}
One step in proving \autoref{combi_intro_extended} was showing that if a simplicial
complex with trivial first homology group embeds in \Sthree, then it must be simply connected.
In this section we have given a proof that only uses elementary topology. We will use these methods
again in \cite{3space4}.

However there is a shorter proof of this fact, which we shall sketch in the following.
Let $C$ be a simplicial complex embedded in \Sthree\ such that one local surface of the embedding
is not a sphere. Our aim is to show that the first homology group of $C$ cannot be trivial.

We will rely on the fact that the first homology group of $X=\Sbb^3\sm \Sbb^1$ is not trivial.
It suffices to show that the homology group of $X$ is a quotient of the homology group of $C$.
Since here by Hurewicz's theorem, the homology group is the abelisation of the fundamental
group, it suffices to show that the fundamental group $\pi_1(X)$ of $X$ is a quotient of the
fundamental group $\pi_1(C)$.

We let $C_1$ be a small open neighbourhood of $C$ in the embedding of $C$ in \Sthree. Since $C_1$
has a deformation retract onto $C$, it has the same fundamental group.
We obtain $C_2$ from $C_1$ by attaching the interiors of all local surfaces of the embedding
except for one -- which is not a sphere. This can be done by attaching finitely many 3-balls.
Similar as in the proof of \autoref{is simply connected}, one can use Van Kampen's theorem to show
that the fundamental group of $C_2$ is a quotient of the fundamental group of $C_1$. By adding
finitely many spheres if necessary and arguing as above one may assume that remaining local surface
is a torus. Hence $C_2$ has the same fundamental group as $X$. This completes the sketch.
\end{rem}

\section{Embedding general simplicial complexes}\label{beyond}

There are three classes of simplicial complexes that naturally include the simply connected
simplicial complexes:
the $p$-nullhomologous ones that are included in those with abelian fundamental group that in turn
are included in
general simplicial complexes. \autoref{combi_intro_extended} characterises embeddability of
$p$-nullhomologous complexes. In this section we prove embedding results for the later two
classes. The
bigger the class gets, the stronger
assumptions we will require in order to guarantee topological embeddings into $\Sbb^3$.

A \emph{curve system} of a surface $S$ of genus $g$ is a choice of at most $g$ genus reducing
curves in $S$ that are disjoint.
An \emph{extension} of a rotation system $\Sigma$ is a choice of curve system at every local
surface of $\Sigma$.
An extension of a rotation system of a complex $C$ is \emph{simply connected} if the topological
space obtained from $C$ by
gluing\footnote{We
stress that the curves need not go through edges of $C$. `Gluing' here is on the level of
topological spaces not of complexes.} a disc at
each curve of the extension is simply connected.
The definition of a \emph{$p$-nullhomologous extension} is the same with `$p$-nullhomologous' in
place of
`simply connected'.

\begin{thm}\label{general}
 Let $C$ be a connected and locally connected simplicial complex with a rotation system
$\Sigma$.
 The following are equivalent.
 \begin{enumerate}
  \item $\Sigma$ is induced by a topological embedding of $C$ into $\Sbb^3$.
  \item $\Sigma$ is a planar rotation system that has a simply connected extension.
  \item We can subdivide edges of $C$, do baricentric subdivision of faces and add new faces such
that the resulting simplicial complex is
simply connected and has a topological embedding into \Sthree\ whose induced planar rotation system
$\Sigma'$ `induces' $\Sigma$.
 \end{enumerate}
\end{thm}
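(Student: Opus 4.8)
The plan is to prove the four implications $(3)\Rightarrow(1)$, $(1)\Rightarrow(3)$, $(2)\Rightarrow(1)$ and $(1)\Rightarrow(2)$; together they give all equivalences. The first two are the easy piece-wise linear directions. For $(3)\Rightarrow(1)$: the complex produced in (3) contains a subdivision of $C$ as a subcomplex, so an embedding of it into $\Sbb^3$ restricts to an embedding of $C$, and by construction the rotation system this restriction induces is $\Sigma$ --- which is precisely what ``$\Sigma'$ induces $\Sigma$'' records. For $(1)\Rightarrow(3)$: a topological embedding of a $2$-complex in $\Sbb^3$ may be taken piece-wise linear \cite{moise}, so $\Sbb^3$ has a triangulation $\mathcal T$ in which a subdivision of $C$ is a subcomplex; take $C^+$ to be the $2$-skeleton of $\mathcal T$. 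Then $C^+$ arises from that subdivision of $C$ by adding faces, it is embedded in $\Sbb^3$, and $\pi_1(C^+)=\pi_1(\Sbb^3)=1$ because attaching cells of dimension $\ge3$ leaves $\pi_1$ unchanged; the planar rotation system induced by its embedding restricts to $\Sigma$ since the $C$-faces keep their cyclic order around each edge of $C$, no matter how the extra faces of $\mathcal T$ are interleaved. A final barycentric subdivision puts $C^+$ into the exact normal form demanded in (3).

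For $(2)\Rightarrow(1)$: since $\Sigma$ is planar, $T(C,\Sigma)$ is a $3$-manifold by \autoref{is_manifold}. I would modify its construction by replacing each handlebody $\hat S_i$ glued onto the local surface $S_i$ with the \emph{compression body} $W_i$ determined by the curve system of the extension on $S_i$ --- that is, $S_i\times[0,1]$ with a $2$-handle attached along each curve of the system inside $S_i\times\{1\}$, and a ball capping each resulting sphere boundary component --- glued to $C$ along $S_i\times\{0\}\cong S_i$. Since $W_i$ and $\hat S_i$ have homeomorphic neighbourhoods of $S_i$ (both look like $S_i\times[0,1)$ there), the space $N:=C\cup\bigcup_i W_i$ is a manifold near $C$ exactly as $T(C,\Sigma)$ is, while away from $C$ it is a compact orientable manifold whose boundary consists of the components of $\partial W_i$ not glued to $C$; it is connected because $C$ is, and orientable because the planar rotation system orients it. By van Kampen, $\pi_1(N)$ is $\pi_1(C)$ modulo the normal closure of the curves of the extension, hence trivial because the extension is simply connected. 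A compact connected orientable $3$-manifold with trivial $\pi_1$ has only spherical boundary components (``half lives, half dies'': the map $H_1(\partial N;\Qbb)\to H_1(N;\Qbb)$ is zero since $H_1(N;\Qbb)=0$, so $H_1(\partial N;\Qbb)$ coincides with its own half-dimensional kernel and therefore vanishes). Capping these spheres with balls yields a closed, simply connected, oriented $3$-manifold, which is $\Sbb^3$ by Perelman's theorem. Thus $C\subseteq N\subseteq\Sbb^3$, and since the compression bodies were inserted respecting the cyclic orders prescribed by $\Sigma$, the planar rotation system this embedding induces on $C$ is $\Sigma$.

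For $(1)\Rightarrow(2)$: the induced rotation system $\Sigma$ is planar by \autoref{prs_orient}, and by \autoref{topo_to_combi} the local surfaces $S_i$ of $\Sigma$ are the boundaries of the closures $\overline{U_i}$ of the components of $\Sbb^3\setminus C$, which are compact orientable $3$-manifolds. I must pick in each $S_i$ a curve system --- at most $\operatorname{genus}(S_i)$ disjoint genus-reducing curves --- so that attaching discs to $C$ along all of them kills $\pi_1(C)$. Collapsing each $\overline{U_i}$ to a point yields a simply connected space, so $\pi_1(C)$ is already normally generated by $\bigcup_i\operatorname{im}\pi_1(S_i)$; the task is to shrink each $\operatorname{im}\pi_1(S_i)$ to the normal closure of a cut system of $S_i$ chosen compatibly with $\overline{U_i}$ (``half lives, half dies'' singles out a Lagrangian cut system), after first discarding inessential and duplicated curves --- a cleanup that preserves simple connectedness, as one checks with van Kampen. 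I expect this to be the main obstacle: an arbitrarily chosen cut system need not suffice --- already for $C$ a knotted torus one must play the cut systems on the two sides of the surface against each other --- so the choice has to exploit how the two sides of each local surface fit together inside $\Sbb^3$.
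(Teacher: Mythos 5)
Your route ($3 \Rightarrow 1$, $1 \Rightarrow 3$, $2 \Rightarrow 1$, $1 \Rightarrow 2$) differs structurally from the paper, which closes the cycle $3 \Rightarrow 1 \Rightarrow 2 \Rightarrow 3$ and puts all the work into $2 \Rightarrow 3$. Your $(2)\Rightarrow(1)$ is a genuinely different argument: where the paper subdivides so that the curves of the extension become closed walks, glues in faces, verifies (\autoref{primeprime}) that the new rotation system is planar, and then applies \autoref{combi}, you instead glue onto each local surface the compression body determined by its curve system, read off $\pi_1(N)=1$ from van Kampen, use half-lives-half-dies to see $\partial N$ is a union of spheres, cap, and invoke Perelman. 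That is correct, avoids the detour through (3), and is arguably more transparent for readers fluent in 3-manifold topology. Your $(1)\Rightarrow(3)$ via Moise's triangulation theorem is likewise a reasonable alternative, although the final sentence --- that a barycentric subdivision ``puts $C^+$ into the exact normal form demanded in (3)'' --- is a handwave: the simplicial subdivision of $C$ sitting inside the ambient triangulation of $\Sbb^3$ need not arise from the edge-subdivisions and barycentric face-subdivisions that (3) lists, and a further barycentric subdivision of $C^+$ does not retroactively make the map $C\to C^+$ of the required form.

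The genuine gap is $(1)\Rightarrow(2)$, which you explicitly leave unfinished. Without it your three completed implications do not chain into the full equivalence. The paper handles this implication with the single sentence that a simply connected extension exists by \autoref{cut_along_discs}, but your worry about arbitrary cut systems is well-posed and deserves to be resolved rather than flagged. The intended argument is that, for each complementary region $U_i$, one chooses genus-reducing curves on the local surface $S_i$ that bound \emph{disjoint discs inside $\overline{U_i}$} (this is exactly what \autoref{cut_along_disc} and \autoref{cut_along_discs} are formulated to deliver: ``the interior of $X$'' is the region one poured concrete into); cutting $\overline{U_i}$ along these discs produces balls, so $\Sbb^3$ is obtained from $C$ together with the discs by attaching balls, hence $\pi_1$ of $C$ with the discs attached is trivial. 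Your knotted-torus example already shows why this must be done with the discs living inside the correct complementary region and not on whichever side happens to compress: the solid-torus side contributes the meridian while the knot-exterior side, after an isotopy within $\Sbb^3$ of the chosen compressing disc into that region, contributes the longitude. You should supply this step; the proposal as it stands does not prove the theorem.
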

Here we define that `$\Sigma'$ induces $\Sigma$' in the obvious way as follows.
Let $C$ be a simplicial complex obtained from a simplicial complex $C'$ by deleting faces.
A rotation system $\Sigma=(\sigma(e)|e\in E(C))$ of $C$ is \emph{induced} by a rotation system
$\Sigma'=(\sigma'(e)|e\in E(C))$ of
$C'$ if $\sigma(e)$ is the restriction of $\sigma'(e)$ to the faces incident with $e$.
If $C$ is obtained from contracting edges of $C'$ instead, a rotation system $\Sigma$ of $C$ is
\emph{induced} by a rotation system
$\Sigma'$ of
$C'$ if $\Sigma$ is the restriction of $\Sigma'$ to those edges that are in $C$.
If $C'$ is obtained from $C$ by a baricentric subdivision of a face $f$ we take the same definition
of `induced', where we make the
identification between the face $f$ of $C$ and all faces of $C'$ obtained by subdividing $f$.
Now in the situation of \autoref{general}, we say that $\Sigma'$ \emph{induces} $\Sigma$ if there
is a chain of planar rotation systems
each inducing the next one starting with $\Sigma'$ and ending with $\Sigma$.

Before we can prove \autoref{general}, we need some preparation.
The following is a consequence of the Loop Theorem \cite{{Pap57},{Hatcher3notes}}.

\begin{lem}\label{cut_along_disc}
 Let $X$ be an orientable surface of genus $g\geq 1$ embedded topologically into $\Rbb^3$, then
there is a genus reducing
circle\footnote{A \emph{circle} is a topological space homeomorphic to $\mathbb{S}^1$. } $\gamma$
 of $X$ and a disc $D$ with boundary $\gamma$ and all interior points of $D$ are contained in the
interior of $X$.
\end{lem}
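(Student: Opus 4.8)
The plan is to deduce this from the Loop Theorem. First I would move from $\Rbb^3$ to \Sthree\ and recall that a closed connected orientable surface $X\subseteq\Sbb^3$ separates \Sthree\ into exactly two components (a closed connected surface in $\Sbb^3$ always does so, by Alexander duality). Write $M_1$ and $M_2$ for the closures of these components, so that each $M_i$ is a compact $3$-manifold with $\partial M_i=X$. The disc $D$ to be produced will be a properly embedded disc in one of the $M_i$, so that $D\sm\partial D$ lies in that complementary region and in particular is disjoint from $X$.

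The core of the argument is the classical fact that \Sthree\ contains no closed $\pi_1$-incompressible surface of positive genus, which I would establish as follows. Suppose for a contradiction that both maps $\pi_1(X)\to\pi_1(M_i)$, $i=1,2$, are injective. Choosing open collar neighbourhoods of $M_1$ and $M_2$ and applying van Kampen's theorem yields $1=\pi_1(\Sbb^3)=\pi_1(M_1)\ast_{\pi_1(X)}\pi_1(M_2)$. Since both amalgamation maps are injective, each free factor embeds into the amalgamated free product, so $\pi_1(M_1)$, $\pi_1(M_2)$ and $\pi_1(X)$ are all trivial; but $g\geq 1$ forces $\pi_1(X)\neq 1$, a contradiction. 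Hence, after possibly swapping $M_1$ and $M_2$, the kernel of $\pi_1(X)\to\pi_1(M_1)$ is nontrivial.

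Now I would apply the Loop Theorem (in the strong form of Stallings) to the pair $(M_1,X)$: it provides a properly embedded disc $(D,\partial D)\hookrightarrow(M_1,X)$ whose boundary $\gamma:=\partial D$ represents a nontrivial element of $\pi_1(X)$. As $D$ is embedded, $\gamma$ is a simple closed curve, it is essential (not null-homotopic) in $X$, and $D\sm\partial D$ is contained in the interior of $M_1$, hence disjoint from $X$. It then remains to check that an essential simple closed curve on a closed orientable surface of genus $g$ is genus reducing: surgering $X$ along $\gamma$ --- cutting along $\gamma$ and capping the one or two resulting boundary circles with discs --- produces either a closed orientable surface of genus $g-1$ (if $\gamma$ is non-separating) or two closed orientable surfaces of genera $g_1,g_2$ with $g_1+g_2=g$ and $g_1,g_2<g$ (if $\gamma$ is separating), where essentiality of $\gamma$ is exactly what prevents either piece from being a disc. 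This yields $\gamma$ and $D$ with the claimed properties.

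The step I expect to be the main obstacle is the middle paragraph: isolating the precise hypothesis of the Loop Theorem --- namely nontriviality of the kernel of the $\pi_1$-map into one side --- and verifying it through the amalgamated-free-product computation. Once that incompressibility fact is in hand, the Loop Theorem together with the elementary surface surgery that identifies essential curves as genus reducing finishes the proof routinely; the only other point needing a little care is the (standard) passage between $\Rbb^3$ and \Sthree.
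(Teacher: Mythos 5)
The paper does not actually prove this lemma; it only states that it is ``a consequence of the Loop Theorem'' and cites Papakyriakopoulos and Hatcher. Your proposal therefore supplies the intended but unwritten argument, and it is the standard one: van Kampen applied to the two complementary compact $3$-manifolds $M_1,M_2$ with common boundary $X$ shows that at least one inclusion $\pi_1(X)\to\pi_1(M_i)$ has nontrivial kernel (otherwise the amalgamated product would be nontrivial, contradicting $\pi_1(\Sbb^3)=1$), and then Stallings' Loop Theorem produces a properly embedded disc whose essential boundary is genus reducing. All of these steps are sound, including the final observation that cutting along an essential simple closed curve reduces genus in both the separating and non-separating cases.

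One point worth flagging explicitly, because it concerns the wording of the statement rather than your argument: your proof (and the Loop Theorem) only guarantees a compressing disc on \emph{some} side of $X$, with no control over which one. If ``contained in the interior of $X$'' is read as ``contained in the bounded component of $\Rbb^3\sm X$'', the assertion as written is actually false: embed the exterior $E(K)$ of a nontrivial knot $K$ in $\Rbb^3$ so that $E(K)$ is the bounded side of the boundary torus $X=\partial E(K)$; then $X$ is incompressible into $E(K)$, and the only compressing disc lies in the unbounded side. So the lemma must be read as asserting that the open disc $D\sm\partial D$ lies in one of the two complementary regions of $X$, i.e.\ $D$ is a compressing disc disjoint from $X$ away from its boundary -- which is exactly what you prove, and what the subsequent \autoref{cut_along_discs} and the proof of \autoref{general} actually need. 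It would also be cleaner to add the harmless hypothesis that $X$ is connected (and tame, so that the Loop Theorem applies), since ``genus'' presupposes this and the paper is working in the PL category throughout.
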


\begin{cor}\label{cut_along_discs}
  Let $X$ be an orientable surface of genus $g\geq 1$ embedded topologically into $\Rbb^3$, then
there are genus reducing circles
$\gamma_1$,..., $\gamma_g$ of $X$ and closed discs $D_i$ with boundary $\gamma_i$
such that the $D_i$  are disjoint and the interior points of the discs $D_i$ are contained in the
interior of $X$.
\end{cor}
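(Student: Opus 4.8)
The plan is to prove \autoref{cut_along_discs} by induction on the genus $g$, peeling off one genus reducing circle together with a compressing disc at a time by means of \autoref{cut_along_disc}, and then passing to the surface(s) obtained by compressing $X$ along that disc.

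For $g=1$ there is nothing to do beyond \autoref{cut_along_disc}: it already supplies a single genus reducing circle $\gamma_1$ with a disc $D_1$ whose interior lies in $\mathrm{int}(X)$, the bounded component of $\Rbb^3\sm X$. For $g\geq 2$ I would first apply \autoref{cut_along_disc} to get a genus reducing circle $\gamma$ of $X$ and a disc $D$ with $\partial D=\gamma$ and $\mathrm{int}(D)\se\mathrm{int}(X)$, and set $M:=\overline{\mathrm{int}(X)}$, a compact connected $3$-manifold with $\partial M=X$ in which $D$ is a properly embedded disc. Cutting $M$ along $D$ produces a compact $3$-manifold $M\vert D$ still embedded in $\Rbb^3$ with $M\vert D\se M$ (hence $\mathrm{int}(M\vert D)\se\mathrm{int}(X)$), whose boundary $X'$ is obtained from $X$ by removing a small annular neighbourhood of $\gamma$ and capping the two new boundary circles with two disjoint parallel push-offs of $D$ into $\mathrm{int}(X)$. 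I would then split into two cases according to whether $\gamma$ separates $X$. If it does not, $X'$ is connected of genus $g-1$ and equals the boundary of $M\vert D$, so $M\vert D=\overline{\mathrm{int}(X')}$ and $\mathrm{int}(X')\se\mathrm{int}(X)$; I apply the induction hypothesis to $X'$ to get disjoint genus reducing circles $\gamma_2,\dots,\gamma_g$ of $X'$ with disjoint discs $D_2,\dots,D_g$, and since $X$ and $X'$ agree away from a small ball $B$ around $D$ and its push-offs, a small isotopy slides $\gamma_2,\dots,\gamma_g$ off the capping discs into $X\sm B\se X$ (disjoint from $\gamma$), carrying the $D_i$ along by an ambient isotopy of $\Rbb^3$ fixing $X'$; then $\gamma,\gamma_2,\dots,\gamma_g$ with $D,D_2,\dots,D_g$ are as required. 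If $\gamma$ separates $X$, then — because a genus reducing curve cannot cut off a planar piece — both sides have positive genus, so $D$ separates $M$ into compact manifolds $M_1,M_2\se M$ whose boundaries $X_1,X_2$ (each side of $\gamma$ capped by $D$) have genera $g_1,g_2\geq 1$ with $g_1+g_2=g$; applying induction to each $X_i$ (so that $\mathrm{int}(X_i)=\mathrm{int}(M_i)\se\mathrm{int}(X)$) and isotoping the $g_i$ resulting circles into $X_i\sm\gamma\se X$ yields in total $g_1+g_2=g$ pairwise disjoint genus reducing circles of $X$ with $g$ pairwise disjoint discs whose interiors lie in $\mathrm{int}(X)$ — disjointness between the two families holding since $M_1\cap M_2=D$ and all these discs avoid $D$.

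The one substantive point beyond \autoref{cut_along_disc} is the bookkeeping around compression: identifying $M\vert D$ (resp.\ $M_1,M_2$) with the closure of the bounded region of the complement of its boundary, which rests on the elementary fact that a compact connected $3$-submanifold of $\Rbb^3$ with a prescribed connected boundary surface is exactly the closure of the bounded complementary region of that surface. The rest is routine: the genus count ($g-1$ in the non-separating case, $g_1+g_2=g$ in the separating case, where termination is guaranteed by $g_i<g$), and transporting the inductively produced circles and discs back into $X$ by isotopy extension, using that $X$ coincides with the compressed surface away from a small ball. I expect this transfer step, and pinning down that the compressed surface is again embedded in $\Rbb^3$ with the asserted interior, to be the fiddliest parts of the argument, though neither is deep.
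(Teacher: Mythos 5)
Your argument is correct and follows the same route as the paper: induction on the genus $g$, cutting the surface along the compressing disc supplied by \autoref{cut_along_disc} and recursing on the resulting lower-genus surface(s). The paper's own proof is only a two-sentence sketch (``cut off the current surface along $D$, then apply \autoref{cut_along_disc} to the new surface''); your write-up fills in the separating/non-separating case split, the identification of the cut manifold with the closure of the bounded complementary region, and the isotopy needed to transport the inductively obtained circles and discs back into $X$, all of which the paper leaves implicit.
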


\begin{proof}
 We prove this by induction on $g$. In the induction step we cut of the current surface along $D$.
Then we the apply
\autoref{cut_along_disc} to that new surface.
\end{proof}

\begin{proof}[Proof of \autoref{general}.]
1 is immediately implied by 3.

Next assume that $\Sigma$ is induced by a topological embedding of $C$ into $\Sbb^3$. Then $\Sigma$
is clearly a planar rotation system.
It has a simply connected extension by \autoref{cut_along_discs}. Hence 1 implies 2.

Next assume that $\Sigma$ is a planar rotation system that has a simply connected extension. We
can clearly subdivide edges and do
baricentric subdivision and change the curves of the curve system of the simply connected extension
such that in the resulting simplicial
complex $C'$ all the curves of the simply connected extension closed are walks in the 1-skeleton of
$C'$.
We define a planar rotation system $\Sigma'$ of $C'$ that induces $\Sigma$ as follows.
If we subdivide an edge, we assign to both copies the cyclic orientation of the original edge. If
we do a baricentric subdivision, we
assign to all new edges the unique cyclic orientation of size two.
Iterating this during the construction of $C'$ defines $\Sigma'=(\sigma'(e)|e\in E(C))$, which
clearly is a planar rotation system that
induces $\Sigma$.
By construction $\Sigma'$ has a simply connected extension such that all its curves are walks in
the 1-skeleton of $C'$ .

Informally, we obtain $C''$ from $C'$ by attaching a disc at the boundary of each curve of the
simply connected extension.
Formally, we obtain $C''$ from $C'$ by first adding a face for each curve $\gamma$ in the simply
connected extension whose boundary is
the closed walk $\gamma$. Then we do a baricentric subdivision to all these newly added faces. This
ensures that
$C''$ is a simplicial complex.
Since $C$ is locally connected, also $C''$ is locally connected.
Since the geometric realisation of $C''$ is equal to the geometric realisation of $C$, which is
simply connected, the simplicial complex $C''$
is simply connected.

Each newly added face $f$ corresponds to a traversal of a curve $\gamma$ of some edge $e$ of $C'$.
This
traversal is a unique edge of the local surface $S$ to whose curve system $\gamma$ belongs. For
later reference we denote that copy of $e$
in $S$ by $e_f$.

We define a rotation system $\Sigma''=(\sigma''(e)|e\in E(C))$ of $C''$ as follows.
All edges of $C''$ that are not edges of $C'$ are incident with precisely two faces. We take the
unique cyclic ordering of size two there.

Next we define $\sigma''(e)$ at edges $e$ of $C'$ that are incident with newly added faces.
If $e$ is only incident with a single face of $C'$,
then $e$ is only in a single
local surface and it only has one copy in that local surface. Since the curves at that local
surface are disjoint. We could have only added
a single face incident with $e$. We take for $\sigma''(e)$ the unique
cyclic orientation of size two at $e$.

So from now assume that $e$ is incident with at least two faces of $C'$.
In order to define $\sigma''(e)$, we start with $\sigma'(e)$ and define in the following for each
newly added face in
between which two cyclic orientations of faces adjacent in $\sigma'(e)$ we put it. We shall ensure
that between any two orientations we put
at most one new face. Recall that two cyclic orientations $\vec{f_1}$ and $\vec{f_2}$ of faces
$f_1$ and $f_2$, respectively, are adjacent
in $\sigma'(e)$ if and only if there is a clone $e'$ of $e$ in a local surface $S$ for
$(C',\Sigma')$ containing $\vec{f_1}$ and
$\vec{f_2}$ such that $e'$ is incident with $\vec{f_1}$ and $\vec{f_2}$ in $S$.
Let $f$ be a face newly added to $C''$ at $e$.  Let $\gamma_f$ be the curve from which $f$ is build
and let $S_f$ be the local surface
that has $\gamma_f$ in its curve system. Let $e_f$ be the copy of $e$ in $S_f$ that corresponds to
$f$ as defined above.  when we
consider $f$ has a face obtained from the disc glued at  $\gamma_f$.
We add $f$ to $\sigma'(e)$ in between the two cyclic orientations that are incident with $e_f$ in
$S_f$.
This completes the definition of $\Sigma''$.
Since the copies $e_f$ are distinct for different faces $f$, the rotation system $\Sigma''$ is
well-defined.
By construction $\Sigma''$ induces $\Sigma$. We prove the following.

\begin{sublem}\label{primeprime}
 $\Sigma''$ is a planar rotation system of $C''$.
\end{sublem}

\begin{cproof}
Let $v$ be a vertex of $C''$. If $v$ is not a vertex of $C'$, then the link graph at $v$
is a cycle. Hence the link complex at $v$ is clearly a sphere. Hence we may assume that $v$ is a
vertex of $C'$.

Our strategy to show that the link complex $S''$ at $v$ for $(C'',\Sigma'')$ is a sphere
will be to show that it is obtained
from the link complex $S'$ for $(C',\Sigma')$ by adding edges in such a way that each
newly added edge traverses a face of $S'$
and two newly added edges traverse different face of $S'$.

So let $f$ be a newly added face incident with $v$ of $C'$.
Let $x$ and $y$ be the two edges of $f$ incident with $v$.
We make use of the notations $\gamma_f$, $S_f$, $x_f$ and $y_f$ defined above.
Let $v_f$ be the unique vertex of $S_f$ traversed by $\gamma_f$ in between $x_f$ and $y_f$.
By \autoref{loc_inc_AND_loc_surfaces} there is a unique face $z_f$ of $S'$ mapped by the map
$\iota$ of that lemma to
$v_f$.
And $x$ and $y$ are vertices in the boundary of $z_f$. The edges on the boundary of $z_f$ incident
with $x$ and $y$ are the cyclic
orientations of the faces that are incident with $x_f$ and $y_f$ in $S_f$. Hence in $S''$ the edge
$f$ traverses the face $z_f$.

It remains to show that the faces $z_f$ of $S'$ are distinct for different newly added faces $f$ of
$C''$. For that it suffices by
\autoref{loc_inc_AND_loc_surfaces} to show that the vertices $v_f$ are distinct. This is true as
curves for $S_f$ traverse a vertex of $S_f$
at most once and different curves for $S_f$ are disjoint.
\end{cproof}

 Since $\Sigma''$ is a planar rotation system of the locally connected simplicial complex $C''$ and
$C''$ is simply connected, $\Sigma''$ is
induced by a topological embedding of
$C''$ into \Sthree\ by \autoref{combi}. Hence 2 implies 3.
\end{proof}

A natural weakening of the property that $C$ is simply connected is that the fundamental group of
$C$ is abelian.
Note that this is equivalent to the condition that every chain that is $p$-nullhomologous is
simply
connected.

\begin{thm}\label{abelian_alt}
 Let $C$ be a connected and locally connected simplicial complex with abelian fundamental group.
Then $C$ has a topological embedding
into $\Sbb^3$ if
and only if it has a planar rotation system $\Sigma$ that has a $p$-nullhomologous extension.
\end{thm}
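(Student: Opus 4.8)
The plan is to mirror the proof of \autoref{general}, replacing ``simply connected'' by ``$p$-nullhomologous'' everywhere the extension is involved, and to pinpoint the one place where the abelian hypothesis is genuinely needed: the step where \autoref{combi_intro} (rather than its homological strengthening) must be invoked.

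For the forward implication, assume $C$ has a topological embedding into \Sthree\ and let $\Sigma$ be the induced rotation system. By the implication $1\Rightarrow 2$ of \autoref{general}, $\Sigma$ is a planar rotation system and it has a simply connected extension. A simply connected extension is in particular $p$-nullhomologous for every prime $p$, since a simply connected space has trivial first homology over $\Fbb_p$. Hence $\Sigma$ is a planar rotation system with a $p$-nullhomologous extension, as required.

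For the backward implication, suppose $\Sigma$ is a planar rotation system of $C$ admitting a $p$-nullhomologous extension. I would run the construction from the proof of \autoref{general} that turns the extension into an honest simplicial complex: subdivide edges and perform baricentric subdivisions of faces until every curve of the extension is a closed walk in the $1$-skeleton, obtaining a subdivision $C'$ with an induced planar rotation system $\Sigma'$; then add a face for each such curve and baricentrically subdivide these new faces to obtain a simplicial complex $C''$ with the rotation system $\Sigma''$ defined there. By the argument of \autoref{primeprime}, $\Sigma''$ is a planar rotation system of $C''$, and $C''$ is locally connected because $C$ is. Since the geometric realisation of $C''$ is homeomorphic to the space obtained from $C$ by gluing a disc along each curve of the extension, which is $p$-nullhomologous by hypothesis, the simplicial complex $C''$ is $p$-nullhomologous. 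Moreover $C''$ is obtained from (a subdivision of) $C$ by attaching $2$-cells, so $\pi_1(C'')$ is a quotient of $\pi_1(C)$ and hence abelian.

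The crux is to deduce from these two facts --- that $\pi_1(C'')$ is abelian and that $C''$ is $p$-nullhomologous --- that $C''$ is in fact simply connected; this is precisely the reformulation of ``abelian fundamental group'' recorded in the remark preceding the statement. Granting this, $C''$ is a simply connected simplicial complex carrying a planar rotation system, so by \autoref{combi_intro} it has a topological embedding into \Sthree, and restricting this embedding to the subcomplex whose realisation is that of $C$ yields an embedding of $C$. I expect the main obstacle to be exactly this last step: one must argue that, under the standing abelian hypothesis, triviality of $H_1(C'';\Fbb_p)$ upgrades to triviality of $\pi_1(C'')$ --- equivalently, that the $p$-nullhomologous extension we started from is a simply connected extension --- so that \autoref{general} (or directly \autoref{combi_intro}) applies; all remaining steps are routine adaptations of the proof of \autoref{general}.
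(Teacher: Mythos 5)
Your proposal follows essentially the same route as the paper. The step you flag as the crux is exactly Lemma~\autoref{abi}, which the paper proves in one line (the glued space has $\pi_1$ a quotient of the abelian $\pi_1(C)$, hence abelian and thus equal to its abelianisation, which is trivial by the homology hypothesis, citing {\cite[Proposition 1.26]{Hatcher}}); the paper then applies \autoref{general} as a black box where you instead unfold its construction inline, but this is only a stylistic difference.
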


In order to prove \autoref{abelian_alt}, we prove the following.

\begin{lem}\label{abi}
 A $p$-nullhomologous extension of a planar rotation system of a simplicial complex $C$ with
abelian
fundamental group is a simply connected
extension.
\end{lem}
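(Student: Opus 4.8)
The plan is to reduce the lemma to a statement about first homology and then invoke the structure of finitely generated abelian groups. First I would set $\hat C$ to be the topological space obtained from $C$ by gluing a disc along each curve of the given extension; by the definitions of a ``$p$-nullhomologous extension'' and a ``simply connected extension'', it then suffices to prove that if $\hat C$ is $p$-nullhomologous then $\hat C$ is simply connected. Planarity of the rotation system $\Sigma$ is not used here: the only relevant fact is that $\hat C$ is obtained from the finite complex $C$ by attaching finitely many $2$-cells, one along each curve (the curves need not run in the $1$-skeleton, but that is immaterial). Hence, by van Kampen's theorem, $\pi_1(\hat C)$ is $\pi_1(C)$ modulo the normal subgroup generated by the classes of these curves — in particular a quotient of $\pi_1(C)$, and therefore abelian. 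By the Hurewicz theorem this identifies $\pi_1(\hat C)$ with $H_1(\hat C,\Zbb)$, a finitely generated abelian group $A$.

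Next I would use the homological hypothesis. Since $\hat C$ is connected, $H_0(\hat C,\Zbb)$ is free, so the universal coefficient theorem gives $A\otimes_\Zbb\Fbb_p\cong H_1(\hat C,\Fbb_p)$, and the latter vanishes because $\hat C$ is $p$-nullhomologous; thus $A=pA$. Taking this over every prime $p$ — which is the reading of ``$p$-nullhomologous'' that makes the lemma go through, and which is in any case automatic in the intended application, since the forward direction of \autoref{abelian_alt} produces an honestly simply connected extension — forces $A$ to be a divisible finitely generated abelian group, hence $A=0$. So $\pi_1(\hat C)=1$ and $\hat C$ is simply connected, which is exactly the equivalence recorded in the remark before \autoref{abelian_alt}.

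I expect the only non-routine point to be the reduction in the first paragraph: attaching $2$-cells \emph{enlarges} the space while sending $\pi_1$ to a \emph{quotient}, and it is precisely the abelianness of $\pi_1(C)$ that lets one replace $\pi_1(\hat C)$ by $H_1(\hat C,\Zbb)$ and thereby turn ``$H_1$ dies mod every prime'' into ``$\pi_1$ is trivial''. Everything after that is bookkeeping with finitely generated abelian groups. The subtle thing to be careful about is that the mod-$p$ vanishing of $H_1$ really must be invoked at every prime: with a single prime one cannot rule out a leftover finite cyclic group of coprime order, as the $2$-skeleton of the lens space $L(q,1)$ with $q$ odd shows (it is locally connected, has abelian fundamental group $\Zbb/q$, carries a planar rotation system, and its only extension is $2$-nullhomologous but not simply connected).
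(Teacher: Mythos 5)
Your argument follows the paper's own route: attach discs along the curves, use van Kampen (or \cite[Proposition 1.26]{Hatcher}, as the paper does) to conclude that $\pi_1(\hat C)$ is a quotient of the abelian group $\pi_1(C)$ and hence abelian, then identify $\pi_1(\hat C)$ with $H_1(\hat C,\Zbb)$ by Hurewicz. Where you go beyond the paper is in the final step, and rightly so: the paper's proof simply asserts that this abelianisation ``is trivial by assumption'', but the hypothesis only provides $H_1(\hat C,\Fbb_p)=0$ for one fixed prime $p$, and, as your universal-coefficients computation shows, this gives only $A=pA$ for $A:=H_1(\hat C,\Zbb)$, not $A=0$. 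For a single prime the statement as literally worded is false, and your proposed counterexample --- the $2$-skeleton of a triangulation of $L(q,1)$ with $q>1$ odd, with the planar rotation system induced by the inclusion into $L(q,1)$, whose local surfaces are all spheres so that the empty curve system is the only extension --- is a valid one: the glued-up space $\hat C=C$ has $H_1(\cdot,\Fbb_2)=0$ but $\pi_1=\Zbb/q\neq 1$. Your repair, reading the hypothesis as ``$p$-nullhomologous for every prime $p$'' so that the finitely generated abelian group $A$ is divisible and hence trivial, is correct; and you are also right that the issue is harmless for the one place the lemma is used (\autoref{abelian_alt}), where the forward direction already produces a genuinely simply connected extension. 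So your proof is sound, and your scrutiny of the last step identifies a real gap in the paper's argument.
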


\begin{proof}
 Let $C'$ be the topological space obtained from $C$ by gluing discs along the curves of the
$p$-nullhomologous extension. The fundamental
group $\pi'$ of $C'$ is a quotient of the fundamental group $\pi$ of $C$, see for example
{\cite[Proposition 1.26]{Hatcher}}.
Since $\pi$ is abelian by assumption, also $\pi'$ is abelian. That is, it is equal to its
abelisation, which is trivial by assumption.
Hence $C'$ is simply connected.
\end{proof}

\begin{proof}[Proof of \autoref{abelian_alt}.]
 If $C$ has a topological embedding into $\Sbb^3$, then by \autoref{general} it has a planar
rotation system that has a $p$-nullhomologous
extension.
 If $C$ has a planar rotation system that has a $p$-nullhomologous
extension, then that extension is simply connected by \autoref{abi}. Hence $C$ has a topological
embedding into $\Sbb^3$ by the other
implication of \autoref{general}.
\end{proof}

\section{Non-orientable 3-manifolds}\label{non-or}

By \autoref{prs_orient} and \autoref{is_manifold} a 2-complex $C$ is embeddable in an
orientable 3-manifold if and only if it has a planar rotation system.
Here we discuss a notion similar to planar rotation systems that can be used to characterise
embeddability in 3-manifolds combinatorially.

Recall that a rotation framework of a 2-complex $C$ is a choice of embedding of all its
link graphs in the plane so that for each edge $e$ of $C$ in the two link graphs at the endvertices
of the edge $e$, the rotators at the vertex $e$ are reverse or agree. We colour an edge \emph{red}
if they agree.
A \emph{generalised planar rotation system} is a rotation framework with the further condition that for every face $f$ of $C$ its number of red
incident edges is even.

\begin{rem}
Any planar rotation system defines a generalised planar rotation system. In this induced
generalised planar rotation system no edge is red.
\end{rem}

\begin{rem}
Let $C$ be simplicial complex whose first homology group over the binary field $\Fbb_2$ is trivial,
for example this includes the case where $C$ is simply connected. Then $C$ has a planar rotation
system if and only if it has a generalised planar rotation system by \autoref{rot_system_exists-simplycon}.
\end{rem}

\begin{rem}\label{trivial}
A locally 3-connected 2-complex has a generalised planar rotation system if and only if every choice
of embeddings of its link graphs in the plane is a generalised planar rotation system. Indeed, any
face shares an even number of edges with every vertex. By local 3-connectedness, the embeddings of a
link graph at some vertex $v$ is unique up to orientation, which reverses all rotators at edges
incident with $v$. So it flips red edges to non-red ones and vice versa. Hence the number of red
edges modulo two does not change.

Thus there is a trivial algorithm that verifies whether a locally 3-connected 2-complex has a
generalised planar rotation system: embed the link graphs in the plane arbitrarily (if possible) and
then check the condition at every edge and face.
\end{rem}

The following was proved by Skopenkov.

\begin{thm}[\cite{Skopenkov94}]
The following are equivalent for a 2-complex $C$.
\begin{itemize}
\item $C$ has a generalised planar rotation system;
\item $C$ has a thickening that is a 3-manifold;
\item $C$ has a piece-wise linear embedding in some (not necessarily compact) 3-manifold.
\end{itemize}
\end{thm}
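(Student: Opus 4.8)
The plan is to prove the three conditions equivalent by the cyclic chain $(2)\Rightarrow(3)\Rightarrow(1)\Rightarrow(2)$, the substance lying entirely in the last implication. The implication $(2)\Rightarrow(3)$ needs no work: if $N$ is a thickening of $C$ that happens to be a $3$-manifold, then by construction $C$ sits inside $N$ as a spine onto which $N$ deformation retracts, so $C$ has a PL embedding into the $3$-manifold $N$.

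For $(3)\Rightarrow(1)$, suppose $C$ is PL-embedded in a $3$-manifold $M$ and replace $M$ by a regular neighbourhood $N$ of $C$, which is again a $3$-manifold with boundary. For a vertex $v$, the frontier of a small regular neighbourhood of $v$ in $N$ is a $2$-sphere meeting $C$ in a copy of $L(v)$, so $L(v)$ embeds in $S^2$; reading off at each incident edge $e$ the cyclic order of the faces at $e$ in this picture yields a rotator $\sigma_v(e)$, so all link graphs are planar and we have a candidate rotation system at every vertex. For an edge $e=vw$, a regular neighbourhood of the interior of $e$ is a solid cylinder $D^2\times[0,1]$ in which the incident faces appear as radial slabs; transporting the cyclic order of these slabs from the $v$-end to the $w$-end identifies $\sigma_v(e)$ with $\sigma_w(e)$ up to reversal, so the two rotators at $e$ agree or are reverse, i.e. we obtain a rotation framework, colouring $e$ red exactly when this transport is orientation-preserving. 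Finally, each face $f$ bounds a $2$-disc embedded in $N$; a disc is simply connected hence two-sided, so its regular neighbourhood is a product $f\times[-1,1]$, supplying a global co-orientation of $f$. Travelling once around $\partial f$ and comparing this co-orientation with the local orientations of the link spheres, the co-orientation is reversed exactly at the red edges of $\partial f$; since it returns to itself, the number of red edges on $\partial f$ is even. This is the same bookkeeping as in \autoref{new99} and \autoref{moe-reverse}, so $C$ has a generalised planar rotation system.

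For $(1)\Rightarrow(2)$ we build the thickening in three layers. For each vertex $v$, take a $3$-ball $B_v$ with the chosen plane embedding of $L(v)$ realised on $\partial B_v\cong S^2$, thickened slightly so that around the point of $\partial B_v$ representing each incident edge $e$ there is a little disc carrying the cyclic order $\sigma_v(e)$ of radii representing the faces at $e$. For each edge $e=vw$, glue a solid cylinder $D^2\times[0,1]$ onto the two little discs on $\partial B_v$ and $\partial B_w$ by a homeomorphism of $D^2$ matching the two cyclic orders of radii, chosen compatibly with the colour of $e$ so that a reflection is inserted exactly when $e$ is red, and so that the faces extend along radial slabs of the tube. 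The union $N_1$ of all balls and tubes is tautologically a $3$-manifold with boundary. For each face $f$ with boundary $e_1\cdots e_k$, attach a slab $f\times[-1,1]$ along the annulus $(\partial f)\times[-1,1]$: the core of this annulus traces $\partial f$ through $N_1$ and its product framing picks up a twist exactly at each red $e_i$, so the framing closes up and the attaching region is a genuine annulus in $\partial N_1$ rather than a M\"obius band, precisely because the number of red edges on $\partial f$ is even. Let $N$ be the union of $N_1$ with all face-slabs. That $N$ is a $3$-manifold with boundary is checked on model neighbourhoods of points of $C$ exactly as in the proof of \autoref{is_manifold}: interior points of faces get a slab glued on each side, interior points of edges see a cyclically glued ``cake'' of slabs, and a vertex $v$ sees the cone over its link surface, which here is a $2$-sphere since $L(v)$ is planar, hence a $3$-ball. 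Finally $N$ visibly deformation retracts onto $C$, so $N$ is the required thickening.

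\textbf{Main obstacle.} The only real work is $(1)\Rightarrow(2)$: making the edge-tube gluings and the face-slab attachments mutually compatible, and tracking the orientation twists carefully enough to see that the ``even number of red edges per face'' condition is exactly the obstruction to attaching each face-slab. Once the model neighbourhoods at vertices, edges and faces are set up, verifying that $N$ is a manifold is the same local analysis as for planar rotation systems in \autoref{is_manifold}, with the $2$-spheres replaced by the planar link surfaces and the orientability hypothesis dropped; this is also precisely the reason that examples such as the $q$-folded cross caps of \autoref{q-folded} are excluded, namely a parity obstruction on a face boundary.
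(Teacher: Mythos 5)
The paper does not actually prove this theorem: it is stated as a result of Skopenkov and supported only by the citation to \cite{Skopenkov94}, so there is no in-paper argument to compare your sketch against. What the paper \emph{does} prove nearby is the orientable analogue, namely \autoref{prs_orient} (an embedding in an orientable $3$-manifold induces a planar rotation system) and \autoref{is_manifold} (a planar rotation system yields an oriented $3$-manifold $T(C,\Sigma)$ via exactly the balls-tubes-slabs construction with local-model checks at vertices, edge interiors and face interiors).

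Your sketch is a reasonable direct proof of the non-orientable generalisation along that same route, and I believe it is essentially correct. In $(3)\Rightarrow(1)$ you correctly reduce to a regular neighbourhood, read off planarity of link graphs and the rotators from the link spheres, get the agree-or-reverse condition from edge tubes, and use two-sidedness of each (simply connected, hence two-sided) face disc to force even red parity around each face boundary; it is worth noting explicitly that this parity is well defined even when the face boundary walk revisits a vertex, since each visit contributes the chosen local orientation there twice and these cancel. In $(1)\Rightarrow(2)$ the construction mirrors \autoref{is_manifold}, with an orientation-reversing gluing inserted at each red edge, and the even-parity condition is exactly what makes the normal bundle of the face boundary curve in $\partial N_1$ trivial so the face slab attaches along an annulus. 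The one thing to double-check is a sign convention: you colour $e$ red when the transport through its tube is orientation-preserving, but the paper's convention is that red means the rotators \emph{agree}, and since the induced all-green case corresponds to consistently transported outward link-sphere orientations (so that the two rotators are reverse of one another), red should correspond to an orientation-\emph{reversing} transport. This is a bookkeeping slip, not a structural gap.
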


\begin{rem}
For 2-dimensional manifolds there are two types of rotation systems: one for the orientable case
and one for the general case. In dimension three the situation is analoguous. For orientable
3-manifolds we have `planar rotation systems' and `generalised planar
rotation systems' for general 3-manifolds.
\end{rem}

\begin{rem}
Similarly as for planar rotation systems, the methods of this series of papers can also be used to
also give a polynomial time algorithm that verifies the existence of generalised planar rotation
systems. As mentioned in \autoref{trivial} the locally 3-connected case has a polynomial algorithm
and the reduction to the locally 3-connected case works the same as for planar rotation systems.
\end{rem}

\begin{rem}
 In \autoref{concl77} we explain how in an earlier version of this paper we characterise the existence of a planar rotation system, and thus embeddability in an orientable 3-manifold, in terms of excluded minors. Similarly, one can characterise the existence of a generalised planar rotation system (and thus embeddability in a 3-manifold) in terms of excluded minors, see \cite{3space2} for details.
\end{rem}

\chapter{A refined Kuratowski-type
characterisation}\label{chapterV}

\section{Abstract}
Building on earlier chapters,
we prove an analogue of Kuratowski's characterisation of graph planarity for three
dimensions.

More precisely, a simply connected 2-dimensional simplicial complex embeds in 3-space
if and only if it has no obstruction from an explicit list. This list of obstructions is finite
except for one infinite family.

\section{Introduction}

We assume that the reader is familiar with \autoref{chapterI}. In that chapter we prove that a
locally
3-connected simply connected 2-dimensional simplicial complex has a
topological embedding into 3-space if and only if it has no space minor from a finite
explicit list $\Zcal$ of obstructions (based on a topological theorem proved in \autoref{chapterII}). The purpose of this chapter is to extend that theorem beyond
locally 3-connected (2-dimensional) simplicial complexes to simply connected
simplicial complexes in general.

\vspace{0.3 cm}

The first question one might ask in this direction is whether the assumption of local
3-connectedness could simply be dropped from the result of \autoref{chapterI}. Unfortunately this is
not
true. One new obstruction can be constructed from the M\"obius-strip as follows.

\begin{eg}
One can obtain the M\"obius-strip from an untwisted strip as follows. Take one of the boundary components. This is homeomorphic to $\Sbb^1$ and now identify antipodal points.
Similarly, one can start with an untwisted strip and now on one boundary component identify every point with its rotation by $120$ degrees. Now identify these two spaces at their central cycles; that is the images of the boundary component where the gluing took place (via some homeomorphism of $\Sbb^1$).

In a few lines we explain why this topological space $X$ cannot be embedded in 3-space. Any
triangulation of $X$ gives an obstruction to embeddability. It can be shown that such
triangulations have no space minor in the finite list $\Zcal$.

Why can $X$ not be embedded in 3-space? To answer this, consider a small torus around the
central cycle. The two glued spaces each intersect that torus
in a circle. These circles however have a different homotopy class in the torus. Since any two
circles in the torus of a different homotopy class intersect\footnote{A simple way to see this is
to note that the torus with a circle removed is an annulus. }, the space $X$ cannot
be embedded in 3-space without intersections of the two glued spaces. Obstructions of
this type we call \emph{torus crossing obstructions}. A precise definition is given in
\autoref{s0}.

\end{eg}

A refined question might now be whether the result of  \autoref{chapterI} extends to simply
connected
simplicial complex if we add the list $\Tcal$ of torus crossing obstructions to the list $\Zcal$ of
obstructions. The answer to this question is `almost yes'. Indeed, we just need to add to the space
minor operation the operations of stretching defined in \autoref{s3} and add a finite set to $\Zcal$; we refer to this larger finite set as $\Ycal$.

The stretching operations are illustrated in \autoref{fig:s_pair}, \autoref{fig:stretch_branch} and
\autoref{fig:stretch_edge}.
It is not hard to show that
stretching preserves embeddability. The
main result of this chapter is the following.

\begin{thm}\label{Kura_gen}\label{Kura_simply_con}
  Let $C$ be a 2-dimensional simplicial complex such that the first homology
group $H_1(C,\Fbb_p)$ is trivial for some prime $p$. The following are equivalent.
 \begin{itemize}
  \item $C$ has a topological embedding in 3-space;
  \item $C$ is simply connected and has no stretching that has a space minor in $\Ycal\cup \Tcal$.
 \end{itemize}
\end{thm}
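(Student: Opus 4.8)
The plan is to reduce Theorem~\ref{Kura_gen} to \autoref{kura_intro_hom} by eliminating the obstruction presented by vertices whose link graphs are not $3$-connected. One direction is routine: if $C$ embeds in $3$-space then it is $p$-nullhomologous by hypothesis, hence simply connected by the topological results of \autoref{chapterII} (the $p$-nullhomologous version of \autoref{nullt}); and since stretching and space minors both preserve embeddability (the former proved in \autoref{s3}, the latter in \autoref{sec:space}), no stretching of $C$ can have a space minor in $\Ycal\cup\Tcal$, as the torus crossing obstructions $\Tcal$ are non-embeddable (the torus-crossing argument sketched in the introduction, made precise in \autoref{s0}) and $\Ycal\supseteq\Zcal'$ consists of non-embeddable complexes by \autoref{final-list} together with \autoref{kura_intro}.

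For the forward implication I would assume $C$ is simply connected, has no stretching with a space minor in $\Ycal\cup\Tcal$, and derive an embedding. The key device is the \emph{local $3$-connectivity reduction}: working vertex by vertex, whenever a link graph $L(v)$ fails to be $3$-connected I would use a suitable stretching operation at $v$ (along a $2$-separation or cutvertex of $L(v)$, using \autoref{fig:stretch_branch} and \autoref{fig:stretch_edge}) to split the local structure into pieces with strictly simpler link graphs, maintaining simple connectedness and introducing no space minor from $\Ycal\cup\Tcal$. Iterating — and invoking well-foundedness of a suitable complexity measure, in the spirit of \autoref{well-founded} — produces from $C$ a stretching $C^\ast$ that is locally $3$-connected (or has link graphs so degenerate that an embedding is transparent). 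Here I must be careful that stretching can disconnect link graphs, which is why splitting vertices is folded into the space minor relation; after a final round of vertex splits the pieces are genuinely locally $3$-connected.

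Now \autoref{kura_intro_hom} applies to $C^\ast$: either $C^\ast$ embeds in $3$-space, or it has a space restriction in $\Zcal$, hence (by \autoref{final-list}) a space minor in $\Zcal'\subseteq\Ycal$ --- contradicting our hypothesis, since a space minor of a stretching of $C$ is a space minor of a stretching of $C$. In the embeddable case one must still check that an embedding of the stretching $C^\ast$ can be ``unstretched'' back to an embedding of $C$; this is where the torus crossing obstructions enter, since unstretching can fail precisely when the way the pieces were glued forces a torus crossing. I would argue that if every unstretching fails then $C$ itself contains a torus crossing obstruction, or more precisely some stretching of $C$ has a member of $\Tcal$ as a space minor --- again contradicting the hypothesis.

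\textbf{Main obstacle.} The hard part will be the last step: controlling precisely \emph{which} stretchings/unstretchings are available and showing that the only way the reduction can fail to yield an embedding of the original $C$ is through a torus crossing obstruction (hence a member of $\Tcal$) or one of the finitely many extra complexes in $\Ycal\setminus\Zcal'$. This requires a careful analysis of how local $2$-separations in link graphs interact with the global homotopy type, and is exactly the place where the finite exceptional set $\Ycal$ --- rather than just $\Zcal$ --- becomes necessary. Everything else is bookkeeping built on \autoref{kura_intro_hom} and the topological machinery of \autoref{chapterII}.
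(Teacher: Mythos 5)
Your easy direction and your general strategy of stretching to improve local connectivity are both on the right track, but the forward implication contains a genuine gap. You propose to stretch $C$ to a locally $3$-connected $C^{\ast}$ and then invoke \autoref{kura_intro_hom}. This cannot work as stated: stretching at local $2$-separators of a parallel graph leaves a parallel graph, and free link graphs (subdivided $3$-stars, paths, cycles with an attached path) also resist being improved to $3$-connected. What the paper actually achieves is a stretching that is only \emph{locally almost $3$-connected and stretched out} (\autoref{main_streching}); the link graphs may still be parallel graphs or free graphs. Consequently \autoref{kura_intro_hom} does not apply, and a new theorem, \autoref{Kura_simply_con2}, is required to characterise planar rotation systems in this weaker regime. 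That theorem is proved via \autoref{sum123}, whose case analysis (non-planar link, non-planar para-path, non-planar para-cycle, helicopter complex) is precisely where the missing ideas live. The para-cycles of parallel-graph vertices give rise to the torus crossing obstructions $\Tcal$ (\autoref{has_obstruction}), and the odd pre-rotation frameworks give rise to the helicopter complexes $\Xcal = \Ycal\setminus\Zcal'$ (\autoref{red_to_helicopter} and \autoref{minimal-helicopters}). None of this is visible from the locally $3$-connected viewpoint.

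You also misattribute the role of the torus crossing obstructions. You suggest they arise because an embedding of $C^{\ast}$ might fail to ``unstretch'' back to an embedding of $C$. In fact the paper never unstretches an embedding; it works with planar rotation systems throughout (\autoref{combi_intro} turns the embedding question into a rotation-system question), and stretching is an \emph{equivalence} for the existence of planar rotation systems in both directions (\autoref{planar_rot_preserve}), so there is no unstretching obstruction to account for. The torus crossing obstructions and helicopter complexes enter already at the level of deciding whether $C^{\ast}$ has a planar rotation system — that is, before any embedding is attempted — because the remaining parallel-graph and free link graphs admit several inequivalent planar rotation systems, whose global compatibility can fail along a para-cycle or a para-path. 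To repair your proof you would essentially need to reprove \autoref{Kura_simply_con2} and \autoref{main_streching}, which are the substantive new results of this chapter.
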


We deduce \autoref{Kura_gen} from the results of \autoref{chapterI} in two steps as follows.
The notion of `local almost 3-connectedness and stretched out' is slightly more general and more
technical than `local 3-connectedness', see \autoref{s0} for a definition. First we extend the
results of
\autoref{chapterI} to locally almost 3-connected and stretched out simply connected
simplicial complexes, see \autoref{Kura_simply_con2} below.
We conclude the proof by
showing that any simplicial complex can be stretched to a locally almost 3-connected and
stretched out one. More
precisely:

\begin{thm}\label{main_streching}
 For any simplicial complex $C$, there is a simplicial complex $C'$ obtained from $C$ by stretching
so that $C'$ is
locally almost 3-connected and stretched out or $C'$ has a non-planar link.

Moreover $C$ has a planar rotation system if and only if $C'$ has a planar rotation system.
\end{thm}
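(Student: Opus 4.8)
If some link graph of $C$ is non-planar we are done by taking $C'=C$, so assume all link graphs of $C$ are planar. The plan is to apply the stretching operations of \autoref{s3} (illustrated in \autoref{fig:s_pair}, \autoref{fig:stretch_branch} and \autoref{fig:stretch_edge}) one at a time, for as long as one is applicable; if at some stage a link graph becomes non-planar we stop and output the current complex (the second alternative of the statement), and otherwise we argue that the process halts at a complex that, by the definitions in \autoref{s0}, is locally almost $3$-connected and stretched out. Every stretching is \emph{local}: it modifies $C$ only inside a small neighbourhood of one edge or vertex, so it changes only the finitely many link graphs there, and --- much as in the vertex-sum description of link graphs after contraction in \autoref{obs1} --- the new link graphs arise from the old ones by vertex sums and minors, operations which at worst keep planarity.

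\textbf{Preservation of planar rotation systems.} Each stretching, and its inverse, is realised by a short sequence of the elementary moves studied in \autoref{sec_vertex_sum} --- splitting a vertex, contracting a non-loop edge, and their inverses --- set up so that every edge that is contracted is not a cutvertex of the two link graphs containing it. By \autoref{obs1}, \autoref{sum_planar1}, \autoref{sum_planar2} and \autoref{contr_pres_planar}, each such move \emph{both preserves and reflects} the existence of a planar rotation system. Composing these equivalences along the finite sequence of stretches producing $C'$ from $C$ yields the ``Moreover'' clause, once finiteness is established.

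\textbf{Termination.} We need a well-founded monovariant, in the spirit of \autoref{well-founded}. No stretching increases the total size of the faces of $C$ --- for the stretches used here it is unchanged --- so it is enough, for a fixed total face size, to exhibit a quantity that every stretch strictly increases and that is bounded above. A natural candidate is $\sum_{v\in V(C)} t(L(v))$, where $t(H)$ denotes the number of pieces obtained by decomposing a graph $H$ into blocks and then into $3$-connected pieces, cycles and bonds: this sum is at most a linear function of $\sum_{v} \lvert E(L(v))\rvert$, which is exactly the total face size of $C$, and realising a $1$- or $2$-separation of a link graph inside the $2$-complex strictly increases it. Hence only finitely many stretches can be performed.

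\textbf{The terminal complex, and the main obstacle.} It remains to check that a complex to which no stretching applies is \emph{exactly} a locally almost $3$-connected and stretched out complex in the sense of \autoref{s0}: inapplicability of the edge- and separating-pair stretches should force every (planar) link graph to be $3$-connected up to the trivial $2$-separations permitted by ``almost $3$-connected'', while inapplicability of the branch stretch is precisely the condition ``stretched out''. Making this correspondence tight is the crux. One must verify: (i) a link graph violating almost-$3$-connectedness always admits a $1$- or $2$-separation that some stretching realises; (ii) realising such a separation changes the neighbouring link graphs only through a harmless vertex sum, so that planarity and the monovariant behave as claimed; and (iii) --- the most delicate bookkeeping --- that one monovariant of the above kind strictly increases at \emph{all three} stretches at once, so that no sequence of stretches can cycle (for instance, a branch stretch being undone by a later edge stretch). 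With these in place, the theorem follows from \autoref{obs1}, \autoref{sum_planar1}, \autoref{sum_planar2} and \autoref{contr_pres_planar}.
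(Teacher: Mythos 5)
Your plan is the natural one, but there is a genuine gap: the termination argument you sketch does not work, and the crux you flag in point (iii) is precisely what the paper must solve with a much more structured induction.

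Concretely, your monovariant argument is wrong on two counts. First, the claim that ``no stretching increases the total size of the faces'' is false for all three stretch operations: pre-stretching a local branch turns some size-3 faces into size-4 faces, which are then subdivided into two size-3 faces; 2-stretching glues in the additional faces of $\Delta^+_n$; and stretching an edge adds a new face of size two which is then subdivided. In every case $\sum_f\lvert f\rvert$ strictly increases. Consequently, your proposed bound on $\sum_v t(L(v))$ (``at most a linear function of $\sum_v\lvert E(L(v))\rvert$, which is exactly the total face size'') floats upward as well, and you cannot deduce that the process halts. Second, you have quietly restricted ``stretching'' to the three expansive operations, but the paper's definition (in \autoref{s3}) also includes \emph{contracting reversible non-loop edges} and \emph{splitting vertices}, and both are indispensable: splitting is needed to reach local connectedness, and contracting reversible edges along the paths $\Pcal$ is how \autoref{reduce_to loc_2-con} actually drives the cutvertex-degree down. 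Without them a greedy sequence of expansive stretches can cycle (a branch-stretch at $v$ coadds a star at a neighbouring vertex $w$, which can create a new high-degree cut-vertex there), which is exactly the oscillation you worry about in (iii) and do not resolve.

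The paper's route is structured to avoid this: it does not apply stretches greedily against a single potential, but proceeds in three stages --- \autoref{reduce_to loc_2-con} (locally almost 2-connected), \autoref{stretch_loc_con} (locally almost 3-connected, via \autoref{2-stretch-change} and the degree-parameter), and the Claim \autoref{make_strected_out} (stretched out) --- with \autoref{reduce_to loc_2-con} itself running an outer induction on cutvertex-degree and an inner induction on degree-parameter, interleaving expansive stretches with contractions of reversible edges. That interleaving is what makes the well-foundedness actually go through. For the ``Moreover'' clause the paper uses \autoref{PRS1}, \autoref{PRS2}, \autoref{stretch_edge_rotn}, the definition of reversibility, and splitting, assembled in \autoref{planar_rot_preserve}; reducing edge-stretching to \autoref{sec_vertex_sum} alone misses the need to argue about contracting a size-two face (via \autoref{rot_closed_down}, \autoref{is_eq}, \autoref{tiny}). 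That last point is minor; the missing termination and the unacknowledged use of contraction/splitting are the real problems.
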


The overall structure of the argument is similar to that for problems in structural graph theory
with `3-connected kernel' (in such arguments one first proves the 3-connected case, then in a
second step deduces the 2-connected case and then finally deduces the general case).
In \autoref{s0} we prove
the extension of the results of \autoref{chapterI} to locally almost 3-connected and stretched out
simplicial complexes, \autoref{Kura_almost}.
In \autoref{s2} we develop the tools to extend this to the locally almost 2-connected case.
In \autoref{s1} and \autoref{s3}  we extend \autoref{Kura_almost} to general simplicial complexes,
which proves \autoref{main_streching}. Then we prove
\autoref{Kura_simply_con}. Finally in \autoref{algo_sec} we describe algorithmic consequences.

For graph theoretic definitions we refer the
reader to \cite{DiestelBookCurrent}.

\section{A Kuratowski theorem for locally almost 3-connected simply connected simplicial
complexes}\label{s0}

This section is subdivided into five subsections. In the first two we introduce the context for \autoref{Kura_simply_con2}, and in the next three subsections we prove it.

\subsection{Torus crossing obstructions}

In this section we prove \autoref{Kura_simply_con2}, which is used in the proof of the main theorem.
First we define the list $\Tcal$ of torus crossing obstructions.

Given a simplicial complex $C$, a \emph{mega face} $F=(f_i|i\in \Zbb_n)$ is a cyclic
orientation of faces $f_i$ of $C$ together with for every $i\in \Zbb_n$ an edge
$e_i$ of $C$ that is only incident with $f_i$ and $f_{i+1}$ such that the $e_i$ and $f_i$ are
locally distinct, that is, $e_i\neq e_{i+1}$ and $f_i\neq f_{i+1}$ for all $i\in \Zbb_n$. We remark
that since in a simplicial
complex any two faces can share at most one edge, the edges $e_i$ are implicitly given by the faces
$f_i$.
A \emph{boundary component} of a mega
face $F$ is a connected component of the 1-skeleton of $C$ restricted to the faces $f_i$ after we
topologically delete the edges $e_i$.
Given a cycle $o$ that is a boundary component of a mega face $F$,
we say that $F$ is
\emph{locally monotone} at $o$ if for every edge $e$ of $o$ and each face $f_i$ containing $e$, the
next face of $F$ after $f_i$ that contains an edge of $o$ contains the unique edge of $o$ that has
an endvertex in common with $e$ and $e_{i+1}$.
Under these assumptions for each edge $e$ of $o$ the number of indices $i$ such that $e$ is
incident with $f_i$ is the same. This number is called the  \emph{winding
number} of
$F$ at $o$.

A \emph{torus crossing obstruction} is a simplicial complex $C$ with a cycle $o$ (called the
\emph{base cycle}) whose faces can be partitioned into two mega faces that both have $o$ has a
boundary component and are locally monotone at $o$ but with different winding numbers. We denote
the set of torus crossing
obstructions by $\Tcal$.

\begin{rem}The set of torus crossing obstructions is infinite. Indeed, it contains at least one
member for every pair of distinct winding numbers. So it is not possible to reduce it to a finite
set. However one can further reduce torus crossing obstruction as follows. First, by working with
the class of 3-bounded 2-complexes as defined in \autoref{chapterI} instead of simplicial complexes,
one
may assume that the cycle $o$ is a loop. Secondly, one may introduce the further operation of
gluing
two faces along an edge if that edge is only incident with these two faces. This way one can glue
the two mega faces into single faces. Thirdly, one can
enlarge the holes of the mega faces to make them into one big hole (after contracting edges
one may assume that this single hole is bounded by a loop). After all these steps we
only have one torus crossing obstruction left for any pair of distinct winding numbers. This
obstruction consists of three vertex-disjoint loops and two faces, each incident with two loops.
The
loop contained by both faces is the base cycle $o$. Here the faces may have winding number greater
than one. The faces have winding number precisely one at the other loops.
\end{rem}

A \emph{parallel graph} consists of two vertices, called the \emph{branch vertices}, and a set of
disjoint paths between them. Put another way, start with a graph with only two vertices and all
edges going between these two vertices, now subdivide these edges arbitrarily,
see \autoref{fig:para5}. For example, parallel graphs where the branch vertices have degree two are cycles.

   \begin{figure} [htpb]
\begin{center}
   	  \includegraphics[height=2cm]{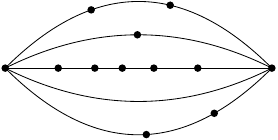}
   	  \caption{A parallel graph with five paths.}\label{fig:para5}
\end{center}\vspace{-0.7cm}
   \end{figure}
A \emph{para-cycle} in a simplicial complex $C$ is a cycle $o$ such that all its edges have face-degree at least three and all link graphs at vertices of $o$ are parallel graphs.
   A para-cycle $o$ is \emph{planar} if there is a choice of embedding of the link graph at each vertex of $o$ so that for each edge $e$ of $o$ the two rotators at $e$ in the link graphs at its endvertices are reverse of one another.

\begin{lem}\label{has_obstruction}
  Let $C$ be a simplicial complex with a non-planar para-cycle $o$.
Then a torus crossing obstruction can be obtained from $C$ by deleting faces.
\end{lem}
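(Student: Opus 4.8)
The plan is to read off, from the link graphs along $o$, a permutation $\rho$ whose cycle structure simultaneously controls the planarity of $o$ and the winding numbers of the mega faces supported on $o$; non-planarity will force two orbits of $\rho$ of different lengths, these give two mega faces of different winding numbers, and the torus crossing obstruction is obtained by deleting all other faces. \emph{Setup.} Write $o=v_1e_1v_2\cdots v_ke_kv_1$. At $v_j$ the two edges of $o$ are $e_{j-1}$ and $e_j$ (indices read mod $k$); having face-degree at least three, they have degree at least three in $L(v_j)$, so --- $L(v_j)$ being a parallel graph, whose only vertices of degree $\geq 3$ are its two branch vertices --- they \emph{are} the branch vertices of $L(v_j)$. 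Thus $L(v_j)$ is a union of internally disjoint paths from $e_{j-1}$ to $e_j$, their number equal to the common face-degree of $e_{j-1}$ and $e_j$; walking round $o$ this shows all the $L(v_j)$ have the same number $m\geq 3$ of paths. Let $S_j$ be the set of faces incident with $e_j$, so $|S_j|=m$; each path of $L(v_j)$ joins its end in $S_{j-1}$ to its end in $S_j$, giving a bijection $\pi_j\colon S_{j-1}\to S_j$, and I set $\rho=\pi_k\circ\cdots\circ\pi_1$, a permutation of $S_0=S_k$.

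\emph{Orbits to mega faces.} From an orbit $O$ of $\rho$ of length $n$ one builds a mega face $F_O$: starting at some $f\in O\subseteq S_0$, for $j=1,2,\dots$ cyclically traverse the unique path of $L(v_j)$ joining the current face in $S_{j-1}$ to its partner in $S_j$, concatenating these paths; after $k$ steps one reaches $\rho(f)$ and after $n$ laps one returns to $f$. The faces visited are the faces of $F_O$, and the edges between consecutive visited faces are exactly the interior vertices of the traversed link-graph paths --- edges of $C$ of face-degree two, incident only with their two neighbouring faces. Since the traversal meets the edges of $o$ in the cyclic order $e_1,e_2,\dots,e_k,e_1,\dots$, exactly once per lap, $F_O$ is locally monotone at $o$, has $o$ as a boundary component, and has winding number $n$ at $o$. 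Moreover every face incident with a vertex of $o$ lies on a unique path of the relevant $L(v_j)$ and is thereby assigned to a single $\rho$-orbit, so the mega faces $F_O$ for distinct orbits have disjoint face sets.

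\emph{Planarity and conclusion.} A planar embedding of the parallel graph $L(v_j)$ is determined by a cyclic order of its $m$ paths, and the rotator it induces at the branch vertex $e_j$ is the reverse, transported along $\pi_j$, of the rotator it induces at $e_{j-1}$; since every cyclic order of $S_{j-1}$ is realised by such an embedding, $o$ is planar exactly when there is a cyclic order $\alpha$ of $S_0$ with $\pi_k\circ\cdots\circ\pi_1(\alpha)=\alpha$, i.e.\ with $\rho(\alpha)=\alpha$ (the reverses cancel telescopically around $o$). A permutation of a finite set preserves some cyclic order of that set if and only if all of its orbits have equal length (place the orbits in parallel around a circle). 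Hence $o$ being non-planar means $\rho$ has orbits $O_A,O_B$ with $|O_A|\neq|O_B|$. I then delete from $C$ every face outside $F_{O_A}\cup F_{O_B}$; the resulting simplicial complex $C'$ has its faces split into the two disjoint mega faces $F_{O_A}$ and $F_{O_B}$, and the deletion changes neither the interiors of the relevant link-graph paths nor the face-degrees of the edges of $o$ (still $\geq|O_A|+|O_B|\geq 3$), so both remain mega faces that are locally monotone at $o$, share $o$ as a boundary component, and have distinct winding numbers. Thus $C'$ is a torus crossing obstruction with base cycle $o$.

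\emph{Main obstacle.} The conceptual heart is the equivalence ``$o$ planar $\iff$ all orbits of $\rho$ have the same length'', resting on the classification of permutations preserving a cyclic order; this part I expect to be clean. The genuinely fiddly work is verifying every clause of the definitions of ``mega face'', ``locally monotone at $o$'', and ``boundary component'' in the presence of degenerate configurations --- link-graph paths of length one (a face spanning two consecutive edges of $o$), very short cycles $o$, and chords of $o$ --- where the naive traversal must be adjusted and one must argue that such configurations either cannot arise under the para-cycle hypotheses or are harmless after deleting further faces. Carrying out this case analysis carefully is where the bulk of the technical effort lies.
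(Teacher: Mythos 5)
Your approach is essentially the paper's: partition the faces of $C$ meeting $o$ into mega faces, show that $o$ is planar iff all these mega faces have equal winding numbers, and then keep two mega faces of distinct winding numbers. Your permutation-theoretic framing via $\rho=\pi_k\circ\cdots\circ\pi_1$ and its orbits is a slightly more explicit way to package the same partition that the paper obtains from the set $X$ of edges off $o$ with an endvertex on $o$, and the equivalence ``$o$ planar $\iff$ $\rho$ preserves a cyclic order $\iff$ all orbits of $\rho$ equal in length'' is a clean way to phrase the paper's contradiction argument.

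However, the step you flag as ``fiddly'' --- chords of $o$ and short cycles --- is not merely fiddly; your argument genuinely breaks there, and your proposed remedy does not work. If $c=v_iv_j$ is a chord of $o$ and $f$ is a face on $c$, then $f$ lies on an interior of a path of $L(v_i)$ \emph{and} on an interior of a path of $L(v_j)$, and these two paths may sit in different $\rho$-orbits, so your claim that ``every face incident with a vertex of $o$ is assigned to a single $\rho$-orbit'' fails and the mega faces $F_O$ need not be disjoint. Likewise, when $o$ has length three a face bounded by $o$ gives an orbit of size $1$, and the traversal produces the constant sequence $(f,f,f)$, which is not a mega face since $f_i\neq f_{i+1}$ is required. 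You suggest these configurations ``are harmless after deleting further faces'', but deleting the faces on a chord removes interior edges of a path of $L(v_i)$, breaking $L(v_i)$ into something that is no longer a parallel graph, so the para-cycle hypothesis --- on which the whole argument rests --- is lost. The paper's fix is different and essential: it subdivides edges of $o$ to force length at least four, and subdivides every chord so that it becomes a path through a new vertex off $o$ (turning a chord-face into two triangles); a torus crossing obstruction in the subdivided complex then pulls back to one in $C$. Without this reduction your proof is incomplete.
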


\begin{proof}
First we prove the lemma under the additional assumption that $o$ has length at least four and is chordless.
Our aim is to define a torus crossing obstruction with base cycle $o$. For that we define a set of
possible mega faces as follows.
By deleting faces of $C$ that do not have a vertex in the cycle $o$ if necessary, assume that all faces of $C$ have an endvertex on $o$.

Let $X$ be the set of edges of $C$ that are not on $o$ but have an endvertex on $o$.
Every edge in $X$ is incident with exactly two faces. And every face of $C$ is incident with exactly two edges of $X$ since $o$ has length at least four and no chords.
This defines a partition of the set of faces of $C$ into mega-faces, where the two edges adjacent with a face are its two unique edges in $X$.

Since $o$ is a para-cycle, the cycle $o$ is a
boundary component of each of these mega-faces. It is straightforward to check that these mega-faces are locally
monotone at $o$.

It suffices to show that two of these mega faces have distinct winding number at $o$.
Suppose not for a contradiction. Then all mega faces have the same winding number (this includes the case that there is only one mega face).

We enumerate the above mega faces and let $K$ be their total number.
We denote by $W$ the common winding number; that is, for all of the $K$ mega faces each edge of $o$ is in exactly $W$ faces of each mega face.
By $f[k,w,e]$ (or $f[k,w]$ if $e$ is implicitly given) we denote the $w$-th face incident with $e\in o$ on the
$k$-th mega face, where $k$ and $w$ are in the cyclic groups $\Zbb_K$ and $\Zbb_W$, respectively (with respect to a fixed linear order of the faces of a mega face that is induced by its unique cyclic ordering of its faces).
We define the rotator at the edge $e$ to be $f[1,1]$,
$f[2,1]$,  \ldots, $f[K,1]$, $f[1,2]$,  $f[2,2]$, \ldots, $f[K,2]$, $f[1,3]$, \ldots ,\ldots,
$f[K,W]$, $f[1,1]$. All other edges of $C$ have face degree at most two and thus a unique rotator. So these choices define a rotation system of $C$.
Since all the link graphs at vertices of $o$ are parallel graphs, this defines a plane embedding of them.
So this rotation system is planar, and witnesses that the para-cycle $o$ is planar.
This is the
desired contradiction to our assumption.
Hence two mega faces must have a different winding number. So $C$ contains a torus crossing
obstruction. This is the
desired contradiction to our assumption.
Hence two mega faces must have a different winding number. So $C$ contains a torus crossing
obstruction.

Having completed the proof under the additional assumption that $o$ has length at least four and no chords, we now deduce the general case from that special case.
We consider the operation of subdividing an edge, which subdivides an edge and all the resulting faces of size four into two triangular faces such that the newly introduced vertex is incident with all newly introduced edges. If such a subdivision has a torus crossing obstruction, then also $C$ has a torus crossing obstruction (simply replace subdivision-faces by the primal faces containing them).
Via subdivisions we first ensure that $o$ has length at least four and then we subdivide every chord, to obtain a chordless cycle. Since subdivisions of parallel graphs are parallel graphs, we can apply the special case now and deduce the general case.
\end{proof}

\subsection{Statement of \autoref{Kura_simply_con2}}

Let $G$ be a link graph of a vertex of a 2-complex $C$ incident with a single loop $\ell$. We say that $G$ is \emph{helicopter planar} if $G$ has a plane embedding so that the rotators at the two vertices of $G$ corresponding to $\ell$ are reverse of one another -- where two edges incident with these vertices are in bijection if they come from the same face of $C$ (this is unambigious as $C$ has exactly one loop).
A \emph{helicopter complex} is a 2-complex with a single loop $\ell$ such that the link graph at the unique vertex incident with $\ell$ is not helicopter planar.

\begin{obs}\label{heli_no_PRS}
 A helicopter complex does not admit a planar rotation system.
 \qed
\end{obs}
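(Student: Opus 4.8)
The plan is to argue by contraposition: assume that a helicopter complex $C$, with distinguished loop $\ell$ at the vertex $v$, admits a planar rotation system $\Sigma=(\sigma_e\mid e\in E(C))$, and deduce that the link graph $L(v)$ is helicopter planar, contradicting the definition of a helicopter complex.

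First I would unpack what $\Sigma$ gives at the vertex $v$. By definition of a planar rotation system of a $2$-complex, the rotation system that $\Sigma$ induces on $L(v)$ is planar, i.e.\ the associated surface is a disjoint union of $2$-spheres; since the number of spheres equals the number of components of $L(v)$, this is precisely the statement that $L(v)$ has a plane embedding whose rotators are the induced rotators (embed each spherical component in its own small disc in the plane). So it remains to identify the induced rotators at the two vertices $\ell_1,\ell_2$ of $L(v)$ coming from $\ell$ and to check that they are reverse of one another with respect to the bijection ``edges that come from the same face of $C$''.

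Second, I would invoke the convention recorded just before the definition of planar rotation systems of $2$-complexes: a loop $e$ at $v$ is oriented from one of its attachments to the other, and consequently its rotator projects to $\sigma_e$ at the copy towards which it is directed and to the reverse of $\sigma_e$ at the other copy. Hence at $\ell_1$ the induced rotator is $\sigma_\ell$ and at $\ell_2$ it is the reverse of $\sigma_\ell$. One still has to match this against the bijection in the definition of helicopter planarity: each face $f$ of $C$ incident with $\ell$ contributes exactly one edge of $L(v)$ at $\ell_1$ and one at $\ell_2$ (the two traversals of $v$ produced when $f$ passes through the loop, or the single self-traversal if $f$ is the size-one face consisting of $\ell$ alone), and these two edges are exactly the ones identified by the bijection. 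Since the rotators at $\ell_1$ and $\ell_2$ are literally cyclic orientations of the same set of faces incident with $\ell$, the relation ``$\sigma_\ell$ reversed equals the reverse of $\sigma_\ell$'' holds under the bijection, so the two rotators are genuinely reverse of one another.

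Combining these two points, the plane embedding of $L(v)$ extracted from $\Sigma$ witnesses that $L(v)$ is helicopter planar, contradicting the hypothesis that $C$ is a helicopter complex. Hence no planar rotation system exists. I do not expect a real obstacle here; the statement is essentially immediate from the definitions. The only points needing slight care are the bookkeeping of how a face through the loop contributes edges to $L(v)$ at the two copies of $\ell$, and the remark that ``reverse of one another'' is unaffected by passing between the plane and the sphere and by globally reversing all rotators of a spherical rotation system (which is again spherical).
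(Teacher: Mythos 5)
Your proof is correct and is exactly the definition‑unpacking the paper has in mind: the paper records this as an observation with no proof (immediate \texttt{\textbackslash qed}), and your argument makes that "immediacy" precise by tracing the loop convention for induced rotators and matching it against the face‑bijection in the definition of helicopter planarity. The extra care you take with the size‑one face, with disconnected link graphs, and with the sphere‑to‑plane projection is all sound and does not add any genuinely new idea beyond what the observation presupposes.
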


A helicopter complex is \emph{nice} if its link graph at the unique vertex incident with its loop is a subdivision of a 3-connected graph.

\begin{lem}\label{minimal-helicopters}
There is a finite set $\Xcal$ of helicopter complexes so that every nice helicopter complex has a space minor in $\Xcal$.
\end{lem}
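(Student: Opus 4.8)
The plan is to show that the class of nice helicopter complexes is \emph{well-quasi-ordered} by the space minor relation restricted to this class, and more specifically that it has only finitely many space-minor-minimal elements, each of which is itself nice. The key structural fact is that a nice helicopter complex $H$ is essentially determined by a very small amount of data: a single vertex $v$, a single loop $\ell$ at $v$, and the link graph $L(v)$, which is a subdivision of a $3$-connected graph $G$ in which two distinguished vertices $\ell_1,\ell_2$ (the two copies of $\ell$) are present, together with the bijection between the $\ell_1$-edges and the $\ell_2$-edges induced by the faces of $H$. Non-helicopter-planarity says precisely that $G$ has no plane embedding in which the rotators at $\ell_1$ and $\ell_2$ are reverse of one another under this bijection. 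So the problem reduces to a purely graph-theoretic statement about $3$-connected graphs $G$ with two marked vertices of equal degree and a prescribed matching between their incident edges, and I would first reformulate it that way.

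First I would make the reduction precise: given a nice helicopter complex $H$, the operations of topologically deleting an edge, splitting a vertex, and deleting a face all act on $L(v)$ in a controlled way — deleting a face of $H$ deletes an edge of $L(v)$, and one checks (using that $3$-connected graphs have no parallel edges, and Whitney's uniqueness of planar embeddings as in \autoref{locally_at_edge}) that if we delete faces of $H$ so as to suppress $L(v)$ down to a topological minor that is still a subdivision of a $3$-connected graph on fewer vertices, the result is still a helicopter complex provided it is still not helicopter planar. So I would argue that starting from any nice $H$ we can repeatedly delete faces, passing to successively smaller subdivisions of $3$-connected minors of $G$, until we reach one that is minimal with respect to remaining a non-helicopter-planar subdivision of a $3$-connected graph; call the resulting complex $H'$, and note $H'$ is a space minor of $H$ obtained using only face deletions (together with cleanup splittings and topological edge deletions that do not change the link graph's homeomorphism type). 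It remains to bound the size of such minimal $H'$.

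The heart of the argument, and the main obstacle, is a Kuratowski/Wagner-type finiteness statement: there are only finitely many $3$-connected graphs $G$ with two marked vertices $a,b$ of equal degree and a bijection between their incident edges such that (i) $G$ admits no plane embedding with reversed rotators at $a$ and $b$, and (ii) $G$ is minor-minimal with this property. I would prove this by invoking the Robertson–Seymour theorem in the following form: the class of such marked-graph pairs that \emph{do} admit a good plane embedding is closed under the appropriate minor operation (deleting or contracting non-marked edges, and contracting marked edges along the matching), hence the obstruction set is finite; alternatively, and more in the spirit of \autoref{has_obstruction}, one observes that non-helicopter-planarity of the link graph is equivalent to the presence of a ``twisted'' configuration around the loop that, once $G$ is $3$-connected and large enough, forces a $K_5$- or $K_{3,3}$-type crossing pattern, so the minimal examples are bounded in size by an explicit constant. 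Either way one obtains a finite list $\Gcal$ of minimal marked $3$-connected graphs; letting $\Xcal$ be the (finite) set of helicopter complexes whose link graph is one of these marked graphs, every minimal $H'$ as above has link graph a subdivision of some member of $\Gcal$, and a further sequence of face deletions and contractions of the resulting size-two faces (exactly as in \autoref{nice_moe} and \autoref{cone-red}) contracts the subdivision down, yielding a space minor in $\Xcal$. Since $H'$ is a space minor of $H$, this gives $\Xcal$ as required, and $|\Xcal|<\infty$. I expect the delicate point to be verifying that the face-contraction step which ``de-subdivides'' the link graph is legitimate — one must check that the relevant faces of $H$ have size at most two after contracting the appropriate non-loop edge, and that the loop $\ell$ is never contracted — but this is handled exactly as in the combined-cone arguments of \autoref{chapterI}.
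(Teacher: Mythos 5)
Your reduction to a graph-theoretic problem about the link graph $L(v)$ with two marked vertices (the two copies of the loop) is the right first step, and it matches the paper's approach of working with ``helicopter graphs''. However, there is a genuine gap at what you yourself call the heart of the argument: the finiteness of minor-minimal marked 3-connected graphs without a good embedding is asserted but not established. Your first option, invoking the Robertson--Seymour theorem, does not directly apply: Robertson--Seymour is a statement about unlabelled graphs, whereas the objects here are graphs with two marked vertices together with a bijection between their incident edges, and it is far from automatic that this enriched structure is well-quasi-ordered under the bespoke minor operation you propose (which must also preserve the marking and the bijection). Your second option, a ``twisted configuration forcing a $K_5$- or $K_{3,3}$-type crossing pattern'', is a heuristic with no proof sketched.

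The paper closes exactly this gap with a short, elementary, and quantitative argument using Bixby's lemma (Lemma~\ref{bixby}): one works with ``helicopter graphs'' carrying only \emph{three} pairs of labelled edges $(a_i,b_i)$, $i\in[3]$ (rather than the full bijection), and Bixby's lemma shows that in a ``3-minimal'' helicopter graph every unlabelled edge has both endvertices among the neighbours of $v$ via $A$, or both among the neighbours of $w$ via $B$ (Lemma~\ref{VA}). This immediately bounds 3-minimal helicopter graphs to at most five vertices (Corollary~\ref{cor100}), giving an explicit, self-contained finiteness bound with no appeal to graph minor theory. The restriction to three marked pairs rather than the full bijection is not cosmetic: it is what makes Bixby's lemma applicable and the contraction/deletion bookkeeping tractable. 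The remaining construction of a bounded-size space minor of $C$ (your ``delicate point'') is indeed handled in the paper by a careful choice of contraction set $Z$ and deletion set $D$ with $D$ inclusion-wise maximal, retaining a bounded number of subdivision edges (sets $Z'$, $D'$), and verifying via Claim~\ref{final-sublem} that no spurious loops arise; you would need to carry this out rather than deferring to the combined-cone arguments, since the presence of the loop $\ell$ makes the situation different from the cone case.
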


\begin{rem}
\autoref{minimal-helicopters} will be proved in \autoref{sec:heli} below.
 The finite list $\Xcal$ of \autoref{minimal-helicopters} is computed explicitly in
 {\cite[Section: Marked graphs]{{3space1}}}; however, here we only prove existence of a finite set as this is much shorter and contains the central ideas.
\end{rem}

Denote by $\Ycal$ the list of the nine 2-complexes of \autoref{final-list} together with the finite list $\Xcal$ from \autoref{minimal-helicopters}.

   \begin{figure} [htpb]
\begin{center}
   	  \includegraphics[height=2cm]{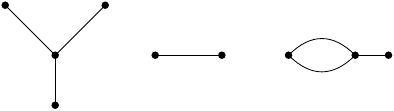}
   	  \caption{A 3-star, an edge and a 2-cycle with an
attached leaf. Free-graphs are subdivisions of these graphs.}\label{fig:starstarround}
\end{center}\vspace{-0.7cm}
   \end{figure}

A \emph{free-graph} is a subdivision of a 3-star, a path or a cycle with an attached path,
see \autoref{fig:starstarround}.
These graphs are `free' in the sense that any rotation system on them defines an embedding in the
plane.
A graph is \emph{almost 3-connected} if it is a subdivision of a 3-connected graph, a parallel
graph or a free-graph.
A simplicial complex is \emph{locally almost 3-connected} if all its link
graphs are almost 3-connected.

\begin{rem}(Motivation) Next we define `stretched out'.
 This is a technical condition, which is used only three times in the
argument,
namely in the proofs of \autoref{detour_via_5}, \autoref{x13} and \autoref{41_2} below. We remark that the notion of
stretched out as defined here is only intended to be useful for locally 3-connected 2-complexes.
In this context, it very roughly says that every edge of degree two has an endvertex whose link graph is \lq very
simple\rq.
\end{rem}

A simplicial complex is \emph{stretched out} if
 every edge incident with only two faces has an endvertex $x$ such that the link graph at $x$
is not a subdivision of a 3-connected graph and not a parallel graph whose branching vertices
have degree at least three.

\begin{thm}\label{Kura_simply_con2}\label{Kura_almost}\label{sum1234}
 Let $C$ be a nullhomologous simplicial complex that is locally almost 3-connected and stretched out. The
following are equivalent.
 \begin{itemize}
  \item $C$ has a planar rotation system;
  \item $C$ has no space minor in $\Ycal\cup \Tcal$.
 \end{itemize}
\end{thm}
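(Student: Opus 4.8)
The plan is to mimic the architecture of the proof of \autoref{rot_minor}, upgrading each ingredient from the locally $3$-connected setting to the present one, with two further families of obstructions entering the picture: the helicopter complexes in $\Xcal\subseteq\Ycal$, governed by \autoref{heli_no_PRS} and \autoref{minimal-helicopters}, and the torus crossing obstructions $\Tcal$, governed by \autoref{has_obstruction}. The easy implication is routine: if $C$ has a planar rotation system then so does every space minor of $C$ by \autoref{rot_closed_down}, while the nine complexes of \autoref{final-list} have no planar rotation system (as verified in the proof of \autoref{rot_minor}), the helicopter complexes in $\Xcal$ have none by \autoref{heli_no_PRS}, and the torus crossing obstructions have none (as established in this chapter's treatment of $\Tcal$; equivalently, by \autoref{prs_orient} and \autoref{is_manifold}, they embed into no orientable $3$-manifold). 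Hence $C$ has no space minor in $\Ycal\cup\Tcal$.

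For the substantial implication, suppose $C$ has no planar rotation system; the task is to produce a space minor in $\Ycal\cup\Tcal$. The heart of the matter is a refinement of \autoref{rot_system_exists-simplycon} valid for nullhomologous, locally almost $3$-connected, stretched out simplicial complexes: such a $C$ without a planar rotation system should admit (i) a vertex whose link graph is non-planar; or (ii) a non-loop edge $e$ so that the link graph $L(e)$ of $C/e$ at the contraction vertex is non-planar; or (iii) a non-planar para-cycle; or (iv) a suitably defined rotation framework together with a cycle $o$ --- which can be pushed to be a face boundary, or else contracted to a loop --- carrying an odd number of red edges. The ``stretched out'' hypothesis is used precisely to control edges of face-degree at most two and to push the arguments behind \autoref{lem101} and \autoref{bad_moebius} through when some link graphs are parallel graphs or free-graphs rather than subdivisions of $3$-connected graphs: vertices with free-graph links impose no constraint on rotation frameworks and are essentially inert, while vertices with parallel-graph links carry the winding-number freedom responsible for para-cycles.

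Each outcome then yields a member of $\Ycal\cup\Tcal$ as a space minor. In (i) only subdivisions of non-planar $3$-connected graphs can occur, since parallel graphs and free-graphs are planar, so \autoref{cone-final} gives a space restriction that is a cone over a subdivision of $K_5$ or $K_{3,3}$, which by \autoref{cone-red} has a cone over $K_5$ or $K_{3,3}$ as a space minor, and this lies in $\Ycal$. In (ii) the links at the endvertices of $e$ are planar, being almost $3$-connected, so \autoref{combined-cone-summary} produces a simplicial combined cone from one of the five families of \autoref{expli-computation}; topologically deleting the edges not meeting an endvertex of the central edge turns it into a combined cone, and iterating \autoref{cone-red-combined} reduces it to a combined cone over $K_5$ or $K_{3,3}$, again in $\Ycal$. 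In (iii), \autoref{has_obstruction} produces a torus crossing obstruction by deleting faces, which lies in $\Tcal$. For (iv), contracting the offending cycle $o$ to a single loop $\ell$ (first contracting all but one of its edges, which are non-loops as $C$ is simplicial) produces a $2$-complex whose link at $\ell$ is a vertex sum of the link graphs $L(v)$, $v\in o$, twisted by the odd red parity so as to fail helicopter planarity; this is a helicopter complex. If its link at $\ell$ is a subdivision of a $3$-connected graph it is nice and \autoref{minimal-helicopters} gives a space minor in $\Xcal\subseteq\Ycal$; if the links along $o$ are parallel graphs then $o$ was a non-planar para-cycle and outcome (iii) applies; and if $o$ can instead be taken to be a face boundary surrounded by $3$-connected links, the argument of \autoref{bad_moebius} together with \autoref{moebius_restr} delivers a M\"obius obstruction, one of the nine complexes of $\Ycal$. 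The edges of face-degree at most two and the free-graph links are dealt with beforehand using the stretched out hypothesis so that only these cases survive.

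I expect the main obstacle to be the refined dichotomy itself --- the analogue of \autoref{rot_system_exists-simplycon} --- and in particular the bookkeeping in outcome (iv): describing how rotation frameworks behave when link graphs are a mixture of subdivisions of $3$-connected graphs, parallel graphs and free-graphs; handling edges of face-degree at most two via the stretched out hypothesis without disturbing the homological or structural assumptions; and verifying that after contracting a cycle of odd red parity one lands cleanly in either the helicopter case or the para-cycle, hence torus crossing, case. This is where the subsequent sections of this chapter --- including the forward-referenced \autoref{detour_via_5}, \autoref{x13} and \autoref{41_2} --- do the real work.
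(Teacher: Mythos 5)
Your overall architecture matches the paper's: establish the easy direction via \autoref{rot_closed_down} and the fact that members of $\Ycal\cup\Tcal$ admit no planar rotation system, then prove a refined dichotomy analogous to \autoref{rot_system_exists-simplycon}, and map each outcome to a member of $\Ycal\cup\Tcal$. You also correctly identify that \autoref{detour_via_5}, \autoref{x13} and \autoref{41_2} are where the work happens. However, your anticipated dichotomy deviates from the paper's \autoref{sum123} in two substantive ways.

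First, the paper's outcome is a non-planar \emph{para-path} of arbitrary length (not just your case (ii), which is the length-one case). Longer para-paths are only afterward reduced: via \autoref{x13} to a single edge with non-planar contraction link if the endvertices of the para-path are non-adjacent, and via \autoref{detour_via_5} to a helicopter complex if they are adjacent. Your dichotomy, taking only the single-edge contraction as outcome (ii), would not by itself exhaust the ways a pre-rotation framework can fail to exist; the paper's \autoref{pre_PRF_obstructions} therefore states its conclusion in terms of para-paths directly.

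Second, and more importantly, your case (iv) expects the odd-red-cycle analysis to split into helicopter complexes, para-cycles, and M\"obius obstructions, with the M\"obius case appearing when the odd cycle can be pushed to a face boundary. The paper does not do this. It deliberately abandons the simply-connected, face-boundary route of \autoref{rot_system_exists-simplycon} here --- as explained in \autoref{concl77}, those stronger hypotheses are not available beyond the locally $3$-connected setting --- and instead passes through a \emph{pre-rotation framework}: if it is odd, \autoref{tiny-arg} extracts a red \emph{chordless} cycle, and \autoref{red_to_helicopter} contracts it to a loop to produce a \emph{nice} helicopter complex. The crucial point you miss is that a red cycle is, by definition, \emph{not} a para-cycle; hence it contains a vertex whose link is a subdivision of a $3$-connected graph, so the contracted link is automatically a subdivision of a $3$-connected graph, and \autoref{minimal-helicopters} applies. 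M\"obius obstructions thus never arise in this proof; they sit in $\Ycal$ only because $\Ycal$ contains the nine complexes of \autoref{final-list} wholesale. Your three-way branching in (iv) is more elaborate than what the paper needs and would require extra work to close (for instance, the face-boundary route genuinely requires $H_1(C;\Fbb_2)=0$, which is only one of the primes your hypothesis permits), whereas the chordless-cycle-to-helicopter route is prime-independent and uniform.
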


Next we define `para-paths' (which are similar to para-cycles) and analyse them.
A path in a simplicial complex $C$ is a \emph{para-path} if
\begin{enumerate}
 \item the link graphs at all interior vertices of $P$ are parallel graphs, where the branching
vertices have degree at least three;
\item the link graphs at the two endvertices of $P$ are subdivisions of 3-connected graphs.
\end{enumerate}

Similar to planarity of para-cycles, we define: a para-path $P$ in a 2-complex $C$ is \emph{planar} if there is a choice of embedding at each vertex of $P$ so that for each edge $e$ of $P$the two rotators at $e$ in the link graphs at its endvertices are reverse of one another; we additionally require that if the endvertices of $P$ are joined by an edge $e'$,
then the chosen embeddings at the endvertices of $P$ evaluated at the rotators of $e'$ are reverse of one another.

As a preparation for the proof of \autoref{Kura_almost}, we prove the following analogue of
\autoref{rot_system_exists}.

\begin{lem}\label{sum123}\label{41_2}
 A null-homological locally almost 3-connected simplicial complex $C$ has one of the following:
 \begin{enumerate}
  \item a planar rotation system;
  \item a non-planar link graph;
  \item a non-planar para-path;
  \item a non-planar para-cycle;
  \item a space minor that is a nice helicopter complex.
 \end{enumerate}
\end{lem}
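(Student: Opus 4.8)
The plan is to adapt the proof of \autoref{rot_system_exists}, with the extra freedom coming from parallel-graph and free-graph link graphs absorbed into the new outcomes 3--5. Suppose that none of the outcomes 2--5 holds; I shall construct a planar rotation system of $C$, giving outcome 1. Since outcome 2 fails, every link graph is planar, and being almost 3-connected it is a subdivision of a 3-connected planar graph (call such a vertex \emph{rigid}), a parallel graph, or a free-graph. A free-graph admits a plane embedding for \emph{every} rotation system, so a free vertex never obstructs anything: its part of the rotation system being built can be fixed at the very end to be compatible with its neighbours. At a rigid vertex Whitney's theorem gives a plane embedding unique up to reflecting all rotators simultaneously, so --- exactly as in \autoref{locally_at_edge} --- along an edge both of whose endvertices are rigid the two rotators agree or are reverse of one another, whatever plane embeddings are chosen.

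Next I would handle the parallel vertices. At a parallel vertex (whose branch vertices have degree at least three) the only two incident edges of face-degree at least three are the branch vertices of its link graph, and matching such an edge across to a neighbour amounts, via \autoref{obs1}, to a vertex sum of the two link graphs. Hence a maximal connected configuration of parallel vertices joined by edges of face-degree at least three is either a cycle --- a para-cycle --- or a path whose two end-edges reach rigid vertices --- a para-path (a path reaching a free vertex is absorbed into that vertex's freeness). Since outcomes 3 and 4 fail, every para-path and every para-cycle of $C$ is planar, so by definition I may choose plane embeddings of the link graphs along each such configuration that are pairwise reverse along its internal edges, and for a para-path also reverse at the edge joining its two rigid endvertices if that edge is present. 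In particular all internal edges of these configurations become green, and flipping the embedding at one rigid endvertex of a para-path propagates along it and forces flipping the embedding at the other.

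It then remains to choose the reflections at the rigid vertices consistently. As in the proof of \autoref{lem101} I would fix a spanning tree of the $1$-skeleton of $C$ and flip rigid vertices so that all tree edges are green; if every fundamental cycle is also green we have a rotation framework with all edges green, which --- after finally fixing the free vertices --- yields a planar rotation system, contradicting the failure of outcome 1. Otherwise there is a rotation framework and a cycle of $C$ with an odd number of red edges; using that $C$ is null-homological this cycle can be reduced, as in \autoref{rot_system_exists-simplycon}, to one supported inside a single block of the configuration above. If it runs through parallel vertices it is a non-planar para-cycle (outcome 4, and via \autoref{has_obstruction} a torus crossing obstruction later on). Otherwise it runs through rigid vertices only, and contracting all but one of its edges turns it into a single loop whose link graph is an iterated vertex sum of subdivisions of $3$-connected graphs, hence by \autoref{sum_3con} itself a subdivision of a $3$-connected graph; the odd red parity forces this link graph to fail helicopter planarity, so after a suitable space restriction $C$ has a space minor that is a nice helicopter complex (outcome 5, and via \autoref{minimal-helicopters} a member of $\Xcal$ further on).

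The step I expect to be the main obstacle is this last one: running the global flipping argument cleanly in the presence of edges of face-degree one and two and of the flexible para-structures, so that the failure of evenness is genuinely localised either to a single para-cycle or to a single cycle running only through rigid vertices; and then checking that the contraction to a loop can really be realised as a valid space minor (contracting only non-loop edges, deleting the appropriate faces first) while keeping the resulting link graph a subdivision of a $3$-connected graph, so that the helicopter complex obtained is genuinely \emph{nice}.
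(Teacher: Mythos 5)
Your plan is, at a high level, the same as the paper's: use the failure of outcomes 2--4 to build a system of compatible plane embeddings of the link graphs at the \emph{rigid} and \emph{parallel} vertices (what the paper calls a \emph{pre-rotation framework}), try to make it even by the spanning-tree flipping argument, and, when that fails, localise the odd parity on a short cycle and contract it down to a single loop whose link graph certifies a helicopter complex. That matches \autoref{pre_PRF_obstructions}, \autoref{pre-PRF_to_PRS}, \autoref{tiny-arg} and \autoref{red_to_helicopter} in spirit.

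However, your final dichotomy is wrong, and it is exactly the step you flag as the main obstacle. A bad cycle (all edges of face-degree at least three, odd red parity, not a para-cycle) need \emph{not} run through rigid vertices only: it may alternate between rigid vertices and parallel vertices, or run along a para-path between two rigid vertices and close up elsewhere. Such mixed cycles are not para-cycles (so they are not covered by outcome 4), and they are not all-rigid either, so your case split leaves them unhandled. The paper avoids this by working directly with a \emph{chordless} cycle in the graph of relevant edges --- chordlessness is what guarantees that contracting $o-e$ produces exactly one loop, and ``not a para-cycle'' is what guarantees at least one rigid vertex on $o$, which is enough to make the vertex-summed link graph at the new loop a subdivision of a $3$-connected graph (parallel-graph summands at their branch vertices only add subdivided parallel paths and do not destroy that structure). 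Two further points: \autoref{sum_3con} is about $k$-connected graphs, not subdivisions of $k$-connected graphs, so it cannot be cited as stated (one must first observe that the contracted edges all correspond to branch vertices of face-degree at least three and pass to the underlying $3$-connected graphs); and the reduction to a chordless cycle does not use null-homology or \autoref{rot_system_exists-simplycon} at all --- the paper uses only the elementary fact that chordless cycles generate the cycle space of a graph over $\Fbb_2$, together with additivity of red parity.
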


\subsection{Proof of \autoref{sum123}}

A vertex of a 2-complex is \emph{relevant} if its link graph is a parallel graph where the two branching vertices have degree at least three or if its link graph is a subdivision of a 3-connected graph. An edge is \emph{relevant} if both its endvertices are relevant and it is incident with at least three faces.
A \emph{pre-rotation framework} of a 2-complex $C$ is a choice of a embedding at each link graph at a relevant vertex of $C$
so that at each relevant edge $e$ the two rotators at $e$ in the chosen embeddings of the link graphs at its endvertices are reverse of one another or agree.
We colour a relevant edge \emph{green} if the two rotators are reverse, and we colour if \emph{red} otherwise, in which case they agree.
A pre-rotation framework is \emph{even} if every cycle consisting only of relevant edges contains an even number of red edges; otherwise it is \emph{odd}.
A cycle is \emph{red} if it contains an odd number of red edges, and all its edges are incident with at least three faces and it is not a para-cycle.

\begin{lem}\label{pre-PRF_to_PRS}
 If an almost locally 3-connected 2-complex $C$ admits an even pre-rotation framework, then it admits a planar rotation system.
\end{lem}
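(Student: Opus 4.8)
The plan is to adapt the proof of \autoref{lem101}, working with the relevant vertices and relevant edges in place of all of them, and afterwards to extend the embeddings so obtained to the non-relevant vertices, exploiting that their link graphs are free.

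First I would record the basic properties of the \emph{flip} operation that reverses the chosen embedding at a single relevant vertex $v$: it reverses the rotator at every vertex of $L(v)$, and since a relevant edge $e$ incident with $v$ is incident with at least three faces, the vertex of $L(v)$ to which $e$ gives rise has degree at least three, so its rotator genuinely changes and hence the colour of $e$ swaps; the colours of relevant edges not incident with $v$ are unchanged. Moreover any cycle of relevant edges through $v$ meets $v$ in exactly two relevant edges, so a flip preserves evenness of the pre-rotation framework. Now I would work inside each connected component of the auxiliary graph $G_R$ whose vertex set is the set of relevant vertices of $C$ and whose edge set is the set of relevant edges. Fixing a rooted spanning tree $T$ of such a component and processing its vertices in order of distance from the root exactly as in \autoref{lem101}, I would flip at a vertex whenever the tree edge leading towards the root is red; afterwards every edge of $T$ is green. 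Each remaining relevant edge of the component closes a fundamental cycle lying entirely inside $G_R$ whose other edges are all green, so by evenness that edge is green as well. Performing this in every component of $G_R$ produces a choice of planar embedding at every relevant vertex for which every relevant edge is green.

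Next I would extend these choices to the non-relevant vertices. By local almost 3-connectedness, the link graph at a non-relevant vertex $u$ is a free-graph (a cycle or a path being special cases), so every rotation system of $L(u)$ is planar, and $L(u)$ has at most one vertex of degree at least three, of degree exactly three; consequently at most one edge incident with $u$ is incident with more than two faces. An edge incident with at most two faces is automatically green, because a cyclic orientation of at most two elements is its own reverse, and an edge between two relevant vertices that is incident with at least three faces is by definition relevant. Hence the only constraints not yet met concern edges incident with at least three faces that have a non-relevant endpoint, and since each non-relevant vertex carries at most one such edge these edges form a matching on the non-relevant vertices. For each edge $e$ of this matching I would, if its other endpoint is relevant, choose the rotator of $e$ at the non-relevant endpoint to be the reverse of the one already fixed there, and otherwise pick a rotator at one endpoint arbitrarily and the reverse at the other; since the link graph at a non-relevant vertex is free and the vertex corresponding to $e$ is its unique vertex of degree at least three, each prescribed rotator extends to a (necessarily planar) embedding of that link graph. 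This yields planar embeddings of all link graphs of $C$ with every edge green.

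Finally, as in the concluding paragraph of \autoref{lem101}, setting $\sigma(e)$ for a directed edge $\vec e$ pointing to $v$ equal to the rotator of $e$ in the chosen embedding of $L(v)$ gives a well-defined rotation system of $C$ whose induced rotation system on each link graph is the chosen planar one; thus $C$ admits a planar rotation system. The step I expect to be the main obstacle is the bookkeeping in the extension to non-relevant vertices: one has to verify that the edges imposing a non-trivial rotator constraint at a non-relevant vertex really do form a matching and can be satisfied simultaneously without disturbing the green relevant edges, and this is exactly where the degree restriction on free link graphs is used.
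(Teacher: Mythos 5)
Your proof is correct and follows essentially the same route as the paper: first replay the spanning-tree flip argument from \autoref{lem101} on the graph of relevant vertices and relevant edges to make every relevant edge green, then extend to non-relevant vertices using that their link graphs admit only planar rotation systems and carry at most one vertex of degree at least three. Your ``matching on the non-relevant vertices'' is exactly the paper's decomposition into the sets $X$ (edges with one relevant and one non-relevant endpoint) and $Y$ (edges with two non-relevant endpoints), handled uniformly; the only cosmetic slip is calling a cycle a free-graph — by the paper's convention a cycle is a parallel graph, not a free-graph — but since cycles too have no vertex of degree $\geq 3$ and only planar rotation systems, the argument is unaffected.
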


\begin{proof}
 First imitate the proof of \autoref{lem101} to modify the pre-rotation system so that all relevant edges are coloured green.
 Now pick plane embeddings at link graphs of remaining vertices, in three steps as follows. Let $X$ be the set of those non-relevant vertices that are incident with an edge of face-degree three to a relevant vertex. Since vertices of $X$ have free link graphs, they are incident with a unique edge of face-degree three and pick an embedding of their link graph so that the rotator at its unique-degree three vertex is the reverse of the rotator picked at the link graph at the relevant endvertex of that edge.
 An edge of face degree three is \emph{very irrelevant} if both endvertices have link graphs that are free. Let $Y$ be the set of endvertices of very irrelevant edges.
 Note that $Y$ is disjoint from $X$. Vertices in $Y$ are incident with a unique edge of face-degree three and it is very irrelevant. Pick an embedding of the two endvertices of that edge so that the rotator at that edge are reverse.
 Now we have defined embeddings at all link graphs of minimum degree at least three. All other link graphs have unique rotation systems and we pick them.
 This defines a rotation framework. Since edges of degree at most two are always green, all edges are green and so this rotation framework gives rise to a planar rotation system as in the proof of \autoref{lem101}.
\end{proof}

\begin{lem}\label{pre_PRF_obstructions}
If an almost locally 3-connected 2-complex does not admit a pre-rotation framework, then it has a non-planar link graph, a non-planar para-path or a non-planar para-cycle.
\end{lem}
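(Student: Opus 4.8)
The plan is to prove the contrapositive: if $C$ has no non-planar link graph, no non-planar para-path and no non-planar para-cycle, then $C$ admits a pre-rotation framework. So assume every link graph of $C$ is planar. Then each relevant link graph is either a subdivision of a $3$-connected planar graph -- with, by Whitney's theorem as used in \autoref{locally_at_edge}, a unique plane embedding up to globally reversing all rotators -- or a parallel graph with branch vertices of degree at least three, whose plane embeddings are parametrised by an arbitrary choice of cyclic orientation (\lq rotator\rq) at one branch vertex, the rotator at the other being then forced. Since such a parallel-link relevant vertex has exactly two relevant edges (its two branch vertices), the relevant vertices and relevant edges of $C$ form a graph $H$ in which parallel-link vertices have degree at most two. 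Hence each component of $H$ is either a cycle through parallel-link vertices only, which is then a para-cycle, or contains a $3$-connected-link relevant vertex; in the latter case its maximal parallel-link paths with $3$-connected-link ends are exactly its para-paths (including single relevant edges between two $3$-connected-link vertices, which are para-paths of length one).

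I then construct the framework by choosing a plane embedding at each relevant vertex. For a component of $H$ that is a para-cycle, use a planar witness of it (one exists, as there is no non-planar para-cycle), that is, embeddings making all its edges reverse. For the remaining components, form $H'$ by contracting each para-path to an edge, pick a spanning tree of each component of $H'$, root it and give the root an arbitrary plane embedding; then process the tree outward, and when a tree para-path $P$ runs from an already-embedded vertex $u$ to a new vertex $u'$, take a planar witness of $P$ (one exists), globally reverse all embeddings along $P$ if needed so that the embedding of $L(u)$ agrees with the one already fixed -- this preserves reverseness on every edge of $P$ -- and so fix the embeddings of $L(u')$ and of all interior vertices of $P$. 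Once the tree is exhausted it remains to see that every non-tree para-path $P$ of $H'$, and every relevant edge between two $3$-connected-link vertices not lying on a tree para-path, is compatible for the embeddings already chosen. For the latter, planarity of the length-one para-path forces the two permitted rotators at the shared edge to coincide on the two sides, whence compatibility holds for all choices of the two reflections. For the former, propagate from $u$ along $P$: at each interior parallel-link vertex there are exactly the two reverse-related choices of rotator at the incoming branch edge, and the set of rotators at the far branch edge reachable in this way depends only on $P$; applying this to a planar witness of $P$ shows -- this is the analogue of \autoref{locally_at_edge} for para-paths -- that this reachable set is precisely the pair of rotators permitted there by the fixed $3$-connected embedding of $L(u')$, so the propagation closes and delivers compatible interior embeddings. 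This yields a pre-rotation framework, as required.

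The crux, and the step I expect to be the main obstacle, is exactly this analogue of \autoref{locally_at_edge} for para-paths and para-cycles: that planarity of the para-path/para-cycle forces the rotators attainable by reverse-or-agree propagation along its parallel-link interior to match the rotators allowed at the $3$-connected end(s), so that the propagation is consistent and no parity-type conflict can arise when a cycle of $H'$ is closed. A secondary nuisance is a handful of degenerate configurations -- a para-path of length one whose endpoints are additionally joined by an edge $e'$, a parallel-link path of $H$ that returns to the same $3$-connected-link vertex at both ends (which is strictly speaking neither a para-path nor a para-cycle), and very short cycles -- which must be examined separately; I expect each to be either impossible for a simplicial complex (no loops, no parallel edges) or dispatched by a small local argument, but checking this carefully is where most of the remaining work lies.
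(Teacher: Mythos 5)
Your proof follows the same overall contrapositive plan as the paper, but the implementation is genuinely different: you build a spanning tree of the contracted graph $H'$, propagate embeddings outward, and then worry about closing non-tree para-paths. The paper's own proof does none of that. It simply picks an \emph{arbitrary} plane embedding at each $3$-connected-link vertex and then installs, at the interior vertices of each maximal parallel-link path (resp.\ cycle), the embeddings coming from a planarity witness of the corresponding para-path (resp.\ para-cycle). No coordination between different para-paths is needed, because a pre-rotation framework only requires \lq\lq reverse \emph{or agree}\rq\rq\ at each relevant edge, and this relation is invariant under globally reversing either endpoint's embedding; since a $3$-connected planar link graph has a unique embedding up to reversal (Whitney), the fixed embedding at an endpoint can differ from the witness's embedding there by at most a flip, which is harmless. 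In other words, your \lq\lq crux\rq\rq\ --- the analogue of \autoref{locally_at_edge} for para-paths, needed to close non-tree cycles --- is not a separate obstacle to be overcome: it is subsumed by the observation above and by the para-path planarity hypothesis directly, so the whole spanning-tree-and-closure scaffold is overkill. (The machinery you invoke is what is genuinely needed in the proof of \autoref{lem101}, where one must make \emph{all} edges green; a pre-rotation framework, however, tolerates red edges freely, so the parity concern you import from there simply does not arise.)

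Where your sketch is actually sharper than the paper's proof is the point you flag as a \lq\lq secondary nuisance\rq\rq. The paper asserts that the edge set $X$ (relevant edges with a parallel-graph-link endvertex) spans a graph of maximum degree two, and concludes that every component of $X$ is a para-path or a para-cycle. This is not literally correct: a $3$-connected-link vertex may be incident with arbitrarily many edges of $X$. The correct statement is only that the parallel-graph-link vertices have degree at most two in $X$, so it is the \emph{parallel-link stretches} of $X$ that decompose into paths and cycles. Once stated that way, three configurations arise that are not handled by the paper's wording and which you correctly identify: a parallel-link path whose two ends attach to the \emph{same} $3$-connected-link vertex (a cycle that is neither a para-path nor a para-cycle); a parallel-link path with a dead end (a parallel-graph-link vertex with only one relevant edge); and a free-standing parallel-link path with no $3$-connected-link end at all. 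The second and third are easily dispatched (process such a path from its anchored end, or arbitrarily, since only one end imposes a constraint), but the first one genuinely needs an argument or needs to be ruled out --- it falls through the paper's case analysis exactly as you suspect, and you are right that checking it carefully is where the residual work lies.
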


\begin{proof}
 We prove the contra-positive: assume that all link graphs at vertices of $C$ are planar, all para-paths are planar and all para-cycles are planar. Our aim is to construct a pre-rotation framework.
 At link graphs that are subdivisions of 3-connected graphs, pick an arbitrary embedding.
 Since single edges are examples of para-paths, their planarity ensures compatibility along edges both whose endvertices have link graphs that are subdivisions of 3-connected graphs.
 Now let $X$ be the set of those relevant edges that have an endvertex whose link graph is a parallel graph. The edge set $X$ -- considered as a subset of the 1-skeleton of $C$ -- spans a graph of maximum degree two. So each connected component of $X$ is a path or a cycle. If it is a cycle, this cycle is a para-cycle, and by assumption it is planar, so pick embeddings at link graphs of its vertices accordingly. If this component is a path, this path is a para-path; call it $P$. The embeddings at the link graphs of the endvertices of $P$ are unique up to reversing, and since the para-path $P$ is planar, we can pick embeddings at link graphs of interior vertices of $P$ so that along each edge of $P$ they are reverse or agree. This defines a pre-rotation frameworks.
\end{proof}

\begin{lem}\label{tiny-arg}
 If a simplicial complex $C$ has a pre-rotation framework but no planar rotation system, then it has a red chordless cycle.
\end{lem}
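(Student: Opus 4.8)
The plan is to run the contrapositive of \autoref{pre-PRF_to_PRS}. Since $C$ has a pre-rotation framework $\Phi$ but no planar rotation system, $\Phi$ cannot be even, so $\Phi$ is odd; that is, there is a cycle consisting only of relevant edges that carries an odd number of red edges. Call such a cycle \emph{bad}, and say an edge set of $C$ has \emph{odd red parity} if it contains an odd number of red edges (only relevant edges are coloured, so edges that are not relevant never count). Exactly as the red parity in the proof of \autoref{rot_system_exists-simplycon}, this quantity is additive modulo $2$ under symmetric difference of edge sets, and this is the elementary fact driving everything.

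Next I would pick a bad cycle $o$ with the minimum number of edges and show it is chordless and not a para-cycle. For chordlessness: a chord $e'$ of $o$ splits it as a symmetric difference $o=o_1\triangle o_2$ of two strictly shorter cycles, each through $e'$, so by additivity at least one of them, say $o_1$, has odd red parity. If $e'$ is relevant then $o_1$ is again a bad cycle, shorter than $o$, contradicting minimality. If $e'$ is not relevant then, using that every relevant link graph (a subdivision of a $3$-connected graph, or a parallel graph with branch vertices of degree at least three) has no vertex of degree at most one, the edge $e'$ is incident with exactly two faces; since $C$ is simplicial one may then replace $e'$ in $o_1$ by the other two edges of a triangular face at $e'$ and shortcut the resulting closed walk to a cycle, again producing a strictly shorter bad cycle of relevant edges and contradicting minimality. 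Hence $o$ is chordless, and since it consists of relevant edges all of its edges are incident with at least three faces.

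It then remains to rule out that $o$ is a para-cycle. Here the key point is that flipping the chosen embedding at a single vertex of a para-cycle reverses the rotators at that vertex, hence flips the colours of exactly its two incident cycle edges, so the parity of the number of red edges of a para-cycle is unchanged under flips; in particular a \emph{planar} para-cycle realises red parity zero. In the context in which this lemma is applied, every para-cycle of $C$ is already known to be planar — otherwise \autoref{has_obstruction} already produces a torus crossing obstruction — so a bad cycle cannot be a para-cycle; in general one argues instead that a minimal bad para-cycle would be non-planar and can be traded, exploiting the extra embedding freedom of parallel-graph link graphs, for a strictly smaller bad cycle. Either way $o$ is not a para-cycle, so $o$ is the required red chordless cycle.

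The step I expect to be the real obstacle is this final point, together with the non-relevant-chord case: both require combining the $\Fbb_2$-additivity of red parity with the fine local structure of almost $3$-connected link graphs and with the simplicial structure of $C$, rather than reducing to a single clean counting identity as in the locally $3$-connected setting of \autoref{chapterI}.
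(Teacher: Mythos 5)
Your overall plan matches the paper's: take the contrapositive of \autoref{pre-PRF_to_PRS} to get a bad cycle, then use $\Fbb_2$-additivity of red parity to extract a chordless one. The paper phrases this via the fact that chordless cycles of the relevant-edge graph $G$ generate its cycle space; you phrase it via a minimal bad cycle, which amounts to the same thing. What you correctly notice---and what the paper's short proof does not address at all---is that this only yields a cycle chordless in $G$: it could still have a non-relevant chord in $C$ (and \autoref{red_to_helicopter} needs chordlessness in $C$, so that contracting $o-e$ produces exactly one loop), and it could be a para-cycle, which the paper's definition of a \emph{red} cycle explicitly excludes.

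Neither of your two repairs closes these gaps. For a non-relevant chord $e'$: replacing $e'$ in $o_1$ by the other two edges of a triangular face gives a closed walk of length $|o_1|+1$, which is not strictly shorter than $|o|=|o_1|+|o_2|-2$ once $|o_2|=3$; the two replacement edges may themselves be non-relevant; and they carry their own colours, so the red parity of the detoured walk is not controlled---minimality gives no contradiction. For the para-cycle case, the appeal to context is unavailable: \autoref{tiny-arg} is invoked inside the proof of \autoref{sum123} precisely to separate outcome~(4) from outcome~(5), so non-planar para-cycles have not been excluded there, and the suggested ``trade for a strictly smaller bad cycle'' does not exist. Concretely, in a M\"obius obstruction the relevant-edge graph $G$ is exactly the central triangle $o$; the rotator compatibility at each of its three edges (each of face-degree $3$) is automatic, so a pre-rotation framework always exists and, since there is no planar rotation system, it is necessarily odd---yet $o$ is a para-cycle, so there is no red chordless cycle at all. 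The statement thus seems to need the additional disjunct ``or $C$ has a non-planar para-cycle'' in its conclusion, and the paper's own one-paragraph proof silently contains both of the gaps you tried to patch.
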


\begin{proof}
 We obtain $G$ from the 1-skeleton of $C$ by deleting all edges that are not relevant; note that all remaining edges have degree at least three in $C$.
 By assumption, $G$ contains a cycle $o$ with an odd number of red edges. The chordless cycles of $G$ generate all cycles of $G$ over the field $\Fbb_2$, so in particular the cycle $o$.
 The parity of the number of red edges of the sum of two cycles is the sum of their parities. So the family of chordless cycles generating $o$ cannot all contain an even number of red edges.
 Thus in this generating family we find a red chordless cycle.
\end{proof}

\begin{lem}\label{red_to_helicopter}
 If a simplicial complex $C$ has a pre-rotation framework $\Sigma$ with a red chordless cycle $o$, then $C$ has a space minor that is a nice helicopter complex.
\end{lem}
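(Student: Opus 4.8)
The plan is to take the red chordless cycle $o$ and carve out of $C$, by deleting faces and edges and splitting vertices, a sub-2-complex $D$ whose $1$-skeleton is exactly the cycle $o$ with a single loop attached, so that $D$ is forced to be a nice helicopter complex because the red-parity obstruction on $o$ cannot be resolved in the plane. First I would contract the cycle $o$ down to a loop: since $o$ is chordless and all its edges are relevant (face-degree at least three), repeatedly contracting all but one edge of $o$ is a legitimate sequence of space-minor operations (these edges are not loops while $o$ has length more than one), and by \autoref{rot_closed_down} and \autoref{contr_pres_planar} this preserves both the absence of a planar rotation system and the pre-rotation framework data along what remains. After this contraction the cycle $o$ has become a single loop $\ell$ at one vertex $x$, and the combined link graph at $x$ is obtained by iterated vertex sums (in the sense of \autoref{obs1}) of the link graphs along $o$; the key point, which I would extract from the red-parity bookkeeping exactly as in \autoref{new99} and the proof of \autoref{bad_moebius}, is that the oddness of the number of red edges on $o$ translates precisely into the statement that the two rotators at the two copies of $\ell$ in $L(x)$ cannot be made reverse of one another in any plane embedding of $L(x)$. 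In other words the resulting 2-complex fails to be helicopter planar at $x$.

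Next I would cut the link graph at $x$ down to a subdivision of a $3$-connected graph so that the resulting helicopter complex is \emph{nice}. Here I would invoke the pre-rotation framework $\Sigma$: at each vertex of $o$ the link graph is either a subdivision of a $3$-connected graph or a parallel graph whose branch vertices have degree at least three (these are the only relevant vertices). In the first case Whitney's theorem (as used in \autoref{locally_at_edge}) pins down the embedding up to reflection, so the red/green colouring is genuinely forced; in the parallel-graph case the two rotators at the vertex corresponding to any incident edge are automatically reverse after a suitable reflection, so such vertices contribute green edges and can be absorbed. Using this I would delete faces at $x$, keeping only enough faces to retain a subdivision of a $3$-connected minor of $L(x)$ that still witnesses helicopter non-planarity — concretely, pass to a topological minor of $L(x)$ that is $3$-connected and on which the forced rotators at the two copies of $\ell$ are still non-reversible; this is possible because non-helicopter-planarity is, by Whitney uniqueness, a property that survives passing to an appropriate $3$-connected piece. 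Deleting the faces of $C$ corresponding to the discarded edges of $L(x)$, and then topologically deleting and splitting off the now-superfluous edges and vertices, yields a space minor $D$ of $C$ that is a helicopter complex whose link graph at the loop vertex is a subdivision of a $3$-connected graph, i.e.\ a nice helicopter complex, as required.

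I expect the main obstacle to be the second step: making the cut-down \emph{nice} without destroying the helicopter obstruction. Passing from $L(x)$ to a $3$-connected topological-minor core while certifying that the red-parity obstruction survives requires an argument in the spirit of the combined-cone and M\"obius-framework lemmas (\autoref{expli}, \autoref{new99}, \autoref{moe-reverse}): one must track how the two prescribed rotators at the two ends of $\ell$ restrict to the core and verify that their relative orientation is still forbidden by planarity. The parallel-graph vertices on $o$ need separate, if routine, handling, since their link graphs are not $3$-connected and their embeddings are only unique up to the reflection that swaps red and green; the even-parity hypothesis on chordless cycles must be used to absorb them consistently. Once niceness is secured, the conclusion that $D$ is a nice helicopter complex — hence by \autoref{heli_no_PRS} admits no planar rotation system, consistent with $D$ being a space minor of the planar-rotation-system-free $C$ — is immediate.
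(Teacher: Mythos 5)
Your first step matches the paper exactly: contract all edges of the chordless cycle $o$ but one, say $e$, so that $e$ becomes a loop and chordlessness guarantees it is the only loop. After that your proposal diverges from the paper in a way that matters.

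For the second step, your red-parity intuition is right, but you leave it at the level of ``\autoref{new99}-style bookkeeping.'' The paper instead runs a clean contradiction via \autoref{sum_planar2}: if $L(v)$ were helicopter planar, a witnessing embedding would decompose (since no $L(v_i)$ has $e$-adjacent cutvertices) into embeddings $\tau(v_i)$ of the link graphs along $o$ that are compatible at all edges of $o$ including the former $e$, hence make $o$ all green; one then shows inductively, using that $o$ is not a para-cycle (so some $L(v_i)$ is a subdivision of a 3-connected graph and Whitney applies, and parallel-graph embeddings are determined by a branch rotator) that each $\tau(v_i)$ is the $\Sigma$-embedding or its flip; since flips preserve the red parity of $o$, the parity for $\Sigma$ would be even, a contradiction. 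Your sketch needs precisely this flip-invariance step to be made rigorous; otherwise ``cannot be made reverse in \emph{any} plane embedding'' does not follow from parity in \emph{one} framework.

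The real gap is your third step, which is both unnecessary and the hardest to complete. You propose to carve $L(x)$ down to a 3-connected core by deleting faces, and you yourself flag that certifying the helicopter obstruction survives this surgery is the main obstacle; that worry is well-founded, since passing to a topological minor of $L(x)$ does not in general preserve helicopter non-planarity. The paper sidesteps this entirely: the link graph at the contraction vertex is \emph{already} a subdivision of a 3-connected graph. This follows because every vertex of $o$ is relevant (so its link is a parallel graph with branch vertices of degree $\geq 3$, or a subdivision of a 3-connected graph), the red cycle is not a para-cycle so at least one link is of the latter type, and iterated vertex sums of such graphs over vertices of degree $\geq 3$ remain subdivisions of 3-connected graphs. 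Recognising this observation is what lets the paper stop after the contraction; by not making it, you have manufactured an extra reduction that you cannot carry out.
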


\begin{proof}
Let $e$ be an arbitrary edge of $o$. We obtain $D$ from $C$ by contracting $o-e$. Since $o$ is chordless, $e$ is the only loop of $D$.
Let $v$ be the unique vertex of $D$ that is incident with the loop $e$.
Suppose for a contradiction that the link graph at $v$ in $D$ is helicopter planar.
The link graph $L(v)$ at $v$ is the vertex sum over all link graphs of $C$ at the vertices of the path $o-e$ via the edges of that path.
Then by \autoref{sum_planar2} applied successively along the edges of $o-e$, the embedding of $L(v)$ induces embeddings at all link graphs at vertices of $o$ in $C$ that are compatible along the edges. We denote these embeddings of the link graphs at vertices of $o$ by $(\tau(x)|x\in V(o))$.

We obtain $\Sigma'$ from the pre-rotation framework $\Sigma$ by changing the embeddings at the vertices $x$ of $o$ to $\tau(x)$. Since $o$ is red, by definition it is not a para-cycle and thus contains a vertex whose link graph is a subdivision of a 3-connected graph. Its embedding $\tau(x)$ is the same or the flip of the corresponding embedding for $\Sigma$.
Using this we prove inductively along the edges of $o$ that each embedding $\tau(x)$ is the same or the flip of the corresponding embedding for $\Sigma$.
So $\Sigma'$ is a pre-rotation framework.
The cycle $o$ has only green edges in the pre-rotation framework $\Sigma'$. Modifying the pre-rotation framework $\Sigma$ by flipping embeddings at link graphs of $C$ does not change the parity of the red edges of $o$. So the cycle $o$ contains an even number of red edges with respect to $\Sigma$, a contradiction to the assumption that $o$ is red.
So the link graph at $v$ is not helicopter planar.
The link graph at $v$ is a subdivision of a 3-connected graph, and thus the space minor $D$ is nice.
\end{proof}

We summarise this subsection with the following.

\begin{proof}[Proof of \autoref{sum123}]
By \autoref{pre_PRF_obstructions} assume that $C$ admits a pre-rotation framework. If it is even, by \autoref{pre-PRF_to_PRS}, $C$ admits a planar rotation system and we find one of the outcomes. So assume that it is odd. By \autoref{tiny-arg} $C$ has a red chordless cycle. Then by \autoref{red_to_helicopter} $C$ has a helicopter complex as a space minor; this is outcome 5.
\end{proof}

\subsection{Helicopter minors}\label{sec:heli}

The aim of this section is to prove \autoref{minimal-helicopters}; we begin by introducing our tools.
A \emph{helicopter graph} is a graph $G$ together with two of its vertices $v$ and $w$ and three pairs
$((a_i,b_i)|i=1,2,3)$ of its edges, where the edges $a_i$ are incident with $v$ and the edges $b_i$ are
incident with $w$. We stress that we allow $a_i=b_j$ for some $i,j\in[3]$ and $v=w$.

A helicopter graph $(G,v,w, ((a_i,b_i)|i=1,2,3))$ is \emph{helicopter planar} if there is a planar rotation system
$(\sigma_x|x\in V(G))$ of $G$ such that the rotator $\sigma_v$ at $v$ restricted to $(a_1,a_2,a_3)$ is the reverse
permutation of the rotator
$\sigma_w$ at $w$ restricted to $(b_1,b_2,b_3)$ -- when concatenated with the bijective map $b_i\mapsto
a_i$.
\begin{eg}\label{heli-eg}
Let $C$ be a 2-complex with a single loop $\ell$ and $x$ be the unique vertex of $C$ incident with $\ell$.
Let $(f_1,f_2,f_3)$ be a choice of three faces incident with $\ell$. Denote the vertices corresponding to $\ell$ in the link graph $L(x)$ by $v$ and $w$, and let $a_i$ be the edge of $L(x)$ incident with $v$ corresponding to $f_i$. Similarly, let $b_i$ be the edge of $L(x)$ incident with $w$ corresponding to $f_i$.
Then $(L(x),v,w, ((a_i,b_i)|i=1,2,3))$ is a helicopter graph.
\end{eg}
We refer to the helicopter graph constructed in \autoref{heli-eg} as the helicopter graph \emph{associated} to $C$ at $x$ given the choices $(f_1,f_2,f_3)$.
The connection with helicopter complexes is as follows.

\begin{lem}\label{loop_to_helicopter}
Let $C$ be a 2-complex with a single loop $\ell$ and $x$ be the unique vertex of $C$ incident with $\ell$.
Then $C$ is a helicopter complex if one of the helicopter graphs associated to $C$ at $x$ is not helicopter planar.
\end{lem}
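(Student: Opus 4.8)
The plan is to prove the contrapositive of \autoref{loop_to_helicopter}: I would assume that $C$ is \emph{not} a helicopter complex, i.e.\ that the link graph $L:=L(x)$ is helicopter planar in the sense of link graphs at a vertex incident with a single loop, and deduce that \emph{every} helicopter graph associated to $C$ at $x$ is helicopter planar. The conceptual content is simply that the two occurrences of the phrase ``helicopter planar'' differ only in scope: for $L$ one requires that the rotators at the two copies $v,w$ of $\ell$ be reverse of one another on the \emph{whole} edge sets at $v$ and $w$, matched via the bijection coming from faces of $C$; for a helicopter graph one only requires this on a chosen triple of edges. So helicopter planarity of $L$ is the stronger statement and trivially implies the other.

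Concretely, I would first unpack the hypothesis: by definition there is a plane embedding of $L$, equivalently (by the correspondence between plane embeddings and planar rotation systems recalled in the introduction) a planar rotation system $(\sigma_e\mid e\in V(L))$ of $L$, such that $\sigma_v$ and $\sigma_w$ are reverse of one another under the bijection $\phi$ that sends an edge of $L$ at $v$ to the edge at $w$ arising from the same face of $C$. Then, for an arbitrary choice $(f_1,f_2,f_3)$ of three faces incident with $\ell$, I would invoke the description of the associated helicopter graph in \autoref{heli-eg}: it is $(L,v,w,((a_i,b_i)\mid i=1,2,3))$ where $a_i$, $b_i$ are the edges of $L$ at $v$, $w$ coming from $f_i$, so that $\phi(a_i)=b_i$. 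Finally I would check that this same rotation system $(\sigma_e)$ witnesses helicopter planarity of this helicopter graph.

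The verification is the routine observation that restricting a cyclic orientation to a subset commutes with reversal and with relabelling: restricting the identity $\sigma_v=\phi^{-1}(\overline{\sigma_w})$ to the triple $\{a_1,a_2,a_3\}$ (and correspondingly $\{b_1,b_2,b_3\}$ on the $w$-side) yields exactly the condition that $\sigma_v$ restricted to $(a_1,a_2,a_3)$, concatenated with $b_i\mapsto a_i$, is the reverse permutation of $\sigma_w$ restricted to $(b_1,b_2,b_3)$. As $(f_1,f_2,f_3)$ was arbitrary, every associated helicopter graph is helicopter planar, which is the contrapositive of the lemma. There is essentially no obstacle here; the only care needed is keeping the directions of $\phi$ and of the reversal straight (which copy of $\ell$ carries the reversed rotator) and noting explicitly that the ``plane embedding'' in the definition of helicopter planarity of $L$ is exactly the planar rotation system asked for in the definition of helicopter planarity of a helicopter graph.
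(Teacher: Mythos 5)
Your proposal is correct and takes essentially the same approach as the paper: the paper's one-line proof states the direct implication (if some associated helicopter graph is not helicopter planar then neither is the link graph, so $C$ is a helicopter complex), while you spell out the equivalent contrapositive by noting that the restriction-to-a-triple of the planar rotation system witnessing helicopter planarity of $L(x)$ is exactly what the definition for helicopter graphs demands.
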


\begin{proof}
If an associated helicopter graph is not helicopter-planar, then the link graph cannot be helicopter planar; and thus $C$ is a helicopter complex.
\end{proof}

A helicopter graph $(G,v,w, ((a_i,b_i)|i=1,2,3))$ is \emph{3-connected} if $G$ is 3-connected.
We abbreviate $A=\{a_1,a_2,a_3\}$ and $B=\{b_1,b_2,b_3\}$.

A \emph{helicopter minor} of a helicopter graph $(G,v,w, ((a_i,b_i)|i=1,2,3))$ is obtained by doing a
series of the following operations:
\begin{itemize}
 \item contracting or deleting an edge not in $A\cup B$;
 \item replacing an edge $a_i\in A\sm B$ and an edge $b_j\in B\sm A$ that are in parallel by a
single new edge which is in that parallel class. In the reduced graph, this new edge is
$a_i$ and $b_j$.
\item the above with `series' in place of `parallel'.
\item apply the bijective map\footnote{That is, we exchange the roles of the vertices \lq $v$\rq\ and \lq $w$\rq\ and the roles of  \lq $a_i$\rq\ and \lq $b_i$\rq\ are interchanged for $i\in [3]$.} $(v,A)\mapsto (w,B)$.
\end{itemize}

\begin{lem}\label{minimal_minor}
 Let $\hat G=(G,v,w, ((a_i,b_i)|i=1,2,3))$ be a helicopter graph such that G is planar.
Let $\hat H$ be a 3-connected helicopter minor of $\hat G$. Then $\hat G$ is helicopter planar if and only if
$\hat H$ is helicopter planar.
\end{lem}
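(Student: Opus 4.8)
The plan is to check that each of the four helicopter-minor operations preserves the property ``$G$ is planar and the helicopter graph is helicopter planar'', in both directions, and then compose. Since a helicopter minor is obtained by a finite sequence of these operations, and since planarity of $G$ is manifestly preserved (and reflected, for the series/parallel reductions) by all of them, it suffices to treat a single operation at a time; the hypothesis that $G$ (hence every intermediate graph) is planar lets me invoke Whitney's uniqueness theorem whenever a graph along the way happens to be $3$-connected. First I would record the trivial direction: the last operation (the bijective relabelling $(v,A)\mapsto(w,B)$) changes nothing about planar rotation systems, and reversing the rotator at every vertex of a planar rotation system is again a planar rotation system, so helicopter planarity is symmetric in $v$ and $w$; this disposes of that operation entirely.

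Next I would handle contraction and deletion of an edge $g \notin A\cup B$. For contraction: a planar rotation system of $G$ restricts to one of $G/g$ (this is exactly the graph-level content already used in \autoref{sec_vertex_sum}, and in fact \autoref{contr_pres_planar} is the $2$-complex analogue), and since $g$ touches none of the edges in $A\cup B$ the induced rotators at the branch vertices $v,w$ are unchanged on $A$ and $B$; hence helicopter planarity descends. Conversely, given a planar rotation system of $G/g$ realising the helicopter condition, I pull it back: contractions can be undone inside a plane embedding (splitting the contraction vertex back into the two endvertices of $g$, choosing the two new rotators compatibly), and because $g$ is incident with neither $v$ nor $w$ this pullback leaves $\sigma_v|_A$ and $\sigma_w|_B$ untouched. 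Deletion of $g$ is the dual statement; alternatively, since $G$ is $3$-connected and planar and $\hat H$ is obtained by deletions and contractions of edges outside $A\cup B$, one may phrase everything in terms of the essentially unique plane embedding of $G$ and observe that both $G/g$ and $G-g$ inherit a plane embedding in which the cyclic orders at $v$ and $w$ on $A,B$ are read off from the embedding of $G$.

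For the series and parallel reductions --- replacing a parallel pair $a_i,b_j$ (with $a_i\in A\setminus B$, $b_j\in B\setminus A$) by a single edge playing the role of both, or the series analogue --- the point is that these are plane-embedding-preserving moves: two parallel edges bound a disc face (or, after the move, the single edge sits where that bigon was), and a degree-two vertex on a series pair is suppressible within the embedding. Thus a planar rotation system of the larger graph yields one of the smaller, with the rotator at $v$ now listing the merged edge exactly where $a_i$ stood and the rotator at $w$ listing it exactly where $b_j$ stood, so the helicopter condition is transported verbatim; conversely a planar rotation system of the reduced graph can be blown up by re-inserting the parallel edge (resp. the subdivision vertex), and one simply checks the merged edge can be doubled without disturbing the cyclic orders on $A$ and $B$.

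The step I expect to be the main obstacle is the ``conversely'' direction of the parallel/series reduction, specifically the bookkeeping at $v$ and $w$: after a parallel pair $\{a_i,b_j\}$ has been merged into one edge $h$, that edge $h$ simultaneously occupies a slot in $\sigma_v$ (as $a_i$) and a slot in $\sigma_w$ (as $b_j$), and when I split $h$ back into two parallel copies I must make sure the copy incident to $v$ lands in the $a_i$ slot of $\sigma_v$ and the copy incident to $w$ lands in the $b_j$ slot of $\sigma_w$, simultaneously with the two copies bounding a bigon face so that the embedding stays plane. Here is exactly where I would use that $\hat H$ is $3$-connected --- by Whitney's theorem (as in \autoref{locally_at_edge}) its plane embedding, hence its rotation system, is unique up to global reversal, so there is no ambiguity to track except that single orientation, and the helicopter condition (reverse permutations on $(a_1,a_2,a_3)$ versus $(b_1,b_2,b_3)$) is itself reversal-invariant. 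Assembling the per-operation equivalences by induction on the length of the minor sequence then gives the lemma.
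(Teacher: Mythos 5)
Your plan of establishing per-operation equivalences and composing them does not quite go through, because the intermediate graphs in the minor sequence need not be $3$-connected. Interestingly, the direction you flag as the main obstacle, the blow-up for a parallel or series reduction, is actually unproblematic: a parallel edge can always be added next to an existing one inside a plane embedding (likewise a subdivision vertex), and since only the $a_i$-copy is in $A$ and only the $b_j$-copy is in $B$, the restrictions $\sigma_v|_A$ and $\sigma_w|_B$ are unchanged. The real gap is in the pullback direction of contraction and deletion. You assert that ``contractions can be undone inside a plane embedding,'' but in general a plane embedding of $G/g$ lifts to one of $G$ only if the edges incident with each endpoint of $g$ happen to form two intervals of the rotator at the contraction vertex; dually, a plane embedding of $G-g$ lifts only if the endpoints of $g$ lie on a common face. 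Neither is automatic when the intermediate graph is not $3$-connected, and your clause ``since $G$ is $3$-connected and planar\ldots'' introduces a hypothesis that is not given here: $G$ is only assumed planar, and in the application to \autoref{minimal-helicopters} it is merely a subdivision of a $3$-connected graph.

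The paper's proof avoids lifting altogether by arguing globally rather than per operation. Take an arbitrary planar rotation system $\Sigma$ of $G$ and let $\Sigma'$ be the one it induces on $H$ along the composite minor sequence. Then $\Sigma'$ is planar, and the helicopter data at $(v,A)$ and $(w,B)$ is transported faithfully through every operation, so $\Sigma$ witnesses helicopter planarity of $\hat G$ exactly when $\Sigma'$ does so for $\hat H$. The $3$-connectedness of $H$ is used only once, at the end: by Whitney, $\Sigma'$ is the unique planar rotation system of $H$ up to global reversal, and, as you correctly note, the helicopter condition is reversal-invariant, so $\hat H$ is helicopter planar if and only if $\Sigma'$ witnesses it. Chaining these gives the lemma with no per-operation equivalences and no need to lift embeddings.
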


Before we can prove this, we need to recall some facts about rotation systems of graphs.
Given a graph $G$ with a rotation system $\Sigma=(\sigma_v|v\in V(G))$ and an edge $e$. The
rotation
system \emph{induced} by $\Sigma$ on $G-e$ is $(\sigma_v-e|v\in V(G))$. Here $\sigma_v-e$ is
obtained from the cyclic ordering $\sigma_v$ by deleting the edge $e$.
The rotation
system \emph{induced} by $\Sigma$ on $G/e$ is $(\sigma_v|v\in V(G/e)-e)$ together with $\sigma_e$
defined as follows. Let $v$ and $w$ be the two endvertices of $e$. Then $\sigma_e$ is obtained from
the cyclic ordering $\sigma_v$ by replacing the interval $e$ by the interval $\sigma_w-e$ (in
such a way that the predecessor of $e$ in $\sigma_v$ is followed by the successor of $e$ in
$\sigma_w$). Summing up, $\Sigma$ induces a rotation system at every minor of $G$. Since the class
of plane graphs\footnote{A \emph{plane graph} is a graph together with an embedding in the plane. }
is closed under taking minors, rotation systems induced by planar rotation systems are planar.

\begin{proof}[Proof of \autoref{minimal_minor}]
Let $\Sigma$ be a planar rotation system of $G$. Let $\Sigma'$ be the rotation system of the
graph $H$ of $\hat H$ induced by $\Sigma$. As mentioned above, $\Sigma'$ is planar.

Moreover, $\Sigma$ witnesses that  $\hat G$ is helicopter planar if and only if   $\Sigma'$
witnesses that  $\hat H$ is helicopter planar. Hence if $\hat G$ is helicopter planar, so is $\hat H$. Now
assume that $\hat H$ is helicopter planar. Since $H$ is 3-connected, it must be that $\Sigma'$ witnesses that
the helicopter graph $\hat H$ is helicopter planar. Hence the helicopter graph $\hat G$ is helicopter planar.
\end{proof}

Our aim is to characterise when 3-connected helicopter graphs are helicopter planar. By \autoref{minimal_minor}
it suffices to study that question for helicopter-minor minimal 3-connected helicopter
graphs; we call such helicopter graphs  \emph{3-minimal}.

It is reasonable to expect -- and indeed true, see below -- that there are only finitely
many 3-minimal helicopter graphs, as follows.

Let $\hat G=(G,v,w, ((a_i,b_i)|i=1,2,3))$ be a helicopter graph.
We denote by $V_A$ the set of endvertices of edges in $A$ different from $v$.
We denote by $V_B$ the set of endvertices of edges in $B$ different from $w$.

\begin{lem}\label{VA}
 Let $\hat G=(G,v,w, ((a_i,b_i)|i=1,2,3))$ be 3-minimal. Unless $G$ is $K_4$, every edge
in $E(G)\sm (A\cup B)$ has its endvertices either both in $V_A$ or both in
$V_B$.
\end{lem}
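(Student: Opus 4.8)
I would argue by contradiction. Suppose $G\neq K_4$ but there is an edge $e=xy$ in $E(G)\sm(A\cup B)$ with $\{x,y\}\not\subseteq V_A$ and $\{x,y\}\not\subseteq V_B$; call such an $e$ \emph{bad}. Since $K_4$ is the only $3$-connected graph on at most four vertices, $|V(G)|\geq 5$. The plan is to produce a proper $3$-connected helicopter minor of $\hat G$, contradicting $3$-minimality. The crucial way the ``bad'' hypothesis enters is a bookkeeping observation: contracting $e$ can never make two edges of $A$ parallel (that would force $\{x,y\}\subseteq V_A$), never two edges of $B$ parallel, and it keeps the images of $v$ and $w$ as distinguished vertices incident with the $a_i$ and $b_i$; so after contracting $e$ every multi-edge that is created can be removed by one of the permitted operations -- deleting a parallel edge not in $A\cup B$, or (when both parallel edges lie in $A\cup B$) a parallel reduction -- and the outcome is again a helicopter graph with the same $A,B$.

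First I would try deleting $e$: if $G-e$ is $3$-connected, then $(G-e,v,w,((a_i,b_i)))$ is a proper $3$-connected helicopter minor, a contradiction. So assume $G-e$ is not $3$-connected. As $G$ is $3$-connected, hence $2$-edge-connected, removing one edge cannot create a cut vertex, so $G-e$ has a $2$-separator $\{s_1,s_2\}$ with $x$ and $y$ in different components of $G-e-\{s_1,s_2\}$. Next I would try contracting $e$ and cleaning up as above to obtain a helicopter graph $\hat G'$; if its underlying graph is $3$-connected we again get a proper $3$-connected helicopter minor and are done. So assume not. Suppressing multi-edges does not affect connectivity, and the image in $G$ of any separator of $G/e$ avoiding the contracted vertex would be a separator of $G$ of the same size; since $G$ is $3$-connected, a separator of $G/e$ of size at most two must therefore use the contracted vertex, and so there is a vertex $z$ with $\{x,y,z\}$ a $3$-separator of $G$.

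It then remains to handle the case in which $G-e$ has a $2$-separator $\{s_1,s_2\}$ and $G$ has a $3$-separator $\{x,y,z\}$ simultaneously. Here I would analyse the interaction of the two small separations together with the minimum-degree-$\geq 3$ condition. The side of $\{x,y,z\}$ not containing both $s_1$ and $s_2$ (in degenerate configurations a suitably chosen minimal side) is forced to be very small -- a single vertex, or a short path of degree-$3$ vertices -- and in each such configuration one of three things happens: some vertex there has all its incident edges in $A\cup B$, which by the definitions of $V_A$ and $V_B$ forces $\{x,y\}\subseteq V_A$ or $\{x,y\}\subseteq V_B$, contradicting badness; or there is a degree-$3$ vertex incident with an edge $f\notin A\cup B$ with $f\neq e$, and contracting $f$ (or deleting it and series-reducing the resulting degree-$2$ vertex, when its two remaining edges are an $A$-edge and a $B$-edge) yields a proper $3$-connected helicopter minor; or the structure collapses and $G\cong K_4$, against the hypothesis. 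I expect this last step -- the separator bookkeeping, i.e. tracking which of the finitely many marked edges $a_i,b_i$ and vertices $v,w,V_A,V_B$ lie on which side of the two separations and checking that the reduction performed is a legitimate helicopter-minor operation -- to be the main obstacle, since it is precisely where one uses that $\hat G$ is $3$-minimal rather than merely $3$-connected. In every case one reaches a contradiction, so no bad edge exists, which is exactly the assertion that every edge of $E(G)\sm(A\cup B)$ has both endvertices in $V_A$ or both in $V_B$.
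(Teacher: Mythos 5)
Your plan has a genuine gap precisely where you yourself flag one, and it is not a minor bookkeeping obstacle: you have no tool to close the ``both fail'' case, whereas the paper closes it before it can arise.

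The paper's proof rests on Bixby's lemma, which for any edge $e$ of a $3$-connected graph $G\neq K_4$ guarantees that the \emph{co-simplification} of $G-e$ or the \emph{simplification} of $G/e$ is $3$-connected. Your case split is on whether $G-e$ \emph{itself} is $3$-connected, which is a strictly stronger condition than its co-simplification being $3$-connected; so your case (c) is larger than it should be and includes cases Bixby's lemma rules out. More importantly, Bixby's lemma makes case (c) vanish altogether, so the hard separator-interaction analysis you defer never has to be done. The paper's argument is also not a contradiction from a ``bad'' edge; it directly shows that for \emph{any} $e\notin A\cup B$, the deletion branch is impossible (a series class with two $A$-edges or two $B$-edges would have to arise, but such a class would force $v$ or $w$ to have degree two in $G-e$, which cannot happen since $v$ is incident with all three of $a_1,a_2,a_3\notin\{e\}$), and then the contraction branch forces a nontrivial parallel class with two $A$-edges (or two $B$-edges), giving a triangle on $\{v,x,y\}$ and hence $x,y\in V_A$. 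Your observation about cleanups after contraction being legal under the ``bad'' hypothesis is sound, and the deletion and contraction sub-cases you do treat are essentially consistent with the paper's; what is missing is the dichotomy that makes the residual case empty. Without Bixby's lemma or an explicit substitute (e.g. an argument via Tutte's wheels-and-whirls machinery), the final paragraph of your sketch is not a proof, and I do not see how the ad hoc small-separator analysis there would terminate, since the auxiliary edge $f$ you propose to contract or delete could itself trigger the same failure.
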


 In the proof of \autoref{VA} we shall use the following lemma of Bixby.

 \begin{lem}[Bixby's lemma \cite{bixby1982simple} or {\cite[Lemma 8.7.3]{Oxley2}}]\label{bixby}
  Let $G$ be a 3-connected graph different\footnote{In the original paper, Bixby proves this lemma in the more general context of matroids and the definitions are set up so that the matroids $U_{1,3}$ and $U_{2,3}$ are 3-connected and thus the assumption $G\neq K_4$ is not necessary.} from $K_4$. For every edge $e$ of $G$ the simplification of $G/e$ or the co-simplification of $G-e$ is 3-connected.
 \end{lem}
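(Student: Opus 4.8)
The final statement is Bixby's lemma, so the plan is essentially to reproduce Bixby's connectivity argument \cite{bixby1982simple}, specialised to graphs. Since the paper's $3$-connected graphs are simple, I would work with vertex connectivity throughout (equivalently with the graphic matroid $M(G)$, for which Tutte $3$-connectedness matches vertex $3$-connectedness once there are at least four vertices, and for which $\mathrm{si}(G/e)$ and $\mathrm{co}(G\setminus e)$ are the graphs underlying $\mathrm{si}(M(G)/e)$ and $\mathrm{co}(M(G)\setminus e)$). Assume for contradiction that neither $\mathrm{si}(G/e)$ nor $\mathrm{co}(G\setminus e)$ is $3$-connected, and write $e=uv$; as $G\neq K_4$ is simple and $3$-connected it has at least five vertices and minimum degree at least $3$.

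First I would translate the two failures into cuts of $G$. For $\mathrm{si}(G/e)$: it has at least four vertices, so it has a $2$-vertex-cut; the two cut vertices, viewed in $G$, are either disjoint from $\{u,v\}$ --- in which case the contraction played no role and the same two vertices already disconnect $G$, a contradiction --- or the cut contains the contracted vertex, which lifts to a $3$-vertex-cut $\{u,v,d\}$ of $G$ with both sides non-empty and, by simplicity of $\mathrm{si}(G/e)$, non-degenerate. For $\mathrm{co}(G\setminus e)$: here $G\setminus e$ is $2$-connected (vertex connectivity drops by at most one under edge deletion), so cosimplification only suppresses those of $u,v$ that have degree $3$ in $G$; a $2$-cut of $\mathrm{co}(G\setminus e)$ lifts to a $2$-vertex-cut $\{a,b\}$ of $G\setminus e$ --- not of $G$, since $G$ is $3$-connected --- with $u$ and $v$ on opposite sides and $a,b\notin\{u,v\}$ (a cut meeting $\{u,v\}$ would already be a $2$-cut of $G$), and this separation survives cosimplification, i.e.\ each side carries a vertex of degree at least $3$ in $\mathrm{co}(G\setminus e)$.

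The heart of the proof is then to uncross the $3$-vertex-cut $\{u,v,d\}$ of $G$ against the near-cut $\{a,b\}$ (a genuine $2$-separation of $G\setminus e$). Formally I would use submodularity of the vertex-cut function --- for vertex subsets $S,T$, $\kappa_G(S)+\kappa_G(T)\ge \kappa_G(S\cap T)+\kappa_G(S\cup T)$ --- applied to the sides of the two separations, and push through the short case analysis according to how $d$ and $\{a,b\}$ distribute among the sides of the other separation. In every case one obtains either a $1$- or $2$-vertex-cut of $G$ itself (contradiction) or a configuration so constrained that $G$ has exactly four vertices with all edges present, i.e.\ $G=K_4$ (contradiction); the non-degeneracy of the two separations --- equivalently, that $e$ does not lie in a triangle whose third vertex would make the contraction separation trivial, and that neither side of the $G\setminus e$ separation is a single subdivided path --- is exactly what rules out the spurious, non-contradictory outcomes. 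The main obstacle is this bookkeeping at the small end: a ``$2$-separation of $G/e$'' coming from a parallel pair (that is, from a common neighbour of $u$ and $v$), or a ``$2$-separation of $G\setminus e$'' coming from a series pair (that is, from $\deg_G u=3$ or $\deg_G v=3$), does not contradict $3$-connectedness of $G$, and these genuinely degenerate cases must be isolated and handled before the clean uncrossing applies --- it is precisely the simultaneous occurrence of such degeneracies on both sides that produces the $K_4$ exception in the statement.
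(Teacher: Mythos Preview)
The paper does not prove this lemma at all: it is stated as Bixby's lemma with citations to \cite{bixby1982simple} and \cite[Lemma~8.7.3]{Oxley2}, and is then simply invoked in the proof of \autoref{VA}. So there is no in-paper proof to compare your proposal against.

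Your sketch follows the standard line of argument one finds in the cited sources: translate the failure of $\mathrm{si}(G/e)$ to be $3$-connected into a $3$-separator $\{u,v,d\}$ of $G$, translate the failure of $\mathrm{co}(G\setminus e)$ into a $2$-separator $\{a,b\}$ of $G\setminus e$ with $u,v$ on opposite sides, and then uncross. The outline is correct, though as written it is closer to a plan than a proof: the submodularity step and the case analysis on how $d$ and $\{a,b\}$ sit relative to the two separations need to be carried out explicitly, and the claim that ``each side carries a vertex of degree at least $3$ in $\mathrm{co}(G\setminus e)$'' is not quite the right non-degeneracy condition to record --- what you actually need is that the lifted $2$-separation of $G\setminus e$ has at least two vertices (not counting the separator) on each side after accounting for any suppressed degree-$2$ endpoints of $e$. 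But none of this is a genuine gap; it is routine bookkeeping, and the paper evidently regards the whole lemma as standard background.
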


\begin{proof}[Proof of \autoref{VA}.]
By assumption $G$ is a 3-connected graph with at least five vertices such that any proper helicopter
minor of
$\hat G$ is not 3-connected. Let $e$ be an edge of $G$ that is not in $A\cup B$. By Bixby's Lemma
(\autoref{bixby}) either $G-e$ is 3-connected after suppressing
series
edges or $G/e$ is 3-connected after suppressing parallel edges.

\begin{sublem}\label{del7}
 There is no 3-connected graph $H$ obtained from $G-e$ by suppressing
series edges.
\end{sublem}

\begin{cproof}
Suppose for a contradiction that there is such a graph $H$. As $G$ is 3-connected, every class of
series edges of $G-e$
has size at most two.
By minimality of $G$, there is no helicopter minor of $\hat G$ with graph $H$. Hence one of these
series classes has to contain two edges in $A$ or two edges in $B$. By symmetry, we may assume that
$e$ has an endvertex $x$ that is incident with two edges $e_1$ and $e_2$ in $A$. As $G$ is
3-connected these two adjacent edges of $A$ can only share the vertex $v$. Thus $x=v$. This
is a
contradiction to the assumption that $e_1$ and $e_2$ are in series as $v$ is incident with the
three edges of $A$.
\end{cproof}

By \autoref{del7} and Bixby's Lemma (\autoref{bixby}), we may assume that the graph $H$ obtained from $G/e$ by
suppressing parallel edges is 3-connected. By minimality of $G$, there is no helicopter minor of
$\hat G$ with graph $H$. Hence $G/e$ has a nontrivial parallel class; and it must contain two edges
$e_1$ and $e_2$ that are both in $A$ or both in $B$. By symmetry we may assume that $e_1$ and $e_2$
are in $A$. Since $G$ is 3-connected, the edges $e$, $e_1$
and $e_2$ form a triangle in $G$. The common vertex of $e_1$ and $e_2$ is $v$. Thus both
endvertices of $e$ are in $V_A$.
\end{proof}

A consequence of \autoref{VA} is that every 3-minimal helicopter graph has at most most 12 edges.
However, we can say more:

\begin{cor}\label{cor100}

Let $\hat G=(G,v,w, ((a_i,b_i)|i=1,2,3))$ be 3-minimal. Then $G$ has at most
five
vertices.
\end{cor}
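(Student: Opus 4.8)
The plan is to combine \autoref{VA} with a case analysis on the structure of a $3$-minimal helicopter graph $\hat G=(G,v,w,((a_i,b_i)|i=1,2,3))$, using $3$-connectivity to bound the number of vertices. First I would dispose of the trivial case $G=K_4$, which has four vertices, so from now on assume $G\neq K_4$ and apply \autoref{VA}: every edge of $E(G)\sm(A\cup B)$ has both endvertices in $V_A$ or both in $V_B$. Since $|A|=|B|=3$, we have $|V_A|\le 3$ and $|V_B|\le 3$, and the vertex set of $G$ is contained in $\{v,w\}\cup V_A\cup V_B$. If $v=w$ this already gives at most $1+3+3=7$ vertices, but I will need to do better; and if $v\neq w$ the same bound is $8$, again not yet five. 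So the heart of the argument is to show that $V_A$ and $V_B$ cannot both be large, and that the edges within $V_A$ (resp. $V_B$) force extra identifications.

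The key observation is that if an edge $a_i\in A$ is \emph{not} also in $B$, then its endvertex in $V_A$, call it $x$, has the property that every edge incident with $x$ other than $a_i$ lies in $E(G)\sm(A\cup B)$ or in $A\cup B$; by \autoref{VA} the former all have their other endpoint in $V_A$ as well. Combined with $3$-connectivity, $\deg_G(x)\ge 3$, so $x$ must send at least two edges into $V_A\cup\{$edges of $A\cup B\}$; careful bookkeeping shows $V_A$ behaves like the vertex set of a small $3$-connected ``gadget'' hanging off $v$ via the three edges of $A$. The extremal configuration is when $V_A$ consists of a single vertex (the three edges of $A$ share a common other endpoint) or of exactly the structure forcing $|V(G)|\le 5$. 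I would run the same analysis on the $w$-side, and then argue that if $|V_A|=3$ \emph{and} $|V_B|=3$ with $v\neq w$, one can find a proper helicopter minor that is still $3$-connected (contracting or deleting a suitably chosen edge inside $V_A$, and using the parallel/series exchange moves and the $(v,A)\mapsto(w,B)$ symmetry of helicopter minors), contradicting $3$-minimality. This pins the total vertex count down to at most five.

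Concretely, the steps in order are: (i) handle $G=K_4$ directly; (ii) invoke \autoref{VA} to get $V(G)\subseteq\{v,w\}\cup V_A\cup V_B$ with $|V_A|,|V_B|\le 3$; (iii) show, using Bixby's lemma (\autoref{bixby}) in the same spirit as the proof of \autoref{VA}, that any ``excess'' vertex inside $V_A$ not forced by $3$-connectivity yields a proper $3$-connected helicopter minor, hence cannot occur in a $3$-minimal graph; (iv) deduce that $|V_A\cup V_B\cup\{v,w\}|\le 5$ by checking the few surviving configurations (the cases $v=w$ versus $v\neq w$, and $|V_A|+|V_B|$ large), discarding each large one via a proper helicopter minor obtained from contracting an internal edge of $V_A$ or $V_B$ and then applying a series/parallel reduction move. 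The main obstacle I anticipate is step (iii)--(iv): making the minor-reduction moves genuinely \emph{helicopter} minors (so that the marked edges $A,B$ survive correctly, possibly merging an $a_i$ with a $b_j$ via the parallel/series exchange) requires keeping precise track of which edges are in $A\cap B$, $A\sm B$, $B\sm A$, and this case analysis — while elementary — is where all the real work sits. One must also be slightly careful that after a reduction the resulting helicopter graph is still $3$-connected, which is exactly where Bixby's lemma does the heavy lifting, just as in the proof of \autoref{VA}.
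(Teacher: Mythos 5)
There is a genuine gap: your steps (iii)--(iv) are the entire content of the corollary, and you explicitly defer them as ``where all the real work sits'' without supplying an argument. The strategy you sketch --- finding yet more proper $3$-connected helicopter minors via Bixby's lemma to rule out configurations with $\geq 6$ vertices --- is not only unsubstantiated but also unnecessary: once \autoref{VA} is in hand, no further minor reductions are required.

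The paper's proof is a short connectivity observation. Set $G_A = G[V_A + v]$ and $G_B = G[V_B + w]$, each having at most four vertices. By \autoref{VA}, $G = G_A \cup G_B$: every edge of $A$ joins $v$ to $V_A$, every edge of $B$ joins $w$ to $V_B$, and every remaining edge lies entirely in $V_A$ or entirely in $V_B$. If $|V(G_A)\cap V(G_B)| \geq 3$ then $|V(G)| \leq 4+4-3 = 5$ by inclusion--exclusion. If $|V(G_A)\cap V(G_B)| \leq 2$, then since $G = G_A\cup G_B$ this intersection would be a separator of size $\leq 2$ unless one of $V(G_A)\setminus V(G_B)$, $V(G_B)\setminus V(G_A)$ is empty; $3$-connectivity forbids such a small separator, so $G_A\subseteq G_B$ or $G_B\subseteq G_A$, giving $|V(G)| \leq 4$. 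You noted that $V(G) \subseteq \{v,w\}\cup V_A\cup V_B$ (at most $8$ vertices) but then reached for the wrong tool to close the gap from $8$ down to $5$; the missing idea is that $G_A$ and $G_B$ cover $G$ and hence their intersection must be large (or trivial) by $3$-connectivity --- no additional helicopter-minor manipulations, and no bookkeeping about $A\cap B$ versus $A\sm B$, are needed here.
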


\begin{proof}
 Let $G_A$ be the induced subgraph with vertex set $V_A+v$. Let $G_B$ be the induced subgraph
with vertex set $V_B+w$.
Note that $G=G_A\cup G_B$.
If $G_A$ and $G_B$ have at least three vertices in common, then $G$ has at most five vertices as
$G_A$ and $G_B$ both have at most four vertices. Hence we may assume that $G_A$ and $G_B$ have at
most two vertices in common. As $G$ is 3-connected, the set of common vertices cannot be a
separator of $G$. Hence $G_A\se G_B$ or $G_B\se G_A$. Hence $G$ has at most four vertices in this
case.
\end{proof}

\begin{proof}[Proof of \autoref{minimal-helicopters}.]
Let $C$ be a 2-complex with a single loop $\ell$ be given. Let $x$ be its unique vertices incident with the loop $\ell$.
Assume that the link graph at $x$ is a subdivision of a 3-connected graph.
Assume that the link graph at $x$ is not helicopter planar.
Our aim is to construct a helicopter complex of bounded size that has $C$ as a space minor.

We denote the link graph at $x$ by $G$.
If $G$ is non-planar, we are done by \autoref{cone-final} and \autoref{cone-red}, so assume it is planar.
Since $G$ is not helicopter planar but planar, the loop must be incident with at least three faces.
Since $G$ is a subdivision of a 3-connected graph, it has a unique embedding in the plane (up to reversing).
Since $G$ is not helicopter planar, no associated helicopter graph can be helicopter planar.
Let $\hat G=(G,v,w, ((a_i,b_i)|i=1,2,3))$ be some associated helicopter graph.
$\hat G$ has a 3-connected helicopter graph as a helicopter minor.
So by definition of 3-minimality, $\hat G$ has a 3-minimal minor $\hat H=(H,v,w, ((a_i,b_i)|i=1,2,3))$; say, $H$ is obtained from $G$ by contracting the edge set $Z$ and deleting the edge set $D$, in formulas: $H=G/Z\sm D$.

Amongst all choices for sets $Z$ and $D$ we pick one so that $D$ is inclusion-wise maximal.
Let $D'$ consist of those edges of $D$ that do not carry any label.
We obtain $\hat I=(I,v,w, ((a_i,b_i)|i=1,2,3))$ from $\hat G$ by deleting those edges of $D'$.
By the choice of $D$ and since $H$ has bounded size by \autoref{cor100}, all but boundedly many edges of $Z$ are subdividing edges in $I$.
We obtain $Z'$ from $Z$ by removing all edges that carry labels and removing all edges that are not subdivision edges in $I$ and removing two subdivision edges of each edge of $I$, and if there are less than two subdivision edges for some edge of $I$ we remove all of them; note that $Z\sm Z'$ has bounded size.
We obtain $\hat J=(J,v,w, ((a_i,b_i)|i=1,2,3))$ from $\hat I$ by contracting $Z'$.
This construction ensures that $J=G/Z'\sm D'$.
By \autoref{cor100} $\hat H$ has bounded size and since only a bounded number of edges carry labels and $Z\sm Z'$ is bounded, also $\hat J$ has bounded size.

\begin{eg}
 This construction ensures that in the graph $\hat J$ the vertices $v$ and $w$ are distinct as they are distinct in $\hat G$.
\end{eg}

Since $\hat G$ is not helicopter planar, by \autoref{minimal_minor} $\hat H$ is not helicopter planar. Applying the other implication of \autoref{minimal_minor} to $\hat J$ and $\hat H$ we deduce that $\hat J$ is not helicopter planar.

It remains to show that $C$ has a space minor $\hat C$ that is a helicopter 2-complex so that its unique vertex $y$ incident with the loop has the property that $\hat J$ is an associated helicopter link graph of $\hat C$ at $y$, and so that $\hat C$ has bounded size.
We obtain $C_1$ from $C$ by deleting all faces and edges that are not incident with the vertex $x$ and then splitting all vertices except for $x$.
The link graph of $C_1$ at $x$ is $G$, and all edges not incident with $x$ are in a single face, and are nonloops.
Moreover, all its vertices are equal to $x$ or adjacent to it and all its edges or faces correspond to vertices or edges of $G$.

We obtain $C_2$ from $C_1$ by deleting all faces corresponding to edges in $D'$, and then split at all vertices aside from $x$, and delete isolated edges of $C$. The link graph of $C_2$ at $x$ is $G\sm D'$.
If a non-loop edge incident with $x$ is incident with exactly two faces, then the link graph at its endvertex other than $x$ is a path consisting of two edges.
Let $Z''$ be the set of edges of $C_2$ not incident with  $x$ that are in faces corresponding to an edge of $L(x)$ in $Z'$.
We obtain $C_3$ from $C_2$ by contracting $Z''$.
The link graph of $C_3$ at $x$ is $G\sm D'$.
\begin{sublem}\label{final-sublem}
  $C_3$ has only the loop $\ell$, and $C_3$ is a space minor of $C_2$.
\end{sublem}
\begin{cproof}
Recall that $C_2$ has only the loop $\ell$, so we need to investigate whether a new loop can arise from the contraction of $Z''$.
Let $P$ be a path of $I$ all of whose internal vertices have degree two in $I$.
Let $F_P$ be the set of faces whose corresponding edges in the link graph $L(x)$ are edges in $P$.
Let $C_2'$ be the 2-complex obtained from $C_2$ by deleting the faces of $F_P$ and after that deleting edges not in a face, and split isolated vertices.
Now $C_2$ can be obtained from $C_2'$ by adding the cone over $P$ at the path of length two corresponding to the first and last vertex of $P$.
The set $Z'$ avoids all edges of $P$ or at least two of them. Since contracting these edges commutes with the gluing of the cone over $P$ onto $C_2'$, it does not create any loop.
This is true for every choice of $P$ and contracting edges on different paths does not affect the other paths. In particular, we can contract the edges of $Z''$ in $C_2$ successively so that none of them creates a loop contracted, so $C_3$ is a space minor of $C_2$.
\end{cproof}

For every edge in $Z'$ its corresponding face of $C_3$ contains two edges since $C_3=C_2/Z''$. Since $\ell$ is the only loop of $C_3$ by \autoref{final-sublem}, it cannot be in any of these faces of size two.
We obtain $\hat C$ from $C_3$ by contracting each face corresponding to an edge of $Z'$ onto a single edge.
The link graph at $x$ is $J=G/Z'\sm D'$. By \autoref{final-sublem} $\hat C$ has only the loop $\ell$ and is a space minor of $C$.
Since $\hat J$ is not helicopter planar,  by \autoref{loop_to_helicopter} $\hat C$ is a helicopter complex.
Moreover, $\hat C$ has bounded size as all its vertices are equal to $x$ or adjacent to it and all its edges or faces correspond to one of the boundedly many vertices or edges of $J$.

\end{proof}

\subsection{Proof of \autoref{Kura_simply_con2}}

\begin{lem}\label{detour_via_5}
 Let $C$ be a stretched-out simplicial complex with a non-planar para-path $P$.
 Assume that the endvertices of $P$ are joined by an edge $e$ in $C$.

 Then $e$ is a non-planar para-path or $C$ has a space minor that is a helicopter complex.
\end{lem}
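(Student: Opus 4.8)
The plan is to form the cycle $o=P+e$, contract $P$ down to a single vertex, and show that the resulting $2$-complex is a helicopter complex. The only degenerate case is when $P$ consists of a single edge: then, since in a simplicial complex there is a unique edge between any two vertices, $e$ coincides with the edge of $P$, so $e=P$ is itself a non-planar para-path and we are done. So assume $P=v_0v_1\cdots v_k$ with $k\geq 2$, put $e=v_0v_k$, and let $v_0,v_k$ be the endvertices of $P$ (with link graphs that are subdivisions of $3$-connected graphs) and $v_1,\dots,v_{k-1}$ the interior ones (with parallel link graphs whose branch vertices have degree at least three); note all these link graphs are $2$-connected.

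The first substantive step is to see that \emph{stretched out} forces $o$ to be chordless. If some edge of $P$ had face-degree two, then, both of its endvertices being vertices of $P$ and hence having link graphs that are subdivisions of $3$-connected graphs or parallel graphs with branch vertices of degree at least three, the stretched-out condition would be violated at that edge; hence every edge of $P$ has face-degree at least three, and therefore at each interior $v_i$ the edges $v_{i-1}v_i$ and $v_iv_{i+1}$ are the two degree-$\geq 3$ vertices of the parallel graph $L(v_i)$, i.e.\ its branch vertices. Now any chord $g=v_iv_j$ of $o$ is a vertex of the $2$-connected link graphs at its endvertices and so has face-degree at least two; if its face-degree were exactly two the stretched-out condition would fail at $g$; and if it were at least three then, choosing an endvertex of $g$ that is an interior vertex of $P$ (possible since every chord of $o$ has an interior endpoint, as $g\neq e$), $g$ would be a branch vertex there and hence one of the incident edges of $P$ --- contradicting that $g$ is a chord. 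So $o$ is chordless.

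Next, let $D=C/(o-e)$ be obtained by successively contracting the $k$ edges of $P$; since $o$ is chordless, each edge contracted is a valid non-loop edge at the time it is contracted, so this is a sequence of space-minor operations, and exactly one loop results, namely $e$, at the vertex $v$ obtained by identifying $v_0,\dots,v_k$. Iterating \autoref{obs1}, the link graph $L_D(v)$ is the iterated vertex sum $L(v_0)\oplus L(v_1)\oplus\cdots\oplus L(v_k)$ along $v_0v_1,\dots,v_{k-1}v_k$; by \autoref{sum_3con} every partial sum is $2$-connected and every summing vertex is a non-cutvertex of the relevant factor. The two vertices of $L_D(v)$ coming from the loop $e$ are the surviving copy $e^+$ of $e$ from $L(v_0)$ and the surviving copy $e^-$ from $L(v_k)$, and since every face of $C$ incident with $e=v_0v_k$ is incident with both $v_0$ and $v_k$, the bijection between edges at $e^+$ and edges at $e^-$ in the definition of helicopter planarity is the identity on the set of faces incident with $e$. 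Suppose now $L_D(v)$ were helicopter planar, witnessed by a planar embedding $\rho$ whose rotators at $e^+$ and at $e^-$ are reverse of one another. Peeling off $L(v_k),L(v_{k-1}),\dots,L(v_1)$ in turn and applying \autoref{sum_planar2} at each step produces planar embeddings $\rho_0,\dots,\rho_k$ of $L(v_0),\dots,L(v_k)$ with $\rho_i$ and $\rho_{i+1}$ having reverse rotators at $v_iv_{i+1}$ for all $i$, and with $\rho_0$ agreeing with $\rho$ at $e^+$ and $\rho_k$ agreeing with $\rho$ at $e^-$; hence $\rho_0$ and $\rho_k$ have reverse rotators at $e$. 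But then $(\rho_0,\dots,\rho_k)$ witnesses that $P$ is a \emph{planar} para-path (the extra condition at $e$ being exactly this last reversal), contradicting the hypothesis. So $L_D(v)$ is not helicopter planar; as $D$ has $e$ as its unique loop, $D$ is a helicopter complex, and it is a space minor of $C$.

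The step I expect to be the main obstacle is recognising that \emph{stretched out} is precisely the hypothesis that makes $o=P+e$ chordless, so that contracting $P$ yields a single loop and $D$ is literally a helicopter complex rather than something that still needs cleaning up; the secondary difficulty is the vertex-sum bookkeeping in the last step, where one must track which vertices survive the iterated vertex sum so that \autoref{sum_planar2} delivers embeddings that are simultaneously compatible along all edges of $P$ and at $e$.
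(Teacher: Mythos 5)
Your proof is correct and follows essentially the same strategy as the paper's: contract the cycle $o=P+e$ down to a single loop and argue that the link graph at the contraction vertex cannot be helicopter planar, since a helicopter-planar embedding could be pulled back via \autoref{sum_planar2} to show $P$ is a planar para-path. The only cosmetic differences are that the paper contracts $P+e-x$ for an arbitrary edge $x$ of $P$ (leaving $x$ as the loop) and dispatches the degenerate case by first assuming $e$ is not itself a non-planar para-path, whereas you contract $P$ (leaving $e$ as the loop) and dispatch the degenerate case by noting that when $P$ has a single edge then $e=P$; you also spell out the chordlessness of $o$ via the stretched-out hypothesis, which the paper asserts without proof, and you make the iterated application of \autoref{sum_planar2} explicit, including the role of \autoref{sum_3con} in supplying the non-cutvertex hypotheses. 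These are all sound and correspond to steps the paper leaves to the reader.
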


\begin{proof}
Since we are done otherwise, assume that $e$ is not a para-path; that is, in $C/e$ the link graph at $e$ is planar. Let $x$ be an arbitrary edge of $P$.
The 2-complex $D$ is obtained from $C$ by contracting $P+e-x$. Since $C$ is stretched out, the edge $x$ is the only loop of $D$.
Let $v$ be the vertex of $D$ incident with $x$. If the link graph at $v$ in $D$ is helicopter planar, by \autoref{sum_planar2} applied successively along the edges $P$, the para-path $P$ is planar.
Since this is not possible by assumption, the link graph at $v$ is not helicopter planar. So $D$ is a helicopter complex.
\end{proof}

\begin{lem}\label{x13}
 Let $C$ be a stretched-out simplicial complex with a non-planar para-path $P$ so that the two endvertices of $P$ are not adjacent in $C$.

 Then $C$ has a space minor $D$ that is a simplicial complex and has an edge $e$ so that the link graph of $D/e$ at the vertex $e$ is not planar.
\end{lem}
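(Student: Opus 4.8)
The plan is to contract the para-path $P$ to a single edge and show that the link graph at the contraction vertex is non-planar; the effort lies in arranging for this to be witnessed by a genuinely simplicial complex.

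Write $P=v_0v_1\cdots v_k$ with $e_i=v_{i-1}v_i$ and endvertices $a=v_0$, $b=v_k$, non-adjacent in $C$, and set $G:=L_C(v_0)\oplus_{e_1}L_C(v_1)\oplus_{e_2}\cdots\oplus_{e_k}L_C(v_k)$, the $i$-th sum being taken over the vertex $e_i$, which lies in both $L_C(v_{i-1})$ and $L_C(v_i)$. Each $L_C(v_i)$ is connected and $2$-connected, hence has no cutvertex: the interior ones are parallel graphs whose branch vertices have degree at least three, the two end ones are subdivisions of $3$-connected graphs. (The case $k=1$ cannot occur: a single-edge para-path with non-adjacent endvertices is planar, since the essentially unique plane embeddings of its two $3$-connected-subdivision link graphs give rotators at $e_1$ that agree or are reverse, and reversing one embedding globally makes them reverse.) Iterating \autoref{sum_3con} with $k=2$ shows every partial vertex sum is $2$-connected, so at each of the $k$ summing steps the summing vertex is not a cutvertex of either side. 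Hence, iterating \autoref{sum_planar1} and \autoref{sum_planar2}, the connected graph $G$ admits a planar rotation system if and only if one can choose a planar rotation system at each $L_C(v_i)$ so that the rotators at $e_i$ in $L_C(v_{i-1})$ and $L_C(v_i)$ are reverse of one another for every $i$. As $a$ and $b$ are non-adjacent, this is exactly the statement that $P$ is a planar para-path, which is false; so $G$ has no planar rotation system and therefore is non-planar.

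It remains to produce, as a space minor of $C$, a \emph{simplicial} complex $D$ with an edge $e$ so that the link graph of $D/e$ at $e$ equals $G$. Following the proof of \autoref{get-combined cone}: first delete all faces of $C$ not incident with a vertex of $P$ and then all edges and vertices no longer in a face; since every surviving face still meets $V(P)$, the link graphs $L(v_i)$ for $v_i\in V(P)$ and the faces at the edges $e_i$ are unchanged, so $P$ remains a non-planar para-path. Next apply the splitting of \autoref{split-edge_remain_simplicial} with $W=V(P)$, exactly as in \autoref{get-combined cone}, obtaining a simplicial complex that around $P$ is a ``chain combined cone'' and still has $L(v_i)=L_C(v_i)$; then contract the path edges other than a chosen $e_j$ and put $e:=e_j$, so that by iterating \autoref{obs1} the link graph of $(\cdot)/e$ at $e$ is the iterated vertex sum $G$. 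The main obstacle, and the only delicate point, is keeping the complex simplicial through these last contractions: contracting a path edge of $C$ creates a face of size two (hence parallel edges), so one must run the contractions through a chain generalisation of the simplicial combined cone construction — using its auxiliary vertices $v_f$ — and check that the outcome $D$ is simplicial, is a space minor of $C$, and has $L_{D/e}(e)=G$. Since $G$ is non-planar by the previous paragraph, this completes the proof.
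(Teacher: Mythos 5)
Your first paragraph, where you compute the iterated vertex sum $G$ and show it has no planar rotation system by iterating \autoref{sum_planar1}, \autoref{sum_planar2}, and \autoref{sum_3con}, is sound and is a correct (more explicit) version of what the paper leaves implicit when it asserts that $e$ is a non-planar para-path in $D$. (The parenthetical remark about $k=1$ is off: a single-edge para-path always has adjacent endvertices, so the hypothesis already excludes it; moreover your claimed reason --- that one can always flip one of two Whitney-unique embeddings to make the rotators at $e_1$ reverse --- is false in general, since the two rotators need not agree nor be reverse. This does not affect your argument, since the case is moot, but the justification is wrong.)

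The second paragraph contains a genuine gap, and you say so yourself: you never carry out the step you identify as ``the main obstacle, and the only delicate point,'' namely producing a space minor $D$ that is a \emph{simplicial} complex and actually has $L_{D/e}(e)\cong G$. Appealing to an undefined ``chain generalisation of the simplicial combined cone construction'' is not a proof. The paper's route is different and explicit: it topologically deletes all \emph{edges} of $C$ not meeting $P$ (keeping the faces), splits vertices off $P$, contracts $P-e$, and then contracts the resulting size-two faces; the point of this specific sequence is that it visibly returns a simplicial complex. Your route (deleting faces not meeting $P$, invoking \autoref{split-edge_remain_simplicial} with $W=V(P)$, then contracting) is plausible but unverified precisely on the issue you flag.

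More importantly, you never use the stretched-out hypothesis, and that is a sign of a real missing ingredient. Without it $P$ could have chords. A chord $v_iv_j$ with $v_i$ interior has face-degree two (since it is a non-branch vertex of the parallel graph $L(v_i)$), and both endvertices then have link graphs which are subdivisions of $3$-connected graphs or parallel graphs with branch vertices of degree at least three --- exactly what stretched out forbids for degree-two edges. So stretched out plus the non-adjacency of the endvertices rules out chords of $P$, and that is what guarantees no loops after contracting $P-e$. Both your construction of $D$ and the well-definedness of the iterated vertex sum $G$ as a \emph{simple} graph silently depend on this; as written, your proof would apparently go through for complexes that are not stretched out, where the conclusion can fail.
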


\begin{proof}
We obtain $C_1$ from $C$ by topologically deleting all edges of $C$ that do not have an endvertex on $P$ and then we split all vertices outside $P$.
Note that $C_1$ is a simplicial complex and it is stretched out.
Let $e$ be an arbitrary edge on $P$.
We obtain $C_2$ from $C_1$ by contracting the edges of $P-e$. By construction of $C_1$, the 2-complex $C_2$ has no loops and if edges are in parallel, then they form a face of size two.
We obtain $D$ from $C_2$ by contracting all faces of size 2. By construction of $C_2$, the 2-complex $D$ is a simplicial complex.
The link graphs at the two endvertices of $e$ in $D$ are isomorphic to the link graphs at the endvertices of $P$ in $C$.
So the edge $e$ is a non-planar para-path in $D$; that is, the link graph of $D/e$ at the vertex $e$ is not planar.
\end{proof}

We summarise this section in the following.

\begin{proof}[Proof of \autoref{Kura_simply_con2}.]
Let $C$ be a nullhomologous simplicial complex that is locally almost 3-connected and stretched out.
First note that by \autoref{heli_no_PRS} no 2-complex in $\Xcal$ admits a planar rotation system, and so by \autoref{kura_intro} and \autoref{final-list} no 2-complex in $\Ycal$ admits a planar rotation system.
2-complexes in $\Tcal$ do not admit planar rotation system, as explained in the Introduction to this chapter.

Conversely, assume that $C$ has no planar rotation system. Then by \autoref{sum123} we get outcome 2, 3, 4 or 5 from that lemma.
Outcome 2 gives a cone over $K_5$ or $K_{3,3}$ as a space minor by \autoref{cone-final} and \autoref{cone-red}; such cones as in $\Ycal$.

Outcome 3 gives a non-planar para-path $P$. If $P$ consists of a single edge, we can directly infer \autoref{combined-cone-summary} and \autoref{final-list} to find a combined cone from $\Ycal$ as a space minor of $C$. If otherwise the two endvertices of $P$ are not adjacent, we use \autoref{x13} to reduce to the case that $P$ has a single edge, and so are done here as well. In the remaining cases, we rely on \autoref{detour_via_5} to find outcome 5, which is discussed below.

Outcome 4 is a non-planar para-cycle. By \autoref{has_obstruction} it gives rise to a torus crossing obstruction, and thus an element of $\Tcal$.
Outcome 5 is a helicopter complex, which contains an element of $\Xcal$ by \autoref{minimal-helicopters}; and $\Xcal \se \Ycal$ by definition.
\end{proof}

\section{Stretching local 2-separators}\label{s2}

In this section we define stretching at local 2-separators and prove basic properties of this
operation.
This operation is necessary for \autoref{main_streching}.

A \emph{2-separator} in a 2-connected graph\footnote{In this paper we will only consider
2-separators of link graphs of simplicial complexes; such link graphs do not have parallel
edges or loops. For multigraphs, it seems suitable to also consider $(a,b)$ a 2-separator
if there are two parallel edges between them and $L-a-b$ is
not empty or $a$ and $b$ have three parallel edges in between. } $L$ is a
pair of
vertices $(a,b)$ such that
$L-a-b$ has at least two connected components.

Given a simplicial complex $C$ with a vertex $v$ such that its link graph $L(v)$ is 2-connected
and has a 2-separator $(a,b)$, the simplicial
complex $C_2$ obtained from $C$ by
\emph{stretching $\{a,b\}$
at $v$} is defined as follows, see \autoref{fig:s_pair}.

   \begin{figure} [htpb]
\begin{center}
   	  \includegraphics[height=4.5cm]{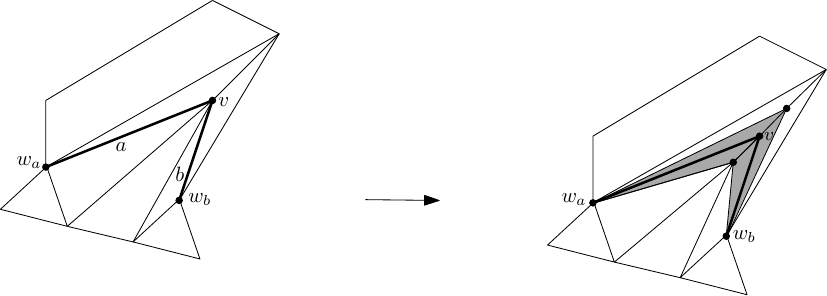}
   	  \caption{If we stretch the highlighted pair of edges in the simplicial complex on the
left,
we obtain the one on the right. The newly
added faces are
depicted in grey.}\label{fig:s_pair}
\end{center}\vspace{-0.7cm}
   \end{figure}

   \begin{figure} [htpb]
\begin{center}
   	  \includegraphics[height=1cm]{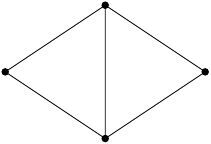}
   	  \caption{The simplicial complex $\Delta_2$.}\label{fig:Delta2}
\end{center}\vspace{-0.7cm}
   \end{figure}

We denote by $\Delta_2$ the simplicial complex obtained from two disjoint faces of size three by
gluing them together at an edge, see \autoref{fig:Delta2}.
Let $\Delta^+_n$ be the simplicial complex obtained by gluing $n$ copies of $\Delta_2$
together at a path of length two whose endvertices have degree two in $\Delta_2$ (this is uniquely
defined up to isomorphism), see \autoref{fig:Delta2N}.
   \begin{figure} [htpb]
\begin{center}
   	  \includegraphics[height=2cm]{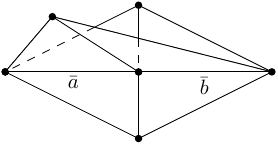}
   	  \caption{The simplicial complex $\Delta^+_3$ with the gluing edges
labelled $\bar a$ and $\bar b$.}\label{fig:Delta2N}
\end{center}\vspace{-0.7cm}
   \end{figure}

Informally, we obtain $C_2$ from $C$ by replacing the edges $a$ and $b$ by $\Delta^+_n$, where $n$
is the
number of components of $L(v)-a-b$.
More precisely, the simplicial complex $C_2$ is defined as follows.
Let $n$ be the number of components of $L(v)-a-b$.
We denote the gluing edges of $\Delta^+_n$ by
$\bar a$ and $\bar b$. We label the vertices of $\Delta^+_n$ incident with neither $\bar a$ nor
$\bar b$ by the components of $L(v)-a-b$.

In our notation we suppress a bijection between vertices of $C$ and $\Delta^+_n$ as follows.
We label the common vertex of the edges of $\bar a$ and $\bar b$ by $v$.
We denote the endvertex of the edge $a$ in $C$ different from $v$ by $w_a$; and we label the
endvertex of the edge $\bar a$ different from $v$ by $w_a$. Similarly, we denote the endvertex of
the edge $b$ in $C$ different from $v$ by $w_b$; and we label the
endvertex of the edge $\bar b$ different from $v$ by $w_b$.
\begin{itemize}
 \item The vertex set of $C_2$ is union of the vertex set of $C$ together with
the vertex set
of $\Delta^+_n$, in formulas: $V(C_2)=V(C) \cup V(\Delta^+_n)$. We stress that the sets $V(C)$ and
$V(\Delta^+_n)$ share the vertices $v$, $w_a$ and $w_b$ and hence these vertices appear in $V(C_2)$
only once as $V(C_2)$ is just a set and not a multiset;
\item the edge set of $C_2$ is (in bijection with) the edge set of $C$ with the edges $a$ and
$b$ replaced by
the set of edges of $\Delta^+_n$, in formulas: $E(C_2)=\left (E(C)-a-b\right ) \cup E(\Delta^+_n)$.
The incidences between vertices and edges are as in $C$ or $\Delta^+_n$, except for those edges of
$C$
that have the vertex $v$ as an endvertex.
This defines all incidences of edges except those of $C$ that have the endvertex $v$.
Given an edge $x$ of $C$ incident with $v$, and denote its other endvertex by $x'$.
Then its corresponding edge of $C_2$ has the endvertices $x'$ and the vertex of $\Delta^+_n$ that
is
the component of $L(v)-a-b$ containing $x$. This completes the definition of the edges of $C_2$.
We stress that the vertex $w_a$ of $C_2$ is incident with those edges of $C-a-b$ with endvertex
$w_a$ and
those edges of $\Delta^+_n$ with endvertex $w_a$;
\item the faces of $C_2$ are the faces of $C$ together with the faces of $\Delta^+_n$; in formulas:
$F(C_2)=F(C)\cup F(\Delta^+_n)$. We stress that the sets $F(C)$ and $F(\Delta^+_n)$ are disjoint.
The incidences between edges and faces are as in $C$ or $\Delta^+_n$, where defined. This defines
all
incidences of faces except for those faces $f$ of $C$ incident with the edges $a$ or $b$, which are
defined as follows.
There are three cases:

-- if $f$ is a face of $C$ incident with both the edges $a$ and $b$, then in $C_2$ these incidences
are replaced by incidences with the edges $\bar a$ and $\bar b$;

-- if $f$ is a face of $C$ incident with the edge $a$ but not $b$, then in $C_2$ the incidence of
$f$ with $a$ is replaced with an incidence with the edge $w_ax$ of $\Delta^+_n$; where
$x$
is the component of $L(v)-a-b$ such that in $L(v)$ the edge $f$ joins $a$ with a
vertex of $x$;

-- similarly, if $f$ is a face of $C$ incident with the edge $b$ but not $a$, then in $C_2$ the
incidence of
$f$ with $b$ is replaced with an incidence with the edge $w_bx$ of $\Delta^+_n$; where
$x$
is the component of $L(v)-a-b$ such that in $L(v)$ the edge $f$ joins $b$ with a
vertex of $x$.
\end{itemize}

This completes the definition of stretching a 2-separator at a
vertex.

We refer to the vertices of $C_2$ that are not in $V(C)-v$ as the \emph{new vertices}, other
vertices of $C_2$ are called \emph{old}.

The link graph at $w_a$ of $C$ is obtained
from the link graph at $w_a$ in $C_2$ by contracting all edges incident with the vertex $\bar a$.
Note that $w_a$ cannot be incident with $b$ as $C$ is a simplicial complex.

\begin{eg}
 In \autoref{fig:link2} we explain how the link graphs of
\autoref{fig:s_pair} change.
   \begin{figure} [htpb]
\begin{center}
   	  \includegraphics[height=3cm]{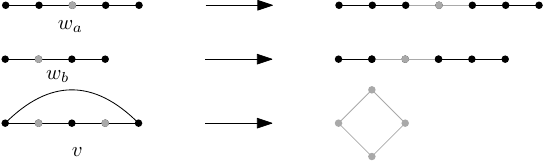}
   	  \caption{On the left we see the link graphs at the vertices $w_a$,
$w_b$ and $v$ of the simplicial complex in \autoref{fig:s_pair}. On the right we see the link
graphs after the stretching at $(a,b)$. The vertices $a$ and $b$ and the new vertices and edges in
the link graphs are depicted in grey. }\label{fig:link2}
\end{center}\vspace{-0.7cm}
   \end{figure}

\end{eg}

The \emph{(abbreviated) degree-sequence} of a graph is the sequences of degrees of its vertices,
ordered by size, where we leave out the degrees which are at most two. We compare degree-sequences
in the lexicographical order.

\begin{lem}\label{2-stretch-change}
 Let $C_2$ be a simplicial complex obtained from $C$ by stretching the 2-separator $(a,b)$ at $v$.
 Then at each vertex of $C$ aside from $v$, the degree-sequence at its link in $C_2$ is at most
the degree-sequence at its link in $C$.
At all new vertices of $C_2$ the degree-sequence at the link is strictly smaller than the
degree-sequence of the link graph at $v$ in $C$ -- unless the link graph at $v$ in $C$ is a
parallel
graph or $L(v)-a-b$ has two components and one is a path.
\end{lem}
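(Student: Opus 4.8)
The plan is to examine each link graph of $C_2$ in turn, describe it explicitly in terms of $L(v)$ and the link graphs of $C$, and then read off the effect on the degree-sequence. Fix notation: let $d_a$ (resp.\ $d_b$) be the number of faces of $C$ incident with $a$ (resp.\ $b$), let $k\in\{0,1\}$ be the number of faces of $C$ incident with both $a$ and $b$, and for each component of $L(v)-a-b$ let $V_i$ ($i\in[n]$) be its vertex set, $m_i$ the number of edges of $L(v)$ from $a$ into $V_i$, and $m_i'$ the number from $b$ into $V_i$ (for $i\in[n]$ write also $q_i$ for the vertex of $\Delta^+_n$ labelled by the $i$-th component). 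Since $L(v)$ is $2$-connected, $L(v)-a$ is connected, so $b$ has a neighbour in every $V_i$, and symmetrically $a$ does; hence $m_i,m_i'\ge 1$, and $d_a=\sum_i m_i+k$, $d_b=\sum_i m_i'+k$. We freely use three elementary facts about degree-sequences (the sorted-decreasing sequence of vertex-degrees $>2$, compared lexicographically): passing to a sub-multiset does not increase it; inserting the same value into two of them preserves their $\le$-order; and a degree-non-increasing injection from one degree-multiset into another witnesses $\le$, with equality only if the multisets coincide.

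The easy case is an old vertex $u\notin\{v,w_a,w_b\}$: such $u$ is incident with neither $a$ nor $b$, so stretching only renames the $v$-ends of the edges and faces at $u$, never altering how they meet $u$; tracking incidences gives $L_{C_2}(u)\cong L_C(u)$, so the degree-sequences are equal. For $u=w_a$ (and symmetrically $w_b$): following the definition of stretching, $L_{C_2}(w_a)$ arises from $L_C(w_a)$ by replacing the vertex $a$ with the $n+1$ vertices $\bar a,q_1w_a,\dots,q_nw_a$, adding for each $i$ an edge $\bar a\,q_iw_a$ (the new face $\{v,q_i,w_a\}$ of $\Delta^+_n$), and reassigning the faces of $C$ formerly at $a$ to $q_iw_a$ or to $\bar a$ according to which component of $L(v)-a-b$ (or, for the $k$-face, which of $a,b$) each is attached to; all remaining vertices keep their degree. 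Thus $\deg(\bar a)=n+k$ and $\deg(q_iw_a)=m_i+1$. From $d_a=\sum_i m_i+k$ and $m_i\ge 1$ we get $n+k\le d_a$ and $m_i+1\le d_a-\sum_{j\ne i}m_j\le d_a-(n-1)\le d_a$; moreover $n+k=d_a$ forces every $m_i=1$, while $m_i+1=d_a$ forces $\sum_{j\ne i}m_j+k=1$, hence $k=0$, $n=2$, $m_j=1$ for $j\ne i$ — in either case every other new degree is $\le 2$. So the new degrees exceeding $2$ are either all strictly below $d_a$ or equal to the single value $d_a$; replacing $\{d_a\}$ (when $d_a>2$) by them therefore does not raise the degree-sequence, giving the first bullet at $w_a$.

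Turning to the new vertices: for $q_i$, unravelling the definition of stretching shows $L_{C_2}(q_i)$ is isomorphic to $K_i$ with one extra vertex — the edge $vq_i$ — adjacent to exactly $a$ and $b$, where $K_i$ is the subgraph of $L(v)$ induced on $V_i\cup\{a,b\}$ with the edge between $a$ and $b$ (if it exists) deleted. Hence every vertex of $V_i$ has the same degree in $L_{C_2}(q_i)$ as in $L(v)$ (in $L(v)$ it has no neighbour outside $V_i\cup\{a,b\}$), while $\deg(a)=m_i+1\le d_a$, $\deg(b)=m_i'+1\le d_b$, and the extra vertex has degree $2$. Sending each $V_i$-degree to itself and the values $m_i+1,m_i'+1$ (when $>2$) to $d_a,d_b$ gives a degree-non-increasing injection of the degrees $>2$ of $L_{C_2}(q_i)$ into those of $L(v)$, so $L_{C_2}(q_i)$'s degree-sequence does not exceed $L(v)$'s. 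If it is not strictly smaller, that injection is onto and value-preserving, which forces (as in the previous paragraph) $n=2$, forces the component $j\ne i$ to have all $L(v)$-degrees $\le 2$, and then forces that component to be a path: a connected graph with all degrees $\le 2$ is a path or a cycle, and a cycle is impossible since, $L(v)$ being $2$-connected, this component must meet $\{a,b\}$, raising a degree to $\ge 3$. So we are in the exceptional case that $L(v)-a-b$ has two components, one a path. Finally, at $v$ itself: after stretching the only edges at $v$ are $\bar a,\bar b,vq_1,\dots,vq_n$ and the only faces at $v$ are the $2n$ faces $\{v,q_i,w_a\}$, $\{v,q_i,w_b\}$, plus the face $\{v,w_a,w_b\}$ of $C$ if it exists; so $L_{C_2}(v)$ is $vq_1,\dots,vq_n$ each joined to both $\bar a$ and $\bar b$, plus the edge $\bar a\bar b$ iff $k=1$, whence $\deg(\bar a)=\deg(\bar b)=n+k$ and every $vq_i$ has degree $2$. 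Since $d_a,d_b\ge n+k$, the map $\bar a\mapsto a$, $\bar b\mapsto b$ is degree-non-increasing, so $L_{C_2}(v)$'s degree-sequence does not exceed $L(v)$'s; if it is not strictly smaller, then $d_a=d_b=n+k$ and every vertex of $L(v)-a-b$ has degree $\le 2$ in $L(v)$, so (same argument) each component of $L(v)-a-b$ is a path joined to both $a$ and $b$ at its ends, i.e.\ $L(v)$ is a union of internally disjoint $a$--$b$ paths, possibly together with the edge $ab$: a parallel graph. Assembling the four cases proves both bullets.

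The real obstacle is making the \lq no strict decrease\rq\ analysis match the two stated exceptions exactly. One must push the bookkeeping with $m_i,m_i',k$ far enough to see that equality can only occur when $n=2$ (or, degenerately, when $L(v)$ has maximum degree $2$ and so is a cycle, which is a parallel graph), and then convert the numerical statement \lq the relevant components of $L(v)-a-b$ have maximum degree $\le 2$ in $L(v)$\rq\ into the structural statements \lq is a path\rq\ and \lq $L(v)$ is a parallel graph\rq\ using $2$-connectivity of $L(v)$; getting the degenerate subcases right — length-zero and length-one paths, the optional edge $ab$, and the cases $d_a\le 2$ or $d_b\le 2$, which by $d_a,d_b\ge n$ can only arise for $n=2$ — is where care is needed. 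By contrast the three explicit descriptions of $L_{C_2}(w_a)$, $L_{C_2}(q_i)$ and $L_{C_2}(v)$ are routine, but must be extracted carefully from the (lengthy) definition of stretching.
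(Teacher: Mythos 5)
Your proof is correct and follows the same route as the paper's: describe the link graph of $C_2$ explicitly at each vertex and compare abbreviated degree-sequences, splitting into old vertices, $w_a$/$w_b$, the components $q_i$, and $v$ itself. The paper's own proof is much terser — it invokes without proof that coadding a star cannot increase the abbreviated degree-sequence, and asserts the bounds at $v$ and at the component vertices without the $m_i,m_i',k$ bookkeeping you supply — so yours is essentially a filled-in account of the same argument. One small slip: in the chain $m_i+1\le d_a-\sum_{j\ne i}m_j\le d_a-(n-1)\le d_a$ the first inequality is equivalent to $k\ge 1$ and so can fail; the intended bound $m_i+1\le d_a$ instead follows directly from $d_a=m_i+\sum_{j\ne i}m_j+k\ge m_i+(n-1)\ge m_i+1$, and your subsequent equality analysis (which correctly forces $k=0$, $n=2$, $m_j=1$ for $j\ne i$ when $m_i+1=d_a$) is unaffected.
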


\begin{proof}
Coadding a star at a vertex cannot increase the abbreviated degree-sequence, hence the lemma is
true at old vertices of $C_2$.
So it remains to prove the lemma for the new vertices of $C_2$ as it is obvious at the
others. As the link graph $L(v)$ at $v$ in $C$ is not a parallel graph, the degree-sequence at the
link at $v$ in $C_2$ is strictly smaller than that in $C$.
Now let $X$ be a component of $L(v)-a-b$. If $L(v)-a-b$ has at least three components, then
 the degree-sequence at the
link at $X$ in $C_2$ is strictly smaller the degree-sequence of the link graph at $v$ in
$C$. This is also true if $L(v)-a-b$ has only two components and the other component has a vertex
of degree greater than two; that is, is not a path. This completes the proof of the lemma.
\end{proof}

The \emph{degree-parameter} of a 2-complex is the sequence of degree-sequences of all its link
graphs, ordered by size.  We compare degree-parameters
in the lexicographical order.

\begin{lem}\label{stretch_loc_con}
 Let $C$ be a simplicial complex such that all link graphs are 2-connected or free-graphs. Then we
can apply stretchings at local
2-separators of 2-connected link graphs such that the resulting simplicial complex is locally
almost 3-connected.
\end{lem}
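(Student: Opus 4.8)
The plan is to run a well-founded induction on the degree-parameter. If $C$ is already locally almost $3$-connected there is nothing to prove, so assume some link graph is not almost $3$-connected. Since by hypothesis every link graph is $2$-connected or a free-graph, and free-graphs are almost $3$-connected, there is a vertex $v$ whose link graph $L(v)$ is $2$-connected but is neither a subdivision of a $3$-connected graph, nor a parallel graph, nor a cycle; in particular $L(v)$ has a $2$-separator, so a stretching at a local $2$-separator of a $2$-connected link graph is available. The whole argument reduces to the following claim: one can choose a $2$-separator $(a,b)$ of $L(v)$ so that the complex $C'$ obtained by stretching $(a,b)$ at $v$ has a strictly smaller degree-parameter and still has all its link graphs $2$-connected or free-graphs. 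Granting the claim, the inductive hypothesis applies to $C'$, which finishes the proof.

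For the claim I would rely on \autoref{2-stretch-change}. That lemma says that stretching any $2$-separator $(a,b)$ at $v$ leaves the degree-sequence of every old link graph other than $L(v)$ unchanged or smaller, strictly decreases the degree-sequence at $v$ (as $L(v)$ is not a parallel graph), and---\emph{unless} $L(v)-a-b$ has exactly two components one of which is a path---gives every new vertex a degree-sequence strictly below that of $L(v)$. In the non-exceptional case this already produces the required drop: the degree-sequence of $L(v)$ is removed and only strictly smaller ones are inserted. So one is left with the exceptional case, $L(v)-a-b=X_1\sqcup P$ with $P$ a path. Here the new link graph at $v$ is a complete bipartite graph $K_{2,n}$ (a parallel graph whose branch vertices carry all its degree), the new link graph at the $P$-vertex is $P$ with a coadded star on each side (again with no large degrees), and a short degree count shows that the new link graph at the $X_1$-vertex has degree-sequence at most that of $L(v)$, with equality forcing the degenerate situation that $a$ or $b$ is a subdivision vertex of $L(v)$ and every vertex of $P$ has degree at most two; in that situation $L(v)$ is just $X_1$ together with one subdivided edge joining two of its vertices, and one either re-picks the $2$-separator among the endpoints of that subdivided edge inside $X_1$ or argues directly that the degree-parameter cannot stay stationary. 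Choosing $(a,b)$ to avoid subdivision vertices when possible, and using this local argument otherwise, secures the strict decrease.

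It remains to check that $C'$ still has all link graphs $2$-connected or free-graphs, so that the induction continues. The link graph at $v$ in $C'$ is the parallel graph $K_{2,n}$; the link graph at a new component-vertex is a component $X$ of $L(v)-a-b$ with a coadded star on each of the two sides, which is $2$-connected when $X$ has a vertex of degree at least three and is a free-graph when $X$ is a path; and the link graphs at $w_a$ and $w_b$ arise from their link graphs in $C$ (which are $2$-connected or free-graphs by hypothesis) by splitting a single vertex off along an edge, an operation that preserves these two classes in the configurations that occur. All other link graphs are unchanged, so the inductive step is complete.

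The main obstacle is the second paragraph: squeezing a strict decrease of the degree-parameter out of the exceptional case of \autoref{2-stretch-change}, which requires a grip on the $2$-separator structure of a $2$-connected graph that fails to be almost $3$-connected and on how to avoid stretching at a ``degenerate'' $2$-separator.
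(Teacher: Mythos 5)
Your high-level strategy coincides with the paper's: stretch at local $2$-separators of $2$-connected link graphs, use \autoref{2-stretch-change} to force a strict drop of the degree-parameter, and check that the invariant ``every link graph is $2$-connected or a free-graph'' is preserved so the process terminates. However, you miss the single device the paper introduces to make this work cleanly: the notion of a \emph{proper} $2$-separator, defined immediately before the statement of the lemma. A $2$-separator $(x,y)$ of a $2$-connected graph $G$ is proper unless $G-x-y$ has precisely two components, one of them is a path, and $xy$ is not an edge. This carves out precisely the exceptional case of \autoref{2-stretch-change}. The paper's proof is then essentially one step: a $2$-connected link graph that is neither a parallel graph nor a subdivision of a $3$-connected graph has a proper $2$-separator; stretch at it, and by \autoref{2-stretch-change} the degree-parameter strictly decreases.

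Where you try to compensate for this is exactly where the proposal has a genuine gap. Your treatment of the case where $L(v)-a-b$ has two components, one a path $P$, ends with ``one either re-picks the $2$-separator \ldots\ or argues directly that the degree-parameter cannot stay stationary'' --- which is not an argument but a statement of what one would need to prove, and you yourself flag it as the main obstacle. The observation that equality of degree-sequences forces the degenerate configuration in which $a,b$ are joined by a subdivided edge carrying $P$ is on the right track (this is essentially what propriety is designed to exclude), but you do not turn it into a concrete choice of $2$-separator guaranteed to decrease the parameter, nor complete the degree count. To close the gap, either adopt the paper's route and choose a proper $2$-separator (and, for full rigour, justify that one exists under the hypotheses on $L(v)$), or finish the case analysis you started by showing that when an improper $2$-separator gives no drop, there is an alternative $2$-separator that does.

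A minor point: your claim that the new link graph at a component-vertex is ``a free-graph when $X$ is a path'' is not quite right --- for $X$ a single vertex one gets a $4$-cycle, which is not a free-graph but a parallel graph; this is harmless because cycles are $2$-connected, so the invariant you need is still preserved, but the classification should be stated as ``$2$-connected'' rather than ``free-graph'' in that case.
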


Before we prove this, we need a definition. A 2-separator $(x,y)$ in a graph $G$ is \emph{proper}
unless $G-x-y$ has precisely
two components
and one of them is a path and $xy$ is not an edge.

\begin{proof}
 If $C$ has a 2-connected link graph that is not a parallel graph or a
subdivision of a 3-connected graph, it contains a proper 2-separator and
we stretch at that 2-separator.
Link graphs at other vertices remain 2-connected or free graphs, respectively.
By \autoref{2-stretch-change} the degree-parameter goes down and hence this process has to stop
after finitely many steps -- with the desired simplicial complex.
\end{proof}

\vspace{.3cm}

Until the rest of this section we fix a simplicial complex $C$ with a vertex $v$ such that the link
$L(v)$ is 2-connected and let $(a,b)$ be a 2-separator of $L(v)$. We
denote the simplicial complex obtained
from $C$ by stretching $(a,b)$ at $v$ by $C_2$.

\begin{rem}\label{inverse2}
 $C$ can be obtained from $C_2$ as follows. First we contract the edges incident with the
vertex $v$ except for $\bar a$ and $\bar b$. We relabel $\bar a$ by $a$ and $\bar b$ by $b$.
We obtain some faces of size two, we refer to these faces as \emph{tiny} faces. Then we contract
all these tiny faces. This gives $C$.
\end{rem}

We say that an operation, such as contracting an edge, is an \emph{equivalence} for a property,
such as the existence of a planar rotation systems, if a simplicial complex has that property if
and only if the simplicial complex after applying this operation has this property.

In \autoref{contr_pres_planar} it is
shown that contracting a non-loop edge where the link graph at both endvertices are 2-connected is
an equivalence for the
existence of planar rotation systems. Contracting a face of size two is not always an equivalence
for the
existence of planar rotation systems but here the contracted faces have the following additional
property.

A face $f$ incident with only two edges $e_1$ and $e_2$ is \emph{redundant} if there is a vertex
$v$ incident with $f$ such that in $C/f$ in any planar rotation system of the link graph $L(v)$
at the rotator at $f$, the edges incident with $e_1$ in the link at $v$ for $C$ form an
interval. (This implies that also the edges incident with $e_2$ in the link at $v$ for $C$ form an
interval.)

The following is obvious.
\begin{obs}\label{is_eq}
Let $C'$ be obtained from $C$ by contracting a redundant face.
If $C'$ has a planar rotation system, then $C$ has a planar rotation system.
\qed
\end{obs}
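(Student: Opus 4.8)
The plan is to build a planar rotation system of $C$ directly from one of $C' := C/f$ by ``un-contracting'' the redundant face $f$, using the redundancy hypothesis to control how the rotators split. Write $e_1,e_2$ for the two edges of $f$ and let $\bar e$ be the single edge of $C'$ obtained by identifying $e_1$ and $e_2$; recall that $C'$ is obtained from $C$ by deleting $f$ and merging $e_1,e_2$ (with $\bar e$ inheriting every incidence of $e_1$ or $e_2$ with a face), and that correspondingly the link graph $L_{C'}(w)$ equals $L_C(w)$ with the non-loop edge $f$ contracted when $w$ is an endvertex of $f$, and equals $L_C(w)$ otherwise. Note also that, since a face containing $e_1$ (respectively $e_2$) contains both endvertices of $f$, the set of faces of $C$ incident with $e_1$ does not depend on which endvertex of $f$ we look from.

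First I would fix a planar rotation system $\Sigma'$ of $C'$ and keep $\sigma(x):=\sigma'(x)$ for every edge $x\neq e_1,e_2$ of $C$. By the definition of redundancy there is an endvertex $v$ of $f$ so that, in the plane embedding of $L_{C'}(v)$ induced by $\Sigma'$, the rotator at $\bar e$ decomposes into two complementary arcs $I_1$ (the faces of $C$ incident with $e_1$, minus $f$) and $I_2$ (those incident with $e_2$, minus $f$). Since a set is an arc of a cyclic orientation if and only if it is an arc of its reverse, the same decomposition $I_1\mid I_2$ is an arc decomposition of the rotator at $\bar e$ in \emph{every} link graph of $C'$, not merely at $v$ --- because the rotators at $\bar e$ seen from its two endvertices are reverses of one another. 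I would then set $\sigma(e_1)$ to be the cyclic orientation given by the arc $I_1$ followed by $f$, and $\sigma(e_2)$ the arc $I_2$ followed by $f$; these are well-defined cyclic orientations of the faces incident with $e_1$, respectively $e_2$, in $C$.

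It remains to verify that $\Sigma=(\sigma(x)\mid x\in E(C))$ is a planar rotation system, and here the only genuine point of care is precisely the one resolved above: the redundancy hypothesis is assumed only at one endvertex of $f$, whereas the argument needs the interval behaviour at both, and the reverse-symmetry of rotators bridges this. Granting that, at a vertex $w$ that is not an endvertex of $f$ nothing has changed ($L_C(w)=L_{C'}(w)$ with the same induced rotators), so the link complex is a sphere; and at an endvertex $w$ of $f$, the link graph $L_C(w)$ is obtained from $L_{C'}(w)$ by splitting the vertex $\bar e$ into $e_1,e_2$ joined by $f$, with the edges landing on $e_1$ being exactly $I_1\cup\{f\}$ occupying a contiguous arc of the old rotator at $\bar e$ and symmetrically for $e_2$. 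This is the standard un-contraction of a non-loop edge inside a spherical embedding --- the inverse of the operation recorded after \autoref{minimal_minor} and used in \autoref{contr_pres_planar} --- and it again yields a spherical link complex. Hence $\Sigma$ is planar, so $C$ has a planar rotation system, and everything else is routine bookkeeping about how link graphs transform under contracting a face of size two, as already noted before \autoref{rot_closed_down}.
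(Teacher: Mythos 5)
Your proof is correct, and since the paper offers no argument for this observation (it is marked with a bare \qed), yours fills in exactly the details being treated as routine. The construction — lifting $\Sigma'$ to $\Sigma$ by splitting $\sigma'(\bar e)$ into the arc $I_1$ plus $f$ and the arc $I_2$ plus $f$, then un-contracting the edge $f$ inside each of the two affected link graphs — is the natural one, and you correctly isolate the one genuine subtlety: redundancy is hypothesized at only one endvertex $v$ of $f$, while the interval decomposition is needed at both endvertices, and the bridge is that the rotators of $\bar e$ in $L_{C'}(v)$ and $L_{C'}(w)$ are mutual reverses, so an arc of one is an arc of the other. With that, the splitting of the vertex $\bar e$ into $e_1,e_2$ joined by $f$ is the inverse of contracting a non-loop edge within a spherical embedding and preserves sphericity of every link complex; at all other vertices nothing changes. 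The argument is complete.
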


\begin{obs}\label{tiny}
Tiny faces (as defined in \autoref{inverse2}) are redundant.
\end{obs}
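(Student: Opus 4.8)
The plan is to trace the tiny faces back through the stretching operation and then read the interval condition of \autoref{is_eq} directly off the structure of the link graph at $v$.

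First I would describe the tiny faces explicitly. By construction $C_2$ contains the copy of $\Delta^+_n$ used in the stretching (with $n$ the number of components of $L(v)-a-b$), whose branch vertices $\bar a,\bar b$ are glued so that their common vertex is $v$ and whose remaining vertices, say $r_1,\dots,r_n$, are the labels of the components of $L(v)-a-b$. Each of the $n$ copies of $\Delta_2$ has two triangular faces: one, $F_i^{a}$, on the edges $\bar a$, $vr_i$, $w_ar_i$, and one, $F_i^{b}$, on $\bar b$, $vr_i$, $w_br_i$. Contracting, in $C_2$, the edges at $v$ other than $\bar a,\bar b$ — that is, the edges $vr_1,\dots,vr_n$ — turns $F_i^{a}$ into a tiny face $f_i^{a}$ with edges $\bar a$ and (the image of) $w_ar_i$, both now running between $v$ and $w_a$, and turns $F_i^{b}$ into a tiny face $f_i^{b}$ with edges $\bar b$ and the image of $w_br_i$, both running between $v$ and $w_b$; no other faces become small. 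I will show that $f_i^{a}$ is redundant with witness $v$; the faces $f_i^{b}$ are handled the same way after interchanging $\bar a\leftrightarrow\bar b$ and $w_a\leftrightarrow w_b$. One observation is useful at once: both edges of a tiny face run between the same two vertices, so every face incident with either of them is incident with $v$; hence the rotator at the contracted edge, viewed in the link graph at $v$, is literally the whole rotator of that edge — which is exactly the datum that \autoref{is_eq} needs to be splittable.

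Then I would compute the link graph at $v$ after the contraction. Write $C_1$ for the complex of \autoref{inverse2} obtained from $C_2$ by contracting $vr_1,\dots,vr_n$, so the tiny faces are exactly the size-two faces of $C_1$. By \autoref{obs1}, applied once per contracted edge, the link graph at $v$ in $C_1$ is the iterated vertex sum of the link graph of $C_2$ at $v$ with the link graphs of $C_2$ at $r_1,\dots,r_n$, taken over the vertices $vr_i$. Inspecting the stretching shows that the link graph of $C_2$ at $v$ is a parallel graph with branch vertices $\bar a,\bar b$: for each $i$ it has a path of length two through the vertex $vr_i$ (the edges $F_i^{a}$ and $F_i^{b}$), together with one further parallel $\bar a$--$\bar b$ edge for every face of $C$ incident with both $a$ and $b$. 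Forming the vertex sum at $vr_i$ deletes the degree-two vertex $vr_i$ and replaces its two edges by an edge $f_i^{a}$ from $\bar a$ to a vertex $x_i$, and an edge $f_i^{b}$ from $\bar b$ to a vertex $y_i$, of the link graph of $C_2$ at $r_i$; here $x_i$ is the vertex of that link graph coming from the edge $w_ar_i$, i.e.\ $x_i$ is the edge $e_2$ of $f_i^{a}$. Consequently the subgraph $G_i$ obtained from the link graph of $C_2$ at $r_i$ by deleting $vr_i$ is attached to the rest of the link graph at $v$ in $C_1$ only at $x_i$ (via $f_i^{a}$, whose other end is $\bar a$) and at $y_i$ (via $f_i^{b}$, whose other end is $\bar b$).

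Finally I would conclude. Contracting the tiny face $f_i^{a}$ — at any stage of \autoref{inverse2}; contracting the other tiny faces leaves $G_i$ and its attachment to the rest of the link graph unchanged — identifies, in the link graph at $v$, the vertices $\bar a$ and $x_i$ into a single vertex $\hat e$ and deletes $f_i^{a}$. Thus afterwards the pair $\{\hat e,\bar b\}$ separates $G_i$ from the rest of the link graph at $v$; equivalently, contracting $G_i$ down to a single $\hat e$--$\bar b$ edge is a planar minor of it. In any plane embedding this forces the edges of $G_i$ at $\hat e$ to occupy a contiguous interval of the rotator at $\hat e$, and these are precisely the edges incident with $e_2$ other than $f_i^{a}$; hence the edges incident with $e_1=\bar a$ other than $f_i^{a}$ form the complementary interval (and if the link graph at $v$ of the contraction is not planar, the condition holds vacuously). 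Together with the observation from the first step, this is exactly the condition in \autoref{is_eq} for $f_i^{a}$ to be redundant with witness $v$; by symmetry all tiny faces are redundant. The only ingredient here beyond bookkeeping is the classical planarity fact that the edges on one side of a separating pair form an interval of the rotator at a separating vertex, and the main routine obstacle is verifying, through the vertex-sum description, that $G_i$ is indeed separated from the rest of the link graph by $\{\hat e,\bar b\}$.
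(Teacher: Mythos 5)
Your proof is correct and uses essentially the same approach as the paper: it takes $v$ as the witness vertex and rests on the classical planarity fact that in a 2-connected plane graph the edges from a vertex of a separating pair to any one of its bridges form an interval of the rotator at that vertex. The paper's proof is much terser — it reads the interval condition directly off the rotator at $a$ in $L(v)$ of $C$ using the separator $\{a,b\}$ — whereas you carefully track the vertex-sum description of $L(v)$ through the intermediate complexes of \autoref{inverse2} and locate the separating pair $\{\hat e,\bar b\}$ there; the extra bookkeeping is sound but the key step is the same.
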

\begin{proof}
 Let $X$ be a component of $L(v)-a-b$. Since $L(v)$ is 2-connected, the edges between $a$ and $X$
form an interval in any rotator at $a$ for any embedding of $L(v)$ in the plane. The same is true
for `$b$' in place of `$a$'.
\end{proof}

\begin{lem}\label{PRS2}
The simplicial complex $C$ has a planar rotation system if and only if the simplicial complex
$C_2$
has a planar rotation system.
\end{lem}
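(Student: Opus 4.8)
To prove Lemma~\ref{PRS2}, the plan is to exploit \autoref{inverse2}, which displays $C$ as the result of applying to $C_2$ the following operations in order: contract the edges of $C_2$ incident with $v$ other than $\bar a$ and $\bar b$; relabel $\bar a,\bar b$ as $a,b$; and contract the resulting tiny faces. Write $t_1,\dots,t_n$ for the edges contracted in the first step, so that $t_i$ is the edge of $\Delta^+_n$ joining $v$ to the vertex labelled by the $i$-th component $X_i$ of $L(v)-a-b$; each $t_i$ is incident with exactly the two triangular faces of the $i$-th copy of $\Delta_2$, on the vertex sets $\{v,X_i,w_a\}$ and $\{v,X_i,w_b\}$. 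Set $T=\{t_1,\dots,t_n\}$. Note that the edges incident with $v$ in $C_2$ are exactly $\bar a,\bar b,t_1,\dots,t_n$, and that $n\ge 2$ because $(a,b)$ is a $2$-separator of $L(v)$.

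The implication ``$C_2$ has a planar rotation system $\Rightarrow$ $C$ has one'' is then immediate: the edges $t_i$ are non-loops (they join the distinct vertices $v$ and $X_i$), and the tiny faces have size two with non-loop edges (as described in \autoref{inverse2}), so all the operations above are space-minor operations and $C$ is a space minor of $C_2$; now apply \autoref{rot_closed_down}.

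For the converse I would reverse these operations. First, the tiny faces are redundant by \autoref{tiny}, so by repeated use of \autoref{is_eq} a planar rotation system of $C$ yields one of the complex obtained just before the tiny faces are contracted; that complex equals $C_2/T$ up to the harmless relabelling of $\bar a,\bar b$, so $C_2/T$ has a planar rotation system. It remains to lift a planar rotation system along the contraction of the star $T$, which I would do one edge at a time. Let $D_j$ be the $2$-complex in which $t_1,\dots,t_j$ are present and $t_{j+1},\dots,t_n$ are contracted, so $D_0=C_2/T$, $D_n=C_2$ and $D_{j-1}=D_j/t_j$. The second part of \autoref{contr_pres_planar} lifts a planar rotation system from $D_{j-1}$ to $D_j$ provided the non-loop edge $t_j$ is not a cutvertex of either of the two link graphs of $D_j$ at the endvertices $v$ and $X_j$ of $t_j$.

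Verifying this non-cutvertex condition is the step I expect to be the main obstacle, as it forces one to pin down the link graphs of $C_2$ and of the partially contracted complexes $D_j$. In $D_j$ the link graph at $X_j$ is the same as in $C_2$, and there $t_j$ has exactly the two neighbours $w_aX_j$ and $w_bX_j$ (coming from the two faces of the $j$-th copy of $\Delta_2$); deleting $t_j$ leaves the subgraph of $L(v)$ induced on $X_j\cup\{a,b\}$ with $a,b$ renamed, which is connected because $L(v)$ is $2$-connected, so $X_j$ is connected and $a,b$ each have a neighbour in $X_j$. In the link graph of $D_j$ at $v$ the vertex $t_j$ again has exactly the two neighbours $\bar a$ and $\bar b$, and these are joined by a $t_j$-avoiding path: through $t_{j-1}$ if $j\ge 2$, and otherwise (using $n\ge 2$) through the part of the link graph contributed by some other component $X_i$, where the bigon coming from a collapsed triangle joins $\bar a$ to the copy of $a$ and this copy meets the connected subgraph coming from $X_i$, which also meets the copy of $b$ joined to $\bar b$; connectedness here again rests on the $2$-connectedness of $L(v)$. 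Since a vertex with exactly two neighbours that are joined by a path avoiding it is never a cutvertex, \autoref{contr_pres_planar} applies at each step, and iterating from $D_0$ up to $D_n=C_2$ produces the desired planar rotation system of $C_2$.
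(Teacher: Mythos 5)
Your proof is correct and follows essentially the same route as the paper: it uses \autoref{inverse2} to write $C$ as $C_2$ with the star $T$ contracted and the tiny faces collapsed, then invokes \autoref{contr_pres_planar} for the edge contractions and \autoref{is_eq}/\autoref{tiny} for the face contractions. The one place where you go beyond the paper is worth noting: the paper simply asserts that the relevant link graphs ``are $2$-connected'' when invoking \autoref{contr_pres_planar}, whereas you explicitly check the weaker hypothesis that lemma actually needs --- that $t_j$ is not a cutvertex of either link graph at its endvertices in $D_j$ --- and you do this step by step along the chain $D_0,\dots,D_n$, where it is genuinely nontrivial because the link graph at $v$ changes with each contraction. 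Your argument there is sound: $t_j$ has exactly the two neighbours $w_aX_j,w_bX_j$ (resp.\ $\bar a,\bar b$), and in each case these are joined by a $t_j$-avoiding path, using the $2$-connectedness of $L(v)$ and $n\ge 2$. One tiny inaccuracy that does not affect the argument: deleting $t_j$ from the link graph at $X_j$ leaves the subgraph of $L(v)$ induced on $X_j\cup\{a,b\}$ \emph{minus} any $ab$-edge (faces of $C$ containing both $a$ and $b$ are routed to $\bar a,\bar b$ in $C_2$ and are not incident with $X_j$), but connectedness still holds for the reason you give.
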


\begin{proof}
 It is shown in \autoref{contr_pres_planar} that contracting a non-loop edge
where the link graph at both endvertices are
2-connected is an equivalence for the
existence of planar rotation systems.

By \autoref{rot_closed_down} contracting a
face of
size two preserves the
existence of planar rotation
systems. So contracting tiny faces is an equivalence for the existence of planar rotation systems
by \autoref{is_eq} and \autoref{tiny}.

Hence all the operation that transform the simplicial complex $C_2$ to $C$ as described in
\autoref{inverse2} are equivalences. Thus stretching at local 2-separators is an equivalence for
planar rotation systems.
\end{proof}

The following is geometrically clear, see \autoref{fig:s_pair}, and we will not use it in
our proofs.

\begin{lem}\label{stretch_22_embed} If $C$ embeds in 3-space, then also $C_2$
embeds in 3-space.
\qed
\end{lem}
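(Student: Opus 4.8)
The statement to prove is \autoref{stretch_22_embed}: if $C$ embeds in 3-space, then so does $C_2$, where $C_2$ is obtained from $C$ by stretching a $2$-separator $(a,b)$ at a vertex $v$ whose link graph $L(v)$ is $2$-connected.

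\medskip

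\noindent\textbf{Plan.} The plan is to give a purely local topological argument: fix an embedding of $C$ in $\mathbb{S}^3$, isolate a small neighbourhood of the part of $C$ that is modified by the stretching operation, and show that within this neighbourhood one can realise the replacement of the edges $a$ and $b$ by the complex $\Delta^+_n$ without disturbing the rest of the embedding. Concretely, let $n$ be the number of components $X_1,\dots,X_n$ of $L(v)-a-b$. First I would take a small ball $B$ around the embedded image of the vertex $v$; the trace of $C$ on $\partial B$ is (a subdivision of) the link graph $L(v)$, embedded in the sphere $\partial B$ so that the rotators match those induced by the $3$-space embedding, and the faces of $C$ incident with $v$ appear as thin ``wedges'' running from $\partial B$ inward to $v$. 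The edges $a$ and $b$ appear as two points $p_a, p_b$ on $\partial B$. Since $(a,b)$ is a $2$-separator of the $2$-connected graph $L(v)$, the two points $p_a,p_b$ together with the arcs of $L(v)\cap\partial B$ cut $\partial B$ into regions, and the $n$ components $X_i$ occupy $n$ disjoint closed discs $D_1,\dots,D_n$ on $\partial B$, each meeting the ``boundary'' arcs only at $p_a$ and $p_b$.

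\medskip

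\noindent Next I would perform the surgery inside $B$. Remove from the embedded $C$ the open wedges incident with $v$ and the point $v$ itself, leaving a ``hollowed'' ball whose inner boundary is a smaller sphere. I want to re-fill $B$ so that the new picture realises $C_2$. In $C_2$ the vertex $v$ has been replaced by the path $\bar a\bar b$ of $\Delta^+_n$ (with midpoint still called $v$), and each component $X_i$ is attached to a distinct new vertex $x_i$ lying between $\bar a$ and $\bar b$; moreover there are $n$ copies of $\Delta_2$, i.e.\ $n$ pairs of triangular faces glued along the $\bar a\bar b$ path. Geometrically: inside $B$ draw a single arc $\gamma$ from $p_a$ to $p_b$ (this will carry the edges $\bar a$ and $\bar b$, subdivided by $v$ and by the $x_i$'s), pushed slightly into the interior of $B$; then, for each $i$, route the faces of $C$ that used to run from $\partial B$ to $v$ through the disc $D_i$ so that they now terminate on the arc-point $x_i$ instead, and add the ``fins'' of $\Delta_2$ as thin membranes spanning between consecutive points of $\gamma$. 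Because the discs $D_i$ are disjoint on $\partial B$, these re-routings can be done in pairwise disjoint sub-balls of $B$, so no two of the modified face-bundles collide; and because $\gamma$ and its decorations live in an arbitrarily thin neighbourhood of the original wedges, nothing outside $B$ is touched. This yields an embedding of $C_2$ in $\mathbb{S}^3$.

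\medskip

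\noindent\textbf{Main obstacle.} The delicate point is to check that the $n$ re-routed face-bundles, together with the $n$ pairs of $\Delta_2$-fins, can genuinely be made disjoint and can be attached to $\gamma$ in the cyclic order prescribed by the faces of $\Delta^+_n$ — in other words, that the combinatorial data of $\Delta^+_n$ is exactly what is produced by this surgery (this is the same bookkeeping already carried out in the definition of $C_2$ in \autoref{s2}, so much of it can be quoted rather than redone). The cleanest way to sidestep the fiddly geometric verification is to instead invoke the combinatorial characterisation: by \autoref{PRS2}, $C$ has a planar rotation system if and only if $C_2$ does; and since stretchings commute with the hypotheses needed, one can in principle route this through the main theorem. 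However, that would be circular at this point in the development and in any case the lemma is explicitly flagged as ``geometrically clear'' and ``not used in our proofs,'' so the intended proof is the direct local-surgery argument sketched above; the honest obstacle there is simply writing down the disjointness of the $n$ discs carefully, which follows from $(a,b)$ being a $2$-separator.
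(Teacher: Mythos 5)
The paper gives no proof here: the lemma is stated with a terminal \verb|\qed| and prefaced by ``The following is geometrically clear, see \autoref{fig:s_pair}, and we will not use it in our proofs.'' So your task was genuinely to supply the argument, and your plan (local surgery in a small ball, re-routing through disjoint discs coming from the 2-separator) is the right kind of plan. However, your execution misdescribes $\Delta^+_n$ in a way that makes the surgery produce the wrong complex, and this propagates to the choice of ball.

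Concretely, you write that the new component-vertices $x_i$ ``lie between $\bar a$ and $\bar b$'' and that the arc $\gamma$ from $p_a$ to $p_b$ is ``subdivided by $v$ and by the $x_i$'s.'' This is not what $\Delta^+_n$ looks like. By the paper's definition, the component-vertices are exactly the vertices of $\Delta^+_n$ \emph{not} incident with $\bar a$ or $\bar b$: they are apex vertices off the spine. $\Delta^+_n$ is a ``book'' whose spine is the two-edge path $w_a \text{--} v \text{--} w_b$ and which has $n$ pages, the $i$-th page being the pair of triangles $v u_i w_a$ and $v u_i w_b$ hinged along the interior edge $v u_i$. In particular the new edges $u_iw_a$ and $u_iw_b$ must run all the way to $w_a$ and $w_b$, neither of which is inside a small ball around $v$. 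Your surgery as written would therefore produce a different 2-complex (one in which $\bar a$ and $\bar b$ are subdivided by the $x_i$, and with the pages attached in the wrong place), not $C_2$; it is not merely a bookkeeping gap that could be fixed in the write-up.

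Two changes repair the argument. First, take $B$ to be a regular neighbourhood of the entire arc $a\cup\{v\}\cup b$ rather than of $v$ alone; this is still a 3-ball because that arc is a contractible subcomplex of the embedded $C$, and its boundary sphere now meets $C$ in a graph on which the faces and edges of $C$ formerly attached along that arc appear in clusters grouped by the components $X_i$ (here your 2-separator/disjoint-discs observation is the right ingredient). Second, inside $B$ draw the spine as a single arc from $w_a$ through $v$ to $w_b$ carrying $\bar a$ and $\bar b$, and place each $u_i$ \emph{off} the spine inside the disc $D_i$; then add the edges $v u_i$, $u_i w_a$, $u_i w_b$ and the two triangular faces of the $i$-th page as a thin fin inside $D_i\times[0,\varepsilon]$, and re-attach the face- and edge-fragments of $C$ belonging to $X_i$ to $u_i$ and to the appropriate sides of that page. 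With those corrections the local-surgery argument is sound.
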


\begin{rem}
 Also the converse of \autoref{stretch_22_embed} is true.
\end{rem}

\section{Stretching a local branch}\label{s1}

In this section we define stretching local branches and prove basic properties of this operation.
This operation is necessary for \autoref{main_streching}.

Given a connected graph $G$ with a cut-vertex $v$, a \emph{branch} at $v$ is a connected component
$X$ of $G-v$ together with the vertex $v$ (and all edges between $X$ and $v$). A \emph{branch of
$G$} is a branch at some cut-vertex of $G$. For any branch $B$, there is a unique vertex $v$ such
that $B$ is a branch at $v$; we refer to that vertex $v$ as \emph{the cut-vertex} of the branch $B$.

Given a 2-complex $C$ with a vertex $v$ such that the link graph $L(v)$ at $v$ is connected and a
branch $B$ of $L(v)$, the complex $C[B]$ obtained from $C$ by \emph{pre-stretching} $B$ is
defined as follows, see \autoref{fig:stretch_branch}.
We denote the cut-vertex of the branch $B$ by $e$; and remark that $e$ is an edge of the simplicial
complex $C$.

   \begin{figure} [htpb]
\begin{center}
   	  \includegraphics[height=5cm]{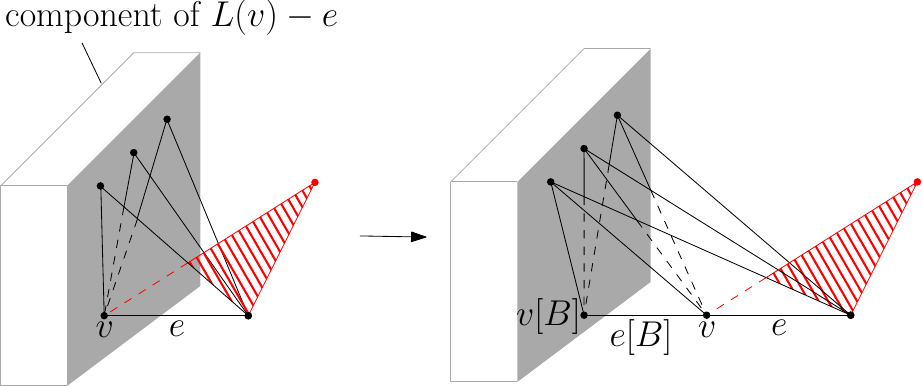}
   	  \caption{The 2-complex on the right is obtained from the 2-complex on the left by
stretching the branch $B$, which consists of the grey box together with the three faces attaching
at the grey box. Stretching is defined like pre-stretching but there
we additionally subdivide faces to make them all have size three.}\label{fig:stretch_branch}
\end{center}\vspace{-0.7cm}
   \end{figure}

\begin{itemize}
 \item The vertex set of $C[B]$ is that of $C$ together with one new vertex, which we denote
by $v[B]$, in formulas: $V(C_1)=V(C) \cup \{v[B]\}$;
\item the edge set of $C[B]$ is (in bijection with) the edge set of $C$ together with one
additional edge, which we denote by $e[B]$, in formulas: $E(C[B])=E(C)\cup \{e[B]\}$;
The incidences between edges and vertices are as in $C$ except for those edges $z\neq e$ of $C$
that are vertices of the branch $B$. Such edges are incident with the new vertex $v[B]$ in place of
$v$, the other endvertex is not changed. The edge $e[B]$ has the endvertices $v$ and $v[B]$;

\item the faces of $C[B]$ are (in bijection with) the faces of $C$; in formulas:
$F(C[B])=F(C)$.
The incidences between faces and edges are as in $C$ except for those faces of $C$ that are
incident with the edge $e$ and are in the link graph $L(v)$ edges of the branch $B$. These faces
now have size four. They are now additionally incident with the edge $e[B]$.
\end{itemize}
This completes the definition of pre-stretching the branch $B$ at $v$.
\emph{Stretching} the branch $B$ is defined the same way except that we additionally subdivide each
face $f$ of size four once. Namely we add a subdivision-edge between the vertex $v$ and the
unique vertex of the face that is not in the edge $e$ and different from $v[B]$. Hence for any
simplicial complex $C$ any stretching at a branch is again a simplicial complex.

See \autoref{fig:stretch_branch_link} for an
example illustrating how the link changes at the vertex $v$ and how the link looks like at the
vertex $v[B]$.
   \begin{figure} [htpb]
\begin{center}
   	  \includegraphics[height=4cm]{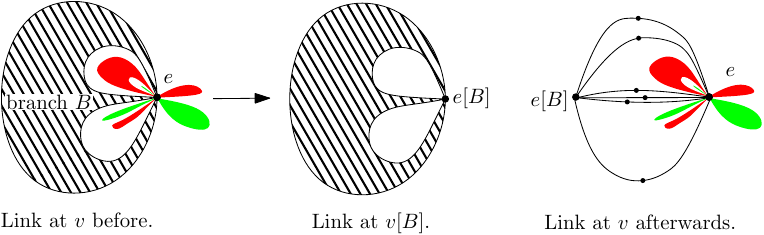}
   	  \caption{On the left we see the link graph of a vertex
$v$ with a cut-vertex $e$ and a branch $B$. On the right we see the
link graphs of the two vertices obtained from the link graph of $v$ by stretching
$B$.}\label{fig:stretch_branch_link}
\end{center}\vspace{-0.7cm}
   \end{figure}

\vspace{.3cm}

Until the rest of this section we fix a simplicial complex $C$ with a vertex $v$ such that the link
graph $L(v)$ is connected and let $B$ be a branch of that link. We denote the simplicial complex
obtained
from $C$ by stretching $B$ by $C[B]$.

\begin{rem}\label{inverse}
The simplicial complex $C$ can be obtained from $C[B]$ as follows. First we contract the edge
$e[B]$. This makes the
faces incident with $e[B]$ in $C[B]$ have size two. Then we contract these faces. This gives $C$.
The
contracted faces are incident with an edge that is incident with only one other face.
\end{rem}

   \begin{lem}\label{PRS1}
The simplicial complex $C[B]$ has a planar rotation system if and only if $C$ has a planar rotation
system.
\end{lem}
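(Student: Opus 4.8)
The plan is to mimic the proof of \autoref{PRS2} (\autoref{PRS1} is the branch-analogue of the 2-separator statement \autoref{PRS2}), using the explicit description of how $C$ is recovered from $C[B]$ given in \autoref{inverse}. The recovery consists of two steps: first contract the non-loop edge $e[B]$, then contract the resulting faces of size two. So it suffices to argue that each of these two steps is an \emph{equivalence} for the existence of a planar rotation system, in the sense defined just before \autoref{is_eq}.

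First I would handle the edge contraction. The edge $e[B]$ has endvertices $v$ and $v[B]$. By the description of stretching and \autoref{fig:stretch_branch_link}, the link graph at $v[B]$ in $C[B]$ is obtained from the branch $B$ by possibly adding an edge (the vertex $e[B]$ corresponding to the new edge), and in particular one can arrange (using that $B$ is a branch, hence 2-connected after adding $e[B]$ at its cut-vertex $e$, or simply invoking the hypothesis that the relevant link graphs are 2-connected — as in the standing assumptions of \autoref{s2}) that the link graphs at both $v$ and $v[B]$ in $C[B]$ are 2-connected. Then \autoref{contr_pres_planar} applies and tells us that contracting the non-loop edge $e[B]$ is an equivalence for the existence of planar rotation systems. (Even without 2-connectedness, \autoref{contr_pres_planar} requires only that $e[B]$ is not a cutvertex of either link graph, and by construction the vertex $e[B]$ of the link graph at $v[B]$ lies on a cycle through the cut-vertex $e$, so it is not a cutvertex there; similarly at $v$.)

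Next I would handle the face contractions. After contracting $e[B]$, the faces of $C[B]$ that were incident with $e[B]$ become faces of size two; by \autoref{inverse} each such face is incident with an edge that is incident with only one other face. The forward direction — if the complex after contraction has a planar rotation system then so does the complex before — is exactly \autoref{is_eq} once we check these faces are redundant. Redundancy here is immediate: let $f$ be such a face with edges $e_1,e_2$, and let $x$ be the endvertex of $f$ that is the image of $v[B]$ (or of $v$); the two edges $e_1,e_2$ of $f$ come from the single edge $e[B]$ and an edge of the branch, and in any planar embedding of the link graph the edges incident with $e_1$ around $x$ form an interval — this is the same interval argument as in \autoref{tiny}, using that the link graph is 2-connected (or using the free/branch structure directly). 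The reverse direction is \autoref{rot_closed_down}: contracting a face of size two is a space minor operation and hence preserves planar rotation systems. Combining: contracting these size-two faces is an equivalence.

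The only real obstacle is bookkeeping the local picture — confirming that the link graphs at $v$ and $v[B]$ genuinely satisfy the 2-connectedness (or non-cutvertex) hypotheses needed to quote \autoref{contr_pres_planar} and that the size-two faces produced are genuinely redundant in the precise sense required by \autoref{is_eq}; this is entirely analogous to \autoref{tiny} and \autoref{PRS2} and carries no surprises. So the write-up is short: ``The simplicial complex $C$ is obtained from $C[B]$ by the two operations of \autoref{inverse}. Contracting the non-loop edge $e[B]$ is an equivalence for planar rotation systems by \autoref{contr_pres_planar}. The resulting size-two faces are redundant by the interval argument as in \autoref{tiny}, so by \autoref{is_eq} and \autoref{rot_closed_down} contracting them is also an equivalence. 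Hence $C[B]$ has a planar rotation system if and only if $C$ does.'' $\square$
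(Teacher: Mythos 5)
Your proposal is correct in outline and reaches the right conclusion, but for the nontrivial direction (``$C$ has a planar rotation system $\Rightarrow$ $C[B]$ has one'') it takes a genuinely different route from the paper. The paper explicitly \emph{constructs} a rotation system $\Sigma'$ of $C[B]$ from a given $\Sigma$ of $C$: it copies $\Sigma$ at old edges, uses the unique size-two rotator at new edges, restricts $\sigma(e)$ to the faces incident with $e[B]$, and then checks planarity at $v$ and $v[B]$ by observing that the induced rotation systems there are exactly the planar rotation systems that $\Sigma$'s link-rotation $\Pi$ at $v$ induces on the branch $B$ and on $L(v)$ with $B-e$ contracted. You instead invert the two operations of \autoref{inverse} via the converse half of \autoref{contr_pres_planar} and the redundancy machinery of \autoref{is_eq}, which is exactly how the paper proves the sibling lemma \autoref{PRS2}. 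Both routes are sound; the paper's construction avoids any cutvertex or redundancy side-conditions, while yours is shorter and more modular but requires you to verify those side-conditions. The backward direction you give is the same as the paper's.

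Two of your side-condition justifications need repair, though the claims they support are true. First, the standing assumption in \autoref{s1} is only that $L(v)$ is \emph{connected}, not 2-connected, and neither $L_{C[B]}(v)$ nor $L_{C[B]}(v[B])$ is 2-connected in general (if $e$ has several branches in $L(v)$, then $e$ is still a cutvertex of $L_{C[B]}(v)$; and $L_{C[B]}(v[B])$ is just $B$ with $e$ renamed $e[B]$, which is typically not 2-connected). What you actually need, and what is true, is that $e[B]$ is not a cutvertex of either link graph: in $L_{C[B]}(v[B])\cong B$ we have $B-e$ connected by definition of a branch, and in $L_{C[B]}(v)$ removing $e[B]$ only strands the degree-two subdivision vertices $s_i$ as leaves hanging off $e$. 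Your ``lies on a cycle through $e$'' justification also fails when $e$ has a unique edge into the branch, since then $e[B]$ has degree one; but a degree-one vertex is never a cutvertex, so the conclusion stands. Second, the size-two faces created after contracting $e[B]$ have edges $z_i$ and $s_i$ (a branch edge and a subdivision edge) -- not $e[B]$, which has already been contracted away. Their redundancy does not need the 2-connectedness argument of \autoref{tiny}: the subdivision edge $s_i$ is incident with exactly two faces, so it has degree two in every relevant link graph, and a vertex of degree two trivially has its incident edges forming an interval in any rotator.
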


\begin{proof}
Let $\Sigma$ be a planar rotation system of the simplicial complex $C$.
We define\footnote{Faces incident with the edge $e$ in $C$ correspond in $C[B]$ either to a single
face of size three
or two faces of size three obtained from a face of size four by subdivision. This induces a
bijective map from the faces incident $e$ in $C$ to the faces incident with $e$ in $C[B]$, and an
injective
partial map from the faces incident $e$ in $C$ to the faces incident with $e[B]$ in $C[B]$. In
order to simplify the presentation of the definition we suppress these two maps.} a
rotation system $\Sigma'$ of the simplicial complex $C[B]$ by taking the same rotator as $\Sigma$
at
every edge except for $e[B]$ and other new edges, which are incident with two faces.
 At the edges incident with two faces we take the unique cyclic ordering of size two. The rotator
at the edge $e[B]$ is constructed from the rotator at the edge $e$ for $\Sigma$ by restricting it
to the faces incident with the edge $e[B]$.

This rotation system is obviously planar at all vertices of $C[B]$ except for the vertex $v$ and
$v[B]$.
We denote by $\Pi$ the rotation system  induced by
$\Sigma$ of the link graph of $C$ at the vertex $v$. By the construction given
directly after \autoref{minimal_minor}, $\Pi$ induces a planar rotation
system $\Pi_1$ at the branch $B$, and $\Pi$ induces a planar rotation system $\Pi_2$
at the minor of the link graph at $v$ in $C$ obtained by contracting $B-e$ to a single
vertex.
It is immediate that the rotation system induced by $\Sigma'$ at $v[B]$ is $\Pi_1$, and the
rotation system induced by $\Sigma'$ at $v$ is $\Pi_2$.
Hence the rotation system $\Sigma'$ is planar for the simplicial complex $C[B]$.

By \autoref{contr_pres_planar} contracting an edge preserves the existence of
planar rotation systems, and by \autoref{rot_closed_down} contracting a face of
size two preserves the existence of planar rotation
systems. Hence by \autoref{inverse} if $C[B]$ has a planar rotation system, then $C$ has a planar
rotation system.
\end{proof}

The following is geometrically clear, see \autoref{fig:stretch_branch}, and we will not use it in
our proofs.

\begin{lem}\label{stretch_branch_embed} If $C$ embeds in 3-space, then also $C[B]$
embeds in 3-space.
\qed
\end{lem}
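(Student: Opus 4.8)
The statement is exactly parallel to \autoref{stretch_22_embed}, and the intended argument is the geometric one already flagged by the author ("geometrically clear"). The plan is to produce an explicit embedding of $C[B]$ in $3$-space from a given embedding of $C$ by working entirely inside a small $3$-ball around the vertex $v$, leaving the rest of the embedding untouched. First I would fix a topological embedding of $C$ in $\Sbb^3$ and choose a small open ball $U$ around $v$ such that $U$ meets each edge incident with $v$ in a single arc, meets each face incident with $v$ in a disc sector, and meets no other cell of $C$. The intersection $\partial U \cap C$ is (a subdivision of) the link graph $L(v)$, embedded in the $2$-sphere $\partial U$; and $C\cap U$ is the cone over this embedded link graph with apex $v$.

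Inside $U$, the branch $B$ of $L(v)$ at its cut-vertex $e$ corresponds to a connected subcomplex of $C\cap U$ that touches the rest of $C\cap U$ only along the single edge-arc of $e$ and the apex $v$. The key step is a local surgery: pick a short arc $\gamma$ inside $U$ from $v$ to a new point $v[B]$, lying in the interior of the cone, chosen so that the portion of $C\cap U$ coming from $B$ can be "slid" so that all of its faces now attach along $\gamma$ rather than through $v$. Concretely, push the sub-cone over $B-e$ off the apex $v$ and onto the apex $v[B]$ along a product region homeomorphic to (cone over $B$) $\times [0,1]$, inserting the new edge $e[B]$ as $\gamma$ and widening each face of $C$ incident with $e$ that lies in $B$ into a face of size four spanning $e$, the two relevant link-edges, and $e[B]$. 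Because this deformation happens in a neighbourhood of a cut-vertex of $L(v)$, the two parts of the cone (over $B$ and over the contraction of $B-e$ to a point) only share $e$ and the new edge $e[B]$, so no new intersections are created; this is where one uses that $e$ is genuinely a \emph{cut-vertex} of $L(v)$. After this, subdivide each of the new size-four faces once by an extra edge from $v$, as in the definition of stretching (as opposed to pre-stretching); subdividing a face within an embedding never affects embeddability. The result is an embedding of $C[B]$.

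Alternatively — and this is probably the cleanest route given the machinery already built — one can avoid the hands-on surgery and argue via \autoref{inverse} together with \autoref{combi_intro_extended}, exactly in the style of \autoref{rem:obv}. Namely: extend $C$ to a simply connected simplicial complex $D$ embedded in $\Sbb^3$ (using \autoref{general}, after a harmless thickening to make things locally connected, as in \autoref{rem:obv}); perform the stretching of $B$ inside $D$ to obtain $D[B] \supseteq C[B]$, which is still simply connected since stretching a branch does not change the homeomorphism type of the geometric realisation (the new edge $e[B]$ and the new faces are attached inside an old face, so $|D[B]| \cong |D|$); by \autoref{PRS1}, $D[B]$ has a planar rotation system; hence by \autoref{combi_intro} (or \autoref{combi_intro_extended}) $D[B]$ embeds in $\Sbb^3$; and therefore its subcomplex $C[B]$ embeds in $\Sbb^3$. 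The only point needing care here is that stretching preserves simple connectedness of the ambient complex $D$, which follows because stretching a branch is realised topologically by adding a face and an edge inside an existing face (mirroring \autoref{inverse}, which contracts them back).

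The main obstacle is making the "no new intersections" claim in the first approach rigorous: one must check that sliding the sub-cone over $B$ along $\gamma$ can be done by an ambient isotopy supported in $U$, and this genuinely relies on $e$ being a cut-vertex so that the two sub-cones glued along $e$ (and now also along $e[B]$) do not interfere. For that reason I would prefer to present the algebraic route via \autoref{PRS1} and \autoref{combi_intro_extended}, where the corresponding difficulty has already been isolated and handled once and for all in \autoref{chapterII}; the reduction then amounts only to the observation that stretching leaves $|D|$ unchanged, hence preserves simple connectedness.
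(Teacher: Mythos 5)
The paper proves nothing here: the lemma is stated with an immediate $\qed$, and the surrounding text says only that the statement ``is geometrically clear, see \autoref{fig:stretch_branch}, and we will not use it in our proofs.'' Your first sketch (surgery in a small ball $U$ around $v$, sliding the sub-cone over $B$ onto the new apex $v[B]$ along an inserted arc $e[B]$, using the cut-vertex property of $e$ in $L(v)$ to ensure the two sub-cones only meet along $e$ and $e[B]$ and so no new intersections arise) is precisely the geometric picture the author is invoking, and as a sketch it is sound. Your second route, imitating \autoref{rem:obv}, is a genuinely different and in principle cleaner argument, but it has a real gap you have not closed: after extending $C$ to a simply connected locally connected complex $D$ via \autoref{general}, the link graph $L_D(v)$ is in general not the same as $L_C(v)$ (new faces and edge-subdivisions can appear at or near $v$), so ``perform the stretching of $B$ inside $D$'' is not well-defined as written, nor is it clear that $C[B]$ sits inside $D[B]$. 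You would need to observe that the genus-reducing curves of \autoref{cut_along_discs} can be taken in general position away from the vertex $v$, so that the extension leaves the link at $v$ unchanged and $B$ remains a branch of $L_D(v)$; the author's use of this template in \autoref{rem:obv} for face contraction and edge contraction avoids this issue only because those operations do not depend on the structure of a single link graph. One smaller inaccuracy: your phrase ``the new edge $e[B]$ and the new faces are attached inside an old face'' does not describe stretching correctly — the faces of $B$ at $e$ are widened from triangles to quadrilaterals, not added inside an old face — though the intended conclusion (that $|D[B]|$ and $|D|$ have the same homotopy type, hence that $D[B]$ is simply connected, which by \autoref{inverse} follows because $D$ is obtained from $D[B]$ by collapsing $e[B]$ and the resulting digons) is correct and is all the argument actually needs.
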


\begin{rem}
 Also the converse of \autoref{stretch_branch_embed} is true.
\end{rem}

\section{Increasing local connectivity}\label{s3}

In the first three subsections of this section we define stretchings and prove basic properties;
these are necessary for \autoref{main_streching}.
The forth subsection is a preparation for the last subsection, in which we prove
\autoref{main_streching}, and \autoref{Kura_gen}.
\subsection{The operation of stretching edges}
Let $C$ be a 2-complex and let $e$ be an edge of $C$ incident with two faces $f_1$ and $f_2$.
Assume that there is an endvertex $v$ of the edge $e$ such that in any planar rotation system of
the link graph $L(v)$ at $v$ the edges $f_1$ and $f_2$ are adjacent in the rotator at $e$.
The complex $C'$ obtained from $C$ by \emph{pre-stretching the edge $e$ in
the direction of $f_1$ and $f_2$} is obtained from $C$ as follows, see \autoref{fig:stretch_edge}.
We replace the edge $e$ by two edges new edges $e_1$ and $e_2$, both with the same endvertices as
$e$. We add a face of size two only incident with $e_1$ and $e_2$. The faces $f_1$ and $f_2$ are
incident with $e_1$ instead of $e$, all other faces incident with $e$ in $C$ are incident with
$e_2$
instead.
This completes the definition of pre-stretching an edge. \emph{Stretching} an edge is defined the
same way except that additionally we subdivide the new face of size two
to obtain a simplicial complex, see \autoref{fig:subdiv}.

   \begin{figure} [htpb]
\begin{center}
   	  \includegraphics[height=4cm]{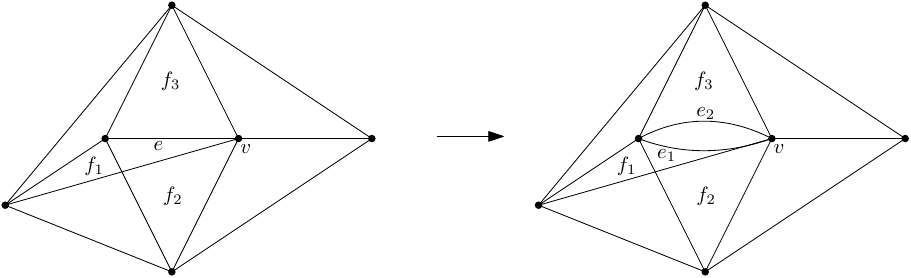}
   	  \caption{The graph on the left defines a simplicial complex by
adding faces on all cycles of size three. We obtain the 2-complex on the
right by pre-stretching the edge $e$ in the direction of the faces $f_1$ and
$f_2$. Its faces are all triangles of the graph on the left except that
the edge $e_1$ is only incident with the two faces $f_1$ and $f_2$ and the new face  $\{e_1,e_2\}$
and the edge $e_2$ is only incident with the face $f_3$ and the new face
$\{e_1,e_2\}$.}\label{fig:stretch_edge}
\end{center}
   \end{figure}

      \begin{figure} [htpb]
\begin{center}
   	  \includegraphics[height=1.3cm]{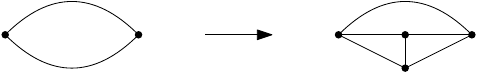}
   	  \caption{Subdivision of a face of size two to a simplicial complex.}\label{fig:subdiv}
\end{center}\vspace{-0.7cm}
   \end{figure}

\begin{eg}
The assumption for stretching an edge $e$ is particularly easy to verify if the link graph $L(v)$
is 3-connected. Indeed, then by a theorem of Whitney, we just need to check whether for a
particular embedding of the link graph $L(v)$ the edges $f_1$ and $f_2$ are adjacent.
\end{eg}

\begin{rem}\label{rem27}
The inverse operation of pre-stretching an edge $e$ to a face $\{e_1,e_2\}$ is contracting the face
$\{e_1,e_2\}$ to the edge $e$ as defined in \autoref{chapterI}.
\end{rem}

\begin{lem}\label{stretch_edge_rotn}
Let $C'$ be obtained from $C$ by pre-stretching an edge $e$. Then $C'$ has a planar rotation system
if
and only if $C$ has a planar rotation system.
\end{lem}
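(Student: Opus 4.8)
The plan is to mimic the proofs of \autoref{PRS1} and \autoref{PRS2}: exhibit $C$ as $C'$ with a face of size two contracted, and argue that this contraction is an \emph{equivalence} for the existence of planar rotation systems because the contracted face is redundant in the sense of the definition preceding \autoref{is_eq}.

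First I would invoke \autoref{rem27}: if $C'$ is obtained from $C$ by pre-stretching $e$ in the direction of $f_1,f_2$, creating the new edges $e_1,e_2$ and the new size-two face $g=\{e_1,e_2\}$, then $C=C'/g$ is recovered by contracting $g$ back onto $e$. Since contracting a face of size two preserves the existence of a planar rotation system (\autoref{rot_closed_down}), this immediately gives the implication: if $C'$ has a planar rotation system, then so does $C$.

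For the converse I would check that $g$ is a \emph{redundant} face of $C'$, with witness the distinguished endvertex $v$ of $e$ (which is indeed incident with $g$). In $C'/g=C$ the edge onto which $e_1$ and $e_2$ are identified is $e$, and the faces incident with $e_1$ in $C'$ other than $g$ are exactly $f_1$ and $f_2$; by the standing hypothesis $f_1$ and $f_2$ are adjacent in the rotator at $e$ in \emph{every} planar rotation system of the link graph $L(v)$ of $C$, i.e.\ they form an interval there, which is precisely the defining property of redundancy at $v$. Then \autoref{is_eq} yields: if $C$ has a planar rotation system, then so does $C'$. Combining the two implications proves the equivalence. This redundancy check is the only place the hypothesis on $(e,f_1,f_2)$ is used, and it is the same phenomenon as in \autoref{tiny}, where the size-two faces produced by stretching a $2$-separator were shown to be redundant; so I expect it to be routine rather than an obstacle.

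Should one wish to avoid appealing to the notion of redundancy, the converse can instead be proved by the explicit construction: given a planar rotation system $\Sigma=(\sigma(x))$ of $C$, it induces a planar rotation system on $L(v)$, so by hypothesis $f_1$ and $f_2$ are consecutive in $\sigma(e)$; put $\sigma'(x)=\sigma(x)$ for every edge $x\neq e$, set $\sigma'(e_1)=(f_1,f_2,g)$, and let $\sigma'(e_2)$ be $\sigma(e)$ with the block $f_1,f_2$ replaced by the single entry $g$. One then verifies, at each of the two endvertices of $e$, that the induced link rotation system contracts along the non-loop edge $g$ to exactly the link rotation system induced by $\Sigma$; since contracting a non-loop edge of an embedded graph leaves the surface unchanged and all other link complexes are literally unchanged, $\Sigma'$ is planar. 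The (mild) bookkeeping in this alternative is the part most prone to slips, which is why I would present the redundancy argument as the main proof.
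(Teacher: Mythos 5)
Your proposal is correct, and it organises the argument a little differently from the paper. For the direction ``$C'$ has a planar rotation system $\Rightarrow$ $C$ does,'' you and the paper both read off \autoref{rem27} and \autoref{rot_closed_down}. For the converse, the paper gives the explicit construction: it copies $\Sigma$ outside $e$, sets the rotator at $e_1$ to $(f_1,f_2,g)$ and the rotator at $e_2$ to $\sigma(e)$ with the block $f_1,f_2$ replaced by $g$, and then observes that at each endvertex of $e$ this amounts to coadding an edge at the vertex $e$ of the link graph, which preserves planarity because $f_1,f_2$ are adjacent. Your ``alternative'' sketch is exactly this construction. Your main route instead observes that the new size-two face $g=\{e_1,e_2\}$ is \emph{redundant} in the sense of the paper's definition, with witness $v$: the faces incident with $e_1$ other than $g$ are precisely $f_1,f_2$, and the standing hypothesis on $(e,f_1,f_2)$ says they are adjacent (hence an interval) in the rotator at $e$ in \emph{every} planar rotation system of $L(v)$. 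Then \autoref{is_eq} gives the converse directly. This is a genuine (if modest) unification: it makes \autoref{stretch_edge_rotn} an instance of the same redundancy mechanism the paper uses for \autoref{PRS2} (via \autoref{tiny}), whereas the paper proves the two lemmas by different-looking arguments. The trade-off is that you lean on \autoref{is_eq}, which the paper asserts as ``obvious'' without proof; the explicit construction is self-contained. One point worth making explicit if you present the redundancy route: the definition of redundant checks the interval property only at the distinguished vertex $v$, but the coaddition has to preserve planarity at both endvertices of $e$; this is fine because the rotator at $e$ in $L(w)$ is the reverse of the one in $L(v)$, and reversing a cyclic order preserves intervals. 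Either way the proof is sound.
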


\begin{proof}
Let $\Sigma$ be a planar rotation system of the 2-complex $C$.
We denote the two new edges of the 2-complex $C'$ by $e_1$ and $e_2$.
We obtain a rotation system $\Sigma'$ of the 2-complex
$C'$ from $\Sigma$ by taking the same rotators at all edges of the 2-complex $C'$ except for $e_1$
and $e_2$.
By assumption, the faces $f_1$ and
$f_2$ along which we pre-stretch the edge $e$ are adjacent in the rotator at the edge $e$.
We define the new rotator at the edge $e_1$ to be the rotator of the edge $e$ restricted to the
adjacent faces $f_1$ and $f_2$ and we add the new face $\{e_1,e_2\}$ in place of the interval
formed by the deleted faces. Similarly, we define a
rotator at the edge $e_2$: we delete from the rotator at $e$ the faces $f_1$ and $f_2$ and add the
face $\{e_1,e_2\}$ in the interval formed by the two deleted faces.
It remains to check that the rotation system $\Sigma'$ is planar. This is immediate at all vertices
except for the two endvertices of the edge $e$.
For the two endvertices, note that pre-stretching the edge $e$ has the effect on the link graph as
coadding\footnote{Recall that \emph{addition} is the reverse operation of deletion of an edge, and \emph{coaddition} is the reverse operation of contraction of an edge.} an edge at the vertex $e$. As the edges $f_1$ and $f_2$ of the link graphs are
adjacent, the coaddition can be done within the embeddings  of the link graphs given  by $\Sigma$.

By \autoref{rot_closed_down}, contracting a
face of size two preserves the existence of planar rotation systems. Hence by \autoref{rem27} if
$C'$ has a planar rotation system, then $C$ has a planar rotation system.
\end{proof}

The following is geometrically clear, see \autoref{fig:stretch_edge}, and we will not use it in
our proofs.

\begin{lem}\label{stretch_edge_embed}
Let $C'$ be obtained from $C$ by stretching an edge $e$. If $C$ embeds in 3-space, then also $C'$
embeds in 3-space.
\qed
\end{lem}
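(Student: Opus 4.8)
The plan is to argue topologically, by performing a local surgery on a given embedding inside a regular neighbourhood of the edge $e$, in the spirit of \autoref{rem:obv} and of the analogous facts \autoref{stretch_22_embed} and \autoref{stretch_branch_embed}. First I would reduce to \emph{pre}-stretching: stretching differs from pre-stretching only by subdividing the new face of size two into a simplicial complex (see \autoref{fig:subdiv}), and any subdivision of a face of an embedded complex is again embedded --- simply place the new vertices in the interior of the disc that the face bounds in the embedding and draw the new edges as arcs inside that disc. So it suffices to embed the pre-stretched complex.

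Now fix an embedding $\iota$ of $C$ into $\Sbb^3$. By \autoref{prs_orient} it induces a planar rotation system $\Sigma=(\sigma(e')\mid e'\in E(C))$, and $\sigma(e)$ is exactly the cyclic order in which the faces incident with $e$ appear around $\iota(e)$. By hypothesis $f_1$ and $f_2$ are adjacent in the rotator at $e$ in $L(v)$ for \emph{every} planar rotation system of $L(v)$, hence in particular in the one induced by $\Sigma$; thus $f_1$ and $f_2$ are consecutive in $\sigma(e)$. Consequently there is a closed $3$-ball $W$, a regular neighbourhood of $\iota(e)$ lying ``between $\iota(f_1)$ and $\iota(f_2)$'': concretely $W\cong e\times S$ with $e\times\{0\}=\iota(e)$, where $S$ is a small angular sector of a normal disc bounded by the radial directions of $f_1$ and $f_2$. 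Using the adjacency of $f_1,f_2$ in the plane-embedded link graphs $L(v)$ and $L(w)$ --- which controls the local picture of $\iota(C)$ near the two endvertices, where a neighbourhood of $\iota(v)$ is a cone over the plane drawing of $L(v)$ --- one can take $W$ all the way from $\iota(v)$ to $\iota(w)$ with interior disjoint from $\iota(C)$.

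Inside $W$ I would then build the surgery. Keep $\iota(e)$ as $\iota(e_2)$, with all faces incident with $e$ other than $f_1,f_2$ left attached to it unchanged. Push an unknotted parallel copy $\iota(e_1)$ of $\iota(e)$ into the interior of $W$, also running from $\iota(v)$ to $\iota(w)$. Realise the new bigon face $\{e_1,e_2\}$ as the thin ribbon disc in $W$ between $\iota(e)$ and $\iota(e_1)$, and re-attach $f_1$ and $f_2$ to $\iota(e_1)$ by a thin flap inside $W$ joining the part of $\iota(f_i)$ lying outside $W$ to $\iota(e_1)$. The three surfaces $\{e_1,e_2\}$, (the flapped-up) $f_1$ and $f_2$ then meet precisely along $\iota(e_1)$, like three pages of a book --- which is exactly the local incidence pattern at $e_1$ in the pre-stretched complex --- while at $\iota(e_2)$ the incident faces become the bigon together with the remaining faces formerly on $e$, again matching the definition. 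Outside $W$ nothing changes, so, observing as in the proof of \autoref{stretch_edge_rotn} that pre-stretching leaves the rest of the complex untouched, this yields an embedding of the pre-stretched complex, hence of $C'$.

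The only point needing real care is the second paragraph: setting up $W$ precisely (in the PL category), in particular checking that the empty sector between $\iota(f_1)$ and $\iota(f_2)$ really persists as an embedded $3$-cell with free interior all the way up to the endvertices $\iota(v)$ and $\iota(w)$. This is where the hypothesis on the rotator at $e$ is used, and it mirrors the two-dimensional fact that the new edge may be coadded within the plane embeddings of $L(v)$ and $L(w)$ (compare the proof of \autoref{stretch_edge_rotn}); everything else is routine bookkeeping about incidences of $e_1$, $e_2$ and the bigon. For $C$ simply connected a shorter alternative is available: by \autoref{prs_orient} $C$ has a planar rotation system, by \autoref{stretch_edge_rotn} so does $C'$, and since collapsing the bigon identifies $e_1$ with $e_2$, stretching does not change the fundamental group, so $C'$ is simply connected and \autoref{combi_intro} gives the embedding (the general statement then bootstraps via \autoref{general}); but the direct surgery above avoids any connectivity or simple-connectedness assumption.
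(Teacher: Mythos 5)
The paper does not actually give a proof of this lemma: it is stated with a bare \verb|\qed| and the remark that it is ``geometrically clear'' and not used in the arguments. Your proposal supplies the missing surgery argument, and it is correct. The two pivotal observations are exactly the ones needed: (i) the hypothesis that $f_1,f_2$ are adjacent at $e$ in \emph{every} planar rotation system of $L(v)$ forces them to be consecutive in the rotation $\sigma(e)$ induced by the given embedding (via \autoref{prs_orient}), so a free angular sector $W$ exists along $\iota(e)$; and (ii) because the induced rotation is the same (up to reversal) when read from either end, the adjacency also holds in the plane drawing of $L(w)$, so $W$ extends with free interior all the way from $\iota(v)$ to $\iota(w)$, which is the one place a careless argument could go wrong. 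The reduction from stretching to pre-stretching via subdividing the bigon inside its embedded disc is routine and fine, and the cross-sectional picture of $e_2=\iota(e)$, the parallel push-off $\iota(e_1)$, the ribbon bigon, and the flapped $f_1,f_2$ meeting at $\iota(e_1)$ realises precisely the incidences demanded by the pre-stretched complex. Your parenthetical alternative via \autoref{stretch_edge_rotn} and \autoref{combi_intro} is correctly flagged as requiring simple connectivity (and, for \autoref{general}, local connectivity and a transfer of the simply connected extension that is not entirely free), so the direct surgery is the right primary route; it matches the paper's analogous hand-waves at \autoref{stretch_22_embed}, \autoref{stretch_branch_embed}, and \autoref{rem:obv}.
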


\begin{rem}
 Also the converse of \autoref{stretch_edge_embed} is true.
\end{rem}

\subsection{The operation of contracting edges}
An edge $e$ in a 2-complex $C$ is \emph{reversible} if the 2-complex $C$
has a planar rotation system if and only if the 2-complex $C/e$ has a
planar rotation system.

A \emph{para-star} is a graph obtained from a family of disjoint parallel graphs by gluing them
together at a single vertex.

\begin{lem}\label{is_rev}
Let $e$ be a non-loop edge with endvertices $v$ and $w$ of a 2-complex $C$
such that the link graphs $L(v)$ and $L(w)$ are para-stars and the
vertex $e$ is a maximum degree vertex in both of them.
Then the edge $e$ is reversible.
\end{lem}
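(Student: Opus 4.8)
One implication is immediate: if $C$ has a planar rotation system $\Sigma$, then by \autoref{contr_pres_planar} the induced rotation system $\Sigma_e$ of $C/e$ is planar.

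For the converse, suppose $\Sigma'$ is a planar rotation system of $C/e$, and let $L(e)$ denote the link graph of the contraction vertex $e$ in $C/e$. By \autoref{obs1} we have $L(e)=L(v)\oplus_e L(w)$, and $\Sigma'$ induces a planar rotation system $\Pi$ on $L(e)$. The aim is to split $\Pi$ into planar rotation systems of $L(v)$ and of $L(w)$ that agree with $\Pi$ away from the vertex $e$ and are reverse of one another at $e$; given such a splitting, $\Sigma'$ together with the resulting rotator at $e$ is a planar rotation system of $C$ inducing $\Sigma'$, exactly as in the proof of \autoref{contr_pres_planar}. If $e$ is a cutvertex of neither $L(v)$ nor $L(w)$, this splitting is supplied directly by \autoref{sum_planar2}.

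So the crux is the case in which $e$ is a cutvertex of, say, $L(v)$. Here I would use the para-star hypothesis together with the maximum-degree hypothesis: these force $e$ to be the common branch vertex at which all the parallel-graph blocks (``arms'') of $L(v)$ are glued, and $e$ is a cutvertex precisely when there are at least two arms. The key fact about para-stars is a cyclic-interval criterion: a rotator $\tau$ at the central vertex of a para-star extends to a planar rotation system of that para-star if and only if the edges of each arm occur consecutively (as a cyclic interval) in $\tau$ --- within a single arm the edges may occur in any order, since a parallel graph admits a planar rotation system for every cyclic order of its constituent paths, and the rotators at subdivision vertices and at the non-central branch vertices are then determined. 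I would prove this criterion first (a short direct computation with the faces of the induced surface, or equivalently a routine Euler-characteristic count). With it in hand, I would pass to the block decomposition of $L(e)=L(v)\oplus_e L(w)$: each block is obtained by gluing an arm of $L(v)$ to an arm (or arms) of $L(w)$ along a sub-bond of the glue cut, and applying \autoref{sum_planar2} within each block to the two sides of that sub-bond produces, for each arm, a partial rotator on its edges consistent with $\Pi$. Assembling these partial rotators arm-by-arm gives a single cyclic order $\tau$ at $e$ for $L(v)$ and its reverse $\bar\tau$ for $L(w)$; the cyclic-interval criterion, applied on both sides, certifies that $\tau$ and $\bar\tau$ induce the required planar rotation systems.

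The main obstacle is this final assembly: one must check that the two partitions of the glue-edges --- one coming from the arms of $L(v)$, the other from the arms of $L(w)$ --- can be realized simultaneously as consecutive intervals of one cyclic order, and that the assembled $\tau$ genuinely reproduces $\Pi$ at every branch vertex rather than merely some plane embedding of the link graphs. This compatibility is exactly the information that planarity of $\Pi$ on $L(e)$ records at the endpoints of the glue-edges; the cleanest way to extract it is via the dual-graph description of the glue cut used in the proof of \autoref{sum_planar2}, which here is a disjoint union of circuits rather than a single bond.
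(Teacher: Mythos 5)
Your setup is right: the forward direction follows from \autoref{contr_pres_planar}, and the crux is \autoref{sublem28}-style splitting of the planar rotation system $\Pi$ of $L(e) = L(v)\oplus_e L(w)$ into planar rotation systems of $L(v)$ and $L(w)$ that are reverse at $e$ and agree with $\Pi$ elsewhere. Your cyclic-interval criterion for para-stars is also correct and is a useful structural observation that the paper keeps implicit. But the route you take from there diverges from the paper's and, more importantly, the plan leaves the genuinely hard step unproved: as you yourself note, you never establish that the two interval partitions of the glue-edges (one from the arms of $L(v)$, one from the arms of $L(w)$) can be realized simultaneously by a single cyclic order that moreover reproduces $\Pi$ at the branch and subdivision vertices. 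That compatibility is exactly what the lemma is about, so a proposal that stops at ``the cleanest way to extract it is via the dual-graph description'' has not proved the lemma.

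Beyond being unfinished, the block-decomposition step as stated is not sound. You assert that each block of $L(e)$ is obtained by gluing an arm of $L(v)$ to one or more arms of $L(w)$ along a sub-bond, but this fails already for small examples. Since $L(v)-e$ and $L(w)-e$ are disjoint unions of subdivided stars (trees), the centres of those stars can themselves be cutvertices of $L(e)$, and a single arm can be split across several blocks. For instance, take $L(v)$ with two triple-edge arms $A_1$, $A_2$ and $L(w)$ with a single arm $B$ consisting of six length-$2$ paths, with $A_1$'s edges glued to three of $B$'s paths and $A_2$'s to the other three: then the centre of $B-e$ is a cutvertex of $L(e)$, and each block of $L(e)$ contains one arm of $L(v)$ but only half of $B$. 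So the recursion you describe on blocks does not cleanly reduce the problem, and applying \autoref{sum_planar2} ``within each block'' does not give the partial rotators you want. The paper sidesteps all of this: in \autoref{sublem28} it first suppresses degree-$2$ vertices so that all arms become multi-edges and $L(v)-e$ is a set of isolated vertices, and then inducts on the number of arms by identifying two vertices of $L(v)-e$ --- either in different components of $L(e)$, or chosen so that two of their glue-edges are consecutive in a rotator of $\Pi$ --- and afterwards unglues the rotator at the merged vertex back into two cyclic subintervals. That identification trick preserves the para-star structure, decreases the number of branches, and makes the compatibility at $e$ fall out of the induction hypothesis, which is precisely the step your plan leaves open.
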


\begin{proof}
By \autoref{contr_pres_planar}, it suffices to show how any
planar rotation
system $\Sigma$ on the 2-complex $C/e$ induces a planar rotation
system $\Sigma'$ on the 2-complex $C$.
Letting $\Sigma'$ to be equal to $\Sigma$ at all edges of $C$ not
incident with $v$ or $w$, it suffices to show the following.

\begin{sublem}\label{sublem28}
Let $L(v)$ and $L(w)$ be para-stars and let the vertex
$e$ have maximal degree in both of them. Let $L(e)$ be the vertex sum
of $L(v)$ and $L(w)$ along $e$.
For any planar rotation system $\Pi$ of the graph $L(e)$, there are
planar rotation systems of the graphs $L(v)$ and $L(w)$ that are
reverse of one another at the vertex $e$, and otherwise agree
with the rotation system $\Pi$.
\end{sublem}

\begin{cproof}
Throughout we assume in the graphs $L(v)$ and $L(w)$, the vertex $e$
is adjacent to any other vertex. This easily implies the general case
by suppressing suitable degree two vertices as rotators
at such vertices are unique.

We prove this by induction on the number of branches of the graph $L(v)$.
The base case is that the graph $L(v)$ is a parallel graph. Then the
graph $L(e)$ is isomorphic to the graph $L(w)$. So a planar rotation
system on the graph $L(e)$ induces a planar rotation
system on the graph $L(w)$. There is a unique planar rotation
system on the graph $L(v)$ whose rotator at $e$ is reverse to the
rotator at $e$ in that planar rotation system of $L(w)$.

So we may assume that the graph $L(v)$ has at least two branches.
We split into two cases.

{\bf Case 1:} the graph $L(e)$ is disconnected.
We consider $L(e)$ as a bipartite graph with the vertex set of
$L(v)-e$ on the left and the vertex set of $L(w)-e$ on the right.
As every vertex of $L(e)$ is incident with an edge, there are two
vertices of $L(v)-e$ in different connected components of the
bipartite graph $L(e)$. Denote these two vertices by $y$ and $z$.
We obtain $L(v)'$ from $L(v)$ by identifying the vertices $y$ and $z$
into a single vertex. Denote that new vertex by $u$. We denote the
vertex sum of $L(v)'$ and $L(w)$ along $e$ by $L(e)'$. The
graph $L(e)'$ is equal to the graph obtained from $L(e)$ by
identifying the vertices $y$ and $z$. Thus any planar rotation system
of the graph $L(e)$ induces a planar rotation system of the graph
$L(e)'$ by sticking the rotation systems at the vertices $y$ and $z$
together so that the rotator at the new vertex $u$ contains the edges
incident with the vertex $y$ or $z$, respectively, as a
subinterval.
By induction such a rotation system induces planar rotation systems at
the graphs $L(v)'$ and $L(w)$. This planar rotation system at the
graph $L(v)'$ induces a rotation system on the graph $L(v)$
by splitting the rotator at $u$ into the two subintervals for the
vertices $y$ and $z$. This induced rotation system is planar for
$L(v)$ as the rotators for $y$ and $z$ are subintervals of the
rotator for $u$.

{\bf Case 2:} not Case 1, so the graph $L(e)$ is connected. As above,
we consider $L(e)$ as a bipartite graph, and let a planar rotation
system of the graph $L(e)$ be given. Since the left side
has at least two vertices, there is a vertex on the right of the
connected bipartite graph $L(e)$ that has two neighbours on the left.
Pick such a vertex $x$. We pick neighbours $y$ and $z$ of $x$
in $L(v)-e$ such that there are edges $e_y$ between $y$ and $x$ and
$e_z$ between $z$ and $x$ such that these two edges are incident in
the rotator at the vertex $x$.
Let $L(v)'$ be the graph obtained from $L(v)$ by identifying the
vertices $y$ and $z$ to a single vertex. Call that new vertex $u$. We
denote the vertex sum of $L(v)'$ and $L(w)$ along $e$ by $L(e)'$. The graph $L(e)'$ is equal to the
graph obtained from $L(e)$ by
identifying the vertices $y$ and $z$. The chosen planar rotation
system of the graph $L(e)$ induces a rotation system for the
graph $L(e)'$ by sticking the rotation systems at the vertices $y$ and
$z$ together so that the rotator at the new vertex $u$ contains the
edges incident with the vertex $y$ or $z$, respectively,
as a subinterval.  By the choice of $y$ and $z$ this rotation system
is planar. By induction this planar rotation system on $L(e)'$ induces
planar rotation systems on the graphs $L(v)'$ and $L(w)$. This planar rotation system at the graph
$L(v)'$ induces a
rotation system on the graph $L(v)$ by splitting the rotator at $u$
into the two subintervals for the vertices $y$ and $z$. This induced rotation system is planar for
$L(v)$ as the rotators for $y$
and $z$ are subintervals of the rotator for $u$.
\end{cproof}
To summarise the proof of \autoref{is_rev}, we define the planar rotation system $\Sigma'$ for
the 2-complex $C$ as indicated above, and we choose the rotators at the edges incident with the
vertices $v$ or $w$ as induced in the sense of \autoref{sublem28} by the rotation system of the link
graph $L(e)$ at the vertex $e$ of the 2-complex $C/e$.
\end{proof}

\subsection{The definition of stretching}

We say that a simplicial complex $\tilde C$ is obtained from a simplicial complex $C$ by
\emph{stretching}, if it is obtained from $C$ by applying successively operations of the following
types:
\begin{enumerate}
 \item stretching local branches at connected link graphs;
 \item 2-stretching at local 2-separators of 2-connected link graphs;
 \item stretching edges;
\item contracting reversible edges that are not loops;
\item splitting vertices.
\end{enumerate}
We also call $\tilde C$ a \emph{stretching} of $C$.

\begin{lem}\label{planar_rot_preserve}
 Assume $C'$ is a stretching of $C$. Then $C$ has a planar rotation system if and only if $C'$ has
a planar rotation system.
\end{lem}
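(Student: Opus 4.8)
The plan is to prove \autoref{planar_rot_preserve} by induction on the number of stretching operations used to obtain $C'$ from $C$. Since \emph{stretching} is defined as a successive application of the five listed operations, it suffices to show that each single operation is an equivalence for the existence of a planar rotation system; the general statement then follows immediately by composing equivalences. So I would reduce at once to the case where $C'$ is obtained from $C$ by exactly one of the five operations, and then check each operation in turn, invoking the lemmas already established earlier in the excerpt.

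For the individual operations the work has essentially already been done. For operation 1 (stretching a local branch at a connected link graph), \autoref{PRS1} states precisely that $C[B]$ has a planar rotation system if and only if $C$ does. For operation 2 (2-stretching at a local 2-separator of a 2-connected link graph), \autoref{PRS2} gives exactly the required equivalence. For operation 3 (stretching an edge), I would first note that stretching an edge is pre-stretching an edge followed by a baricentric subdivision of the new face of size two; subdividing a face of size two does not affect the existence of a planar rotation system (the subdivision edges are incident with two faces and carry unique rotators, and this preserves link complexes being spheres), and \autoref{stretch_edge_rotn} handles the pre-stretching step, so the composite is an equivalence. For operation 4 (contracting a reversible non-loop edge), this is an equivalence by the very definition of \emph{reversible}. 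For operation 5 (splitting a vertex), splitting replaces a vertex by one copy per component of its link graph and distributes the incidences accordingly; the multiset of link graphs is unchanged (the link graph at a splitter is exactly the corresponding connected component of the original link graph), so a choice of planar rotation system — equivalently, a family of cyclic orientations $\sigma(e)$ at the edges together with sphericity of all link complexes — transfers verbatim in both directions. Hence each of the five operations is an equivalence for the existence of a planar rotation system.

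Putting this together: let $C=C_0, C_1, \dots, C_k = C'$ be the intermediate simplicial complexes, where each $C_{i+1}$ is obtained from $C_i$ by one of the five operations. By the case analysis above, $C_i$ has a planar rotation system if and only if $C_{i+1}$ does, for every $i<k$. Chaining these equivalences gives that $C$ has a planar rotation system if and only if $C'$ does, which is the claim.

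The only mild subtlety — and the one point I would write out with a little care — is operation 5, since splitting is the one operation in the list for which no dedicated earlier lemma is cited. But even here there is no real obstacle: splitting a vertex neither creates nor destroys any link graph, it merely relabels which vertex a given link graph sits at, so planarity of all link complexes is trivially preserved in both directions. The proof is therefore short, and I do not expect any genuine difficulty beyond correctly matching each listed operation to its governing lemma.
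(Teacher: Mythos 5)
Your proposal is correct and follows exactly the paper's own proof: reduce to a single operation by chaining, then invoke \autoref{PRS1}, \autoref{PRS2}, \autoref{stretch_edge_rotn} (plus the observation that subdividing the size-two face is harmless), the definition of reversible, and the triviality of splitting. No meaningful difference from the paper's argument.
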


\begin{proof}
In the language introduced above we are to show that all five stretching operations are
equivalences for the property `existence
of planar rotation systems'.
For the first operation it is proved in \autoref{PRS1}, for the second it is proved in
\autoref{PRS2}, and for the third it is proved in \autoref{stretch_edge_rotn} for pre-stretchings
of edges, and so the result for stretchings follows. For the forth
operation it is true by the definition of reversible.
Splitting vertices is clearly an equivalence for the existence of planar rotation systems.
\end{proof}

\subsection{Increasing the local connectivity a bit}

A 2-complex is \emph{locally almost 2-connected} if all its link graphs are 2-connected or free
graphs. The following is a key step towards
\autoref{main_streching}.

\begin{thm}\label{reduce_to loc_2-con}
Any simplicial complex $C$ has a stretching $C'$ that is a simplicial complex so that $C'$ is
locally almost 2-connected or $C'$ has a non-planar link graph.
\end{thm}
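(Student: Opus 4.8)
The plan is to run an iterative simplification of the link graphs of $C$ by stretchings, in the spirit of \autoref{stretch_loc_con} (which carries out the analogous step from \lq locally almost $2$-connected\rq\ to \lq locally almost $3$-connected\rq). First, if some link graph of $C$ is non-planar we output $C$ unchanged and are in the second alternative of the statement; so assume henceforth that every link graph of $C$ is planar. Every stretching operation replaces a link graph either by a minor or a subdivision-modification of it (for splitting, branch-stretching, $2$-stretching, edge-stretching) or by a vertex-sum of two para-stars along a vertex of maximum degree (for contracting a reversible edge, see the proof of \autoref{is_rev}); in each case planarity of all link graphs is preserved. Hence once the procedure below is started it never needs to fall back on the non-planar alternative, and its output is automatically a simplicial complex that is locally almost $2$-connected.

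The procedure is: while some link graph $L(v)$ is not almost $2$-connected, perform one of the following. If $L(v)$ is disconnected, split $v$, replacing $L(v)$ by its connected components. If $L(v)$ is connected and has a cut-vertex, it has a branch; we select a suitable branch $B$ at some cut-vertex $e$ of $L(v)$ and stretch it (\autoref{s1}), which by the argument in the proof of \autoref{PRS1} replaces $L(v)$ by a subdivision of $B$ at the new vertex $v[B]$ and by a subdivision of $L(v)/(B-e)$ at $v$. As a well-founded termination measure we take the \emph{degree-parameter} of the current complex, exactly as in \autoref{2-stretch-change} and \autoref{stretch_loc_con}: splitting a disconnected link graph strictly decreases it, since a disconnected graph has strictly larger degree-sequence than any of its components, and nothing else changes. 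The crux is to show that for a connected link graph that is neither $2$-connected nor a free-graph one can always pick a move that strictly decreases the degree-parameter: either a branch $B$ for which both $B$ and $L(v)/(B-e)$ have strictly smaller degree-sequence than $L(v)$, or -- in the degenerate configurations where the only available branches are trivial -- a reversible edge of $C$ to contract (\autoref{is_rev}), exploiting that the two relevant link graphs are then para-stars; a little care (a follow-up split, or absorbing the resulting size-two faces) keeps the complex simplicial after such a contraction.

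The main obstacle is precisely this last claim, which I would prove by a case analysis on the block-cut tree of $L(v)$. If $L(v)$ has two vertices of degree at least three lying in distinct blocks, then a branch at the separating cut-vertex that contains exactly one of them works, as both the branch and its contracted complement then drop a large degree. If instead all vertices of degree at least three lie in a single block $B_0$, then, since $L(v)$ is not a free-graph, either $B_0$ is not a cycle, or $B_0$ is a cycle carrying at least two pendant paths, or $L(v)$ is a subdivided star $K_{1,k}$ with $k\ge 4$; in the first two cases an appropriate branch (one containing $B_0$, respectively all but one of the pendant paths) again strictly reduces both pieces, while in the last case branch-stretching is useless and one instead contracts the reversible edge of $C$ joining $v$ to its $K_{1,k}$-centre, using \autoref{is_rev} to split $L(v)$ into free link graphs. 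The routine but delicate part is the bookkeeping: tracking the subdivision edges introduced by stretching and the degree of the contraction vertex in $L(v)/(B-e)$, so as to confirm that the degree-parameter genuinely drops in each case. Once that is verified, well-foundedness of the degree-parameter order gives termination, and by construction the terminal complex has every link graph $2$-connected or free, as required.
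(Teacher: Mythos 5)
There is a genuine gap, and it is the termination argument. The central claim you rest on—``for a connected link graph that is neither $2$-connected nor a free-graph one can always pick a branch $B$ at a cut-vertex $e$ for which both $B$ and $L(v)/(B-e)$ have strictly smaller degree-sequence than $L(v)$''—is false. In $L(v)/(B-e)$ the vertex $e$ keeps its full degree $\deg_{L(v)}(e)$: every edge of $L(v)$ incident with $e$ survives contraction (those going into $B-e$ just become parallel edges to the contracted vertex), so the leading entry of the degree-sequence does not drop when $e$ realises the maximum. Concretely, if $L(v)$ is a star of parallel graphs with cut-vertex $e$ of degree $a\ge 4$ and branches $B_1,\dots,B_m$ that are parallel graphs with $\deg_{B_i}(e)=d_i$, then $L(v)/(B_1-e)$ has degree-sequence $(a,d_1,d_2,\dots,d_m)$, \emph{identical} to that of $L(v)$, and you have additionally created a new link graph $L(v[B_1])$ with degree-sequence $(d_1,d_1)$. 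So the degree-parameter does not decrease; if anything it increases. Your Case 1 (``two vertices of degree at least three in distinct blocks'') applies here with $z_1$ and $z_2$, but the claimed strict drop simply does not occur, and your Case 2 never reaches the figure-eight or other stars of parallel graphs because the only high-degree vertex $e$ lies in \emph{all} blocks, not ``a single block $B_0$.''

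This is exactly the obstruction the paper's proof is built around, and it is why the actual argument has a different shape. The paper first applies branch-stretching to reach ``every link graph is $2$-connected or a star of parallel graphs'' (\autoref{stretching_loc_1-cuts}), where termination is measured by the number of cut-vertices of link graphs, \emph{not} the degree-parameter. It then runs an outer induction on the \emph{cutvertex-degree} $a$ of the whole complex, and the decisive step is not a local branch-stretch but a \emph{global} one: at a vertex whose link is a star of parallel graphs with cut-vertex $e$ of degree $a$, the edge $e$ of $C$ is extended to a path $P_n$ of edges all with face-degree $a$ (\autoref{path_exists}, \autoref{cutvx}), and the whole path is contracted using reversibility (\autoref{is_rev}). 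That contraction is what actually lowers the cutvertex-degree, and making the result simplicial again requires the careful $C_3',C_4'$ manoeuvre at the end of the proof. Your proposal notices the need for reversible contractions only in the degenerate subdivided-$K_{1,k}$ case and treats them as a local patch, which misses that contractions along a whole path are essential in the general star-of-parallel-graphs case and that branch-stretching alone cannot make progress there.
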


Before we prove \autoref{reduce_to loc_2-con}, we need some preparation.
A \emph{star of parallel graphs} is a graph that is not 2-connected and is obtained from a set of
disjoint parallel graphs by
gluing them together at a single vertex.
\begin{eg}
The only parallel graphs that are stars of parallel graphs are paths. Stars of parallel graphs are
2-connected para-stars.
\end{eg}

\begin{lem}\label{stretching_loc_1-cuts}
 Let $C$ be a simplicial complex that is locally connected. Then there is a simplicial complex
$\tilde{C}$ that is obtained
from $C$ by stretching local branches such that every link graph of $\tilde C$ is 2-connected
or a
star of parallel graphs.
\end{lem}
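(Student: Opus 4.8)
\textbf{Proof plan for \autoref{stretching_loc_1-cuts}.}

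The plan is to induct on a complexity measure of $C$ that strictly decreases under stretching a local branch, namely the degree-parameter introduced in \autoref{s1} (the sequence of abbreviated degree-sequences of all link graphs, ordered by size, compared lexicographically). First I would observe that stretching local branches keeps the complex locally connected: the operation replaces the link graph $L(v)$ at the relevant vertex by two link graphs, one being the branch $B$ (which is connected, since a branch is a component of $L(v)-e$ together with $e$) and the other being the link graph at $v$ with $B-e$ contracted to a single vertex (which is connected because $L(v)$ was), while all other link graphs are untouched. So the inductive hypothesis applies to the resulting complex provided its degree-parameter has strictly decreased.

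The main step is then the following claim: if some link graph $L(v)$ of $C$ is connected but neither $2$-connected nor a star of parallel graphs, then $L(v)$ has a branch $B$ whose stretching strictly decreases the degree-parameter. To see this, note that a connected graph that is not $2$-connected has a cut-vertex, hence a proper branch decomposition; among all branches at all cut-vertices one picks a branch $B$ at a cut-vertex $e$ that is itself "maximal" in the sense that $B$ is $2$-connected or a free-type piece and the contracted remainder is no larger. After stretching $B$ at $v$: at every vertex other than $v$ and the new vertex $v[B]$ the link graph is unchanged; at $v$ the new link graph is obtained from $L(v)$ by contracting $B-e$, which cannot increase the abbreviated degree-sequence and strictly decreases it unless $B$ was trivial; and the new vertex $v[B]$ carries the link graph $B$, whose abbreviated degree-sequence is a strict sub-sequence of that of $L(v)$ (it omits the degrees contributed by the rest of $L(v)$, while the degree of $e$ can only drop), hence strictly smaller, unless $L(v)$ was already essentially a parallel graph attached at $e$. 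The bookkeeping here is exactly analogous to \autoref{2-stretch-change} and I would cite the style of that argument rather than redo it in full; the key point is simply that every stretching of a nontrivial branch moves us down in the well-founded order on degree-parameters.

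Finally I would close the induction: if every link graph of $C$ is already $2$-connected or a star of parallel graphs we are done with $\tilde C = C$; otherwise pick such a bad link graph $L(v)$, stretch an appropriate branch to get $C_1$ with strictly smaller degree-parameter, which is still locally connected, and apply the induction hypothesis to $C_1$ to obtain $\tilde C$. Since the degree-parameter cannot decrease infinitely often, the process terminates, and the resulting $\tilde C$ is obtained from $C$ by a finite sequence of local-branch stretchings with all link graphs $2$-connected or stars of parallel graphs. The only subtlety I anticipate — the main obstacle — is verifying carefully that when $L(v)$ has exactly two branches and one of them is a path (the boundary case that also appears in \autoref{2-stretch-change}), the stretching still either strictly decreases the degree-parameter or else $L(v)$ is already a star of parallel graphs (a path glued to a parallel graph at a vertex), so no further stretching at $v$ is needed; handling this case requires checking that "star of parallel graphs" is precisely the obstruction to finding a degree-parameter-decreasing branch.
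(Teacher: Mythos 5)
Your plan hinges on the claim that whenever $L(v)$ is connected, not $2$-connected and not a star of parallel graphs, some branch $B$ can be stretched so as to strictly decrease the degree-parameter, with the key step being that the new link at $v$, namely $L(v)$ with $B-e$ contracted, cannot have a larger abbreviated degree-sequence. That step is false, and the gap is real. Contracting $B-e$ to the single new vertex $e[B]$ gives $e[B]$ degree $\deg_B(e)$, i.e.\ the number of edges of $L(v)$ from the cut-vertex $e$ into $B-e$, which can greatly exceed any individual degree inside $B-e$. Concretely, let $L(v)$ be two wheels on five rim vertices each, sharing their hub $e$ (realised, say, as the link at the apex of the cone over that graph, a simply connected locally connected simplicial complex). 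Then $L(v)$ is connected, not $2$-connected, and not a star of parallel graphs, with abbreviated degree-sequence $(10,3,3,3,3,3,3,3,3,3,3)$. Stretching either branch replaces the link at $v$ by a graph with abbreviated degree-sequence $(10,5,3,3,3,3,3)$, which is lexicographically \emph{larger}, and in addition creates a new vertex $v[B]$ with degree-sequence $(5,3,3,3,3,3)$. So the degree-parameter strictly increases for every choice of branch, and the induction does not get off the ground. In particular, ``star of parallel graphs'' is not the precise obstruction to finding a degree-parameter-decreasing branch, as you speculated it might be.

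The paper's proof therefore uses a different well-founded measure: the sequence of \emph{numbers of cut-vertices} of the link graphs, sorted and compared lexicographically. When some $L(v)$ has at least two cut-vertices $e_1,e_2$, one stretches a branch of $e_1$ containing $e_2$; then the cut-vertices of $L(v)$ distribute between the two resulting link graphs, with $e_1$ and $e_2$ landing in different pieces, so both new link graphs have strictly fewer cut-vertices and the measure drops. The base case, where every link graph has at most one cut-vertex, is handled directly rather than by a decreasing measure: at each such vertex one stretches all branches at the unique cut-vertex $e$ that are not parallel graphs; this leaves a star of parallel graphs at $v$, $2$-connected links at the new vertices, and changes the other link graphs only by subdividing edges. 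The degree-parameter you reached for does appear in the paper's arguments, but for stretchings at local $2$-separators (\autoref{2-stretch-change}, \autoref{stretch_loc_con}), where the operation genuinely splits the degree of the cut pair; branch stretching behaves differently, precisely because the contraction vertex $e[B]$ can concentrate degree, and so it needs the cut-vertex count instead.
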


\begin{proof}
We will prove this by induction. The base case is that every link graph is 2-connected or a star of
parallel graphs. Next we consider the case that each link graph has at most one cut-vertex. Let $v$
be a vertex of the simplicial complex $C$ such that its link graph has a cut-vertex $e$. Then all
branches of $e$ are 2-connected graphs. We stretch all branches of $e$ that are not parallel
graphs, one after the other. Then the link at $v$ becomes a star of parallel graphs and all other
new link graphs are 2-connected. The old link graphs, those at vertices of $C$ aside from $v$,
do not change except possibly for subdividing edges\footnote{The vertices $v'$ of $C$ where those
subdivisions occur are those such that there is an edge $e'$ between $v$ and $v'$ such that
$e'$ is a vertex of one of the branches we stretch.}. We apply this recursively to all
link graphs
with cut-vertices, and so reduce this
case to the base case.

Next suppose that there is a vertex $v$ such that its link graph has at least two cut-vertices.
Let $e_1$ be an arbitrary cut-vertex of that link graph. Let $B$ be a branch of $e_1$
containing another cut-vertex $e_2$. Then we stretch $B$. All link graphs at vertices of $C$
aside from $v$ are not changed (except for possibly subdividing edges). The vertex $v$ is replaced
by two new vertices. Each cut-vertex of
the link graph of $v$ is in precisely one of the two new link graphs, and $e_1$ and $e_2$ are in
different link graphs. Hence both new link graphs have strictly less cut-vertices than the link
graph of $v$. Hence we can apply induction (on the sequence of numbers of cut-vertices of link
graphs, ordered by size and compared in lexicographical order).

\
\end{proof}

\begin{proof}[Proof of \autoref{reduce_to loc_2-con}.]
The \emph{cutvertex-degree} of a simplicial complex $C$ is
the maximal degree of a cutvertex of a link graph of the simplicial complex $C$.
We prove \autoref{reduce_to loc_2-con} by induction on the cutvertex-degree.
So let $C$ be a simplicial complex with cutvertex-degree $a$.

We obtain $C_1$ from $C$ by splitting all vertices whose link graphs are disconnected. The
simplicial complex $C_1$ is locally connected.
By \autoref{stretching_loc_1-cuts} there is a stretching $C_2$ of the simplicial complex $C_1$ such
that all its link graphs are 2-connected or stars of parallel graphs.

If the cutvertex-degree $a$ is at most three, then all link graphs of $C_2$ are
2-connected or stars of parallel graphs where the unique cut-vertex has degree at most three.
Graphs of the second type are always free, see
\autoref{fig:starstarround}. This completes the proof if  the cutvertex-degree is at most
three, so from now on let the cutvertex-degree $a$ be at least four.

We say that a simplicial complex $C$ is \emph{$a$-nice} if all its link graphs are 2-connected, or
stars of parallel graphs whose cutvertex has degree precisely $a$ or else have maximum degree
strictly less than $a$. For example the simplicial complex $C_2$ is $a$-nice.
We say that a simplicial complex $C$ is \emph{$a$-structured} if all its link graphs are
parallel graphs, or stars of
parallel graphs whose cutvertex has degree precisely $a$ or else have maximum degree strictly less
than $a$. For example, every $a$-structured simplicial complex is $a$-nice.

\begin{sublem}\label{be_nice}
Assume $C_2$ is $a$-nice.
 There is a stretching $C_3$ of $C_2$ that is $a$-structured or else has a non-planar link.
\end{sublem}

\begin{cproof}
We prove this claim by induction on the degree-parameter as defined in \autoref{s2}.
So let $C_2$ be an $a$-nice simplicial complex such that all $a$-nice simplicial complexes with
strictly smaller degree-parameter have a stretching that is $a$-structured or has a non-planar
link.

We may assume that the simplicial complex $C_2$ is not $a$-structured; that is, it has a vertex $v$
such that the link graph $L(v)$ is 2-connected but no parallel graph and $L(v)$ has vertex $e$ of
degree at
least $a$. Also we may assume that $L(v)$ is planar.

{\bf Case 1:} the vertex $e$ is not contained in a proper 2-separator of $L(v)$.
Since embeddings of 2-connected graphs in the plane are unique up to flipping at 2-separators by a
theorem of Whitney, any embedding of the graph $L(v)$ in the plane has the same rotator at the
vertex $e$ (up to reversing). Take two edges $f_1$ and $f_2$ incident with the vertex $e$ that
are adjacent in the rotator.
Now we stretch the edge $e$ of $C_2$ in the direction of the faces corresponding to $f_1$ and
$f_2$. The link graphs at all vertices of $C_2$ except for $v$ and the
other
endvertex $w$ of the edge $e$ of $C_2$ do not change. In the link
graphs for $v$ and $w$ the vertex $e$ is replaced by two new vertices (and a path of
length
two
joining them), each of strictly smaller
degree than $e$, as its degree $a$ is at least four. This new simplicial complex $C_3$ has strictly
smaller degree-parameter than
$C_2$.

In order to be able to apply induction, we need to show that $C_3$ is $a$-nice. The link graph at
$v$ is still 2-connected in $C_3$. If the link graph at $w$ in $C_2$ is 2-connected, this is
still true in $C_3$. Hence it remains to consider the case that it is a star of parallel
graphs. In this case the vertex $e$ must be the cutvertex of $L(w)$ by the choice of $a$. Then in
the simplicial complex $C_3$, the link graph at $w$ has maximum degree less than $a$. Thus $C_3$ is
$a$-nice and we can apply the induction hypothesis. So $C_3$ has a stretching of the desired type,
and $C_3$ is a stretching of $C_2$. This completes the induction step in this case.

{\bf Case 2:} not Case 1. Then the vertex $e$ is contained in a proper 2-separator of $L(v)$. Let
$x$ be the other vertex in that 2-separator. We obtain $C_3$ from $C_2$ by stretching at the
2-separator $\{e,x\}$. The simplicial complex $C_3$ has strictly smaller degree-sequence
than
$C_2$ by \autoref{2-stretch-change}. We verify that $C_3$ is $a$-nice. All link graphs at new
vertices are still 2-connected in $C_3$. Let $w$ and $w'$ be the endvertices of the edges $e$ and
$x$ aside from $v$, respectively. Hence it remains so show
that the link graphs at $w$ and $w'$ in $C_3$ are 2-connected, stars of parallel graphs whose
cutvertex has degree $a$ or have maximal degree less than $a$. If the link graph at $w$ in $C_2$ is
2-connected, it is also 2-connected in $C_3$ (as coadding a star preserves 2-connectedness). Hence
we may assume that the link graph at $w$ in $C_2$ is a star of parallel graphs. and $e$ is its
cutvertex by the choice of $a$. Then either $L(w)$ is still a star of parallel graphs in $C_3$ or
else it has maximum degree less than $a$. The same analysis applies to the vertex `$w'$' in place
of `$w$'. Thus $C_3$ is $a$-nice. So by induction there is a stretching of $C_3$ of the desired
type, and $C_3$ is a stretching of $C_2$. This completes the induction step, and hence the proof of
this claim.
\end{cproof}

Let $C_3$ be stretching of the simplicial complex $C_2$ as in \autoref{be_nice}. If $C_3$ has a
non-planar link, we are done. Hence we may assume that the simplicial complex $C_3$ is
$a$-structured. If $C_3$ has cutvertex-degree less than $a$, we can apply the induction
hypothesis.
Hence we may assume that $C_3$ has a vertex $v$ such that the link graph at $v$ is a star of
parallel graphs whose cutvertex $e$ has degree precisely $a$. Now we show how the property
`$a$-structured' implies the existence of certain paths, which can then be
contracted to reduce the cutvertex-degree.

\begin{sublem}\label{path_exists}
 There is a path $P_n$ from the vertex $v$ starting with $e$ to another vertex $w_n$ whose link
graph is a star of parallel graphs. All link graphs at internal vertices of the path are parallel
graphs and all edges of the path have the same face-degree.
\end{sublem}

\begin{cproof}
We build the path $P_n=w_0e_1w_1...e_nw_{n}$ recursively as follows.
We start with $e_1=e$ and $w_0=v$ and let $w_1$ be the endvertex of the edge $e$ aside from $v$.
Assume we already constructed $w_0e_1w_1...e_iw_{i}$.
If the link graph $L(w_i)$ is a star of parallel graphs we stop and let $i=n$ and
$w_i=w_n$.
Otherwise by assumption, the link graph $L(w_i)$ must be 2-connected. As the edge $e_i$ has
degree precisely $a$ the link graph $L(w_i)$ of the $a$-structured simplicial complex $C_3$ is a
parallel graph.
So the link graph $L(w_i)$ contains a unique vertex except from $e_i$ that has degree larger than
two, and this vertex has the same degree as the vertex $e_i$. We pick this vertex for $e_{i+1}$.
Note that
$e_{i+1}$ is an edge of the simplicial complex $C$. We let $w_{i+1}$ be the endvertex of
$e_{i+1}$ different from $w_i$.
Note that all edges $e_i$ have the same face-degree by construction.
Since any path\footnote{A \emph{path} in a graph is a sequence alternating between vertices and
edges such that adjacent members are incident, and all vertices (and edges) are distinct.} in $C$
must be finite, it suffices to prove the following:

\begin{fact}
For all $i$ the walk $P_i$ is a path.
\end{fact}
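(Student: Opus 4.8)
The plan is to prove by induction on $i$ that the walk $P_i = w_0e_1w_1\ldots e_iw_i$ is a path, i.e.\ that all its vertices (and hence all its edges) are distinct. The base case $i=1$ is trivial since $w_0 = v \neq w_1$. For the induction step, suppose $P_i$ is a path; I need to show $w_{i+1} \notin \{w_0, w_1, \ldots, w_i\}$. Suppose for contradiction that $w_{i+1} = w_j$ for some $j \le i$. Then the sub-walk $w_j e_{j+1} w_{j+1} \ldots e_{i+1} w_{i+1} = w_j$ closes up into a cycle $o$ in the $1$-skeleton of $C_3$, all of whose edges have the same face-degree, namely the common face-degree of the $e_k$ which equals $a$ (by construction, since all $e_k$ have the degree of $e_1 = e$ in the relevant link graph, and this degree is the face-degree of $e_k$ as an edge of $C_3$).

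First I would rule out $j \ge 1$: the vertices $w_1, \ldots, w_{i-1}$ are interior vertices of $P_i$, and by construction their link graphs are parallel graphs (that is how the recursion continued past them), whereas $w_{i+1} = w_j$ would, if $j$ is an interior index, still need its link graph to be a parallel graph, which is consistent — so the contradiction must instead come from a counting/parity obstruction on the cycle $o$. The key point is that $o$ is a cycle all of whose edges have face-degree exactly $a \ge 4$, and all of whose vertices except possibly $w_j$ have parallel link graphs in which $o$ uses precisely the two branch vertices (since $e_k$ and $e_{k+1}$ are the two vertices of degree $> 2$ in $L(w_k)$). In other words $o$ is essentially a para-cycle through vertices whose link graphs are parallel graphs. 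I would then argue that such a configuration forces the recursion never to have started a second time around, i.e.\ once we return to an earlier vertex $w_j$ we would have stopped at $w_j$ already (if $L(w_j)$ is a star of parallel graphs we would have set $n = j$; if $L(w_j)$ is a parallel graph then $w_j$ is an interior vertex and the unique high-degree vertex $\ne e_j$ of $L(w_j)$ is $e_{j+1}$, but arriving from $e_{i+1}$ we would be forced to also have $e_{i+1}$ equal to that same unique vertex, so $e_{i+1} = e_{j+1}$, contradicting distinctness of the edges of $P_{i+1}$ unless $i+1 = j+1$).

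So the heart of the argument is the case analysis at $w_j$: either $L(w_j)$ is a star of parallel graphs, in which case the recursion would already have terminated at step $j < i+1$ (contradicting that we reached step $i+1$); or $L(w_j)$ is a parallel graph, in which case it has a \emph{unique} vertex other than $e_j$ of degree $>2$, this vertex is both $e_{j+1}$ (by how we chose the next edge when building $P_{j+1}$) and must equal $e_{i+1}$ (since $e_{i+1}$ is incident with $w_{i+1} = w_j$ and has face-degree $a > 2$, so as a vertex of $L(w_j)$ it has degree $> 2$, hence is the unique such vertex $\ne e_j$, \emph{provided} $e_{i+1} \ne e_j$; and $e_{i+1} \ne e_j$ because they have distinct indices in the path $P_i$ which we already know is a path). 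Therefore $e_{i+1} = e_{j+1}$, so by distinctness of edges along the already-constructed path we get $i+1 = j+1$, i.e.\ $j = i$, meaning $w_{i+1} = w_i$; but $w_{i+1}$ is by definition the endvertex of $e_{i+1}$ \emph{different from} $w_i$, a contradiction.

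The main obstacle I anticipate is handling the boundary index $j = 0$, i.e.\ the possibility $w_{i+1} = v$: here $L(v)$ is a star of parallel graphs (not necessarily a single parallel graph), so the clean "unique high-degree vertex" argument needs the observation that $e_1 = e$ is the cutvertex of $L(v)$ and has degree exactly $a$, and that any edge incident with $v$ of face-degree $a$ must, as a vertex of $L(v)$, also have degree $a$ and hence coincide with the cutvertex $e$ — giving $e_{i+1} = e_1$ and again forcing $i+1 = 1$. I would also need to be slightly careful that "all edges of the path have the same face-degree" is exactly what was established in the construction, which it is, so no extra work is needed there. Overall the proof is a short finiteness/uniqueness argument and I do not expect any genuinely hard step, only the bookkeeping of which link graphs are parallel graphs versus stars of parallel graphs along $P_i$.
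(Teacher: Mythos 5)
Your strategy --- induct on $i$, suppose $w_{i+1}=w_j$ for some $j\le i$, inspect the link graph $L(w_j)$, and conclude that $e_{i+1}$ must be one of the two high-degree vertices $e_j,e_{j+1}$ of $L(w_j)$ (or the cutvertex $e_1$ when $j=0$) --- is the paper's strategy. But the way you then extract the contradiction has a genuine gap. You justify ``$e_{i+1}\ne e_j$'' by saying they ``have distinct indices in the path $P_i$'', and you conclude ``$j+1=i+1$'' from ``$e_{i+1}=e_{j+1}$'' by appealing to ``distinctness of the edges of $P_{i+1}$''. Both appeals are circular: the inductive hypothesis tells you only that $P_i$ is a path, so $e_1,\ldots,e_i$ are pairwise distinct --- it says nothing about $e_{i+1}$, which is precisely the new object whose non-repetition you are trying to establish, and $e_{i+1}$ is not an edge of $P_i$. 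So, as written, your argument rules out neither $e_{i+1}=e_j$ nor $e_{i+1}=e_{j+1}$ with $j<i$; and in the $j=0$ case the claim ``$e_{i+1}=e_1$ forces $i+1=1$'' has the same circularity and is even numerically off.

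The paper closes the gap by converting the edge coincidence into a coincidence among \emph{vertices} already controlled by the inductive hypothesis. Since edges of a simplicial complex have two distinct endvertices, $e_{i+1}=e_j$ together with $w_{i+1}=w_j$ forces $w_i=w_{j-1}$, and $e_{i+1}=e_{j+1}$ forces $w_i=w_{j+1}$; in each case both sides lie on $P_i$, so the inductive hypothesis applies and narrows $j$ to at most the two values $i-1$ and $i$ (with $e_{i+1}=e_j$ in fact impossible). The surviving small cases are then eliminated using that $C$ is a simplicial complex with no loops or parallel edges: $j=i$ would make $e_{i+1}$ a loop, and $j=i-1$ would make $e_{i+1}$ and $e_i$ distinct parallel edges (recall $e_{i+1}\ne e_i$ by construction). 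You skipped both the endvertex translation and this no-loops/no-parallel-edges step; those are exactly what your proposal is missing.
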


\begin{proof}
We prove this by induction on $i$. The base case is that $i=1$.
Suppose for a contradiction there is some $j< i$ such that
$w_i=w_j$.

{\bf Case 1:} $j=0$: then $e_i$ must be equal to the only vertex of $L(v)$ of the same degree; that
is, $e_i$ is equal to the edge $e_1$. But then the endvertex $w_{i-1}$ of the edge $e_i$ is equal
to the vertex $w_1$. This is a contradiction to the induction hypothesis. Hence $w_i$
cannot be equal to $w_0$.

{\bf Case 2:} $j\geq 1$: as in the link graph $L(w_j)$ the only two vertices with the same degree
as the vertex $e_i$ are $e_{j}$ and $e_{j+1}$, it must be that the edge $e_i$ is equal to one of
these two edges; that is, the endvertex $w_{i-1}$ of $e_i$ must be equal to
$w_{j-1}$ or $w_{j+1}$.
The vertex $w_{j-1}$ cannot be an option by the induction hypothesis.
Similarly, the vertex $w_{j+1}$ cannot be an option by the induction hypothesis if $j+1< i-1$. So
$j+2\geq i$, so
$j=i-2$ or $j=i-1$.
Since the simplicial complex $C$  has no loops or parallel edges any three consecutive
vertices on $P_i$, such as $w_{i-2}$, $w_{i-1}$ and $w_i$, are distinct. Hence neither  $j=i-2$ nor
$j=i-1$
are possible. Thus we have also reached a
contradiction in this case. Hence the vertex $w_i$ is distinct from all previous vertices on the
walk $P_i$.
\end{proof}

\end{cproof}

Given a path $P_n$ with endvertex $w_n$ as in \autoref{path_exists}, whose link graph $L(w_n)$ at
$w_n$ is a star of parallel
graphs, denote the (unique) cut-vertex of the link graph $L(w_n)$ by $x$.
\begin{sublem}\label{cutvx}
 The cut-vertex $x$ is equal to the last edge $e_n$ on the path $P_n$.
\end{sublem}

\begin{cproof}
We denote the degree of the cut-vertex $x$ by $a'$. By the definition of $a$, we have, $a'\leq a$.

On the other hand by \autoref{path_exists} the vertices $e$ and $e_n$ have the same degree in
the graphs $L(v)$ and $L(w_n)$, and this
degree is equal to $a$ by the choice of the vertex $v$.
As $L(w_n)$ is a star of parallel graphs with a
cut-vertex, the degree of the cut-vertex $x$ is strictly larger than the degree of any other vertex
of
$L(w_n)$.
Hence it must be that $a=a'$ and $x=e_n$.
\end{cproof}

By \autoref{path_exists} and \autoref{cutvx}, there is a set of vertex-disjoint paths in $C_3$
such that any of their endvertices has a link graph that is a star of parallel graphs whose
cutvertex has degree $a$. All internal vertices of these paths are parallel graphs. By taking
this collection maximal, we ensure that any vertex whose link graph is a star of parallel
graphs whose
cutvertex has degree $a$ is an endvertex of one of these paths. We denote the set of these paths by
$\Pcal$.
We obtain the 2-complex $C_4$ from $C_3$ by contracting all edges on these paths of $\Pcal$.
Contracting the edges on the paths recursively, we note at each step that these edges are
reversible by \autoref{is_rev}.
At all vertices of $C$ except
for those vertices on the paths, the two $2$-complexes $C_3$ and $C_4$ have the same link graphs.
In
addition, $C_4$ has the contraction vertices, one for each of the vertex-disjoint paths. These
link graphs are the vertex-sum of the link graphs at the vertices on its path, see
\autoref{sec_vertex_sum} for background on vertex-sums.
So the link graph at a new contraction vertex is (isomorphic to) the vertex sum of the link graphs
at the two endvertices plus various subdivision vertices coming from the parallel graphs at
internal vertices of the path. By \autoref{cutvx}, each
of
these vertices in the link graph has degree strictly less than $a$. Hence all new contraction
vertices have maximum degree less than $a$. Hence the cutvertex-degree of $C_4$ is
 strictly smaller than $a$.
 So the 2-complex $C_4$ satisfies all the conditions to apply the induction hypothesis except that
it may not be a simplicial complex as it may have edges that are loops or parallel edges.

Now we
show how we can stretch local branches of $C_3$ to get a simplicial complex $C_3'$ so that the
simplicial complex $C_4'$ obtained from $C_3'$ by contracting all the paths in $\Pcal$ is a
simplicial complex. We obtain $C_3'$ from $C_3$ by stretching at each endvertex of a path in $\Pcal$
all the branches and at each interior vertex of a path in $\Pcal$ we stretch at the 2-separator
consisting of the two branching vertices of its parallel graph. We obtain $C_4'$ from $C_3'$ by
contracting the above defined family of paths $\Pcal$. It is straightforward to check that $C_4'$
is a simplicial complex -- and is a stretching of $C$ with smaller cutvertex-degree. This completes
the induction step, and hence this proof.
\end{proof}

\subsection{Proofs of \autoref{Kura_simply_con} and \autoref{main_streching}}

We conclude this section by proving the following theorems mentioned in the Introduction.

\begin{proof}[Proof of \autoref{main_streching}]
Let $C$ be a simplicial complex. Recall that  \autoref{main_streching} says there is a simplicial
complex $C'''$ obtained from $C$ by stretching
so that $C'''$ is
locally almost 3-connected and stretched out or $C'''$ has a non-planar link; moreover $C$ has a
planar rotation system if and
only if $C'''$ has a planar rotation system.

 By \autoref{reduce_to loc_2-con} there is a stretching $C'$
of $C$ that is a simplicial complex that is locally almost 2-connected or has a non-planar link.
As we are done otherwise, we may assume that $C'$ is locally almost 2-connected.
By
\autoref{stretch_loc_con}
there is a stretching
$C''$ of $C'$ that is a locally almost 3-connected simplicial complex.

 \begin{sublem}\label{make_strected_out}
 Let $C''$ be a locally almost 3-connected simplicial complex. Then there is a
stretching $C'''$ of $C''$ that has additionally the property that it is stretched out.
\end{sublem}

\begin{cproof}
We say that an edge of face-degree two is \emph{stretched out} if it has one endvertex that is not
a subdivision of a 3-connected graph or a parallel graph whose branch vertices have degree at least
three. Note that a simplicial complex in which every edge of degree two is stretched out is
stretched out itself.
We prove this claim by induction on the number of edges of degree two that are not stretched
out.
So assume there is an edge $e$ that is not stretched out. Let $v$ be one of its endvertices.

{\bf Case 1:} the link graph at $v$ is a parallel graph whose two branch vertices $x_1$ and $x_2$
have degree at least three. Then we stretch at the 2-separator $(x_1,x_2)$ at $v$.
This gives a simplicial complex $\tilde C$ that in addition to the vertex $v$ has also one new
vertex for
every component of $L(v)-x_1-x_2$. The link graphs at these new vertices are cycles. Hence every
edge of degree two incident with these new vertices is stretched out. Thus $\tilde C$ has strictly
less
edges of degree two that are not stretched out.

{\bf Case 2:} the link graph at $v$ is a subdivision of a 3-connected graph.
Then the vertex $e$ of $L(v)$ is contained in a subdivided edge. Let $P$ be the path of that
subdivided edge,
and let $x_1$ and $x_2$ be its endvertices. Then we stretch at the 2-separator $(x_1,x_2)$ at $v$.
The rest of the analysis is analogue to Case 1. This completes the proof of the claim.
\end{cproof}

By \autoref{make_strected_out} we may assume that $C$ has a stretching $C'''$ that is a locally
almost 3-connected
and stretched out simplicial complex.
The `Moreover'-part follows from the fact that $C'''$ is a stretching of $C$ as shown in
\autoref{planar_rot_preserve}. This completes the proof.
 \end{proof}

 \begin{proof}[Proof of \autoref{Kura_simply_con}]
Let $C$ be a simplicial complex such that the first homology
group $H_1(C,\Fbb_p)$ is trivial for some prime $p$.
By \autoref{combi_intro_extended} $C$ embeds into $\mathbb{S}^3$ if and only if it is simply connected and has a planar rotation system; so from now on assume that $C$ is simply connected.

Recall that \autoref{Kura_simply_con} says that $C$ has an embedding in 3-space if and
only if $C$ has no stretching that has a space minor in $\Ycal\cup \Tcal$.
By \autoref{combi_intro} $C$
is embeddable in 3-space if and only if it has a
planar rotation system.

By \autoref{main_streching} there is a simplicial complex $C'$ that is a stretching of $C$.
Moreover $C$ has a planar rotation system if and only if $C'$ has a planar rotation system.
By that theorem either the simplicial complex $C'$ has a non-planar link or it is locally almost
3-connected and stretched out. In the first case,
by
Kuratowski's theorem, \autoref{get-cone} and
\autoref{cone-red}, the simplicial complex $C'$ has a minor in the finite list
$\Ycal$ -- so the theorem is true in this case. In the second case by \autoref{Kura_almost} $C'$ has
a planar rotation system if and only if it has no space minor in
$\Ycal\cup \Tcal$.
This completes the proof.
  \end{proof}

\section{Algorithmic consequences}\label{algo_sec}

Our proofs give a quadratic algorithm that verifies whether a given 2-dimensional
simplicial complex has a planar rotation system. This gives a quadratic algorithm that checks
whether a given 2-dimensional simplicial complex has an embedding in a (compact)
orientable 3-manifold by
\autoref{is_manifold} (for the general, not necessarily orientable, case see
\autoref{non-or}). In particular, for simply connected 2-complexes this
gives
a quadratic algorithm
that tests embeddability in 3-space by Perelman's theorem. The algorithm has several components.
Next we explain them in reverse order and prove the relevant lemmas afterwards.
We say that a simplicial complex $D$ is \emph{equivalent} to $C$ if $D$ has a planar rotation system if and only if $C$ has one.

\begin{enumerate}
 \item[4.] The locally almost 3-connected and stretched out case.
 This subroutine checks whether a locally almost 3-connected and stretched out simplicial complex admits a planar rotation system.
 The corresponding fact in the paper
is \autoref{41_2}. This clearly has a linear time algorithm.

\item[3.] Reduction of the locally almost 3-connected case to the locally almost 3-connected and
stretched out case.
This subroutine takes as input a locally almost 3-connected simplicial complex $D_2$ and the output is a simplicial complex $D_3$ equivalent to $D_2$ with the property that it is locally almost 3-connected and stretched out.
The corresponding fact in the paper
is \autoref{make_strected_out}. This clearly has a linear time algorithm.

 \item[2.] Reduction of the locally almost 2-connected case to the locally almost 3-connected case.
 This subroutine takes as input a locally almost 2-connected simplicial complex $D_1$ and the output is a simplicial complex $D_2$ equivalent to $D_1$ and that additionally is locally almost 3-connected.
 The corresponding fact in the paper
is \autoref{stretch_loc_con}.

To analyse the running time, we do this step slightly differently
than in the paper. First we compute a Tutte-decomposition\footnote{A Tutte-decomposition is a
decomposition of a graph (or matroid) into its 3-connected components along 2-separators. In the
special case of graphs it is also known as the \emph{SPQR tree}.} at every 2-connected link graph.
This
tells us precisely how we can stretch that vertex along 2-separators. Doing these stretchings at
different vertices may affect the link graphs at other vertices. Indeed, it may affect other
vertices in that we coadd stars at their link graphs.
However, once a link graph is a
subdivision of a 3-connected graph, it will stay that. So the vertices we may have to look at
multiple times are vertices where the link graphs are parallel graphs. But if we need to stretch
there again, the maximum degree goes down. Using \autoref{coadd_star} below, it is straightforward
to show that this step can be done
in linear time.
 \item[1.] Reduction of the general case to the locally almost 2-connected case.    This subroutine takes as input a simplicial complex $C$ and obtains a witness that $C$ has no planar rotation system or the output is a simplicial complex $D_1$ equivalent to $C$ and that additionally is locally almost 2-connected. The corresponding
fact in the paper is \autoref{reduce_to loc_2-con}.

This is done by
recursion on the cutvertex-degree $a$. So let us analyse the step from $a$ to $a-1$ in detail.
The input is a simplicial complex $C$ and we measure its size by $\sum (deg(e)-2)$, where the sum
ranges over all edges $e$ of $C$ of degree at least three. We refer to that sum as the \emph{degree
parameter}.\footnote{
We remark that at edges of degree at most two the compatibility conditions for planar rotation
systems is always satisfied and hence we do not need to take them into account in the definition of
the degree parameter.} Here $deg(e)$ denotes the number of faces incident with $e$.

The stretching related to \autoref{stretching_loc_1-cuts} can be done in linear time as computing
the block-cutvertex-tree of link graphs can be done in linear time. For the part
corresponding to \autoref{be_nice} we compute the stretching via Tutte-decompositions as in step 3
explained above, and then we check for planarity for each 3-connected link graph. If it is planar,
we remember a planar rotation system and if we stretch later an edge incident with that vertex at
the other endvertex we check whether this stretching is compatible with the chosen planar rotation
system. This can be done in
linear time. The construction of the set $\Pcal$ of paths can clearly be done
in linear time.
Hence the whole recursion step from $a$ to $a-1$ just takes linear time. The output is the
simplicial complex $C_4'$ from the last paragraph of the proof of \autoref{reduce_to loc_2-con}.

However, with the current argument, the degree parameter of $C_4'$ might be larger
than the degree parameter of the input $C$. Indeed, stretching a local branch may increase the
degree parameter. Hence here
we explain how we modify the construction of the simplicial complex $C_4'$ so that the degree
parameter does not increase.
First note that none of the stretching operations except for stretching a branch increases the
degree parameter, compare \autoref{degpar}.
Recall the definition of $C_3'$ from the last paragraph of the proof of \autoref{reduce_to loc_2-con}.
We obtain $C''$ from $C_3'$ by contracting all edges
$e[B]$ of degree at least three that
were added by stretching a local branch, and contracting the resulting faces of size two (this
has the effect of reversing the stretching operations at those edges $e[B]$); additionally we
stretch so that no edge of degree two has both endvertices on the same path -- similarly as in the
construction of stretched out in \autoref{make_strected_out} (this ensures that edges of degree two
do not make a problem
later. This does
not increase the degree parameter). It is easy to see that $C''$ is a simplicial complex and that
$C$ has a planar rotation system if and only if $C''$ has one.
Each path $P\in \Pcal$ of $C_3'$ contracts onto a closed trail of $C''$ (in fact this closed trail is a path or a cycle as edges incident with interior vertices of $P$ but not on $P$ have degree two and are not contracted when going from $C_3''$ to $C''$ by definition). We obtain $C_3''$ from
$C''$ by stretching branches for each closed trail $P\in \Pcal$ as follows. Recall that the endvertices of the path $P$ are denoted by $v$ and $w$.

{\bf Case 1:} in the simplicial complex $C''$ the trail $P$ has at least one internal vertex.
Denote the edge of $P$ incident with $v$ by
$e_1$ and the edge of $P$ incident with $w$ by
$e_2$. Then we stretch at the link graphs of
 $v$ and $w$ all branches at $e_1$ and $e_2$, respectively.

{\bf Case 2:} not Case 1. Note that $P$ must consist of at least one edge by
\autoref{path_exists}. And that edge is not a loop as $C''$ is a simplicial complex.
So $P$ consists of a single edge $e$.
Then in the simplicial complex $C''$ there is no edge in parallel to $e$. We stretch all branches
of the link graph at  $v$ at the vertex $e$ (but not for $w$).

We obtain $C_4''$ from  $C_3''$ by contracting all $P\in \Pcal$. It is
straightforward to check that $C_4''$ is a simplicial complex and that the degree parameter of
$C_4''$ is at
most that of $C$, compare \autoref{C4}. By construction the cutvertex degree of $C_4''$ is
strictly smaller than that of $C$. So $C_4''$ is a suitable output of the recursion step. As each
recursion step takes linear time, all of them together take at most quadratic time.
\end{enumerate}

This completes the description of the steps of the algorithm. Together they form the following algorithm. The input is a simplicial complex $C$ and we want to check whether it has a planar rotation system.
In step 1, we either obtain a witness that $C$ has no planar rotation system or the output is a simplicial complex $D_1$ equivalent to $C$ and that additionally is locally almost 2-connected.
In step 2, we take $D_1$ has input and the output is a simplicial complex $D_2$ equivalent to $D_1$ and that additionally is locally almost 3-connected.
Step 3 works like step 2 but here the output has the additional property that it is stretched out.
Finally in step 4, we check the existence of a planar rotation system for locally almost 3-connected and stretched out simplicial complexes.

\subsection{Some lemmas for the algorithm above}

Here we prove the lemmas referred to in the beginning of \autoref{algo_sec}.

The \emph{degree-parameter} of a graph $G$ is $\sum(deg(v)-2)$, where the sum ranges
over all vertices.

\begin{lem}\label{coadd_star}
 Coadding a star at a vertex $v$ preserves the degree parameter.
 \end{lem}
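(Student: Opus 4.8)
\textbf{Proof plan for \autoref{coadd_star}.}

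The statement says that coadding a star at a vertex $v$ of a graph $G$ preserves the degree-parameter $\sum_{u}(\degree(u)-2)$, summed over all vertices. Recall that coadding a star is the inverse of contracting an edge, so concretely: we replace the vertex $v$ by two new vertices $v_1$ and $v_2$ joined by a new edge $e$, and the edges formerly incident with $v$ are distributed among $v_1$ and $v_2$ (each old edge goes to exactly one of the two new vertices). The plan is simply a direct degree-counting argument, which I would organise as follows.

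First I would fix notation: let $\degree_G(v)=d$, and suppose that after coadding, $v_1$ receives $k$ of the old edges and $v_2$ receives $d-k$ of them, where both $v_1$ and $v_2$ are additionally incident with the new edge $e$. So $\degree(v_1)=k+1$ and $\degree(v_2)=(d-k)+1$. All other vertices of $G$ have unchanged degree (the old edges keep their other endpoints, and the new edge $e$ only touches $v_1$ and $v_2$). Therefore the change in the degree-parameter is
\[
\bigl((k+1)-2\bigr)+\bigl((d-k)+1-2\bigr)-\bigl(d-2\bigr)
= (k-1)+(d-k-1)-(d-2)=0.
\]
Hence the degree-parameter is unchanged.

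The only subtlety worth spelling out is what "star" means here and why the computation covers the general case: a star coadded at $v$ may in principle split the incident edges into more than two groups if one coadds several edges at once, but the paper's use (via \autoref{sec_vertex_sum} and the stretching operations) is the elementary one where coadding a single-edge star corresponds to the inverse of a single edge contraction; iterating the one-edge computation above handles any finite sequence of such coadditions, since each step changes the parameter by $0$. I expect no real obstacle: this is a routine bookkeeping lemma, and the main (trivial) point is just to make sure the new edge $e$ contributes $+1$ to each of its two endpoints while removing the single term $d-2$ and replacing it by two terms summing to $d-2$.
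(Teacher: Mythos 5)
Your proof is correct, but you take a genuinely different route from the paper, so let me compare. The paper handles the $k$-leaf star in one shot: writing $c$ for the centre of the coadded star (degree $k$, carrying none of the old edges) and $v_1,\ldots,v_k$ for the leaves, it observes $\degree(v)=\sum_i \degree(v_i)-k$ and plugs this into
\[
\bigl(\degree(v)-2\bigr)-\left(\sum_{i=1}^k\bigl(\degree(v_i)-2\bigr)+(k-2)\right)=0,
\]
giving the invariance in a single computation. You instead reduce to the one-edge case and iterate: each single-edge coaddition (split $v$ into $v_1,v_2$ joined by a new edge and distribute the old incidences) changes $\sum(\degree(u)-2)$ by $(k-1)+(d-k-1)-(d-2)=0$, and a $k$-leaf star coaddition factors as $k$ such steps (peel off one leaf at a time, keeping all previously created star-edges on the evolving centre). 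Both arguments are sound; yours is more modular and reuses the elementary edge-coaddition computation, while the paper's is shorter once one trusts the identity $\degree(v)=\sum\degree(v_i)-k$. One small imprecision worth fixing in your write-up: you open by saying that ``coadding a star is the inverse of contracting an edge,'' which is really only true for a one-leaf star (a single edge); in general coadding a star is the inverse of contracting that star to a point. You do acknowledge and handle the general case via iteration, so the argument stands, but the opening sentence should be corrected to avoid confusion.
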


\begin{proof}
 Let $k$ be the degree of the center of the coadded star and $v_1, .., v_k$ be the leaves of the
coadded star. The degree parameter of the graph before minus the degree parameter after the
coaddition is:
\[
deg(v)-2 -\left(\sum_{i=1}^k (deg(v_i)-2)+(k-2)\right)
\]
As $deg(v)=\sum_{i=1}^k deg(v_i)-k$ the above sum evaluates to zero, completing the proof.
\end{proof}

\begin{lem}\label{degpar}
All stretching operations except for possibly stretching a local branch do not increase the degree
parameter.
\end{lem}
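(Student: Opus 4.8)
Looking at the statement of \autoref{degpar}, the plan is to go through each of the five stretching operations listed in the definition of stretching and check that each one, with the possible exception of stretching a local branch, does not increase the degree-parameter $\sum(\degree(e)-2)$ (sum over edges $e$ of $C$ of face-degree at least three). Here I read the ``degree parameter'' in the sense used in \autoref{algo_sec}, where $\degree(e)$ is the number of faces incident with $e$. The key observation is that in every operation except branch-stretching, the effect on link graphs is either trivial or is a \emph{coaddition of a star}, and then \autoref{coadd_star} does the work — except that \autoref{coadd_star} is stated for the degree-parameter of a single graph, while here we need the statement for the 2-complex's edge-face incidence data; so the first thing I would do is note that the degree-parameter of a 2-complex equals (up to the trivial-degree truncation) a sum over link graphs, and translate coadditions of stars in link graphs into the bookkeeping on $\sum(\degree(e)-2)$.

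Concretely I would proceed operation by operation. For \textbf{2-stretching at a local 2-separator} (\autoref{s2}): by the explicit description, the edges $a$ and $b$ of $C$ are replaced by the edges of $\Delta^+_n$, all of which have face-degree at most three (in fact the two gluing edges $\bar a,\bar b$ inherit the face-degrees of $a,b$ possibly split across components, and the new interior edges have face-degree two), while at every other vertex the link graph only changes by subdividing edges or coadding stars; using \autoref{2-stretch-change} and the fact that coadding a star and subdividing do not increase face-degrees of edges already present, the degree-parameter does not go up. For \textbf{stretching an edge} (\autoref{s3}, pre-stretch then subdivide): the edge $e$ of face-degree $k\ge 2$ is split into $e_1$ (face-degree $3$, incident with $f_1,f_2$ and the new size-two face) and $e_2$ (face-degree $k-1$), plus subdivision edges of face-degree two; so the contribution $\max(k-2,0)$ is replaced by $\max(k-3,0)$ and possibly a $+1$ from $e_1$, never increasing the sum — I would just check the small cases $k=2,3$ and $k\ge 4$ by hand. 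For \textbf{contracting a reversible non-loop edge}: contraction of a non-loop edge does not change the face-degree of any surviving edge (the incidence relation between edges of $C/e$ and faces is the same as in $C$, as noted after \autoref{obs1}), so the degree-parameter is unchanged. For \textbf{splitting a vertex}: this does not alter any edge or its incident faces at all, so again no change.

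The \textbf{one exception — stretching a local branch} — is exactly the operation that \emph{can} increase the degree-parameter, because pre-stretching a branch $B$ at a vertex $v$ introduces a new edge $e[B]$ whose face-degree equals the number of faces incident with the cut-edge $e$ that lie in the branch $B$, and this can be three or more; thus a new positive term $\degree(e[B])-2$ appears while the old terms are essentially unchanged (the cut-edge $e$ keeps its face-degree, other edges are only subdivided). So for this operation the claim genuinely fails in general, which is why the lemma carves it out, and no argument is needed there. I would simply remark this to make the ``except for possibly'' clause transparent, and that remark is the content that motivates the separate treatment of branch-stretching in the construction of $C_4''$ in \autoref{reduce_to loc_2-con}'s algorithmic version.

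I do not expect a serious obstacle here; the only mild subtlety — and the step I would be most careful about — is getting the bookkeeping right for \emph{stretching an edge}, since the new size-two face (which after subdivision becomes two size-three faces) makes $e_1$ acquire face-degree $3$, so one must confirm that the net effect $\max(k-2,0)\mapsto \max(k-3,0) + [k\ge 2]\cdot 1$ together with the subdivision edges (face-degree $\le 2$, contributing $0$) is always $\le \max(k-2,0)$; checking $k=2$ gives $0 \mapsto 0+1$? — no: when $k=2$ the edge $e$ is not counted at all in the degree-parameter since $\degree(e)=2$, but after stretching $e_1$ has face-degree $3$ and contributes $1$. So in fact even stretching an edge of face-degree exactly two increases the degree-parameter by $1$! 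This means the honest statement is that stretching an edge does not increase the degree-parameter \emph{when $\degree(e)\ge 3$}, or alternatively one must use the convention (as the footnote in \autoref{algo_sec} suggests) tuned so that it works; I would resolve this by following the paper's own convention and only invoking \autoref{degpar} for the stretchings actually performed in \autoref{reduce_to loc_2-con}, where the stretched edges have face-degree at least four (so $k=2,3$ never occur), and I would phrase the proof of \autoref{degpar} to make exactly that case distinction explicit. The main work, then, is really just careful case-checking; the conceptual content is entirely supplied by \autoref{coadd_star} and the invariance of edge face-degrees under edge-contraction and vertex-splitting.
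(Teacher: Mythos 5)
Your proposal follows essentially the same route as the paper's own proof: handle splitting and contraction of reversible non-loops as immediate, handle 2-stretching via \autoref{coadd_star}, and handle edge-stretching via the identity $\deg(e)=\deg(e_1)+\deg(e_2)-2$ together with the observation that subdivisions contribute nothing. The one place where you go beyond the paper is a genuinely useful catch: the paper's proof asserts ``pre-stretching does not change the degree parameter as $\deg(e)=\deg(e_1)+\deg(e_2)-2$'', but with the degree-parameter truncated at degree three (as the paper defines it in \autoref{algo_sec}), this is only correct for $\deg(e)\ge 3$. For $\deg(e)=2$ the contribution goes from $0$ to $1$, exactly as you compute, so the lemma as stated is false for that degenerate case. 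Your resolution — that in every invocation of \autoref{degpar} inside \autoref{reduce_to loc_2-con} (namely Case 1 of \autoref{be_nice}) the stretched edge has face-degree at least $a\ge 4$, so the offending case never arises — is the right fix, and arguably the paper should have recorded precisely that hypothesis. Your observation about branch-stretching genuinely increasing the degree parameter via the new edge $e[B]$ is also correct and explains the ``except for possibly'' clause, which the paper leaves tacit.
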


\begin{proof}
 This lemma is immediate for splitting vertices and contracting reversible non-loops.
 If we stretch an edge, first note that pre-stretching does not change the degree parameter as
$deg(e)=deg(e_1)+deg(e_2)-2$ if $e$ is stretched to $e_1$ and $e_2$.  Then note that subdivisions
of faces do not change the degree parameter.

The fact that 2-stretching does not change the degree parameter is proved similarly as
\autoref{coadd_star}.
\end{proof}

 \begin{lem}\label{C4}
The complex $C_4''$ is a simplicial complex whose degree parameter is not larger
than that of $C$.
\end{lem}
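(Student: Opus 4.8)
The plan is to trace carefully through the construction of $C_4''$ from $C$ laid out in step 1 of the algorithm, verifying at each stage that (a) the object produced is a genuine simplicial complex and (b) the degree parameter, namely $\sum_{e:\,\deg(e)\ge 3}(\deg(e)-2)$, never increases. Most of this is bookkeeping using the structural lemmas already proved, so the proof is really a matter of assembling them in the right order.

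First I would recall the chain of constructions: from $C$ (equivalently $C_3'$ as in the last paragraph of the proof of \autoref{reduce_to loc_2-con}) one forms $C''$ by contracting all edges $e[B]$ of degree at least three that were introduced by branch-stretchings and contracting the resulting faces of size two, plus some additional stretchings that only separate edges of degree two; then $C_3''$ is obtained from $C''$ by a further round of branch-stretchings, one bundle of branches per path $P\in\Pcal$ as described in Cases 1 and 2; and finally $C_4''$ is obtained from $C_3''$ by contracting all the paths $P\in\Pcal$. For the simplicial-complex claim I would argue exactly as in the proof of \autoref{reduce_to loc_2-con}: the branch-stretchings in the definition of $C_3''$ are designed precisely so that after contracting the paths of $\Pcal$ no loops or parallel edges survive — at the endvertices $v,w$ of each path we have stretched away all branches at the relevant vertex of the link graph, exactly as $C_3'$ was built so that $C_4'$ is simplicial, and in Case 2 the absence of an edge parallel to $e$ in $C''$ together with the branch-stretching at $v$ rules out parallel edges after contraction. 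So $C_4''$ is a simplicial complex.

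For the degree-parameter bound I would use \autoref{degpar}, which says that every stretching operation except possibly branch-stretching leaves the degree parameter unchanged, together with \autoref{coadd_star}. The point is that the branch-stretchings appearing in the passage from $C''$ to $C_3''$ are subsequently \emph{undone} by the path-contractions: contracting a path $P\in\Pcal$ takes the vertex-sum of the link graphs along $P$ and, because the branches were stretched off beforehand, the net effect on the degree parameter of ``branch-stretch then contract'' is the same as the coaddition-of-star situation analysed in \autoref{coadd_star}, i.e. it is degree-parameter-neutral — indeed it is exactly the reverse of the branch-stretching by \autoref{inverse}. Meanwhile, relative to the original input $C$, the only stretchings that survive in $C_4''$ are of the types covered by \autoref{degpar} (vertex splitting, reversible non-loop contraction, edge-stretching, $2$-stretching, and the extra degree-two separations which by construction do not raise the degree parameter), since every branch-stretching used along the way — both those inside $C_3'$ that produced the edges $e[B]$ and those in the definition of $C_3''$ — is cancelled by a corresponding contraction. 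Hence the degree parameter of $C_4''$ is at most that of $C$.

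The main obstacle I expect is purely organisational rather than mathematical: keeping straight which branch-stretchings are cancelled by which contractions, and in particular checking that the additional stretchings inserted ``so that no edge of degree two has both endvertices on the same path'' (mimicking \autoref{make_strected_out}) genuinely leave the degree parameter fixed — for these one uses that edges of degree at most two contribute nothing to the degree parameter, and that $2$-stretching at a separator of a link graph is degree-parameter-neutral by the computation in the proof of \autoref{coadd_star}. Once that accounting is pinned down, the inequality $\text{degpar}(C_4'')\le\text{degpar}(C)$ follows by summing the per-operation (non-)changes, and the simplicial-complex assertion follows as above.
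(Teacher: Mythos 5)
Your handling of the simplicial-complex half matches the paper, but the degree-parameter half contains a genuine gap. You claim that the branch-stretchings performed in passing from $C''$ to $C_3''$ are ``undone'' by the path contractions and that the net effect is degree-parameter-neutral, ``exactly the reverse of the branch-stretching by \autoref{inverse}.'' This is not what happens. \autoref{inverse} reverses a branch-stretching by contracting the new edge $e[B]$ and the resulting size-two faces; in constructing $C_4''$ one instead contracts the edges of the path $P$ (and resulting size-two faces), which are entirely different edges from the $e[B]$. The new edges $e[B]$ created at the endvertices $v,w$ are \emph{not} removed by contracting $P$ --- they persist in $C_4''$ and genuinely contribute to its degree parameter. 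So there is no cancellation in the sense you invoke, and the appeal to \autoref{inverse} and \autoref{coadd_star} does not establish neutrality.

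What the paper actually does here is a quantitative trade-off, and that is the missing idea. For a trail $P \in \Pcal$ (Case 1, $P$ has an internal vertex), stretching all branches at the cutvertices $e_1$ and $e_2$ in $L(v)$ and $L(w)$ creates new edges $e[B]$ whose total face-degree is at most $\deg(e_1)+\deg(e_2)=2\deg(e_1)$, so the degree parameter rises by at most $2\deg(e_1)-4$. Contracting the at least two edges of $P$, each of face-degree $\deg(e_1)$, then lowers the degree parameter by at least $2(\deg(e_1)-2)=2\deg(e_1)-4$. The net change per trail is therefore $\leq 0$, and combined with the bound $\mathrm{degpar}(C'')\leq\mathrm{degpar}(C)$ from \autoref{degpar} this gives the lemma. (Case 2 is similar with one stretching and a single edge.) Note also that your asserted conclusion of exact neutrality is stronger than what is true: if $P$ has three or more edges the degree parameter strictly decreases, which is another sign that a purely ``operation cancels operation'' argument cannot be the right mechanism.
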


\begin{proof}
 As mentioned above $C''$ is a simplicial complex. By \autoref{degpar} the degree parameter of
$C''$ is not larger than that of $C$.
Hence it suffices to show for each trail $P\in \Pcal$ that the construction given in the
two cases plus the contraction afterwards preserves being a simplicial complex and does not
increase the degree parameter.

First we treat Case 1; that is, $P$ has an internal vertex. In the construction of $C''$ we never
contract an edge incident with an internal vertex of $P$ that is not on the path $P$, as they all have face-degree two. Hence $P$ is
a path in $C''$ or a cycle. Then we stretch all the branches at the local cutvertices $e_1$ and
$e_2$ in the link graphs at $v$ and $w$, respectively. The sum of degrees of the new edges $e[B]$
is at most $deg(e_1)+deg(e_2)$. So the degree parameter increased by at most $2deg(e_1)-4$ (as
$deg(e_1)=deg(e_2)$). Then we contract all edges on $P$, and resulting faces of degree two. This decreases the degree parameter at
least by that amount, as $P$ contains at least two edges. Thus in total the degree parameter does not increase. The stretching
before
the contraction ensures that all edges incident with an interior vertex of $P$ have an endvertex whose link graph is a cycle and thus on none of the paths from $\Pcal$. So we do not create parallel edges or loops. So $C_4''$ is a simplicial complex.

The analysis in Case 2 is similar. Here note that $P$ is a path of length one, and so $v\neq w$. Thus one stretching suffices to ensure that $C_4''$ is a simplicial complex.
\end{proof}

   \section*{Acknowledgement}

I thank Radoslav Fulek for pointing out an error in an earlier version of this paper.
I thank Arnaud de Mesmay for useful discussions that led to \autoref{non-or} in the extended version of \cite{3space2}.
I thank Nathan Bowler and Reinhard Diestel for useful discussions on this topic.

\bibliographystyle{plain}
\bibliography{literatur}

\begin{thebibliography}{10}

\bibitem{armstrong}
M.A. Armstrong.
\newblock {\em {B}asic {T}opology}.
\newblock Springer-Verlag, 1983.

\bibitem{Bin59}
R.~H. Bing.
\newblock An alternative proof that 3-manifolds can be triangulated.
\newblock {\em Ann. Math.(2)}, 69:37--65, 1959.

\bibitem{bixby1982simple}
Robert~E Bixby.
\newblock A simple theorem on 3-connectivity.
\newblock {\em Linear Algebra and its Applications}, 45:123--126, 1982.

\bibitem{Bre83}
Ulrich Brehm.
\newblock A nonpolyhedral triangulated {M}{\"o}bius strip.
\newblock {\em Proc. Amer. Math. Soc.}, 89(3):519--522, 1983.

\bibitem{3space1v3}
Johannes Carmesin.
\newblock Embedding simply connected 2-complexes in 3-space -- {I}. {A}
  {K}uratowski-type characterisation.
\newblock Preprint, arXiv:1709.04642v3".

\bibitem{3space1}
Johannes Carmesin.
\newblock Embedding simply connected 2-complexes in 3-space -- {I}. {A}
  {K}uratowski-type characterisation.
\newblock Preprint 2017, available at "https://arxiv.org/pdf/1709.04642.pdf".

\bibitem{3space2}
Johannes Carmesin.
\newblock Embedding simply connected 2-complexes in 3-space -- {II}. {R}otation
  systems.
\newblock Preprint 2017, available at "https://arxiv.org/pdf/1709.04643.pdf".

\bibitem{3space4}
Johannes Carmesin.
\newblock Embedding simply connected 2-complexes in 3-space -- {IV}. {D}ual
  matroids.
\newblock Preprint 2017, available at "https://arxiv.org/pdf/1709.04652.pdf".

\bibitem{carmesin2022new}
Johannes Carmesin and Lyuben Lichev.
\newblock New constructions related to the polynomial sphere recognition
  problem.
\newblock {\em Discrete \& Computational Geometry}, 67(4):1097--1123, 2022.

\bibitem{s3np_hard}
Arnaud de~Mesmay, Yo'av Rieck, Eric Sedgwick, and Martin Tancer.
\newblock Embeddability in {$\mathbb{R}^3$} is {NP}-hard.
\newblock Preprint 2017, available at: "https://arxiv.org/pdf/1708.07734".

\bibitem{DiestelBookCurrent}
Reinhard Diestel.
\newblock {\em Graph {T}heory \emph{(5th edition)}}.
\newblock Springer-Verlag, 2017.
\newblock \\ Electronic edition available at:\\ {\small\tt
  http://diestel-graph-theory.com/index.html}.

\bibitem{fulek2022atomic}
Radoslav Fulek and Csaba~D T{\'o}th.
\newblock Atomic embeddability, clustered planarity, and thickenability.
\newblock {\em ACM Journal of the ACM (JACM)}, 69(2):1--34, 2022.

\bibitem{solving_Rota}
Jim Geelen, Bert Gerards, and Geoff Whittle.
\newblock Solving {R}ota's conjecture.
\newblock {\em Notices Amer. Math. Soc.}, 61(7):736--743, 2014.

\bibitem{georgakopoulos20232}
Agelos Georgakopoulos and Jaehoon Kim.
\newblock 2-complexes with unique embeddings in 3-space.
\newblock {\em Bulletin of the London Mathematical Society}, 55(1):156--174,
  2023.

\bibitem{georgakopoulos2022discrete}
Agelos Georgakopoulos and George Kontogeorgiou.
\newblock Discrete group actions on 3-manifolds and embeddable cayley
  complexes.
\newblock {\em arXiv preprint arXiv:2208.09918}, 2022.

\bibitem{MR0012237}
H.~Hadwiger.
\newblock \"{U}ber eine {K}lassifikation der {S}treckenkomplexe.
\newblock {\em Vierteljschr. Naturforsch. Ges. Z\"{u}rich}, 88:133--142, 1943.

\bibitem{Hatcher3notes}
Allen Hatcher.
\newblock Notes on basic 3-manifold topology.
\newblock available at "http://www.math.cornell.edu/~hatcher/3M/3Mfds.pdf".

\bibitem{Hatcher}
Allen Hatcher.
\newblock {\em Algebraic {T}opology}.
\newblock Cambridge Univ.\ Press, 2002.

\bibitem{HeuZen16}
Michael Heusner and Raphael Zentner.
\newblock A new algorithm for 3-sphere recognition.
\newblock Preprint 2016, available at arXiv:1610.04092.

\bibitem{Iva01}
S.~V. Ivanov.
\newblock Recognizing the 3-sphere.
\newblock {\em Illinois J. Math.}, 45(4):1073--1117, 2001.

\bibitem{kuratowski1930probleme}
Kazimierz Kuratowski.
\newblock Sur le probleme des courbes gauches en topologie.
\newblock {\em Fundamenta mathematicae}, 15(1):271--283, 1930.

\bibitem{lovasz_gm_survey}
L\'aszl\'o Lov\'asz.
\newblock Graph minor theory.
\newblock {\em Bull. Amer. Math. Soc. (N.S.)}, 43(1):75--86, 2006.

\bibitem{mstw_3sphere_decidable}
Jiri Matousek, Eric Sedgwick, Martin Tancer, and Uli Wagner.
\newblock Embeddability in the 3-sphere is decidable.
\newblock In {\em Computational geometry ({S}o{CG}'14)}, pages 78--84. ACM, New
  York, 2014.
\newblock Extended version available at "https://arxiv.org/pdf/1402.0815".

\bibitem{mtw_hardness}
Jiri Matousek, Martin Tancer, and Uli Wagner.
\newblock Hardness of embedding simplicial complexes in {$\mathbb{R}^d$}.
\newblock {\em J. Eur. Math. Soc. (JEMS)}, 13(2):259--295, 2011.

\bibitem{MoharThomassen}
B.~Mohar and C.~Thomassen.
\newblock {\em Graphs on Surfaces}.
\newblock Johns Hopkins, 2001.

\bibitem{moise}
Edwin~E. Moise.
\newblock {A}ffine structures in 3-manifolds. {V}. {T}he triangulation theorem
  and {H}auptvermutung.
\newblock {\em Annals of Mathematics. Second Series}, 56:96--114, 1952.
\newblock ISSN 0003-486X, JSTOR 1969769, MR 0048805.

\bibitem{Oxley2}
James Oxley.
\newblock {\em {M}atroid {T}heory \emph{(2nd edition)}}.
\newblock Oxford University Press, 2011.

\bibitem{Pap43}
C.~Papakyriakopoulos.
\newblock A new proof for the invariance of the homology groups of a complex
  (in greek).
\newblock {\em Bull. Soc. Math. Grece}, 22:1--154, 1946.

\bibitem{Pap57}
C.~D. Papakyriakopoulos.
\newblock O{n} {D}ehn's {L}emma and the {A}sphericity of {K}nots.
\newblock {\em Ann. Math.}, 66:1--26, 1957.
\newblock doi:10.2307/1970113.

\bibitem{Perelman1}
G.~Perelman.
\newblock The entropy formula for ricci flow and its geometric applications.
\newblock 2002, available at "https://arxiv.org/abs/math.DG/0211159".

\bibitem{Perelman3}
G.~Perelman.
\newblock Finite extinction time for the solutions to the ricci flow on certain
  three-manifolds.
\newblock 2003, available at "https://arxiv.org/abs/math.DG/0307245".

\bibitem{Perelman2}
G.~Perelman.
\newblock Ricci flow with surgery on three-manifolds.
\newblock 2003, available at "https://arxiv.org/abs/math.DG/0303109".

\bibitem{linkless_emb_survey}
J.~L. Ram{\'\i}rez~Alfons{\'\i}n.
\newblock Knots and links in spatial graphs: a survey.
\newblock {\em Discrete Math.}, 302(1-3):225--242, 2005.

\bibitem{MR2099147}
Neil Robertson and Paul~D Seymour.
\newblock Graph minors. {XX}. {W}agner's conjecture.
\newblock {\em J. Combin. Theory Ser. B}, 92(2):325--357, 2004.

\bibitem{Sch11}
Saul Schleimer.
\newblock Sphere recognition lies in np.
\newblock {\em Low-dimensional and symplectic topology}, 82:183--213, 2011.

\bibitem{Skopenkov94}
A.~B. Skopenkov.
\newblock Geometric proof of {N}euwirth's theorem on the construction of
  3-manifolds from 2-demensional [2-dimensional] polyhedra.
\newblock {\em Math Notes}, 56(2):827--829, 1994.
\newblock DOI: 10.1007/BF02110742.

\bibitem{wagner1937eigenschaft}
Klaus Wagner.
\newblock {\"U}ber eine {E}igenschaft der ebenen {K}omplexe.
\newblock {\em Mathematische Annalen}, 114(1):570--590, 1937.

\bibitem{Wagner_minor}
Uli Wagner.
\newblock Minors, embeddability, and extremal problems for hypergraphs.
\newblock pages 569--607, 2013.

\bibitem{Whitney_flip}
H.~Whitney.
\newblock 2-{I}somorphic {G}raphs.
\newblock {\em Amer. J. Math.}, 55:245--254, 1933.

\bibitem{whitney1992congruent}
Hassler Whitney.
\newblock Congruent graphs and the connectivity of graphs.
\newblock {\em Hassler Whitney Collected Papers}, pages 61--79, 1992.

\bibitem{Zen16}
Raphael Zentner.
\newblock Integer homology 3-spheres admit irreducible representations in
  {SL}(2,{C}).
\newblock {\em Duke Math. J.}, 167(9):1643--1712, 2018.

\end{thebibliography}

\end{document}